\documentclass[11pt]{article}
\pdfoutput=1

\usepackage[letterpaper,margin=1in,bottom=1.4in]{geometry} 
\usepackage[format=plain,font=small,labelfont=bf,textfont=up]{caption}

\usepackage[utf8]{inputenc} 
\usepackage[T1]{fontenc}    
\usepackage[hidelinks]{hyperref}       
\usepackage{url}            
\usepackage{booktabs}       
\usepackage{amsfonts}       
\usepackage{nicefrac}       
\usepackage{microtype}      
\usepackage[affil-it]{authblk} 
\usepackage{tikz}
\usepackage{lmodern}
\usepackage{float}

\usepackage[linesnumbered,ruled,vlined]{algorithm2e}

\SetCommentSty{mycommfont}

\usepackage{amsmath}
\usepackage{amssymb}
\usepackage{amsthm}
\usepackage{mathtools}
\mathtoolsset{showonlyrefs,showmanualtags}
\allowdisplaybreaks
\usepackage{bbm}
\usepackage{graphicx}
\usepackage{subcaption}
\usepackage{thmtools, thm-restate}  
\usepackage[toc,page,header]{appendix}
\usepackage{minitoc}
\usepackage{enumitem}

\newcommand{\figpath}{figs}

\newcounter{spacesave}
\usepackage[style=authoryear-comp,sorting=nyt,sortcites=false, natbib=true, backend=bibtex, maxbibnames=99, doi=false, url=false]{biblatex}
\renewbibmacro{in:}{} 

\DeclarePairedDelimiter\paren\lparen\rparen

\DeclarePairedDelimiter\braces\lbrace\rbrace

\DeclarePairedDelimiter\abs\lvert\rvert

\providecommand{\bbone}{\mathbf{1}}
\DeclarePairedDelimiterXPP\indicator[1]{\bbone}{\lbrack}{\rbrack}{}{#1}

\DeclarePairedDelimiterXPP\expf[1]{\exp}{\lparen}{\rparen}{}{#1}
\DeclarePairedDelimiterXPP\logf[1]{\log}{\lparen}{\rparen}{}{#1}
\DeclarePairedDelimiterXPP\maxf[1]{\max}{\lparen}{\rparen}{}{#1}
\DeclarePairedDelimiterXPP\minf[1]{\min}{\lparen}{\rparen}{}{#1}

\DeclareMathOperator*{\sgn}{sgn}
\DeclarePairedDelimiterXPP\sgnf[1]{\sgn}{\lparen}{\rparen}{}{#1}

\DeclarePairedDelimiterXPP\func[2]{#1}{\lparen}{\rparen}{}{#2}

\DeclareMathOperator*{\atan}{atan}
\DeclarePairedDelimiterXPP\atanf[1]{\atan}{\lparen}{\rparen}{}{#1}
\DeclarePairedDelimiterXPP\tanf[1]{\tan}{\lparen}{\rparen}{}{#1}

\DeclarePairedDelimiter\setb\lbrace\rbrace

\newcommand{\Reals}{\mathbb{R}}

\DeclareMathOperator*{\argmin}{arg\,min}

\newcommand{\mat}[1]{\boldsymbol{#1}}
\renewcommand{\vec}[1]{\boldsymbol{#1}}

\makeatletter
\newcommand*{\tran}{{\mathpalette\@tran{}}}
\newcommand*{\@tran}[2]{\raisebox{\depth}{$\m@th#1\intercal$}}
\makeatother

\DeclarePairedDelimiter\norm\lVert\rVert
\DeclarePairedDelimiterXPP\tnorm[1]{}{\lVert}{\rVert_{1}}{}{#1}
\DeclarePairedDelimiterXPP\enorm[1]{}{\lVert}{\rVert_{2}}{}{#1}
\DeclarePairedDelimiterXPP\inorm[1]{}{\lVert}{\rVert_{\infty}}{}{#1}
\DeclarePairedDelimiterXPP\pnorm[2]{}{\lVert}{\rVert_{#1}}{}{#2}
\DeclarePairedDelimiterXPP\opnorm[1]{}{\lVert}{\rVert_{op}}{}{#1}

\DeclarePairedDelimiterXPP\detf[1]{\det}{\lparen}{\rparen}{}{#1}

\DeclarePairedDelimiterXPP\kerf[1]{\ker}{\lparen}{\rparen}{}{#1}

\DeclareMathOperator{\trsym}{tr}
\DeclarePairedDelimiterXPP\tr[1]{\trsym}{\lparen}{\rparen}{}{#1}

\DeclareMathOperator{\vspansym}{span}
\DeclarePairedDelimiterXPP\vspan[1]{\vspansym}{\lparen}{\rparen}{}{#1}

\DeclareMathOperator{\diagsym}{diag}
\DeclarePairedDelimiterXPP\diag[1]{\diagsym}{\lparen}{\rparen}{}{#1}

\DeclareMathOperator{\ranksym}{rank}
\DeclarePairedDelimiterXPP\rank[1]{\ranksym}{\lparen}{\rparen}{}{#1}

\DeclareMathOperator{\vectorizesym}{vec}
\DeclarePairedDelimiterXPP\vectorize[1]{\vectorizesym}{\lparen}{\rparen}{}{#1}

\DeclareMathOperator*{\esssup}{ess\,sup}
\DeclarePairedDelimiterXPP\esssupf[1]{\esssup}{\lparen}{\rparen}{}{#1}

\let\Prsym\Pr
\let\Pr\relax
\DeclarePairedDelimiterXPP\Pr[1]{\Prsym}{\lparen}{\rparen}{}{%
	#1}

\DeclarePairedDelimiterXPP\Prsub[2]{\Prsym_{#1}}{\lparen}{\rparen}{}{%
	#2}

\DeclareMathOperator{\Esym}{E}
\DeclarePairedDelimiterXPP\E[1]{\Esym}{\lbrack}{\rbrack}{}{%
	#1}

\DeclarePairedDelimiterXPP\Esub[2]{\Esym_{#1}}{\lbrack}{\rbrack}{}{%
	#2}

\DeclareMathOperator{\Varsym}{Var}
\DeclarePairedDelimiterXPP\Var[1]{\Varsym}{\lparen}{\rparen}{}{%
	#1}

\DeclarePairedDelimiterXPP\Varsub[2]{\Varsym_{#1}}{\lparen}{\rparen}{}{%
	#2}

\DeclarePairedDelimiterXPP\EstVar[1]{\widehat{\Varsym}}{\lparen}{\rparen}{}{%
	#1}

\DeclareMathOperator{\Covsym}{Cov}
\DeclarePairedDelimiterXPP\Cov[1]{\Covsym}{\lparen}{\rparen}{}{%
	#1}

\DeclarePairedDelimiterXPP\Covsub[2]{\Covsym_{#1}}{\lparen}{\rparen}{}{%
	#2}

\DeclareMathOperator{\Corrsym}{Corr}
\DeclarePairedDelimiterXPP\Corr[1]{\Corrsym}{\lparen}{\rparen}{}{%
	#1}

\makeatletter
\newcommand{\indep}{\protect\mathpalette{\protect\@indep}{\perp}}
\newcommand*{\@indep}[2]{\mathrel{\rlap{$#1#2$}\mkern3mu{#1#2}}}
\makeatother

\newcommand{\bigOsym}{\mathcal{O}}
\DeclarePairedDelimiterXPP\bigO[1]{\bigOsym}{\lparen}{\rparen}{}{#1}
\DeclarePairedDelimiterXPP\bigOt[1]{\widetilde{\bigOsym}}{\lparen}{\rparen}{}{#1}

\newcommand{\littleOsym}{o}
\DeclarePairedDelimiterXPP\littleO[1]{\littleOsym}{\lparen}{\rparen}{}{#1}

\newcommand{\bigOpsym}{\bigOsym_p}
\DeclarePairedDelimiterXPP\bigOp[1]{\bigOpsym}{\lparen}{\rparen}{}{#1}

\newcommand{\littleOpsym}{\littleOsym_p}
\DeclarePairedDelimiterXPP\littleOp[1]{\littleOpsym}{\lparen}{\rparen}{}{#1}

\newcommand{\bigOmegasym}{\Omega}
\DeclarePairedDelimiterXPP\bigOmega[1]{\bigOmegasym}{\lparen}{\rparen}{}{#1}

\newcommand{\littleOmegasym}{\omega}
\DeclarePairedDelimiterXPP\littleOmega[1]{\littleOmegasym}{\lparen}{\rparen}{}{#1}

\newcommand{\bigThetasym}{\Theta}
\DeclarePairedDelimiterXPP\bigTheta[1]{\bigThetasym}{\lparen}{\rparen}{}{#1}

\DeclarePairedDelimiterXPP\cosf[1]{\cos}{\lparen}{\rparen}{}{#1}
\DeclarePairedDelimiterXPP\sinf[1]{\sin}{\lparen}{\rparen}{}{#1}

\newcommand{\quadtext}[1]{\quad\text{#1}\quad}

\newcommand{\quadand}{\quadtext{and}}

\newcommand{\newvar}[2]{
	\expandafter\newcommand\csname #1\endcsname{#2}
}

\newcommand{\newvars}[2]{
	\expandafter\newcommand\csname #1\endcsname[1]{#2_{##1}}
}

\newcommand{\newvarss}[2]{
	\expandafter\newcommand\csname #1\endcsname[2]{#2_{##1,##2}}
}

\newcommand{\newvarsss}[2]{
	\expandafter\newcommand\csname #1\endcsname[3]{#2_{##1,##2,##3}}
}

\newcommand{\newvarssss}[2]{
	\expandafter\newcommand\csname #1\endcsname[4]{#2_{##1,##2,##3,##4}}
}

\newcommand{\newfunconly}[2]{
	\expandafter\DeclarePairedDelimiterXPP\csname #1\endcsname[1]{#2}{\lparen}{\rparen}{}{##1}
}

\newcommand{\newfuncs}[2]{
	\expandafter\DeclarePairedDelimiterXPP\csname #1\endcsname[2]{#2_{##1}}{\lparen}{\rparen}{}{##2}
}

\newcommand{\newfuncss}[2]{
	\expandafter\DeclarePairedDelimiterXPP\csname #1\endcsname[3]{#2_{##1,##2}}{\lparen}{\rparen}{}{##3}
}

\newcommand{\newfuncsss}[2]{
	\expandafter\DeclarePairedDelimiterXPP\csname #1\endcsname[4]{#2_{##1,##2,##3}}{\lparen}{\rparen}{}{##4}
}

\newcommand{\newfuncssss}[2]{
	\expandafter\DeclarePairedDelimiterXPP\csname #1\endcsname[5]{#2_{##1,##2,##3,##4}}{\lparen}{\rparen}{}{##5}
}

\newcommand{\varpartialeval}[3]{
	\expandafter\newcommand\csname #2\endcsname{\csname #1\endcsname{#3}}
}

\newcommand{\funcpartialeval}[3]{
	\expandafter\newcommand\csname #2\endcsname[1][]{\csname #1\endcsname[##1]{#3}}
}

\newcommand{\varevali}[2][]{
	\varpartialeval{#2#1}{#2i}{i}
	\varpartialeval{#2#1}{#2j}{j}
}

\newcommand{\varevalk}[2][]{
	\varpartialeval{#2#1}{#2k}{k}
	\varpartialeval{#2#1}{#2l}{\ell}
}

\newcommand{\varevals}[2][]{
	\varpartialeval{#2#1}{#2s}{s}
	\varpartialeval{#2#1}{#2r}{r}
}

\newcommand{\funcevali}[2][]{
	\funcpartialeval{#2#1}{#2i}{i}
	\funcpartialeval{#2#1}{#2j}{j}
}

\newcommand{\funcevalk}[2][]{
	\funcpartialeval{#2#1}{#2k}{k}
	\funcpartialeval{#2#1}{#2l}{\ell}
}

\newcommand{\varevalii}[2][]{
	\varevali[#1]{#2}
	\varevali{#2i}
	\varevali{#2j}
}

\newcommand{\varevalik}[2][]{
	\varevali[#1]{#2}
	\varevalk{#2i}
	\varevalk{#2j}
}

\newcommand{\varevaliik}[2][]{
	\varevalii[#1]{#2}
	\varevalk{#2ii}
	\varevalk{#2ij}
	\varevalk{#2ji}
	\varevalk{#2jj}
}

\newcommand{\varevaliis}[2][]{
	\varevalii[#1]{#2}
	\varevals{#2ii}
	\varevals{#2ij}
	\varevals{#2ji}
	\varevals{#2jj}
}

\newcommand{\varevaliikk}[2][]{
	\varevaliik[#1]{#2}
	\varevalk{#2iik}
	\varevalk{#2ijk}
	\varevalk{#2jik}
	\varevalk{#2jjk}
	\varevalk{#2iil}
	\varevalk{#2ijl}
	\varevalk{#2jil}
	\varevalk{#2jjl}
}

\newcommand{\funcevalii}[2][]{
	\funcevali[#1]{#2}
	\funcevali{#2i}
	\funcevali{#2j}
}

\newcommand{\funcevalik}[2][]{
	\funcevali[#1]{#2}
	\funcevalk{#2i}
	\funcevalk{#2j}
}

\newcommand{\funcevaliik}[2][]{
	\funcevalii[#1]{#2}
	\funcevalk{#2ii}
	\funcevalk{#2ij}
	\funcevalk{#2ji}
	\funcevalk{#2jj}
}

\newcommand{\funcevaliikk}[2][]{
	\funcevaliik[#1]{#2}
	\funcevalk{#2iik}
	\funcevalk{#2ijk}
	\funcevalk{#2jik}
	\funcevalk{#2jjk}
	\funcevalk{#2iil}
	\funcevalk{#2ijl}
	\funcevalk{#2jil}
	\funcevalk{#2jjl}
}

\newcommand{\funcevalX}[3]{
	\expandafter\newcommand\csname #1v#2\endcsname{\csname #1v\endcsname{#3}}
}

\newcommand{\funcevaliX}[3]{
	\expandafter\newcommand\csname #1v#2\endcsname{\csname #1v\endcsname{#3}}
	\expandafter\newcommand\csname #1v#2e\endcsname[1]{\csname #1ve\endcsname{##1}{#3}}
	\expandafter\newcommand\csname #1v#2i\endcsname{\csname #1vi\endcsname{#3}}
	\expandafter\newcommand\csname #1v#2j\endcsname{\csname #1vj\endcsname{#3}}
}

\newcommand{\funcevalikX}[3]{
	\expandafter\newcommand\csname #1v#2\endcsname{\csname #1v\endcsname{#3}}
	\expandafter\newcommand\csname #1v#2e\endcsname[2]{\csname #1ve\endcsname{##1}{##2}{#3}}
	\expandafter\newcommand\csname #1v#2i\endcsname[1]{\csname #1vi\endcsname{##1}{#3}}
	\expandafter\newcommand\csname #1v#2j\endcsname[1]{\csname #1vj\endcsname{##1}{#3}}
	\expandafter\newcommand\csname #1v#2ik\endcsname{\csname #1vik\endcsname{#3}}
	\expandafter\newcommand\csname #1v#2il\endcsname{\csname #1vil\endcsname{#3}}
	\expandafter\newcommand\csname #1v#2jk\endcsname{\csname #1vik\endcsname{#3}}
	\expandafter\newcommand\csname #1v#2jl\endcsname{\csname #1vjl\endcsname{#3}}
}

\newcommand{\newvari}[2]{
	\newvar{#1}{#2}
	\newvars{#1e}{\csname #1\endcsname}
	\varevali[e]{#1}
}

\newcommand{\newvark}[2]{
	\newvar{#1}{#2}
	\newvars{#1e}{\csname #1\endcsname}
	\varevalk[e]{#1}
}

\newcommand{\newvarik}[2]{
	\newvar{#1}{#2}
	\newvarss{#1e}{\csname #1\endcsname}
	\varevalik[e]{#1}
}

\newcommand{\newvarii}[2]{
	\newvar{#1}{#2}
	\newvarss{#1e}{\csname #1\endcsname}
	\varevalii[e]{#1}
}

\newcommand{\newvariik}[2]{
	\newvar{#1}{#2}
	\newvarsss{#1e}{\csname #1\endcsname}
	\varevaliik[e]{#1}
}

\newcommand{\newvariis}[2]{
	\newvar{#1}{#2}
	\newvarsss{#1e}{\csname #1\endcsname}
	\varevaliis[e]{#1}
}

\newcommand{\newvariikk}[2]{
	\newvar{#1}{#2}
	\newvarssss{#1e}{\csname #1\endcsname}
	\varevaliikk[e]{#1}
}

\newcommand{\newfunc}[2]{
	\newvar{#1}{#2}
	\newfunconly{#1v}{\csname #1\endcsname}
}

\newcommand{\newfunci}[2]{
	\newvari{#1}{#2}
	\newfunconly{#1v}{\csname #1\endcsname}
	\newfuncs{#1ve}{\csname #1\endcsname}
	\funcevali[e]{#1v}
}

\newcommand{\newfunck}[2]{
	\newvark{#1}{#2}
	\newfunconly{#1v}{\csname #1\endcsname}
	\newfuncs{#1ve}{\csname #1\endcsname}
	\funcevalk[e]{#1v}
}

\newcommand{\newfuncik}[2]{
	\newvarik{#1}{#2}
	\newfunconly{#1v}{\csname #1\endcsname}
	\newfuncss{#1ve}{\csname #1\endcsname}
	\funcevalik[e]{#1v}
}

\newcommand{\newfuncii}[2]{
	\newvarii{#1}{#2}
	\newfunconly{#1v}{\csname #1\endcsname}
	\newfuncss{#1ve}{\csname #1\endcsname}
	\funcevalii[e]{#1v}
}

\newcommand{\newfunciik}[2]{
	\newvariik{#1}{#2}
	\newfunconly{#1v}{\csname #1\endcsname}
	\newfuncsss{#1ve}{\csname #1\endcsname}
	\funcevaliik[e]{#1v}
}

\newcommand{\newfunciikk}[2]{
	\newvariikk{#1}{#2}
	\newfunconly{#1v}{\csname #1\endcsname}
	\newfuncssss{#1ve}{\csname #1\endcsname}
	\funcevaliikk[e]{#1v}
}

\theoremstyle{plain}
\newtheorem{theorem}{Theorem}[section]
\newtheorem{conjecture}{Conjecture}
\newtheorem{corollary}[theorem]{Corollary}
\newtheorem{lemma}[theorem]{Lemma}
\newtheorem{proposition}[theorem]{Proposition}

\newtheorem*{mainresult}{Main Result}

\theoremstyle{definition}

\newtheorem{definition}{Definition}

\theoremstyle{remark}

\newcommand{\ate}{\tau}
\newcommand{\eate}{\hat{\ate}}

\newcommand{\neigh}[1]{N(#1)}
\newcommand{\neighi}{\neigh{i}}
\newcommand{\extneigh}[1]{\widetilde{N}(#1)}
\newcommand{\extneighi}{\extneigh{i}}

\newcommand{\model}{\mathcal{M}(G)}
\newcommand{\momodel}[1]{\mathcal{M}(G,\ate,#1)}
\newcommand{\momodelq}{\momodel{q}}

\newcommand{\expset}[1]{\Delta_{#1}}
\newcommand{\expseti}{\expset{i}}
\newcommand{\expmap}[1]{h_{#1}}
\newcommand{\expmapi}{\expmap{i}}

\newcommand{\risk}{\mathcal{R}}
\newcommand{\mirisk}{\risk_{\maxindset}}

\newcommand{\cgraph}{\mathcal{H}}
\newcommand{\cvert}{V_{\cgraph}}
\newcommand{\cedges}{E_{\cgraph}}

\newcommand{\allgraphs}{\mathbb{H}}

\newcommand{\indset}{\mathcal{I}}
\newcommand{\maxindset}{\mathcal{MI}}

\newcommand{\lamH}{\lambda(\cgraph)}

\newcommand{\davgH}{\davg(\cgraph)}
\newcommand{\dmaxH}{\dmax(\cgraph)}

\newcommand{\ccgraph}[1]{\cgraph( \cvert \setminus #1 )}
\newcommand{\ccgraphS}{\ccgraph{S}}
 
\newcommand{\cI}{\mathcal{I}}
\newcommand{\cH}{\mathcal{H}}
\newcommand{\cM}{\mathcal{M}}

\newcommand{\cF}{\mathcal{F}}
\newcommand{\cR}{\mathcal{R}}

\newcommand{\cU}{\mathcal{U}}
\newcommand{\cB}{\mathcal{B}}

\newcommand{\cA}{\mathcal{A}}
\newcommand{\cG}{\mathcal{G}}

\newcommand{\tv}{d_\mathrm{TV}}

\newcommand{\R}{\mathbb{R}}
\newcommand{\F}{\mathbb{F}}

\newcommand{\contrastz}[2]{\vec{z}_{#1,(#2)}}

\newcommand{\design}{\mathcal{D}}

\newcommand{\constant}{8.92}

\newcommand{\riskconst}{2.64 \cdot 10^{-6}}

\newcommand{\lowerboundconst}{1.36 \cdot 10^{-3}}

\newcommand{\localconst}{6.6 \cdot 10^{-7}}

\newcommand{\dmax}{d_{\mathrm{max}}}
\newcommand{\dmin}{d_{\mathrm{min}}}
\newcommand{\dhar}{d_{\mathrm{harm}}}
\newcommand{\davg}{d_{\mathrm{avg}}}

\definecolor{myred}{RGB}{234,108,123}
\definecolor{myblue}{RGB}{113,154,225}

\usepackage{expcmd}
\newcommand\mainref\ref
\newcommand\suppref\ref


\title{A Design-Based Minimax Theory for Network Experiments}
\author[1]{Vardis Kandiros}
\author[1]{Christopher Harshaw}
\author[2]{Fredrik S{\"a}vje}
\affil[1]{Columbia University}
\affil[2]{Uppsala University}
\date{\today}

\begin{document}
	
	\makeatletter%
	
	\begin{NoHyper}\gdef\@thefnmark{}\@footnotetext{\hspace{-1em}We thank
James M. Robins
for insightful discussions which helped to shape this work.
The authors gratefully acknowledge support from the following agencies:
NSF Grant MMS-2316335;
Jan Wallander, Tom Hedelius \& Tore Browaldh foundations Grant P25-0067;
Swedish Research Council Grant 2025-04794.
Part of this research was conducted at the Isaac Newton Institute for Mathematical Sciences.}\end{NoHyper}%
	\makeatother%
	
	\maketitle
	\thispagestyle{empty}
	
	\begin{abstract}
		Network experiments are used throughout the social and medical sciences to investigate causal effects under the presence of interference.
While a large body of work has developed improved statistical procedures, the fundamental limits of statistical estimation in these settings is less well understood.
In this paper, we develop and investigate a design-based theory of minimax risk for network experiments under an arbitrary neighborhood interference model.
Our notion of minimax risk describes the optimal precision among all statistical procedures for investigating a particular causal effect on the observed interference network.
We show that the minimax risk is a function of the corresponding conflict graph, which captures inherent unobservability of estimand-relevant potential outcomes given the observed interference network.
Our main contribution is a series of upper and lower bounds on the minimax rate in terms of local and global connectivity properties of the conflict graph.
To illustrate their utility, we apply these general results to obtain minimax analyses for two commonly studied effects: the direct treatment effect and global average treatment effect.

	\end{abstract}

	\newpage
	
	\pagenumbering{roman}
	
	\doparttoc 
	\faketableofcontents 
	\part{} 
	\parttoc 
	
	\newpage

	\pagenumbering{arabic}
	
	\section{Introduction} \label{sec:intro}

Network interference experiments are used in the social and medical sciences to investigate spillover effects and other causal interactions between units.
An important problem for the execution of such an experiment is the choice of the experimental design and the causal effect estimator, which we collectively refer to as the \emph{statistical procedure}.
The choice is crucial because it determines how much can be learned from the experiment.
An ill-advised choice can introduce both bias and imprecision, often rendering the estimator inconsistent unless the underlying network is very sparse.

A growing body of literature has sought to shine light on this question.
The typical line of attack is the development of a new statistical procedure for network experiments that can be shown to provide improved rates of convergence under suitable circumstances and restrictions on the underlying interference network.
Examples include experimental designs based on cluster randomization \citep{Ugander2013Graph, Ugander2023Randomized, Viviano2026Causal}, independent sets \citep{Karwa2018Systematic, Jagadeesan2020Designs, Fatemi2020Network},
and conflict-resolution schemes \citep{kandiros2024conflict}.

While significant progress has been made with this approach, the literature is stifled by a lack of a theoretical benchmark: what are the statistically optimal rates of estimation for network experiments?
The question of rate optimality is not only of theoretical interest, but it also has practical implications.
Without an understanding of the fundamental statistical limits of what can be learned from these experiments, both statisticians and practitioners remain unaware of how near or far existing statistical procedures are from being optimal.
If it is found that existing procedures are far from the fundamental optimal limit, we should be incentivized to continue working on new procedures, possibly trying novel approaches.

In this paper, we develop and investigate a design-based theory of minimax optimality for network experiments.
The proposed notion of minimax optimality describes the best precision among all statistical procedures for investigating a particular causal effect $\ate$ on an observed interference network $G$, providing a fundamental statistical limit of the empirical problem at hand.
Our notion of minimax optimality has two key features.
First, optimality is considered with respect to a single observed interference network $G$, considering all potential outcomes obeying the restrictions imposed by the network.
Second, optimality is with respect to all possible choices of experimental design and estimator without any restrictions.
We believe that these two features reflect the challenges faced by practitioners, who are often able to select any statistical procedure after measuring the interference network.
Moreover, this fills a gap in the design-based literature; to the best of our knowledge, minimax analyses which account for both experimental design and estimator have not previously been investigated in design-based causal inference.

The main contribution of the paper is an analysis of the minimax rate of causal effect estimation under the assumption of arbitrary neighborhood interference (ANI), where the outcome of a unit depends arbitrarily on the treatment of its neighbors.
Central to our analysis is the notion of an exposure-level conflict graph, which is closely related to the unit-level conflict graph described by \citet{kandiros2024conflict}.
The vertices in the conflict graph $\cgraph$ are all potential outcomes relevant for the estimand $\ate$ at hand, and an edge is drawn between two vertices if the corresponding potential outcomes cannot be observed simultaneously given the interference network $G$.
We show that the minimax risk of the network experiment problem is in fact a function of the conflict graph $\cgraph$, albeit a complicated one.
Thus, the central question becomes: what aspects of the conflict graph determine the minimax rate?

Our primary analysis appears in a series of lower and upper bounds on the minimax risk presented throughout the paper.
We collect these results here, presented in a slightly simplified version.

\begin{mainresult}[Informal] \label{thm:informal-main-result}
	Let $G$ be an interference network, $\ate$ be a causal estimand, and $\cgraph$ be the corresponding conflict graph.
	The minimax risk for estimating $\ate$ on network $G$ is bounded as
	\[
	\max\paren[\Bigg]{\frac{1}{|\cI(\cH)|}, \frac{\sqrt{d^*(\cH)}}{n} } 
	\lesssim 
	\inf_{\design,\eate} \sup_{y} \E[\Big]{\paren[\big]{\eate - \ate}^2} 
	\lesssim
	\frac{\min_{S} Q(S,\cH)}{n}
	\enspace,
	\]
	where the infimum is taken over all possible designs $\design$ and estimators $\eate$, and the supremum is taken over all potential outcomes $y$ having bounded second moments and satisfying the network interference assumption.
	The expectation is taken with respect to the experimental design $\design$.
\end{mainresult}

The lower bounds on minimax risk depend on two quantities describing the conflict graph: its largest independent set $\indset(\cgraph)$ and its critical degree $d^*(\cgraph)$, which represent global and local connectivity information of the graph.
More precisely, $\indset(\cgraph)$ is the largest subset of vertices for which no pair is adjacent in $\cgraph$ and $d^*(\cgraph)$ is the largest integer $k$ such that at least $\sqrt{k}$ vertices have degree at least $k$.
The upper bound is the solution to a regularized induced subgraph problem, where for any subset $S$ of vertices, the objective function $Q(S,\cgraph) = \lambda(\cgraph \setminus S) + |S|$ has two terms: the largest eigenvalue of the induced subgraph obtained by removing $S$ and the size of $S$.
This upper bound reflects a bias-variance trade-off and improves over the rate attained by \citet{kandiros2024conflict}.
The full analysis considers general moment restrictions, and it uncovers an interplay between the strength of the moment restrictions and degree heterogeneity for the minimax rates, where stronger moment restrictions can accommodate more heterogeneity.

The primary technical innovation facilitating this work is the development of techniques to derive statistical lower bounds in a design-based setting.
Our approach begins by adopting Le Cam's method within the design-based framework, where a mixture distribution over potential outcome functions is constructed to certify the statistical lower bound.
In the setting of network experiments, several novel challenges arise necessitating the construction of mixture distributions which are both network-specific and design-adaptive.
The lower bound techniques we develop may be of independent interest, and may be useful in other high-dimensional network problems.

\subsection{Illustration of Main Results}\label{sec:applications}

To illustrate the utility of our general minimax rate analyses, we apply the main result to two causal estimands previously considered in the literature.
In what follows, we denote the minimax rate as $\risk(G,\ate)$ for a network $G$ and causal effect $\ate$.
For simplicity, we here focus on second moment restrictions and present results without considering the aforementioned bias-variance trade-off, effectively setting $S = \emptyset$.
All formal definitions are described in Section~\ref{sec:minimax_theory} and proofs of the corollaries in this subsection appear in Section~\ref{sec:applications_appendix} of the appendix.

We first consider estimation of the Direct Treatment Effect (DTE).
The DTE estimand is the effect of treating a single experimental subject when all other subjects are assigned control treatment, averaged over all subjects.
The estimand isolates the effect that treatment has directly on the treated unit, removing any potential spillover and interference effects.
The following corollary uses the main results above to establish bounds on the minimax optimal rate of estimating DTE in terms of the underlying interference graph $G$.

\begin{restatable}{corollary}{dterate} \label{corollary:direct-rate}
	The minimax rate of estimating the Direct Treatment Effect $\ate_{\mathrm{DTE}}$ on a network $G$ is bounded as
	\[
	\max\paren[\Bigg]{ \frac{1}{|\indset(G)|} , \frac{\sqrt{d_{\mathrm{avg}}(G)}}{n} } 
	\lesssim 
	\risk(G,\ate_{\mathrm{DTE}}) 
	\lesssim
	\frac{\lambda(G)}{n}
	\enspace,
	\]
	where $\indset(G)$ is the maximum independent set in $G$, $\davg(G)$ is its average degree, and $\lambda(G)$ is the largest eigenvalue of its adjacency matrix.
\end{restatable}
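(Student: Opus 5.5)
The plan is to specialize the Main Result to the DTE estimand. To do that I need to identify the conflict graph $\cgraph$ for $\ate_{\mathrm{DTE}}$ and relate $|\indset(\cgraph)|$, $d^*(\cgraph)$ and $\lambda(\cgraph)$ to the corresponding quantities of $G$.

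\textbf{Conflict graph.} The estimand $\ate_{\mathrm{DTE}} = \frac1n\sum_i (y_i(e_i) - y_i(\vec 0))$ involves two relevant exposures for each unit $i$:
\begin{itemize}
\item $y_i(e_i)$: unit $i$ is treated and every other vertex of its neighborhood $\neighi$ is in control;
\item $y_i(\vec 0)$: all of $\neighi$ is in control.
\end{itemize}
So $\cgraph$ has $2n$ vertices. Under ANI, two exposures conflict exactly when they demand different treatments on a common vertex.
\begin{itemize}
\item $(i,\text{treat})$ conflicts with $(i,\text{control})$.
\item $(i,\text{treat})$ conflicts with $(j,\text{treat})$ and with $(j,\text{control})$ whenever $i \in \neigh{j}$, i.e.\ $ij \in E(G)$. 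Assuming a symmetric neighborhood, this covers both directions.
\item Two control exposures never conflict.
\end{itemize}

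\textbf{Upper bound.} Take $S = \emptyset$, which gives the bound $\lambda(\cgraph)/n$. Write the adjacency matrix of $\cgraph$ in $2\times 2$ block form, indexed by (treat, control):
\[
\begin{pmatrix} A & A + I \\ A + I & 0 \end{pmatrix},
\]
where $A$ is the adjacency matrix of $G$. Its top eigenvalue is at most the top eigenvalue of
\[
\begin{pmatrix} \lambda & \lambda+1 \\ \lambda+1 & 0 \end{pmatrix}, \qquad \lambda = \lambda(G),
\]
which I would justify via Perron vectors and nonnegativity. That eigenvalue is $\lesssim \lambda(G) + 1$. Since $\lambda(G) \ge \davg(G) \ge 0$, we get $\lambda(\cgraph) \lesssim 1 + \lambda(G)$. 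The additive constant is harmless: every graph with at least one edge has $\lambda(G) \ge 1$, and for the edgeless graph the $1/n$ rate is trivially attained. This gives $\risk \lesssim \lambda(G)/n$.

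\textbf{First lower bound.} I need $|\indset(\cgraph)| \lesssim |\indset(G)|$, together with a version of the Main Result's lower bound that is not ruined by the $n$ control vertices. As I read it, the Main Result's independent-set term must really refer to an independent set among estimand-relevant pairs. Otherwise the set of all control exposures would be independent and force $|\indset(\cgraph)| \ge n$, making that term trivial. I would therefore check the precise formal statement, which likely counts units whose treatment exposures form an independent set. The set of units $i$ whose treat exposures are pairwise non-adjacent is exactly an independent set of $G$, giving $1/|\indset(G)|$.

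\textbf{Second lower bound.} I need $\sqrt{d^*(\cgraph)} \gtrsim \sqrt{\davg(G)}$. The treat vertex of $i$ has degree $2\deg_G(i)+1$. The main obstacle is that $d^*$ depends on the degree distribution, and average degree alone need not guarantee that $\sqrt k$ vertices have degree at least $k$. I would address this by a counting argument.
\begin{itemize}
\item Let $k = \davg(G)/2$ and $m = \#\{i : \deg_G(i) \ge k\}$.
\item Since $\sum_i \deg_G(i) = n\,\davg(G)$, we have $m\,\dmax + nk \ge n\,\davg(G)$, so $m \ge n\,\davg(G)/(2\dmax) \ge \davg(G)/2$, because $\dmax \le n$.
\item Then $m \ge k \ge \sqrt{k}$, so at least $\sqrt k$ treat vertices have degree at least $k$, and $d^*(\cgraph) \ge k \gtrsim \davg(G)$.
\end{itemize}
Combining the three steps yields the stated bounds.
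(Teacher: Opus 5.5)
Your proposal is correct and follows essentially the paper's route. It uses the $S=\emptyset$ conflict-graph-design upper bound with a block-matrix bound on $\lambda(\cH)$, and the global lower bound applied to $T=\{e_{i,1}\}$, whose induced subgraph is a copy of $G$. For the critical degree it uses a Markov-type count, which the paper does via $\davg(\cH)\ge\davg(G)/2$ and Proposition~\ref{prop:critical_to_avg} rather than directly on the degrees of $G$ as you do.

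Two small points. Your block form, with $A+I$ in the off-diagonal blocks, is the correct one. The paper's displayed matrix omits $A_G$ there, and its claimed $\lambda(\cH)\le\lambda(G)+2$ already fails for $d$-regular $G$ with $d\ge 2$; your bound $\lambda(\cH)\le 2\lambda(G)+1$ is what is actually needed. Also, for edgeless $G$ the upper bound must be read as $(\lambda(G)+1)/n$, as in the paper's formal version, since $\lambda(G)/n=0$ there and the bound cannot be ``trivially attained.''
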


The bounds are similar to those that appeared in the general main results, but with the underlying interference network $G$ in place of the conflict graph $\cH$.
This simplification is possible because $\cH$ is tightly coupled with $G$ for the DTE.

The minimax rate is precisely characterized when the upper and lower bounds match; however, there are graphs $G$ for which these bounds do not match.
In Section~\ref{sec:comparisons}, we show that both bounds are tight even within the restricted class of $d$-regular graphs, in the sense that there exist graphs such that the true minimax rate coincides with the lower bound, and other graphs such that it coincides with the upper bound.
In addition, we show that these bounds characterize the minimax risk for random $d$-regular graphs, up to logarithmic factors.

We next consider estimation of the Global Average Treatment Effect (GATE).
The GATE estimand is the effect of treating all subjects versus treating no one.
The estimand captures both the direct effect of treatment and spillover effects.
The following corollary uses the main results above to establish bounds for the GATE in terms of the two-hop network $G^2$, which is obtained by connecting all subjects that are within distance $2$ in the original interference network $G$.

\begin{restatable}{corollary}{gaterate} \label{corollary:gate-rate}
	The minimax rate of estimating the Global Average Treatment Effect $\ate_{\mathrm{GATE}}$ on a network $G$ is bounded as
	\[
	\frac{\sqrt{\davg(G^2)}}{n}
	\lesssim 
	\risk(G,\ate_{\mathrm{GATE}})
	\lesssim
	\frac{\lambda(G^2)}{n}
	\enspace,
	\]
	where $G^2$ is the two-hop interference graph, $\davg(G^2)$ is the average degree of the two-hop graph, and $\lambda(G^2)$ is the largest eigenvalue of its adjacency matrix.
\end{restatable}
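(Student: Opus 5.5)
The plan is to derive the corollary from the Main Result by identifying the conflict graph $\cgraph$ for the GATE and relating its structure to the two-hop network $G^2$. The GATE contrasts $y_i(\vec{1})$ with $y_i(\vec{0})$, and under arbitrary neighborhood interference the relevant exposures of unit $i$ are ``all of $\neighi$ treated'' and ``all of $\neighi$ in control.'' So $\cgraph$ has $2n$ vertices, $(i,1)$ and $(i,0)$ for each unit $i$. Two vertices conflict exactly when the corresponding exposures cannot both be realized by one assignment:
\begin{itemize}
\item $(i,1)$ and $(i,0)$ always conflict (I take $i\in\neighi$);
\item $(i,1)$ and $(j,0)$ conflict exactly when $\neighi\cap\neigh{j}\neq\emptyset$, i.e.\ when $i=j$ or $i,j$ are adjacent in $G^2$;
\item same-level pairs $(i,a),(j,a)$ never conflict.
\end{itemize}
Hence $\cgraph$ is the bipartite double cover of $G^2$ with self-loops added. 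Its adjacency matrix is
\[
\begin{pmatrix} 0 & I + A_{G^2} \\ I + A_{G^2} & 0 \end{pmatrix},
\]
where $A_{G^2}$ is the adjacency matrix of $G^2$.

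For the upper bound, take $S=\emptyset$ in the Main Result, so the risk is at most $\lambda(\cgraph)/n$. The spectrum of the block matrix is $\pm$ the spectrum of $I+A_{G^2}$, so $\lambda(\cgraph)=1+\lambda(G^2)\le 2\lambda(G^2)$ whenever $G^2$ has an edge. If $G^2$ has no edges, then $G$ has no edges and the bound $1/n$ is immediate, which this step should absorb into the constants.

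For the lower bound, use the term $\sqrt{d^*(\cgraph)}/n$. Each vertex $(i,a)$ has degree $1+\deg_{G^2}(i)$, so every degree of $G^2$ appears twice in $\cgraph$. The remaining step is a purely combinatorial claim: for any graph $H$ on $m$ vertices, $d^*(H)\gtrsim \davg(H)$. Here $m=2n$ and $\davg(\cgraph)=1+\davg(G^2)$.

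This claim is the main obstacle, since the definition of $d^*$ counts high-degree vertices and a few hubs could in principle carry the average. I would argue it by a threshold split. Let $k=\lfloor c\,\davg\rfloor$ for a small constant $c$. Vertices of degree below $k$ contribute at most $c\,m\,\davg$ to the total degree, so vertices of degree at least $k$ carry at least $(1-c)\,m\,\davg$. Each vertex has degree at most $m-1$, so the number of such vertices is at least $(1-c)\davg$. This is at least $\sqrt{k}$ whenever $\davg\gtrsim 1$, which gives $d^*(H)\ge k\gtrsim\davg(H)$. When $\davg<1$, the bound $\sqrt{\davg}/n\lesssim 1/n$ holds trivially, because any estimator with $n$ units incurs $\Omega(1/n)$ risk, for example through the independent-set bound $1/|\indset(\cgraph)|\ge 1/(2n)$.

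Substituting $\davg(\cgraph)=1+\davg(G^2)$ gives the lower bound $\sqrt{\davg(G^2)}/n$. The independent-set term is not used for the GATE, consistent with its absence from the statement.
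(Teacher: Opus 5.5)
Your proposal is correct and follows the paper's proof. You identify $\cgraph$ via the off-diagonal blocks $I+A_{G^2}$, take $S=\emptyset$ for the upper bound, and combine the critical-degree lower bound with $d^*(\cgraph)\gtrsim\davg(\cgraph)\ge\davg(G^2)$; your threshold argument is a direct version of the paper's Proposition~\ref{prop:critical_to_avg}, and your identity $\lambda(\cgraph)=1+\lambda(G^2)$ is sharper than the paper's $\lambda(\cgraph)\le 2+2\lambda(G^2)$.

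One quibble concerns the edgeless case. There $\lambda(G^2)=0$ while the risk is of order $1/n$, so the $1/n$ cannot be absorbed into constants. The correct upper bound is $(\lambda(G^2)+1)/n$, which is exactly what the paper's formal version of the corollary states.
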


Because the two-hop network $G^2$ is typically much more densely connected than the original network $G$, the minimax analyses of Corollary~\ref{corollary:gate-rate} aligns with the commonly held belief that the DTE is easier to estimate than the GATE.
In Section~\ref{sec:comparisons}, we formalize this intuition by showing that the minimax rate of estimating the DTE is at most that of estimating the GATE for random $d$-regular graphs.

The fact that the derived upper and lower bounds on the minimax rates for network experiments do not always match, in the sense that they are the same up to constant terms, means that they do not fully characterize the fundamental limit of the statistical problem.
Although the minimax risk is a function of the conflict graph $\cgraph$, this exact relationship depends on the structure of maximal independent sets of $\cgraph$, which are likely too complex to be succinctly described.
In Section~\ref{sec:conjecture}, we formalize this possibility through a set of well-motivated conjectures about the computational hardness of deriving sharper analyses of the minimax risk and discuss their practical consequences.
Nevertheless, we believe the bounds in this paper provide an important step towards the understanding of network experiments as a statistical problem.

	\subsection{Related Work} \label{sec:related_work}

We consider experimental design and estimation in network experiments under \emph{arbitrary neighborhood interference}, allowing the outcome of a unit to depend on the treatments of its neighbors in a known graph.
This is an instance of the design-based exposure mapping framework developed by \citet{aronow2017estimating}, which generalizes the ideas developed by \citet{Sobel2006What} and \citet{hudgens2008toward} to allow for outcomes to depend on arbitrary summaries of the treatment vector.
The concept of effective treatments described by \citet{manski2013identification} is an alternative formulation.

Experimental design under arbitrary neighborhood interference has been considered previously.
 \citet{Ugander2013Graph} and \citet{Ugander2023Randomized} consider estimation of the Global Average Treatment Effect (GATE) under uniformly bounded outcomes and describe designs that split the graph in clusters used for the assignment of treatment.
 The work closest to the current paper is \citet{kandiros2024conflict}, who consider arbitrary contrastive effects under bounded second outcome moments and describe an experimental design based on a conflict resolution scheme.
Compared to this previous work, our contribution is to derive lower bounds on the minimax rate, which has previously been absent from the literature, and improved upper bounds.
We also formalize the study of the minimax rate in the setting of network experiments.

Several authors have considered network experiments under restrictions on the potential outcomes beyond those imposed by the neighborhood interference model itself.
For example, \citet{Toulis2013Estimation} describe two designs for assigning treatments in graphs under a linear outcome model.
A long line of work has explored other types of additional outcome assumptions in network experiments \citep{Sussman2017,Fatemi2020Network,Jagadeesan2020Designs,Cai2024Independent,Viviano2026Causal}.
The purpose of these assumptions is to make potential outcomes that are irrelevant under the neighborhood interference model alone informative of the causal effect of interest, simplifying the experimental design and estimation problems.

Our concept of minimax risk and optimality differs from previously considered concepts in the experimental design literature.
Previously, analyses have typically fixed the estimator and considered a restricted class of designs \citep{Wu1981Robustness,Baird2018Optimal,Kallus2018Optimal,Leung2022Rate,Bojinov2023Design,Candogan2024Correlated}.
Some analyses instead restrict the design and consider a larger class of estimators \citep[see, e.g.,][]{CortezRodriguez2023Exploiting}. 
This type of analysis demonstrates optimality within the restricted class of designs or estimators, but not optimality in a universal sense.
We consider the choice of estimator and design simultaneously without restrictions, making the analysis informative of universal optimality.
Concepts of optimality not building on minimax performance have also been considered.
For example, \citet{Basse2018Model} consider designs that are optimal with respect to a generative working model, and \citet{karwa2023admissibility} consider admissibility of linear exposure effect estimators under interference.

Within the superpopulation framework for causal inference, statistical optimality and minimax analysis have been studied extensively.
The data generating process is beyond the researcher's control here, so optimality is considered only over possible estimators.
The predominate focus in this literature is on \emph{local minimax risk} \citep{van2000asymptotic}, which considers the best possible asymptotic variance that can be achieved by an asymptotically unbiased estimator.
In semiparametric efficiency theory, this is done using the efficient influence curve \citep{bickel1993efficient}. 
The approach is local because it considers perturbations of the parameter of interest locally in the parameter space.
We instead use a global notion of minimax risk, considering the worst-case performance of a statistical procedure for all potential outcome functions in some class of functions.
The global notion is suitable to capture the optimal \emph{rate} of estimating an effect, without consideration for the exact constants involved.
A global notion of minimax risk has been considered also in the superpopulation setting to study optimal rates \citep[see, e.g.,][]{robins2008higher,rotnitzky2021characterization,foster2023orthogonal,Hu2023Off,kennedy2024minimax}. 

Within the survey sampling literature, several minimax optimality results for finite populations have been developed \citep{RINOTT2009523}.
For example, \citet{aggarwal1959bayes,bickel1981minimax} established that the sample mean is minimax optimal under the constraint that the variance of the outcomes is bounded, and \citet{hodges1982minimax} showed that a slightly biased version of the sample mean is minimax optimal under the constraint that all outcomes are uniformly bounded.
More recently, \citet{aronow2026minimax} derived lower bounds in a setting with unequal inclusion probabilities under uniformly bounded outcomes.
While the underlying structure of the statistical problems are similar, the techniques for establishing minimax optimality in survey sampling are not directly applicable to causal inference problems.
The survey sampling techniques rely on classical decision theory arguments based on symmetry of the problem with respect to the unit, allowing the proof to be divided into two parts, establishing the optimal design separately from the optimal estimator \citep{blackwell1979theory,gabler2012minimax}.
No such symmetry exists in the causal inference setting, and we must consider the choice of design and estimator simultaneously.
For this reason, there is no obvious way to apply these approaches to the problems we consider in this paper.
The techniques we describe here are developed from first principles and unrelated to those used in the survey sampling literature.

	\section{Minimax Theory for Network Experiments}\label{sec:minimax_theory}

\subsection{Preliminaries: Model and Estimands} \label{sec:preliminaries}

We consider an experimental study with $n$ subjects indexed by integers $\setb{1, 2, \ldots, n} \triangleq [n]$.
The experimenter selects a random intervention $z$ from a set of interventions $\Omega$ and the randomized selection mechanism is referred to as the experimental design.
Formally, the \emph{interventions} are a measurable space $(\Omega, \cF)$ and the \emph{experimental design} is a probability measure $\design : \cF \to [0,1]$.
We focus on the setting where intervention is a binary treatment to each experimental subject, i.e. $z = (z_1, z_2, \dots z_n)$ with $z_i \in \setb{0,1}$ so that $\Omega = \setb{0,1}^n$.

Each subject $i \in [n]$ has an associated \emph{potential outcome function} $y_i : \Omega \to \R$ which describes its responses under each intervention.
For notational convenience, we collect these $n$ functions into an \emph{entire potential outcome function} $y : \Omega \to \R^n$ defined as $y(z) = (y_1(z), \dots ,y_n(z))$.
In the randomized experiment, the experimenter draws a random intervention $Z$ according to the design $\design$ and observes the $n$ outcomes $y_1(Z), \dots ,y_n(Z)$.

In order to facilitate meaningful inference of causal effects, the experimenter must place further restrictions on the potential outcome functions.
A common choice is the arbitrary neighborhood interference (ANI) model, which depends on an observed interference network.
We presume that the experimenter has access to an undirected network $G = (V,E)$ whose vertices $V$ are experimental subjects with edge set $E$.
For a subject $i \in [n]$, we define its \emph{neighborhood} in the network to be the other subjects to which it is connected: $\neighi =\setb{ j \in [n] : (i,j) \in E }$.
For  notational convenience, we define the \emph{extended neighborhood} as $\extneighi = N(i) \cup \{i\}$, which is just a unit's neighbors and itself.
The ANI model stipulates that a subject's potential outcome function can depend arbitrarily on treatments assigned to subjects in its extended neighborhood, but is not affected by treatment assigned to other subjects.

\begin{definition}
	The \emph{arbitrary neighborhood interference} model $\model$ contains all functions $y : \Omega \to \Reals^n$ satisfying the following condition:
	$y_i(z) = y_i(z')$
	for all subjects $i \in [n]$ and interventions $z, z' \in \Omega$ with
	$z_j = z_j'$ 
	for all
	$j \in \extneigh{i}$.
\end{definition}

The ANI model is an example of an exposure mapping model \citep{aronow2017estimating}.
It will sometimes be convenient to work with the underlying exposure mappings, so we define them now.
For each subject $i \in [n]$, its \emph{exposure set} $\expseti$ is defined to contain all subsets of its extended neighborhood, i.e. $\expseti = \mathcal{P}(\extneighi)$.
The exposure mapping $\expmapi : \Omega \to \expseti$ maps an intervention $z$ to the subset of extended neighbors which were treated under $z$, i.e.
$
\expmapi(z) = \setb{ j \in \extneighi : z_j = 1 }
$.
In an exposure mapping model, a unit's potential outcome function is constant across level sets of the exposure mapping, i.e. if $\expmapi(z) = \expmapi(z')$ then $y_i(z) = y_i(z')$.
In this way, we can see that the ANI model is an exposure mapping model, where the exposure is the specific configuration of extended neighbors which are treated.
In line with the previous literature, we overload the potential outcome notation and write $y_i(e)$ to denote the potential outcome that would be observed for unit $i$ under exposure $e \in \expseti$.

We consider the class of contrastive causal effects, which are defined as the average over subjects of the contrast between two different potential outcomes.
Formally, a \emph{contrastive effect} $\ate$ is a linear functional $\ate : \model \to \Reals$ which is specified by $2n$ unit-specific exposures $\setb{ (e_{i,1}, e_{i,0}) }_{i=1}^n$ and defined as
\[
\ate(y) = \frac{1}{n} \sum_{i=1}^n y_i(e_{i,1}) - y_i(e_{i,0})
\enspace.
\]
To avoid notational clutter, we do not parameterize $\ate$ by the choice of exposures.
We review two of the most well-studied examples of contrastive effects below:

\begin{figure}[t!]
	\centering
	\begin{subfigure}[t]{0.45\textwidth}
		\centering
		\resizebox{0.8\linewidth}{!}{%
            \tikzset{every picture/.style={line width=0.75pt}} 

\begin{tikzpicture}[x=0.75pt,y=0.75pt,yscale=-1,xscale=1]

\draw  [fill=myred  ,fill opacity=1 ][line width=1.5]  (147,144) .. controls (147,130.19) and (158.19,119) .. (172,119) .. controls (185.81,119) and (197,130.19) .. (197,144) .. controls (197,157.81) and (185.81,169) .. (172,169) .. controls (158.19,169) and (147,157.81) .. (147,144) -- cycle ;
\draw  [fill=myred  ,fill opacity=1 ][line width=1.5]  (102,69.5) .. controls (102,60.94) and (108.94,54) .. (117.5,54) .. controls (126.06,54) and (133,60.94) .. (133,69.5) .. controls (133,78.06) and (126.06,85) .. (117.5,85) .. controls (108.94,85) and (102,78.06) .. (102,69.5) -- cycle ;
\draw  [fill=myred  ,fill opacity=1 ][line width=1.5]  (207,69.5) .. controls (207,60.94) and (213.94,54) .. (222.5,54) .. controls (231.06,54) and (238,60.94) .. (238,69.5) .. controls (238,78.06) and (231.06,85) .. (222.5,85) .. controls (213.94,85) and (207,78.06) .. (207,69.5) -- cycle ;
\draw  [fill=myred  ,fill opacity=1 ][line width=1.5]  (113,226.5) .. controls (113,217.94) and (119.94,211) .. (128.5,211) .. controls (137.06,211) and (144,217.94) .. (144,226.5) .. controls (144,235.06) and (137.06,242) .. (128.5,242) .. controls (119.94,242) and (113,235.06) .. (113,226.5) -- cycle ;
\draw  [fill=myred  ,fill opacity=1 ][line width=1.5]  (75,150.5) .. controls (75,141.94) and (81.94,135) .. (90.5,135) .. controls (99.06,135) and (106,141.94) .. (106,150.5) .. controls (106,159.06) and (99.06,166) .. (90.5,166) .. controls (81.94,166) and (75,159.06) .. (75,150.5) -- cycle ;
\draw  [fill=myred  ,fill opacity=1 ][line width=1.5]  (237,149.5) .. controls (237,140.94) and (243.94,134) .. (252.5,134) .. controls (261.06,134) and (268,140.94) .. (268,149.5) .. controls (268,158.06) and (261.06,165) .. (252.5,165) .. controls (243.94,165) and (237,158.06) .. (237,149.5) -- cycle ;
\draw  [fill=myred  ,fill opacity=1 ][line width=1.5]  (199,226.5) .. controls (199,217.94) and (205.94,211) .. (214.5,211) .. controls (223.06,211) and (230,217.94) .. (230,226.5) .. controls (230,235.06) and (223.06,242) .. (214.5,242) .. controls (205.94,242) and (199,235.06) .. (199,226.5) -- cycle ;
\draw [line width=1.5]    (126,82) -- (155,125) ;
\draw [line width=1.5]    (215,82) -- (187,124) ;
\draw [line width=1.5]    (160,166) -- (135,213) ;
\draw [line width=1.5]    (106,150.5) -- (147,144) ;
\draw [line width=1.5]    (237,149.5) -- (197,144) ;
\draw [line width=1.5]    (182,166) -- (205,215) ;
\draw  [fill=myblue  ,fill opacity=1 ][line width=1.5]  (441,143) .. controls (441,129.19) and (452.19,118) .. (466,118) .. controls (479.81,118) and (491,129.19) .. (491,143) .. controls (491,156.81) and (479.81,168) .. (466,168) .. controls (452.19,168) and (441,156.81) .. (441,143) -- cycle ;
\draw  [fill=myblue  ,fill opacity=1 ][line width=1.5]  (396,68.5) .. controls (396,59.94) and (402.94,53) .. (411.5,53) .. controls (420.06,53) and (427,59.94) .. (427,68.5) .. controls (427,77.06) and (420.06,84) .. (411.5,84) .. controls (402.94,84) and (396,77.06) .. (396,68.5) -- cycle ;
\draw  [fill=myblue  ,fill opacity=1 ][line width=1.5]  (501,68.5) .. controls (501,59.94) and (507.94,53) .. (516.5,53) .. controls (525.06,53) and (532,59.94) .. (532,68.5) .. controls (532,77.06) and (525.06,84) .. (516.5,84) .. controls (507.94,84) and (501,77.06) .. (501,68.5) -- cycle ;
\draw  [fill=myblue  ,fill opacity=1 ][line width=1.5]  (407,225.5) .. controls (407,216.94) and (413.94,210) .. (422.5,210) .. controls (431.06,210) and (438,216.94) .. (438,225.5) .. controls (438,234.06) and (431.06,241) .. (422.5,241) .. controls (413.94,241) and (407,234.06) .. (407,225.5) -- cycle ;
\draw  [fill=myblue  ,fill opacity=1 ][line width=1.5]  (369,149.5) .. controls (369,140.94) and (375.94,134) .. (384.5,134) .. controls (393.06,134) and (400,140.94) .. (400,149.5) .. controls (400,158.06) and (393.06,165) .. (384.5,165) .. controls (375.94,165) and (369,158.06) .. (369,149.5) -- cycle ;
\draw  [fill=myblue  ,fill opacity=1 ][line width=1.5]  (531,148.5) .. controls (531,139.94) and (537.94,133) .. (546.5,133) .. controls (555.06,133) and (562,139.94) .. (562,148.5) .. controls (562,157.06) and (555.06,164) .. (546.5,164) .. controls (537.94,164) and (531,157.06) .. (531,148.5) -- cycle ;
\draw  [fill=myblue  ,fill opacity=1 ][line width=1.5]  (493,225.5) .. controls (493,216.94) and (499.94,210) .. (508.5,210) .. controls (517.06,210) and (524,216.94) .. (524,225.5) .. controls (524,234.06) and (517.06,241) .. (508.5,241) .. controls (499.94,241) and (493,234.06) .. (493,225.5) -- cycle ;
\draw [line width=1.5]    (420,81) -- (449,124) ;
\draw [line width=1.5]    (509,81) -- (481,123) ;
\draw [line width=1.5]    (454,165) -- (429,212) ;
\draw [line width=1.5]    (400,149.5) -- (441,143) ;
\draw [line width=1.5]    (531,148.5) -- (491,143) ;
\draw [line width=1.5]    (476,165) -- (499,214) ;

\draw (166,129) node [anchor=north west][inner sep=0.75pt]   [align=left] {{\huge $\displaystyle i$}};
\draw (460,128) node [anchor=north west][inner sep=0.75pt]   [align=left] {{\huge $\displaystyle i$}};
\draw (151,245) node [anchor=north west][inner sep=0.75pt]  [font=\huge] [align=left] {$\displaystyle e_{i,1}$};
\draw (451,245) node [anchor=north west][inner sep=0.75pt]  [font=\huge] [align=left] {$\displaystyle e_{i,0}$};

\end{tikzpicture}%
        }
		\caption{GATE Exposures}
	\end{subfigure}%
	\hfill
	\begin{subfigure}[t]{0.45\textwidth}
		\centering
		\resizebox{0.8\linewidth}{!}{%
            \tikzset{every picture/.style={line width=0.75pt}} 

\begin{tikzpicture}[x=0.75pt,y=0.75pt,yscale=-1,xscale=1]

\draw  [fill=myred   ,fill opacity=1 ][line width=1.5]  (151,144) .. controls (151,130.19) and (162.19,119) .. (176,119) .. controls (189.81,119) and (201,130.19) .. (201,144) .. controls (201,157.81) and (189.81,169) .. (176,169) .. controls (162.19,169) and (151,157.81) .. (151,144) -- cycle ;
\draw  [fill=myblue   ,fill opacity=1 ][line width=1.5]  (106,69.5) .. controls (106,60.94) and (112.94,54) .. (121.5,54) .. controls (130.06,54) and (137,60.94) .. (137,69.5) .. controls (137,78.06) and (130.06,85) .. (121.5,85) .. controls (112.94,85) and (106,78.06) .. (106,69.5) -- cycle ;
\draw  [fill=myblue   ,fill opacity=1 ][line width=1.5]  (211,69.5) .. controls (211,60.94) and (217.94,54) .. (226.5,54) .. controls (235.06,54) and (242,60.94) .. (242,69.5) .. controls (242,78.06) and (235.06,85) .. (226.5,85) .. controls (217.94,85) and (211,78.06) .. (211,69.5) -- cycle ;
\draw  [fill=myblue   ,fill opacity=1 ][line width=1.5]  (117,226.5) .. controls (117,217.94) and (123.94,211) .. (132.5,211) .. controls (141.06,211) and (148,217.94) .. (148,226.5) .. controls (148,235.06) and (141.06,242) .. (132.5,242) .. controls (123.94,242) and (117,235.06) .. (117,226.5) -- cycle ;
\draw  [fill=myblue   ,fill opacity=1 ][line width=1.5]  (79,150.5) .. controls (79,141.94) and (85.94,135) .. (94.5,135) .. controls (103.06,135) and (110,141.94) .. (110,150.5) .. controls (110,159.06) and (103.06,166) .. (94.5,166) .. controls (85.94,166) and (79,159.06) .. (79,150.5) -- cycle ;
\draw  [fill=myblue   ,fill opacity=1 ][line width=1.5]  (241,149.5) .. controls (241,140.94) and (247.94,134) .. (256.5,134) .. controls (265.06,134) and (272,140.94) .. (272,149.5) .. controls (272,158.06) and (265.06,165) .. (256.5,165) .. controls (247.94,165) and (241,158.06) .. (241,149.5) -- cycle ;
\draw  [fill=myblue   ,fill opacity=1 ][line width=1.5]  (203,226.5) .. controls (203,217.94) and (209.94,211) .. (218.5,211) .. controls (227.06,211) and (234,217.94) .. (234,226.5) .. controls (234,235.06) and (227.06,242) .. (218.5,242) .. controls (209.94,242) and (203,235.06) .. (203,226.5) -- cycle ;
\draw [line width=1.5]    (130,82) -- (159,125) ;
\draw [line width=1.5]    (219,82) -- (191,124) ;
\draw [line width=1.5]    (164,166) -- (139,213) ;
\draw [line width=1.5]    (110,150.5) -- (151,144) ;
\draw [line width=1.5]    (241,149.5) -- (201,144) ;
\draw [line width=1.5]    (186,166) -- (209,215) ;
\draw  [fill=myblue   ,fill opacity=1 ][line width=1.5]  (445,143) .. controls (445,129.19) and (456.19,118) .. (470,118) .. controls (483.81,118) and (495,129.19) .. (495,143) .. controls (495,156.81) and (483.81,168) .. (470,168) .. controls (456.19,168) and (445,156.81) .. (445,143) -- cycle ;
\draw  [fill=myblue   ,fill opacity=1 ][line width=1.5]  (400,68.5) .. controls (400,59.94) and (406.94,53) .. (415.5,53) .. controls (424.06,53) and (431,59.94) .. (431,68.5) .. controls (431,77.06) and (424.06,84) .. (415.5,84) .. controls (406.94,84) and (400,77.06) .. (400,68.5) -- cycle ;
\draw  [fill=myblue   ,fill opacity=1 ][line width=1.5]  (505,68.5) .. controls (505,59.94) and (511.94,53) .. (520.5,53) .. controls (529.06,53) and (536,59.94) .. (536,68.5) .. controls (536,77.06) and (529.06,84) .. (520.5,84) .. controls (511.94,84) and (505,77.06) .. (505,68.5) -- cycle ;
\draw  [fill=myblue   ,fill opacity=1 ][line width=1.5]  (411,225.5) .. controls (411,216.94) and (417.94,210) .. (426.5,210) .. controls (435.06,210) and (442,216.94) .. (442,225.5) .. controls (442,234.06) and (435.06,241) .. (426.5,241) .. controls (417.94,241) and (411,234.06) .. (411,225.5) -- cycle ;
\draw  [fill=myblue   ,fill opacity=1 ][line width=1.5]  (373,149.5) .. controls (373,140.94) and (379.94,134) .. (388.5,134) .. controls (397.06,134) and (404,140.94) .. (404,149.5) .. controls (404,158.06) and (397.06,165) .. (388.5,165) .. controls (379.94,165) and (373,158.06) .. (373,149.5) -- cycle ;
\draw  [fill=myblue   ,fill opacity=1 ][line width=1.5]  (535,148.5) .. controls (535,139.94) and (541.94,133) .. (550.5,133) .. controls (559.06,133) and (566,139.94) .. (566,148.5) .. controls (566,157.06) and (559.06,164) .. (550.5,164) .. controls (541.94,164) and (535,157.06) .. (535,148.5) -- cycle ;
\draw  [fill=myblue   ,fill opacity=1 ][line width=1.5]  (497,225.5) .. controls (497,216.94) and (503.94,210) .. (512.5,210) .. controls (521.06,210) and (528,216.94) .. (528,225.5) .. controls (528,234.06) and (521.06,241) .. (512.5,241) .. controls (503.94,241) and (497,234.06) .. (497,225.5) -- cycle ;
\draw [line width=1.5]    (424,81) -- (453,124) ;
\draw [line width=1.5]    (513,81) -- (485,123) ;
\draw [line width=1.5]    (458,165) -- (433,212) ;
\draw [line width=1.5]    (404,149.5) -- (445,143) ;
\draw [line width=1.5]    (535,148.5) -- (495,143) ;
\draw [line width=1.5]    (480,165) -- (503,214) ;

\draw (170,129) node [anchor=north west][inner sep=0.75pt]   [align=left] {{\huge $\displaystyle i$}};
\draw (464,128) node [anchor=north west][inner sep=0.75pt]   [align=left] {{\huge $\displaystyle i$}};
\draw (155,245) node [anchor=north west][inner sep=0.75pt]  [font=\huge] [align=left] {$\displaystyle e_{i,1}$};
\draw (455,245) node [anchor=north west][inner sep=0.75pt]  [font=\huge] [align=left] {$\displaystyle e_{i,0}$};

\end{tikzpicture}%
        }
		\caption{DTE Exposures}
	\end{subfigure}
	\caption{An illustration of the relevant exposures that define the global and direct effects. Red and blue correspond to treated and untreated units.
	}
	\label{fig:exposures}
\end{figure}

\begin{itemize}
	\item \textbf{Global Average Treatment Effect:}
	The \emph{global average treatment effect} (GATE) is defined by the following two exposures:
	$e_{i,1} = \extneighi$, which corresponds to all subjects in the extended neighborhood receiving treatment, and $e_{i,0} = \emptyset$, which corresponds to all subjects in the extended neighborhood receiving control. 
	The GATE is the effect of treating everyone versus treating no one.
	This may be relevant for policy makers which intend to roll out treatment to everyone.
	
	\item \textbf{Direct Treatment Effect:} 
	The \emph{direct treatment effect} (DTE) is defined by the following two exposures:
	$e_{i,1} = \setb{i}$, which corresponds to the subject $i$ receiving treatment while all of its neighbors remain untreated, and $e_{i,0} = \emptyset$, which corresponds to both the subject and its neighbors receiving control.
	The DTE is the effect of treatment when a subject is the only one to get it.
	This may be more relevant for an individual's decision if they believe that the crowd will not adopt treatment.
\end{itemize}

See Figure~\ref{fig:exposures} for visual depictions of these two effects and their constituent exposures.
Without loss of generality, we only consider estimands for which $e_{i,1} \neq e_{i,0}$.
To see this is without loss of generality, observe that if $e_{i,1} = e_{i,0}$ then $y_i(e_{i,1}) = y_i(e_{i,0})$ so that subject $i$ does not contribute to the causal effect and may be removed from consideration.

The causal estimand depends on the entire potential outcome function which is unknown to the experimenter even after the experiment is run, a challenge often referred to as the ``fundamental problem of causal inference'' \citep{holland1986statistics}.
For this reason, the experimenter must estimate the causal effect from the observed data.
Recall that the experimenter draws a random intervention $Z \in \Omega$ according to the design $\design$ and observes the $n$ outcomes $y_1(Z), \dots , y_n(Z)$.
Formally, we define an \emph{estimator} as a measurable function $\eate : \Omega \times \R^n \to \R$ which takes as input the realized intervention and observed outcomes and produces an estimate of the causal effect.
Because the researcher can select both the design $\design$ and estimator $\eate$, we refer to the pair $(\design, \eate)$ jointly as a \emph{statistical procedure}.

When analyzing the precision of a statistical procedure, it is necessary to place restrictions on the magnitude of the potential outcomes, because the precision can be arbitrarily poor if the magnitude of the potential outcomes is sufficiently large.
To this end, we impose further moment restrictions on the ANI model, which may be of arbitrary order.
For any $q \in [2, \infty]$, define the \emph{$q$th moment restricted ANI model} as
\[
\momodelq = \setb[\Big]{
	y \in \model: 
	\paren[\Big]{ \frac{1}{n} \sum_{i=1}^n \abs{ y_i(e_{i,k}) }^q  }^{1/q} \leq 1
	\text{ for } k \in \setb{0,1}
}
\enspace.
\]
The $q$th moment restricted model $\momodelq$ only contains potential outcome functions in the ANI model $\model$ for which the $q$th moments of the estimand-relevant potential outcomes are bounded.
In defining this restricted model, we have made two particular choices: we have assumed that the $q$th moments are bounded by $1$, and we have placed no restrictions on the potential outcomes which are not relevant for the estimand.
In Appendix~\ref{sec:supp_outcome_restrictions}, we show that both of these choices are without loss of generality when studying the minimax risk.
First, we show that replacing the moment restriction $1$ with any fixed constant $C > 0$ scales the minimax risk by a multiplicative factor of $C^2$, so that we may consider $C=1$ without loss of generality.
Second, we show that the minimax risk is unaffected if we were to place extreme restrictions on the estimand-irrelevant outcomes, so that they were effectively presumed to be known.
Because the outcomes that are irrelevant for the estimand provide no information about the causal effect under an ANI model, the extent to which they are restricted is irrelevant for the minimax risk.

\subsection{Minimax Risk for Estimating Causal Effects} \label{sec:def-minimax-risk}

In this paper, we use the mean squared error to measure the precision, or risk, of a statistical procedure.
Formally, the \emph{mean squared error} (MSE) of a design-estimator pair $(\design,\eate)$ for a causal effect $\ate$ under potential outcome function $y$ is defined as
\[
\textrm{MSE}(\design,\eate, \ate, y) 
= \Esub[\Big]{Z \sim \design}{\paren[\big]{\eate(Z,y(Z)) - \ate(y)}^2} 
\enspace.
\]
The MSE depends not only on the design $\design$ and estimator $\eate$, but also the potential outcome function $y$ under consideration.
Because the underlying potential outcome function $y$ is unknown to the experimenter, the MSE of a statistical procedure cannot be evaluated before the experiment is run.
In order to evaluate different statistical procedures without exact knowledge of the potential outcome functions, we consider its worst case MSE within a set of potential outcome functions, in our case the moment-restricted interference model.
More precisely, the \emph{maximum risk} of a statistical procedure within a $q$-th moment restricted ANI model is defined as
\[
\sup_{y \in \momodelq}
\Esub[\Big]{Z \sim \design}{\paren[\big]{\eate(Z,y(Z)) - \ate(y)}^2} 
\enspace.
\]
The maximum risk evaluates a design-estimator pair by comparing its worst-case performance among all potential outcome functions in the $q$-th moment restricted ANI model, $y \in \momodelq$.
Unlike MSE, the maximum risk can, in principle, be evaluated before the experiment is run and can therefore be used as a criterion for selecting a design-estimator pair.

We define the minimax risk as the smallest achievable maximum risk when considering all possible choices of design and estimator.
Formally, the \emph{minimax risk} depends on the causal effect $\ate$, the underlying network $G$, and the moment restriction $q$, and is defined as
\begin{equation}\label{eq:minimax_risk}
	\risk(\ate, G, q) 
	= \inf_{\design,\eate} \risk(\design,\eate, \ate, G, q)
	= \inf_{\design,\eate} \sup_{y \in \cM}
	\Esub[\Big]{Z \sim \design}{\paren[\big]{\eate(Z,y(Z)) - \ate(y)}^2} 
	\enspace.
\end{equation}
The minimax risk $\risk(\ate, G, q)$ characterizes the fundamental statistical limits of the experimental problem at hand: no statistical procedure can attain a maximum risk which is smaller than $\risk(\ate, G, q)$.
In addition to its theoretical motivation, a clear understanding of the minimax MSE can also provide practical guidance.
For example, an applied researcher might be satisfied using a statistical procedure whose performance is close to minimax optimal because no other procedure can offer significant improvements.
In Section~\ref{sec:supp_minimax_general} of the appendix, we show how this notion of minimax risk for ANI models and contrastive effects can be extended to the more general design-based framework of \citet{harshaw2022design}.

The minimax risk obeys several basic monotonicity properties: it increases if the moments are made to be less restrictive or if additional edges are added to the graph.
These properties follow directly from the fact that these modifications enlarge the restricted ANI model under consideration.
For two graphs $G,G'$ on the same set of vertices, we write $G' \subseteq G$ when every edge of $G'$ is also an edge of $G$.
We present the monotonicity properties below:

\begin{restatable}{proposition}{monotonicity}\label{prop:monotonicity}
	The minimax risk $\risk(G,\ate,q)$ obeys the following monotonicity properties:
	\begin{itemize}
		\item \textbf{Monotonicity in $q$:} If $q' \leq q$, then $\risk(G,\ate,q) \leq \risk(G,\ate,q')$.
		\item \textbf{Monotonicity in $G$:} If $G' \subseteq G$, then $\risk(G',\ate,q) \leq \risk(G,\ate,q)$.
	\end{itemize}
\end{restatable}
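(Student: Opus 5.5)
The plan is to show that each modification enlarges the feasible set of potential outcome functions over which the supremum in \eqref{eq:minimax_risk} is taken, while leaving the set of statistical procedures and the estimand unchanged. Both claims then follow from one elementary fact. If $\cM' \subseteq \cM$, then for every fixed pair $(\design,\eate)$ the supremum of the MSE over $\cM'$ is at most the supremum over $\cM$. Taking the infimum over all $(\design,\eate)$ on both sides preserves this inequality. The only real content is therefore the set inclusion in each case.

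\textbf{Monotonicity in $q$.} Here $G$ is fixed, so the ANI model $\model$ and the estimand-relevant exposures $e_{i,k}$ are the same on both sides. Let $q' \leq q$. The normalized power mean $\paren{\frac1n\sum_i \abs{x_i}^q}^{1/q}$ is nondecreasing in $q$, by Jensen's (or Lyapunov's) inequality applied to the uniform distribution on $[n]$. This includes $q=\infty$, where the mean is the maximum. Hence, for every $y \in \model$ and $k\in\{0,1\}$, the $q'$-th moment of the relevant outcomes is at most the $q$-th moment, and $\momodel{q} \subseteq \momodel{q'}$. The estimator class, the design class and the functional $\ate$ do not depend on $q$, so the general fact gives $\risk(G,\ate,q)\le\risk(G,\ate,q')$.

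\textbf{Monotonicity in $G$.} Let $G' \subseteq G$, on the same vertex set. Each extended neighborhood in $G'$ is contained in the corresponding one in $G$. If $y$ depends only on treatments within the $G'$-extended neighborhoods, then it depends only on treatments within the larger $G$-extended neighborhoods, so $\mathcal{M}(G') \subseteq \mathcal{M}(G)$.

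The step needing care is that the estimand and the moment restriction are expressed through exposures $e_{i,k}$, which are subsets of $\extneighi$ and therefore depend on the graph. To compare the two models we must identify $\ate$ across graphs. The natural convention is to view $\ate$ on $\Omega$ through intervention-level definitions: $y_i(e_{i,k})$ means $y_i(z)$ for any $z$ whose restriction to $\extneighi$ in $G$ treats exactly $e_{i,k}$, with all units outside treated consistently. For DTE this means everyone else in control. For GATE it means everyone treated or everyone in control.

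Under this convention, for $y\in\mathcal M(G')$ the value $y_i(z)$ depends only on $z$ restricted to the $G'$-extended neighborhood. That restriction coincides with the $G'$-exposure defining $\ate$ on $G'$. So $\ate(y)$ and the moment constraint take the same value whether computed via $G$ or via $G'$. It follows that $\momodel{q}$ built on $G'$ is contained in $\momodel{q}$ built on $G$, with the same estimand values, and the general fact gives $\risk(G',\ate,q)\le\risk(G,\ate,q)$.

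I expect this identification of $\ate$ across graphs to be the only obstacle. I would state the convention explicitly and then verify it for the contrastive effects considered, which are defined via global intervention profiles.
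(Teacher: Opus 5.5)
Your proposal is correct and follows the paper's proof. Both use a general monotonicity lemma for nested models. Both get the inclusion $\mathcal{M}(G,\ate,q)\subseteq\mathcal{M}(G,\ate,q')$ from the power-mean (Jensen) inequality, and $\mathcal{M}(G')\subseteq\mathcal{M}(G)$ from the nesting of extended neighborhoods.

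Two points you add fill in details the paper leaves implicit without changing the argument. The first is your explicit convention for identifying $\ate$ across $G'$ and $G$: you restrict the $G$-exposures to the $G'$-extended neighborhoods, which is automatic for the DTE and GATE. The second is your treatment of $q=\infty$.
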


We have defined the minimax risk in terms of deterministic estimators which use only the data from the experiment; however, it is possible to consider randomized statistical procedures which use an additional source of randomness other than the treatment assignment, e.g. a random number generator.
In Section~\ref{sec:randomness_appendix} of the appendix, we show that the minimax risk among all procedures, including randomized ones, is the same as the minimax risk among the restricted class of deterministic procedures, mirroring well-known results from classical decision theory \citep{lehmann1998theory}.
Nevertheless, additional randomness may still be beneficial for constructing simpler and computationally efficient statistical procedures.
Indeed, the statistical procedures introduced in this work are randomized, and their ``de-randomizations'' appear computationally challenging to implement.

Finally, it may be beneficial to draw comparisons between the design-based perspective and the sampling-based perspective, particularly from the viewpoint of semi-parametric inference.
In the classical superpopulation setting, one observes independent samples from some distribution $P_{\theta}$, parametrized by a high dimensional nuisance component $\theta$.
The goal is to estimate some low dimensional functional $\psi(\theta)$ of the nuisance parameter.
In design-based inference, one can view the entire potential outcome function $y$ as the high-dimensional nuisance parameter and $\tau(y)$ as the low-dimensional quantity of interest.
One major distinction between these perspectives arises in the context of experimental design: the semi-parametric theory typically takes the data generating distribution $P_{\theta}$ as given, and this includes the random assignment of treatment.
In the design-based framework, the experimental design $\design$ is decoupled from the nuisance parameter $\tau(y)$ and needs to be chosen by the experimenter.
As we will see, this selection of both design and estimator will introduce additional challenges in establishing lower bounds on the minimax risk in the design-based setting.

\subsection{Conflict Graph} \label{sec:conflict_graph}

Our analysis of the minimax risk will prominently feature the conflict graph $\cgraph$, originally introduced by \citet{kandiros2024conflict} for the purpose of constructing statistical procedures.
In this paper, we show that the conflict graph is in fact a central quantity when studying statistical complexity of these network experiment problems.

At a high level, the conflict graph $\cgraph$ encodes the fundamental unobservability between estimand-relevant exposures.
Recall the exposures $\setb{ (e_{i,1}, e_{i,0} ) }_{i=1}^n$ which define the contrastive estimand.
We say that two exposures $e_{i,k}$ and $e_{j,\ell}$ are \emph{in conflict} if there is no intervention under which they can both be observed, i.e. there does not exist $z \in \Omega$ for which $h_i(z) = e_{i,k}$ and $h_j(z) = e_{j,\ell}$.
This happens exactly when the two exposures $e_{i,k}$ and $e_{j,\ell}$ specify different treatment assignments to at least one of the units in the intersection of extended neighborhoods $\extneighi \cap \extneigh{j}$.
The conflict graph is formally defined below:

\begin{definition}
	Given a network $G$ and contrastive effect $\ate$, the corresponding \emph{conflict graph} $\cgraph = (\cvert, \cedges)$ contains the estimand-relevant exposures as vertices $\cvert = \setb{ (e_{i,1}, e_{i,0} ) }_{i=1}^n$ with an edge if two exposures are in conflict.
\end{definition}

Note that the conflict graph $\cgraph$ depends on both the underlying network $G$ and the causal effect of interest $\ate$.
In this way, any analysis of the minimax risk in terms of the conflict graph $\cgraph$ reflects both $G$ and $\ate$.
However, we suppress this dependency to avoid unnecessary notational clutter.
We typically consider a generic contrastive effect throughout the paper and make it clear from context when we focus on a specific effect.

\begin{figure}[t!]
	\centering
	\begin{subfigure}[t]{0.3\textwidth}
		\centering
		\resizebox{\linewidth}{!}{%
            \tikzset{every picture/.style={line width=0.75pt}} 

\begin{tikzpicture}[x=0.75pt,y=0.75pt,yscale=-1,xscale=1]

\draw   (256,75) .. controls (256,61.19) and (267.19,50) .. (281,50) .. controls (294.81,50) and (306,61.19) .. (306,75) .. controls (306,88.81) and (294.81,100) .. (281,100) .. controls (267.19,100) and (256,88.81) .. (256,75) -- cycle ;
\draw   (196,173) .. controls (196,159.19) and (207.19,148) .. (221,148) .. controls (234.81,148) and (246,159.19) .. (246,173) .. controls (246,186.81) and (234.81,198) .. (221,198) .. controls (207.19,198) and (196,186.81) .. (196,173) -- cycle ;
\draw   (309,173) .. controls (309,159.19) and (320.19,148) .. (334,148) .. controls (347.81,148) and (359,159.19) .. (359,173) .. controls (359,186.81) and (347.81,198) .. (334,198) .. controls (320.19,198) and (309,186.81) .. (309,173) -- cycle ;
\draw   (418,74) .. controls (418,60.19) and (429.19,49) .. (443,49) .. controls (456.81,49) and (468,60.19) .. (468,74) .. controls (468,87.81) and (456.81,99) .. (443,99) .. controls (429.19,99) and (418,87.81) .. (418,74) -- cycle ;
\draw   (465,172) .. controls (465,158.19) and (476.19,147) .. (490,147) .. controls (503.81,147) and (515,158.19) .. (515,172) .. controls (515,185.81) and (503.81,197) .. (490,197) .. controls (476.19,197) and (465,185.81) .. (465,172) -- cycle ;
\draw    (267,95.5) -- (232,150.5) ;
\draw    (296,95.5) -- (322,151.5) ;
\draw    (418,74) -- (306,75) ;
\draw    (453,96.5) -- (478,150.5) ;

\draw (287,76) node   [align=left] {\begin{minipage}[lt]{12.24pt}\setlength\topsep{0pt}
{\Large $\displaystyle 1$}
\end{minipage}};
\draw (226,175) node   [align=left] {\begin{minipage}[lt]{12.24pt}\setlength\topsep{0pt}
{\Large $\displaystyle 2$}
\end{minipage}};
\draw (338,174.25) node   [align=left] {\begin{minipage}[lt]{12.24pt}\setlength\topsep{0pt}
{\Large $\displaystyle 3$}
\end{minipage}};
\draw (449,75) node   [align=left] {\begin{minipage}[lt]{12.24pt}\setlength\topsep{0pt}
{\Large $\displaystyle 4$}
\end{minipage}};
\draw (494,174.25) node   [align=left] {\begin{minipage}[lt]{12.24pt}\setlength\topsep{0pt}
{\Large $\displaystyle 5$}
\end{minipage}};

\end{tikzpicture}%
        }
		\caption{Network $G$}
	\end{subfigure}%
	~ 
	\begin{subfigure}[t]{0.3\textwidth}
		\centering
		\resizebox{0.8\linewidth}{!}{%
            \tikzset{every picture/.style={line width=0.75pt}} 

\begin{tikzpicture}[x=0.75pt,y=0.75pt,yscale=-1,xscale=1]

\draw   (238,99.5) .. controls (238,85.69) and (249.19,74.5) .. (263,74.5) .. controls (276.81,74.5) and (288,85.69) .. (288,99.5) .. controls (288,113.31) and (276.81,124.5) .. (263,124.5) .. controls (249.19,124.5) and (238,113.31) .. (238,99.5) -- cycle ;
\draw   (159,172) .. controls (159,158.19) and (170.19,147) .. (184,147) .. controls (197.81,147) and (209,158.19) .. (209,172) .. controls (209,185.81) and (197.81,197) .. (184,197) .. controls (170.19,197) and (159,185.81) .. (159,172) -- cycle ;
\draw   (320,173) .. controls (320,159.19) and (331.19,148) .. (345,148) .. controls (358.81,148) and (370,159.19) .. (370,173) .. controls (370,186.81) and (358.81,198) .. (345,198) .. controls (331.19,198) and (320,186.81) .. (320,173) -- cycle ;
\draw   (406,102) .. controls (406,88.19) and (417.19,77) .. (431,77) .. controls (444.81,77) and (456,88.19) .. (456,102) .. controls (456,115.81) and (444.81,127) .. (431,127) .. controls (417.19,127) and (406,115.81) .. (406,102) -- cycle ;
\draw   (465,172) .. controls (465,158.19) and (476.19,147) .. (490,147) .. controls (503.81,147) and (515,158.19) .. (515,172) .. controls (515,185.81) and (503.81,197) .. (490,197) .. controls (476.19,197) and (465,185.81) .. (465,172) -- cycle ;
\draw    (263,124.5) -- (262,318) ;
\draw    (345,198) -- (345,255) ;
\draw    (431,127) -- (430,317) ;
\draw    (491,249) -- (490,197) ;
\draw   (455,342) .. controls (455,355.81) and (443.81,367) .. (430,367) .. controls (416.19,367) and (405,355.81) .. (405,342) .. controls (405,328.19) and (416.19,317) .. (430,317) .. controls (443.81,317) and (455,328.19) .. (455,342) -- cycle ;
\draw   (516,274) .. controls (516,287.81) and (504.81,299) .. (491,299) .. controls (477.19,299) and (466,287.81) .. (466,274) .. controls (466,260.19) and (477.19,249) .. (491,249) .. controls (504.81,249) and (516,260.19) .. (516,274) -- cycle ;
\draw   (369,281) .. controls (369,294.81) and (357.81,306) .. (344,306) .. controls (330.19,306) and (319,294.81) .. (319,281) .. controls (319,267.19) and (330.19,256) .. (344,256) .. controls (357.81,256) and (369,267.19) .. (369,281) -- cycle ;
\draw   (287,343) .. controls (287,356.81) and (275.81,368) .. (262,368) .. controls (248.19,368) and (237,356.81) .. (237,343) .. controls (237,329.19) and (248.19,318) .. (262,318) .. controls (275.81,318) and (287,329.19) .. (287,343) -- cycle ;
\draw   (210,274) .. controls (210,287.81) and (198.81,299) .. (185,299) .. controls (171.19,299) and (160,287.81) .. (160,274) .. controls (160,260.19) and (171.19,249) .. (185,249) .. controls (198.81,249) and (210,260.19) .. (210,274) -- cycle ;
\draw    (451,328.5) -- (476,293.5) ;
\draw    (282,328.5) -- (328,300.5) ;
\draw    (287,343) -- (405,342) ;
\draw    (203,291.5) -- (241,329.5) ;
\draw    (185,249) -- (184,197) ;
\draw    (204,186.5) .. controls (233,235.5) and (250,259.5) .. (251,321.5) ;
\draw    (275,321.5) .. controls (282,285.5) and (298,262.5) .. (327,189.5) ;
\draw    (202,256.5) .. controls (224,217.5) and (250,176.5) .. (251,120.5) ;
\draw    (326,263.5) .. controls (317,241.5) and (273,183.5) .. (277,120.5) ;
\draw    (473,189.5) -- (439,318.5) ;
\draw    (443,124.5) -- (475,254.5) ;
\draw    (285,336.5) .. controls (433,305.5) and (420,214.5) .. (424,125.5) ;
\draw    (288,99.5) .. controls (419,123.5) and (420,275.5) .. (421,318.5) ;

\draw (246,84) node [anchor=north west][inner sep=0.75pt]  [font=\huge] [align=left] {$\displaystyle e_{1,0}$};
\draw (167,158) node [anchor=north west][inner sep=0.75pt]  [font=\huge] [align=left] {$\displaystyle e_{2,0}$};
\draw (329,158) node [anchor=north west][inner sep=0.75pt]  [font=\huge] [align=left] {$\displaystyle e_{3,0}$};
\draw (415,88) node [anchor=north west][inner sep=0.75pt]  [font=\huge] [align=left] {$\displaystyle e_{4,0}$};
\draw (472,158.5) node [anchor=north west][inner sep=0.75pt]  [font=\huge] [align=left] {$\displaystyle e_{5,0}$};
\draw (167,261) node [anchor=north west][inner sep=0.75pt]  [font=\huge] [align=left] {$\displaystyle e_{2,1}$};
\draw (327,268) node [anchor=north west][inner sep=0.75pt]  [font=\huge] [align=left] {$\displaystyle e_{3,1}$};
\draw (244,330) node [anchor=north west][inner sep=0.75pt]  [font=\huge] [align=left] {$\displaystyle e_{1,1}$};
\draw (473,261) node [anchor=north west][inner sep=0.75pt]  [font=\huge] [align=left] {$\displaystyle e_{5,1}$};
\draw (412,329) node [anchor=north west][inner sep=0.75pt]  [font=\huge] [align=left] {$\displaystyle e_{4,1}$};

\end{tikzpicture}%
        }
		\caption{DTE Conflict Graph}
	\end{subfigure}
	~
	\begin{subfigure}[t]{0.3\textwidth}
		\centering
		\resizebox{0.8\linewidth}{!}{%
            \tikzset{every picture/.style={line width=0.75pt}} 

\begin{tikzpicture}[x=0.75pt,y=0.75pt,yscale=-1,xscale=1]

\draw   (238,99.5) .. controls (238,85.69) and (249.19,74.5) .. (263,74.5) .. controls (276.81,74.5) and (288,85.69) .. (288,99.5) .. controls (288,113.31) and (276.81,124.5) .. (263,124.5) .. controls (249.19,124.5) and (238,113.31) .. (238,99.5) -- cycle ;
\draw   (159,172) .. controls (159,158.19) and (170.19,147) .. (184,147) .. controls (197.81,147) and (209,158.19) .. (209,172) .. controls (209,185.81) and (197.81,197) .. (184,197) .. controls (170.19,197) and (159,185.81) .. (159,172) -- cycle ;
\draw   (320,173) .. controls (320,159.19) and (331.19,148) .. (345,148) .. controls (358.81,148) and (370,159.19) .. (370,173) .. controls (370,186.81) and (358.81,198) .. (345,198) .. controls (331.19,198) and (320,186.81) .. (320,173) -- cycle ;
\draw   (406,102) .. controls (406,88.19) and (417.19,77) .. (431,77) .. controls (444.81,77) and (456,88.19) .. (456,102) .. controls (456,115.81) and (444.81,127) .. (431,127) .. controls (417.19,127) and (406,115.81) .. (406,102) -- cycle ;
\draw   (465,172) .. controls (465,158.19) and (476.19,147) .. (490,147) .. controls (503.81,147) and (515,158.19) .. (515,172) .. controls (515,185.81) and (503.81,197) .. (490,197) .. controls (476.19,197) and (465,185.81) .. (465,172) -- cycle ;
\draw    (263,124.5) -- (262,318) ;
\draw    (345,198) -- (344,256) ;
\draw    (431,127) -- (430,317) ;
\draw    (491,249) -- (490,197) ;
\draw   (455,342) .. controls (455,355.81) and (443.81,367) .. (430,367) .. controls (416.19,367) and (405,355.81) .. (405,342) .. controls (405,328.19) and (416.19,317) .. (430,317) .. controls (443.81,317) and (455,328.19) .. (455,342) -- cycle ;
\draw   (516,274) .. controls (516,287.81) and (504.81,299) .. (491,299) .. controls (477.19,299) and (466,287.81) .. (466,274) .. controls (466,260.19) and (477.19,249) .. (491,249) .. controls (504.81,249) and (516,260.19) .. (516,274) -- cycle ;
\draw   (369,281) .. controls (369,294.81) and (357.81,306) .. (344,306) .. controls (330.19,306) and (319,294.81) .. (319,281) .. controls (319,267.19) and (330.19,256) .. (344,256) .. controls (357.81,256) and (369,267.19) .. (369,281) -- cycle ;
\draw   (287,343) .. controls (287,356.81) and (275.81,368) .. (262,368) .. controls (248.19,368) and (237,356.81) .. (237,343) .. controls (237,329.19) and (248.19,318) .. (262,318) .. controls (275.81,318) and (287,329.19) .. (287,343) -- cycle ;
\draw   (210,274) .. controls (210,287.81) and (198.81,299) .. (185,299) .. controls (171.19,299) and (160,287.81) .. (160,274) .. controls (160,260.19) and (171.19,249) .. (185,249) .. controls (198.81,249) and (210,260.19) .. (210,274) -- cycle ;
\draw    (185,249) -- (184,197) ;
\draw    (198,193) .. controls (231,238) and (250,259.5) .. (251,321.5) ;
\draw    (275,321.5) .. controls (282,285.5) and (302,267) .. (331,194) ;
\draw    (197,252) .. controls (219,213) and (250,176.5) .. (251,120.5) ;
\draw    (331,259) .. controls (317,239.5) and (270,185) .. (274,122) ;
\draw    (480,194) -- (439,318.5) ;
\draw    (443,124.5) -- (475,254.5) ;
\draw    (285,336.5) .. controls (433,305.5) and (420,214.5) .. (424,125.5) ;
\draw    (288,99.5) .. controls (419,123.5) and (420,275.5) .. (421,318.5) ;
\draw    (208,266) -- (324,186) ;
\draw    (206,184) -- (321,271) ;
\draw    (285,88) .. controls (392,103) and (449,217) .. (470,261) ;
\draw    (286,351) .. controls (339,338) and (453,279) .. (473,191) ;
\draw    (418,124) -- (359,261) ;
\draw    (414,322) -- (361,191) ;
\draw    (202,189) .. controls (236,234) and (320,330) .. (405,342) ;
\draw    (204,258) .. controls (222,230) and (314,119) .. (406,102) ;

\draw (246,84) node [anchor=north west][inner sep=0.75pt]  [font=\huge] [align=left] {$\displaystyle e_{1,0}$};
\draw (167,158) node [anchor=north west][inner sep=0.75pt]  [font=\huge] [align=left] {$\displaystyle e_{2,0}$};
\draw (329,158) node [anchor=north west][inner sep=0.75pt]  [font=\huge] [align=left] {$\displaystyle e_{3,0}$};
\draw (415,88) node [anchor=north west][inner sep=0.75pt]  [font=\huge] [align=left] {$\displaystyle e_{4,0}$};
\draw (472,158.5) node [anchor=north west][inner sep=0.75pt]  [font=\huge] [align=left] {$\displaystyle e_{5,0}$};
\draw (167,261) node [anchor=north west][inner sep=0.75pt]  [font=\huge] [align=left] {$\displaystyle e_{2,1}$};
\draw (327,268) node [anchor=north west][inner sep=0.75pt]  [font=\huge] [align=left] {$\displaystyle e_{3,1}$};
\draw (244,330) node [anchor=north west][inner sep=0.75pt]  [font=\huge] [align=left] {$\displaystyle e_{1,1}$};
\draw (473,261) node [anchor=north west][inner sep=0.75pt]  [font=\huge] [align=left] {$\displaystyle e_{5,1}$};
\draw (412,329) node [anchor=north west][inner sep=0.75pt]  [font=\huge] [align=left] {$\displaystyle e_{4,1}$};

\end{tikzpicture}%
        }
		\caption{GATE Conflict Graph}
	\end{subfigure}
	\caption{An illustration of a network $G$ and conflict graphs for the DTE and GATE.
	}
	\label{fig:cg}
\end{figure}

To better understand how the structure of the conflict graph $\cgraph$ depends on both the underlying network $G$ and the causal effect $\ate$, we focus on several examples below:

\begin{itemize}
	\item \textbf{No Interference:}
	Consider the setting of no-interference, in which case we may see the underlying graph $G$ as containing no edges.
	Suppose we want to estimate the average treatment effect, where $e_{i,1}$ is the exposure that subject $i$ receives treatment and $e_{i,0}$ is the exposure that subject $i$ receives the control.
	In this case, if $i \neq j$ then there are no conflicts between $e_{i,k}$ and $e_{j,\ell}$ because the individual treatments can be selected separately.
	On the other hand, there will be a conflict between $e_{i,1}$ and $e_{i,0}$ because a subject cannot be treated and untreated under any intervention.
	Thus, the conflict graph $\cgraph$ is simply a matching of the $2n$ exposures.
	
	\item \textbf{Direct Treatment Effect:} 
	Recall that $e_{i,1}$ is the exposure that subject $i$ is treated and all of its neighbors are untreated while $e_{i,0}$ is the exposure that the entire extended neighborhood of subject $i$ receives the control. 
	In this case, there is no edge in the conflict graph $\cgraph$ between all-control exposures $e_{i,0}$ and $e_{j,0}$ since we can observe both exposures by setting all units to control.
	On the other hand, if at least one of $k,\ell$ is equal to $1$ (i.e. representing the direct exposure), then there is an edge between $e_{i,k}$ and $e_{j,\ell}$ if and only if $i,j$ are connected in $G$. 
	In this way, the conflict graph $\cgraph$ has the underlying graph $G$ as a subgraph.

	\item \textbf{Global Average Treatment Effect:} 
	Recall that $e_{i,1}$ is the exposure that all extended neighbors are treated while $e_{i,0}$ is the exposure that no extended neighbors are treated.
	In this case, there are no edges in the conflict graph $\cgraph$ between all-treatment exposures $e_{i,1}$ and $e_{j,1}$ for subjects $i$ and $j$ because they can both be realized by the intervention $z = \mathbf{1}$.
	Likewise, there are no edges in the conflict graph $\cgraph$ between all-control exposures $e_{i,0}$ and $e_{i,0}$.
	Instead, the edges in the conflict graph arise between exposures $e_{i,k}$ and $e_{j,\ell}$ where $k \neq \ell$ and subjects $i$ and $j$ have common extended neighbors, i.e. $\extneighi \cap \extneigh{j} \neq \emptyset$.
	These exposures are in conflict because both $i$ and $j$ have a common neighbor who would have to be treated and not treated to observe $e_{i,k}$ and $e_{j,\ell}$, which is impossible.
	In this way, the conflict graph $\cgraph$ is a bipartite graph that resembles the two hop graph $G^2$ of the underlying graph $G$.
\end{itemize}

See Figure~\ref{fig:cg} for an illustration of how for the same underlying graph $G$, the resulting conflict graph will typically be different for these two effects.
As seen in these examples, the edges in the conflict graph encode the fundamental problem of causal inference and its extension to network interference.

The conflict graph is a summary of the experimental settings, capturing exactly which pairs of exposures are impossible to realize simultaneously under any design.
It abstracts away from other details of the experiment, such as the exact definition of the estimand and the interference structure that gave rise to these conflicts.
The following proposition demonstrates that this abstraction retains enough information about the statistical problem to exactly characterize the minimax risk of the corresponding network experiment problem.
In what follows, we let $\allgraphs$ denote the set of all labelled graphs (see Section~\ref{sec:supp-cg-characterizes-risk} in the appendix for a detailed proof).

\begin{restatable}{proposition}{cgcharacterization}\label{prop:cg-characterize-minimax}
	Let $G$ be an interference network, $\tau$ be a contrastive effect.
	For each moment parameter $q \geq 2$, there exists a function $f_q : \allgraphs \to \Reals_{\geq 0}$ such that $\risk(G, \ate, q) = f_q(\cgraph)$.
\end{restatable}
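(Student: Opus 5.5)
The plan is to show that the network experiment problem $(G,\ate,q)$ is equivalent, in terms of minimax risk, to an abstract estimation problem that is defined purely in terms of the labelled graph $\cgraph$. We then set $f_q(\cgraph)$ equal to the minimax risk of that abstract problem.

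\emph{The abstract problem.} The vertex set is $\cvert=\setb{(i,k)}$, where the labels record $n$ and the pairing $(i,1),(i,0)$. A procedure consists of a (possibly randomized) distribution over independent sets $I$ of $\cgraph$ together with an estimator. The estimator sees $I$ and the values $\setb{y_v : v\in I}$, where $y\in\Reals^{\cvert}$ is constrained only by the two $q$th-moment conditions. The target is $\frac1n\sum_i \paren{y_{(i,1)}-y_{(i,0)}}$. Its minimax MSE is manifestly a function of $\cgraph$ and $q$ alone.

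\emph{Key combinatorial step: every independent set of $\cgraph$ is simultaneously realizable.} Each exposure $e_{i,k}$ specifies a partial treatment assignment on $\extneighi$. Two exposures are non-adjacent in $\cgraph$ exactly when these partial assignments agree on $\extneighi\cap\extneigh{j}$. A family of partial functions that is pairwise consistent has a common extension, so every independent set $I$ admits some $z_I\in\Omega$ with $h_i(z_I)=e_{i,k}$ for all $(i,k)\in I$. Conversely, for any $z$ the set $O(z)$ of realized relevant exposures is independent. This is automatic for exposures of different units. For the same unit, $e_{i,1}\neq e_{i,0}$ on the same $\extneighi$, so the two always conflict; hence $(i,1)\sim(i,0)$ for every $i$ in every $\cgraph$.

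\emph{Two reductions.}
\begin{itemize}
\item (Abstract $\Rightarrow$ network.) Given an abstract procedure, draw $I$ and run the design $z_I$. The estimator discards the outcomes of exposures outside $I$ and otherwise applies the abstract estimator. The MSE is unchanged for every $y$, because $y\in\momodelq$ restricted to relevant exposures ranges over exactly the abstract parameter space.
\item (Network $\Rightarrow$ abstract.) First invoke the appendix result that fixing the estimand-irrelevant outcomes to known values, say $0$, does not change the minimax risk. Then the data $(Z,y(Z))$ is a deterministic function of $Z$ and $\setb{y_v: v\in O(Z)}$. Thus $(\design,\eate)$ induces a randomized abstract procedure: draw $Z\sim\design$, reveal $I=O(Z)$, and use $Z$ as auxiliary randomness. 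Its MSE equals that of the original procedure. Since randomization does not lower minimax risk (appendix), or alternatively since the abstract problem is defined over randomized procedures, the two minimax values coincide.
\end{itemize}

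\emph{Expected obstacle.} The main subtlety is the handling of the irrelevant outcomes. In the original problem they are unrestricted, and an adversary could make them informative or harmful. The key is to use the appendix reduction in both directions, so that the sup over $\momodelq$ equals the sup over relevant values only, with care about measurability of the induced estimator.
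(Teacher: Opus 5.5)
Your argument is correct, but it is organized differently from the paper's.

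\textbf{The paper's route.} The paper proves a three-link chain
\[
\risk(G,\ate,q)\ge\risk(\cgraph,q)\ge\mirisk(\cgraph,q)\ge\risk(G,\ate,q),
\]
where $\risk(\cgraph,q)$ and $\mirisk(\cgraph,q)$ are minimax risks over \emph{deterministic} procedures on all independent sets and on maximal independent sets, respectively. The first two links each use a pushforward design, a conditional-expectation estimator, and Jensen's inequality. The last link needs an injective map $f$ from maximal independent sets to interventions with $R_{f(A)}=A$ exactly, and this is why maximality enters.

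\textbf{Your route.} You use two links and push all non-uniqueness into auxiliary randomness.
\begin{itemize}
\item Every independent set lies inside some $R_z$. This is your common-extension observation, which is the same construction the paper uses to get $A\subseteq R_z$.
\item In the abstract-to-network direction, the network estimator carries $I$ along and discards everything outside it.
\item In the network-to-abstract direction, you keep $Z$ as side information. So no conditional expectation over $\{z : R_z = S\}$ is needed, and the Jensen step is delegated to the randomization proposition.
\end{itemize}

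\textbf{What each buys.} Your route is shorter and avoids both maximality and injectivity. However, your $f_q$ is the value of a minimax problem over randomized procedures on all independent sets. The paper's chain shows in addition that nothing is lost by restricting to deterministic estimators and designs supported on maximal independent sets. That structural fact is what the paper leans on when motivating its hardness conjectures. You could recover it from your setup with one more Jensen step that sends each $I$ to a maximal superset.

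\textbf{A small simplification.} In the network-to-abstract direction you do not need the equality $\cR^0(G,\ate,q)=\cR(G,\ate,q)$. The induced abstract procedure's worst-case risk equals the network procedure's worst case over the zeroed model. Since that model is contained in $\momodelq$, the trivial inclusion already gives the inequality you need. So the ``obstacle'' about irrelevant outcomes is milder than you anticipate.
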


Proposition~\ref{prop:cg-characterize-minimax} demonstrates that the minimax risk can be expressed as a function of the conflict graph $\cgraph$.
A key aspect of its proof is the translation of the network experiment minimax problem into an equivalent minimax problem defined solely in terms of $\cgraph$ through its independence sets.
The translation relies on the fact that every experimental design can be identified with a probability measure over independent sets in $\cgraph$.
In fact, we show that it suffices to consider probability measures over maximal independent sets in $\cgraph$, corresponding to experimental designs which provide as much information as possible about potential outcomes.
The structure of the maximal independent sets in $\cgraph$ therefore determines the minimax risk $\risk(G, \ate, q)$, albeit in a complex way.
In particular, the function $f_q : \allgraphs \to \Reals_{\geq 0}$ in Proposition~\ref{prop:cg-characterize-minimax} is not explicitly constructed, but rather defined through a minimax problem involving $\cgraph$ alone.
Our primary analysis in this paper therefore may be understood as providing upper and lower bounds on $f_q(\cgraph)$ in terms of more accessible graph quantities.

The conflict graph presented here is slightly different than that considered in \citet{kandiros2024conflict}.
In that paper, the vertices of $\cgraph$ were subjects in the experiment whereas in this paper the vertices of $\cgraph$ are the estimand-relevant exposures.
While the two are related, we prefer to use the definition in this paper as it contains exactly the information to characterize the minimax risk.

	\section{Lower Bounds on the Minimax Risk}\label{sec:lower_bounds}

In this section, we present statistical lower bounds on the minimax risk in terms of the conflict graph.
In Section~\ref{sec:lower_bound_tools}, we begin by demonstrating how Le Cam's method can be adapted from classical minimax theory to the design-based setting.
Next, we derive two types of statistical lower bounds using global and local information from the conflict graph.
These are presented in Sections~\ref{sec:global_lower_bound} and \ref{sec:local_lower_bound}, respectively.

\subsection{Le Cam's Method for Design-Based Inference}\label{sec:lower_bound_tools}

Le Cam's method is one of the most prominent approaches for obtaining statistical lower bounds on the minimax risk \citep{yu1997assouad,duchi2023lecture,wu2017lecture}. 
The method itself proceeds in two steps: the first step is a reduction from estimation to testing and the second step is the construction of a mixture which quantifies the limits of the testing problem.
In this section, we show how the first step can be adapted to the design-based setting.

Le Cam's method requires the construction of probability measures supported on potential outcome functions in the interference model.
To facilitate this, let $\cA$ be the $\sigma$-algebra generated by all finite subsets of the restricted model $\momodelq$, i.e. the countable-cocountable $\sigma$-algebra.
A \emph{distribution} over potential outcome functions is a probability measure on $(\momodelq, \cA)$.

Let $H_0,H_1$ be two distributions over $\momodelq$, supported on submodels $\cM_0,\cM_1 \subseteq \momodelq$ respectively.
In order to quantify the difference between the evaluation of the causal effect on the support of $H_0$ and $H_1$, we define their \emph{effect separation} as
\[
d(H_0,H_1) = \inf_{y \in \cM_0, y' \in \cM_1} \abs{\tau(y) - \tau(y')} \enspace.
\]
For any distribution $H$ over potential outcome functions and design $\design$, let $Q_H^{(\design)}$ be the pushforward measure of the product measure $\design \otimes H$ under the mapping $(z,y) \mapsto (z,y(z))$. In other words, $Q_H^{(\design)}$ is the distribution of the observed data when the potential outcome function is drawn according to $H$ and the intervention is drawn according to $\design$.

\begin{restatable}{proposition}{lecam}
\label{prop:lecam}
For any network $G$, causal effect $\tau$, and moment parameter $q$, the minimax risk is lower bounded as
\[
\risk(G, \ate, q) \geq \inf_\design \sup_{H_0,H_1} \frac{d^2(H_0,H_1)}{8} \cdot \paren[\Bigg]{1 - \tv\paren[\Big]{Q_{H_0}^{(\design)}, Q_{H_1}^{(\design)}}} 
\enspace,
\]
where $\tv$ is the total variation distance and the infimum is taken over all designs $\design$.
\end{restatable}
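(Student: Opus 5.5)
Fix an arbitrary design $\design$ and estimator $\eate$; it suffices to show that for every pair $H_0,H_1$,
\[
\sup_{y \in \momodelq} \Esub{Z\sim\design}{(\eate(Z,y(Z))-\ate(y))^2} \geq \frac{d^2(H_0,H_1)}{8}\paren[\big]{1-\tv(Q_{H_0}^{(\design)},Q_{H_1}^{(\design)})}.
\]
Taking the supremum over $(H_0,H_1)$, then the infimum over $\eate$ (the right side does not depend on $\eate$), then over $\design$, gives the claim. The plan is the classical reduction from estimation to testing, adapted so that the randomness comes from the product $\design\otimes H_k$ rather than from i.i.d.\ sampling.

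\textbf{Step 1 (worst case dominates average).} For $k\in\{0,1\}$, the supremum of the MSE over $\momodelq$ is at least its average under $H_k$:
\[
\sup_y \mathrm{MSE}(\design,\eate,\ate,y) \geq \int \Esub{Z\sim\design}{(\eate(Z,y(Z))-\ate(y))^2}\, dH_k(y).
\]
By Tonelli this equals $\E{(\eate(Z,Y(Z))-\ate(Y))^2}$ with $(Z,Y)\sim\design\otimes H_k$. Measurability needs some care because $\cA$ is the countable–cocountable $\sigma$-algebra. I would handle it by restricting to distributions with countable support, where the integral is simply a countable sum and no measurability issue arises. Alternatively, one can argue that the supremum bound holds with outer integrals, or that the integrand is measurable on the relevant product space. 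I expect this step to be the main technical obstacle.

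\textbf{Step 2 (reduction to a test).} Set $\delta=d(H_0,H_1)$ and assume $\delta>0$. Choose a threshold $c$ separating the sets $\ate(\cM_0)$ and $\ate(\cM_1)$. These sets lie on opposite sides, say $\ate(\cM_0)\le t_0$ and $\ate(\cM_1)\ge t_0+\delta$, up to orientation; this needs a short argument, or one can simply use distances directly. Define the test $\psi(Z,Y(Z))=\indicator{\eate\ge c}$ with $c=t_0+\delta/2$. If $\psi$ errs under $H_k$, then $|\eate-\ate(Y)|\ge\delta/2$. Markov's inequality then gives
\[
\E_{H_k}(\eate-\ate)^2 \ge \frac{\delta^2}{4}\,\Pr_{H_k}(\psi\ne k).
\]
The error probability depends only on the observed data $(Z,Y(Z))$, so $\Pr_{H_k}(\psi\ne k)$ is a probability computed under $Q_{H_k}^{(\design)}$.

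To avoid the orientation argument, I would instead use the cruder bound $|\eate-\ate(y)|+|\eate-\ate(y')|\ge\delta$ for any $y\in\cM_0$ and $y'\in\cM_1$. Let $A=\{|\eate-a|<\delta/2 \text{ for some } a\in\ate(\cM_0)\}$, which is a function of the data. On $A^c$, every $y\in\cM_0$ has error at least $\delta/2$. On $A$, every $y'\in\cM_1$ has error greater than $\delta/2$ by the triangle inequality.

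\textbf{Step 3 (Le Cam).} Averaging the bounds for $k=0$ and $k=1$,
\[
\sup_y \mathrm{MSE} \ge \frac{\delta^2}{8}\paren[\big]{Q_{H_0}(A^c)+Q_{H_1}(A)} \ge \frac{\delta^2}{8}\paren[\big]{1-\tv(Q_{H_0},Q_{H_1})},
\]
using $Q_0(A^c)+Q_1(A)\ge 1-\sup_B|Q_0(B)-Q_1(B)|$. The constant $8$ arises as $\tfrac14\cdot\tfrac12$, which matches the statement. One residual point is that $A$ must be measurable in the data space. I would define $A$ via the countable set $\ate(\cM_0)$, which is countable under the countable-support restriction adopted in Step 1.
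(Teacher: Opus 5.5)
Your proposal is correct and follows essentially the paper's route. There, Lemma~\ref{lem:estimation_to_testing} reduces estimation to a threshold test for finitely supported $H_0,H_1$ (assuming $\sup_{\cM_1}\ate<\inf_{\cM_0}\ate$) and bounds each error probability by Markov's inequality, after which the proof averages the two worst cases and bounds the testing error by $1-\tv$, giving the same constant $\tfrac14\cdot\tfrac12$. Your refinements are harmless and slightly more general.
\begin{itemize}
\item You allow countable rather than finite support.
\item Your set $A$ also covers interleaved effect values, which the paper's ordering assumption excludes. Its measurability is automatic, since $A$ is the preimage under $\eate$ of an open subset of $\Reals$.
\item You fix $\design$ before taking the supremum over $(H_0,H_1)$, which gives the stated $\inf_\design\sup_{H_0,H_1}$ order directly. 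The paper's displayed chain fixes $H_0,H_1$ first and so literally yields only the weaker sup--inf form.
\end{itemize}
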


Proposition~\ref{prop:lecam} provides a lower bound that increases with the effect separation between $H_1$ and $H_0$, but decreases with their total variation distance.
Applying the proposition requires constructing distributions $H_0$ and $H_1$ to trade-off these quantities, which is a challenge shared by all statistical lower bounds of this form.

The aspect of Proposition~\ref{prop:lecam} that is unique to the design-based setting is the outer infimum over experimental designs $\design$, reflecting the fact that the choice of $\design$ is in the researcher's control.
This does not appear in the classical setting, where the data generating process is fixed.
This means that when applying Proposition~\ref{prop:lecam}, we must consider a choice of distributions $H_0$ and $H_1$ for each possible experimental design $\design$.
If we were to pick only choice of $H_0$ and $H_1$, it would need to produce good lower bounds for all possible designs to be informative.
Otherwise, the distributions $H_0$ and $H_1$ must be chosen with respect to the design, which we refer to as a \emph{design-adaptive} construction.
Design-adaptivity plays an important role in the development of our local-information lower bounds, whereas our global-information lower bounds are non-adaptive.

Very recently, \citet{gao2026impossibility} established statistical lower bounds for misspecification testing within the design-based framework.
Their approach may be understood as carrying out the second step of Le Cam's method.
In this work, we focus on minimax optimal rates of estimation and the mixtures we construct are considerably more complex.

\subsection{Lower Bound using Global Information}\label{sec:global_lower_bound}

Before presenting our lower bound, we must first define several graph-theoretic notions.
For the moment, let us consider a generic graph which (in a slight abuse of notation) we denote as $G = (V,E)$.
A subset of vertices $S \subseteq V$ is said to be \emph{independent} if no two vertices in $S$ are adjacent in $G$ and the \emph{independence number} of $G$ is the size of its largest independent set, denoted $\indset(G)$.
Given a subset of vertices $T \subseteq V$, the \emph{$T$-induced subgraph} is $G_T = (V_T, E_T)$ where $V_T = T$ and the edges $E_T$ are those edges in the original graph $G$ between vertices in $T$.
We use $\indset(G, T)$ to denote the independence number of the induced subgraph $G_T$.

The statistical lower bound in this section depends on the induced independence numbers within the conflict graph.
These quantities represent global information about the conflict graph in the sense that they depend on its entire topology and cannot be determined from local information such as degree sequences.
To gain intuition for these results, observe that each intervention $z \in \Omega$ reveals a set of exposures $S \subseteq \cvert$ which, by the definition of conflicts, form an independent set in $\cgraph$.
Thus, if all independent sets in $\cgraph$ are small then each intervention can only provide information on a similarly small number of exposures.
The following theorem formalizes this intuition into a precise statistical lower bound:

\begin{theorem}
\label{thm:global_lower_bound}
Let $G$ be an interference network, $\tau$ be a contrastive effect.
If $n \geq 3$, then for all moment parameters $q \geq 2$
\[
\risk(G,\tau,q) \geq  1.05 \cdot 10^{-5} \cdot \sup_{T \subseteq V_\cH, |T| \geq 5}  \frac{1}{ \indset(\cgraph,T) } \cdot \paren[\Big]{\frac{|T|}{n}}^{2 - \frac{2}{q}} \enspace.
\]
\end{theorem}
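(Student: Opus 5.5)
The plan is to apply Proposition~\ref{prop:lecam} with a \emph{non-adaptive} pair of mixtures. They depend on $T$ but not on $\design$, so the outer infimum over designs costs nothing provided the total variation bound holds uniformly in $\design$.

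\textbf{Reductions and structural fact.} Fix $T \subseteq \cvert$ with $m = |T| \ge 5$ and write $\alpha = \indset(\cgraph,T)$. Split $T$ into treated-type exposures $e_{i,1}$ and control-type exposures $e_{i,0}$, and keep the larger part $T'$. Then $|T'| \ge m/2$ and $\indset(\cgraph,T') \le \alpha$. Perturbing an outcome at a vertex of $T'$ then moves $\ate$ always in the same direction, by $\pm 1/n$ per unit of perturbation. Set every potential outcome outside $T'$ to $0$; this is allowed because estimand-irrelevant outcomes are unrestricted. The structural fact driving the proof is that every intervention $z$ reveals an independent set of $\cgraph$. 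Hence, whatever $\design$ is, the observed data contain the values of at most $\alpha$ of the exposures in $T'$, and the remaining ones are all $0$ and carry no information.

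\textbf{Construction.} For $b \in \{0,1\}$, let $H_b$ assign to each exposure in $T'$ an i.i.d.\ value $\pm c$, taking $+c$ with probability $\tfrac12(1 + (2b-1)\delta)$. Choose $c = (n/|T'|)^{1/q}$ up to a constant so that the $q$th moment constraint holds for every realization. Choose $\delta = \kappa/\sqrt{\alpha}$ with a small constant $\kappa$ (capped at $1/2$).

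Conditional on $Z = z$, the observed data under $H_b$ are $z$ together with at most $\alpha$ i.i.d.\ signed Bernoulli draws. Their law under $H_0$ versus $H_1$ has Hellinger or KL divergence $O(\alpha\delta^2) = O(\kappa^2)$. Since $\design$ is the same under both hypotheses, the joint total variation is an average over $z$ of these conditional distances. This gives $\tv(Q_{H_0}^{(\design)}, Q_{H_1}^{(\design)}) \le 1/2$ for every $\design$.

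The mean gap of $\ate$ between the two hypotheses is of order
\[
\frac{c\,|T'|\,\delta}{n} \asymp \frac{1}{\sqrt{\alpha}}\paren[\Big]{\frac{m}{n}}^{1-\frac1q}.
\]
Squaring this gives exactly the claimed rate.

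\textbf{Obstacle: effect separation.} Proposition~\ref{prop:lecam} requires the infimum of $|\ate(y) - \ate(y')|$ over the supports, not merely a gap in means. I would therefore condition each $H_b$ on the event that $\sum_{T'} y$ lies on its own side of the midpoint, with margin a quarter of the gap. The fluctuation of $\ate$ has standard deviation about $c\sqrt{m}/n$, while the gap is $c\,m\,\delta/n$, so their ratio is $\kappa\sqrt{m/\alpha}$.

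\emph{Regime $m/\alpha \ge C$.} For a large constant $C$, Chebyshev's inequality makes the conditioning events have probability near $1$. Conditioning then changes the total variation by only a small constant, and the argument goes through.

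\emph{Regime $m/\alpha < C$.} Here I would use a different two-point construction that exploits the always-present conflict between $e_{i,1}$ and $e_{i,0}$. Because $m \le C\alpha$, the target rate is, up to constants, of order $m^{1-2/q}\, n^{2/q-2}$. I would hide the sign information in exposures that are unobserved in the realized intervention, placing values of size $(n/m)^{1/q}$ on a set of about $m$ units. The design must leave at least one exposure of each conflicting pair unseen, and I would randomize which side carries the perturbation.

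The balancing of constants between these two regimes is the step most likely to break. The small constant $1.05\cdot 10^{-5}$ in the statement suggests a crude case split of this kind.
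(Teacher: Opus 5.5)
Your first regime is a valid alternative to the paper's construction. The second regime, however, is a genuine gap, not a matter of balancing constants.

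\textbf{Why the i.i.d.\ construction cannot cover it.} An i.i.d.\ prior separates the values of $\tau$ only when $|T'|/\indset(\cH,T')$ exceeds a large constant. This is your own computation: the spread of $\tau$ is of order $c\sqrt{|T'|}/n$, while the gap is $c|T'|\delta/n$ with $\delta\lesssim 1/\sqrt{\indset(\cH,T')}$. There are networks where that ratio is never large. With no interference, $\cH$ is a perfect matching on the $2n$ exposures, so $\indset(\cH,T)\ge |T|/2$ for every $T$. After your split by exposure type, $T'$ is even an independent set. So in that case the entire content of the theorem, the $\Omega(1/n)$ rate obtained from $T=\cvert$, rests on your second construction.

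\textbf{Why the second construction, as sketched, fails.} Two hypotheses with separated values of $\tau$ must differ in the marginal law of some estimand-relevant outcome, because the mean of $\tau$ is linear in those marginals. Suppose the sign of each unit's perturbation encodes the hypothesis and only its side is random. Then, with no interference, the design that treats everyone sees the perturbed value for about half the units and reads off the hypothesis. Suppose instead the signs are random with a small bias. Then you are back to the i.i.d.\ construction at ratio $O(1)$, where $\tau$ does not separate.

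\textbf{The missing idea.} Make $\tau$ constant on each support instead of merely concentrated. The paper draws a uniformly random $S\subseteq T$ of fixed size $p|T|$. It puts $c=(n/|T|)^{1/q}$ on $e_{i,1}\in S$ and $-c$ on $e_{i,0}\in S$, which also makes your split by type unnecessary. Then $\tau=p(|T|/n)^{1-1/q}$ exactly, so the separation is exact and no conditioning is needed. Given $z$, the observation is in bijection with the hypergeometric sample $S\cap R_z\cap T$. Comparing likelihood ratios for $p=1/2$ and $p=1/2+\delta$ gives $\tv\le 7/8$ whenever $|R_z\cap T|\le |T|/2$ and $\delta\le 0.026/\sqrt{|R_z\cap T|}$. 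So the threshold ratio is $2$, not a large constant.

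The case $\indset(\cH,T)>|T|/2$ then needs no new construction. Since $e_{i,0}$ and $e_{i,1}$ always conflict, $\indset(\cH,\cvert)\le n=|\cvert|/2$, so $T=\cvert$ yields $\risk\ge 1.05\cdot10^{-5}\cdot 2^{2-2/q}/n$. For every $T$ with $\indset(\cH,T)>|T|/2$, this dominates the target, because $\indset(\cH,T)^{-1}(|T|/n)^{2-2/q}\le 2|T|^{1-2/q}/n^{2-2/q}\le 2^{2-2/q}/n$ using $|T|\le 2n$. Your type split discards exactly these conflicts.

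\textbf{An alternative repair within your framework.} You can keep your first regime and replace the second by a Bayes-risk bound. Under a mean-zero i.i.d.\ $\pm c$ prior on $T$, the outcomes on $T\setminus R_z$ are independent of the data. Hence every procedure has risk at least $(|T|-\indset(\cH,T))\,c^2/n^2$. This matches the target up to constants whenever $\indset(\cH,T)\le |T|/2$ and $|T|/\indset(\cH,T)$ is bounded, and $T=\cvert$ again covers the rest.
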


For a fixed subset of exposures $T$ in the conflict graph $\cH$, the lower bound in Theorem~\ref{thm:global_lower_bound} has two parts: the induced independence number $\indset(\cgraph,T)$ and the ratio of $(T / n)^{2 - 2/q}$.
Roughly speaking, the former arises as an effective sample size while the latter reflects the amount of mass that can be ``hidden'' depending on the moment $q$ that is being restricted.
The intuition behind these quantities is that if the induced independence number is large, we can construct a mixture that hides mass among more potential outcomes.
The overall lower bound is obtained by taking the supremum over sufficiently large subsets of exposures $T$.
We have not made an attempt to optimize the constant appearing in Theorem~\ref{thm:global_lower_bound}.

In typical applications of Theorem~\ref{thm:global_lower_bound}, it suffices to consider $T$ which are of order $\bigOmega{n}$.
In general, it is always possible to set $T = \cvert$ in which case the lower bound becomes $1 / \indset(\cgraph)$.
In the specific case of the direct effect, the conflict graph $\cgraph$ contains $G$ as an induced subgraph of size $n$, so that by setting $T = \setb{ e_{i,1} : i \in [n] }$, we obtain that $\indset(\cgraph,T) = \indset(G)$.
On the other hand, there are cases where the subset $T$ that maximizes the lower bound contains a vanishingly small fraction of vertices in $\cgraph$.
For example, consider a conflict graph $\cH$ that is the union of a clique $R$ of size $k_n$ and $2n - k_n$ isolated nodes, where $k_n = \littleO{n}$.
In this case, the optimal choice is to set $T = R$ in which case $\indset(\cgraph,T)= 1$ and the resulting lower bound is $\bigOmega{k_n / n}$ for $q=2$ and $\bigOmega{(k_n/n)^2}$ for $q=\infty$, both of which can be shown to be tight up to constant factors.
In general, the lower bound increases as $q$ decreases.

As stated above, the main idea for proving Theorem~\ref{thm:global_lower_bound} comes from the relationship between independent sets in $\cgraph$ and sets of exposures which can be simultaneously observed.
In the remainder of this section, we provide a proof sketch together with the necessary ingredients.
For a simplified proof sketch, we consider the case where $T = \cvert$ which yields the weaker $\bigOmega{1/|\cI(\cH)|}$ bound.
This simplification highlights the key features of the proof while avoiding some technical complications.
For the purpose of notational simplicity (and without loss of generality), we consider the casual estimand $\ate = (1/n) \sum_{i=1}^n \braces{ y_i(e_{i,1}) + y_i(e_{i,0}) }$ where the contrast is now a sum.

To apply Le Cam's method, we will construct distributions $H_1$ and $H_0$ over potential outcome functions in the restricted ANI model.
To this end, we consider a class of distributions $H(p)$ on the potential outcome functions which are defined by a single parameter $p \in (0,1)$.
Under $H(p)$, we select a subset $S$ of the $2n$ exposures uniformly at random among all subsets of size $\lceil 2pn \rceil$ and assign outcome $1$ to exposures in $S$ and outcome $0$ to exposures not in $S$.
For a distribution $H$ over outcome functions and intervention $z \in \Omega$, let $Q_{H}(\cdot | z)$ be the conditional distribution of the observed outcomes conditioned on the intervention being equal to $z$. 
Note that this distribution does not depend on the design $\design$ at all, since we are fixing the assignment $z$. 
For any $z \in \setb{0,1}^n$, let $R_z$ be the subset of the $2n$ exposures that we get to observe when the treatment assignment is $z$. 

We will consider mixtures of the form $H_0 = H(1/2)$ and $H_1 = H(1/2 + \delta)$ where the mixture parameter $\delta >0$  controls the separation between the two distributions.
We provide an auxiliary lemma about distinguishing $H_0$ and $H_1$ from the observed outcomes.

\begin{restatable}{lemma}{tvbound}\label{lem:tv_bound}
	Suppose that $n \geq 3$ and fix an intervention $z \in \Omega$.
	If the mixture parameter satisfies $\delta \leq 0.026 / \sqrt{|R_z|}$, then the total variation distance between the conditional observed distributions is bounded as 
$
d_{\mathrm{TV}}(Q_{H_0}^{(z)}, Q_{H_1}^{(z)}) \leq \frac{7}{8}
$.
\end{restatable}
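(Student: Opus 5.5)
The plan is to reduce the comparison of the two conditional observed distributions to a comparison of two hypergeometric laws, and then bound that total variation distance by a telescoping argument over the size of the random subset.

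\textbf{Step 1 (reduction to a hypergeometric count).} Fix $z$ and write $r = |R_z|$, $N = 2n$, $m_0 = \lceil n \rceil = n$ and $m_1 = \lceil 2(1/2+\delta)n \rceil$. Under $H(p)$, what we observe at $z$ is the indicator vector of $S \cap R_z$ on the coordinates $R_z$, where $S$ is a uniform $m$-subset of the $N$ exposures. By exchangeability, conditionally on $K = |S \cap R_z|$ the set $S \cap R_z$ is a uniform $K$-subset of $R_z$, and this conditional law does not depend on $m$. Hence $K$ is a sufficient statistic, and
\[
\tv\paren[\big]{Q_{H_0}^{(z)}, Q_{H_1}^{(z)}} = \tv\paren[\big]{\mathrm{Hyp}(N,m_0,r),\, \mathrm{Hyp}(N,m_1,r)} .
\]
Next I use the conflict structure. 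The exposures $e_{i,1}$ and $e_{i,0}$ can never be observed together, since $e_{i,1}\neq e_{i,0}$ and both fix treatments on $\extneighi$. So $R_z$ contains at most one exposure per unit, giving $r \leq n = N/2$. This matters: the finite-population correction $(N-r)/(N-1)$ is then at least about $1/2$, so $K$ keeps variance of order $r$. Without this, if $r = N$ the two laws would be point masses at different values.

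\textbf{Step 2 (one-step bound).} By the triangle inequality, $\tv(P_{m_0},P_{m_1}) \leq \sum_{m=m_0}^{m_1-1} \tv(P_m,P_{m+1})$, where $P_m = \mathrm{Hyp}(N,m,r)$. For a single step I write $\tv(P_m,P_{m+1}) = \tfrac12 \E_{P_m}\abs{L_m(K)-1}$ with the explicit likelihood ratio
\[
L_m(k) = \frac{P_{m+1}(k)}{P_m(k)} = \frac{(m+1)(N-m-r+k)}{(m+1-k)(N-m)} .
\]
Expanding $L_m(k)-1$ gives a quantity of order $(kN - rm)/\paren{m(N-m)}$, up to a lower-order correction I must control exactly (not just linearize). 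I then apply Cauchy--Schwarz, $\E_{P_m}\abs{L_m-1} \leq \sqrt{\E_{P_m}(L_m-1)^2}$, using $\E K = rm/N$ and $\Var K \leq r(m/N)(1-m/N)$. For $m$ within $O(\delta n)$ of $N/2$ this yields a per-step bound of roughly $c\sqrt{r}/N$ with $c \approx 2$. A cleaner alternative is to compute the chi-square $\E_{P_m}(L_m-1)^2$ in closed form via factorial moments of the hypergeometric, and I would try that first.

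\textbf{Step 3 (summing and constants).} The number of steps is $m_1 - m_0 \leq 2\delta n + 1$. The total is therefore at most about $(2\delta n+1)\cdot\sqrt{r}/(2n)\cdot c'$, which splits into a main term of order $\delta\sqrt r \leq 0.026\, c'$ and a rounding term of order $\sqrt r / n \leq 1/\sqrt n$. The latter is at most $1/\sqrt3 \approx 0.58$ at worst for $n \geq 3$. I must make sure the sum of both terms stays below $7/8$, using the factor $\tfrac12$ in the definition of total variation and the fact that the one-step TV is itself at most $1$.

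I expect Step 3, specifically the rounding step at small $n$, to be the main obstacle. For small $r$ I can instead bound the TV directly, or use that for $r \leq 1$ the laws are Bernoulli with parameters $m_0/N$ and $m_1/N$. Everything else is a routine, if fiddly, control of the exact likelihood ratio in Step 2.
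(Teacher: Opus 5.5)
Your proposal is sound, and it takes a different route from the paper.

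\textbf{The paper's route.} The paper makes the same reduction you do: the probability that $S\cap R_z=A$ depends on $A$ only through $k=|A|$. It then compares the subset sizes $t/2$ and $t(1/2+\delta)$ in one shot, with $t=2n$ and $s=|R_z|\le n$. It writes the likelihood ratio as $\prod_{l<s-k}\bigl(1-\frac{2\delta t}{t-2l}\bigr)\prod_{l<k}\bigl(1+\frac{2\delta t}{t-2l}\bigr)$. Using $1+x=e^{x+b(x)}$, it shows this ratio lies within $1/2$ of $1$ only on the window $|k-s/2|\le\sqrt{s/2}$; the constant $0.026$ is the tightest of three constraints evaluated at $t=5$. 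Chebyshev then gives that window $H_0$-mass at least $1/2$, and a crude split yields $d_{\mathrm{TV}}\le 1-\frac12\cdot\frac14=\frac78$. The paper never has to control the ratio in the tails.

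\textbf{What your route buys.} Telescoping through intermediate sizes with a per-step $\chi^2$ bound gives a quantitative bound of order $\delta\sqrt r+\sqrt r/n$, far below $7/8$. For example, $n=r=3$ gives exactly $0.3$, and $n=r=10$ gives about $0.17$; in both cases $m_1=m_0+1$. Your route also treats the ceiling in $m_1$ explicitly, whereas the paper simply assumes $\delta|T|$ is an integer.

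\textbf{The cost, in Step 2.} The exact ratio is $L_m(k)-1=\frac{(N+1)k-(m+1)r}{(m+1-k)(N-m)}$. When $r\approx n\approx m$, the factor $(m+1-k)^{-1}$ is of order $1$ near $k=r$, not of order $2/N$. So the linearization is not uniform, and Cauchy--Schwarz needs a typical/tail split, which is essentially the paper's window idea again. For the same reason, $\mathbb{E}_{P_m}(L_m-1)^2$ is not a polynomial moment of $K$, so the factorial-moment closed form you planned to try first does not go through directly.

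\textbf{A cleaner Step 2.} $L_m$ is increasing in $k$, so $d_{\mathrm{TV}}(P_m,P_{m+1})=P_{m+1}(K>k^*)-P_m(K>k^*)$ with $k^*=(m+1)r/(N+1)$. Couple the two sizes by $S_{m+1}=S_m\cup\{x\}$, with $x$ uniform outside $S_m$. This gives exactly $d_{\mathrm{TV}}(P_m,P_{m+1})=P_m(K=\kappa)\,\frac{r-\kappa}{N-m}$, where $\kappa=\lfloor k^*\rfloor$. Everything then reduces to bounding hypergeometric point probabilities, and the slack in the constants is ample.

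\textbf{A tacit step to make explicit.} Your identity $d_{\mathrm{TV}}=\frac12\mathbb{E}_{P_m}|L_m-1|$ needs $\mathrm{supp}\,P_{m+1}\subseteq\mathrm{supp}\,P_m$. This holds because $r\le n\le m$, a second place where the bound $|R_z|\le n$ is used.
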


Lemma~\ref{lem:tv_bound} quantifies how large the mixture parameter $\delta$ can be so that the observed outcomes under the two mixtures $H_0$ and $H_1$, conditioned on the intervention $z$, are hard to distinguish.
Together with an application of Le Cam's method as stated in Proposition~\ref{prop:lecam}, we are ready to provide a proof sketch of Theorem~\ref{thm:global_lower_bound}.

\begin{proof}[Proof Sketch of Theorem~\ref{thm:global_lower_bound}]
Let $H_0 = H(1/2)$ and $H_1 = H(1/2 + \delta)$ where $\delta = 0.026/\sqrt{|\cI(\cH)|}$. We will show that these two distributions satisfy the conditions of Proposition~\ref{prop:lecam}, which will then imply the desired lower bound.
Note that the choice of $H_0,H_1$ is independent of the design $\design$.

First, by definition of $H_0,H_1$ we have $d(H_0,H_1) = 2\delta = 0.052/\sqrt{|\cI(\cH)|}$. 
Second, by definition of the observed outcome distributions and the law of total probability, we have that for any design $\design$ 
\begin{align}
    d_{\mathrm{TV}}(Q_{H_0}^{(\design)}, Q_{H_1}^{(\design)}) 
    &= \sum_{z \in \setb{0,1}^n} \design(z) \cdot d_{\mathrm{TV}}(Q_{H_0}(\cdot|z), Q_{H_1}(\cdot|z)) \label{eq:tv_conditional} \enspace.
\end{align}

Therefore, if we can show that $Q_{H_0}(\cdot|z), Q_{H_1}(\cdot|z)$ are close for any value of $z$, then the same would be true for $Q_{H_0}^{(\design)}, Q_{H_1}^{(\design)}$ as well, for any design $\design$.
For a fixed $z$, the set $R_z$ is an independent set in $\cH$. Indeed, if two exposures in $R_z$ were connected by an edge in $\cH$, then by definition of the conflict graph we would not be able to observe both with the same assignment $z$. Thus, we have that $|R_z| \leq |\cI(\cH)|$, which implies that 
\[
\delta = \frac{0.026}{\sqrt{|\cI(\cH)|}} \leq \frac{0.026}{\sqrt{|R_z|}} \enspace.
\]
Thus, by Lemma~\ref{lem:tv_bound}, we have that $d_{\mathrm{TV}}(Q_{H_0}(\cdot|z), Q_{H_1}(\cdot|z)) \leq 7/8$ for all $z$, which implies that $d_{\mathrm{TV}}(Q_{H_0}^{(\design)}, Q_{H_1}^{(\design)}) \leq 7/8$ for all designs $\design$ by \eqref{eq:tv_conditional}. Therefore, using Proposition~\ref{prop:lecam}, we can lower bound the minimax risk as
\[
\cR(G,\tau,q,C) \geq \frac{d(H_0,H_1)^2}{8}  \cdot \inf_\design \paren[\Big]{1 - \tv\paren[\big]{Q_{H_0}^{(\design)}, Q_{H_1}^{(\design)}}} \geq \frac{0.052^2}{8 \cdot |\cI(\cH)|} \cdot (1 - 7/8) = \bigOmega[\Bigg]{\frac{1}{|\cI(\cH)|}} \enspace.
\qedhere
\]
\end{proof}

\subsection{Lower Bound using Local Information}\label{sec:local_lower_bound}

In this section, we provide a statistical lower bound on the minimax risk in terms of the degrees of the conflict graph $\cgraph$.
In contrast to global properties like induced independent sets, the degrees of the $\cgraph$ describe local connectivity information.
The lower bound presented in this section is complimentary to the one in the previous section in the sense that there are situations where one or the other is more informative.

For a vertex $e_{i,k} \in \cvert$ in the conflict graph $\cgraph$, its \emph{degree} $d_{i,k}$ is the number of vertices to which it is connected.
For a subset of vertices $T$, we denote the minimum degree in $T$ as $\dmin(\cgraph, T) = \min_{e_{i,k} \in T} d_{i,k}$.
Finally, we define the \emph{generalized harmonic mean degree} of vertices in $T$ as $\dhar(\cgraph, T,r) = \abs{T} / \braces{ \sum_{e_{i,k} \in T} 1 / d_{i,k}^r }^{1/r}$.
Note that for $r = 1$, the generalized harmonic mean reduces to the usual harmonic mean.

The minimax rate will increase as the conflict graph becomes more densely connected and the degrees of the vertices in $\cgraph$ provide local information regarding its connectivity. 
Thus, it seems natural that the the minimax rate can be lower bounded in terms of the degrees.
This can be formalized thru the following theorem, whose proof appears in Section~\ref{sec:local_lower_bound_appendix} of the appendix.

\begin{restatable}{theorem}{localLowerBoundGeneral}\label{thm:local_lower_bound_general}
	Let $G$ be an interference network, $\tau$ be a contrastive effect.
	If $n \geq 5$ then for all moment parameters $q \geq 2$,
\[
\risk(G,\tau,q) \geq \frac{\localconst}{n^{2 - \frac{2}{q}}} \sup_{T \subseteq \cvert, |T| \geq 5} 
\min\paren[\Big]{\dmin(\cgraph, T), \sqrt{ \dhar(\cgraph,T,2-2/q) }, \abs{T} }^{2 - \frac{2}{q}}
\enspace.
\]
\end{restatable}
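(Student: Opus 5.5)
Fix a subset $T \subseteq \cvert$ with $|T| \geq 5$ and set $m = \min\paren{\dmin(\cgraph,T), \sqrt{\dhar(\cgraph,T,r)}, |T|}$ with $r = 2 - 2/q$. The plan is to apply Proposition~\ref{prop:lecam} with a \emph{design-adaptive} pair $(H_0,H_1)$. We aim for effect separation of order $(m/n)^{1-1/q}$ and total variation at most a constant bounded away from one. Squaring the separation then gives the claimed $m^{2-2/q}/n^{2-2/q}$ rate, and taking the supremum over $T$ finishes the proof.

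\textbf{Construction.} Let $H_0$ be the point mass at $y \equiv 0$ on all estimand-relevant exposures. Given a design $\design$, let $p_v = \Pr{v \in R_Z}$ be the marginal probability that exposure $v \in T$ is observed. $H_1$ chooses a random set $S$ of about $k \asymp m$ exposures from a low-observation subset $L \subseteq T$ and assigns each exposure in $S$ a spike. With a common spike value $(n/k)^{1/q}$, the moment constraint in $\momodelq$ holds, since $\frac{1}{n}\sum |y|^q \leq 1$. The contribution to $\ate$ is then $(k/n)^{1-1/q}$, with a sign chosen so that the $\pm$ contrast does not cancel: all spikes are placed on exposures of the same index $k$, or the sum form of the estimand is used as in the sketch of Theorem~\ref{thm:global_lower_bound}. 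The support of $H_1$ is finite, so measurability is trivial.

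\textbf{TV bound.} Because $H_0$ is a point mass, the observed data under $H_0$ and $H_1$ coincide unless some spiked exposure is observed. Hence
\[
\tv\paren{Q_{H_0}^{(\design)},Q_{H_1}^{(\design)}} \leq \Esub{S}{\sum_{v\in S} p_v} = \frac{k}{|L|}\sum_{v \in L} p_v .
\]
It therefore suffices to find $L$ with average observation probability at most $1/(2k)$.

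\textbf{Observation probabilities (main obstacle).} Two facts about any design are available. First, for every conflict edge $(u,v)$ the two exposures are never observed together, so $p_u + p_v \leq 1$. More usefully, for a vertex $v$ with closed neighborhood $N[v]$, the event $\{v \in R_Z\}$ excludes every neighbor, so $R_Z \cap N[v]$ is either $\{v\}$ or contained in $N(v)$. Second, each $R_z \cap T$ is an independent set.

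I would try a Caro--Wei style double counting. For each realized $z$ and each observed $v \in R_z \cap T$, charge $v$ to its $d_v \geq \dmin$ neighbors, which are unobserved. Summing over the distribution of $Z$ bounds $\sum_v p_v d_v$ against the expected number of unobserved-neighbor incidences. This should show that the set of vertices with $p_v \gtrsim 1/d_v$ cannot carry most of the mass, so that $L$ can be taken to be vertices with $p_v \lesssim 1/d_v$.

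The case $\dmin$ large but the harmonic mean small is where the exponent $r$ enters. There I would replace the common spike by degree-dependent spike sizes $a_v \propto d_v^{1/q}$ (or a similar power). The moment budget then reads $\sum_{v\in S} a_v^q \leq n$, and optimizing the weights under the TV constraint $\sum_{v\in S} p_v \lesssim \sum_{v\in S} 1/d_v$ produces exactly the generalized harmonic mean $\dhar(\cgraph,T,r)$, via Hölder with conjugate exponents tied to $q$.

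The square root on $\dhar$ suggests that the crude union bound may be too lossy. If so, I would instead randomize the spike signs, or use a second-moment ($\chi^2$) bound on $Q_{H_1}$ versus $Q_{H_0}$, so that only pairs of spiked exposures observed together contribute. The independence of $R_z$ controls those pairs and should yield the $\sqrt{\cdot}$ scaling.

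The cap by $|T|$ simply reflects that $k \leq |T|$. The constant $\localconst$ and the condition $n \geq 5$ would come from rounding $k$ to an integer and from fixing the TV at a value such as $1/2$.
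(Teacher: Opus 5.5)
Your proposal has a genuine gap exactly at the step you flag as the main obstacle, and a second gap in the small-probability regime.

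The reduction to finding $L\subseteq T$ with $|L|\gtrsim m$ and average observation probability at most $1/(2k)$ cannot be carried out for every design. For the GATE, the design that draws $Z\in\{\mathbf{0},\mathbf{1}\}$ uniformly observes every exposure with probability exactly $1/2$. So no such $L$ exists once $k\geq 2$, and your union bound is vacuous. Caro--Wei double counting does not rescue this. Because each $R_z$ is independent in $\cgraph$, it only gives $\sum_v p_v d_v\leq|\cedges|$, and $p_v\equiv 1/2$ satisfies this with equality. Hence ``$p_v\lesssim 1/d_v$ for most $v$'' is false, and the H\"older optimization of spike heights built on $\sum_{v\in S}p_v\lesssim\sum_{v\in S}1/d_v$ has nothing to stand on. The missing idea is a design-adaptive dichotomy that uses the fact you noted (observing $v$ excludes all of $N(v)$) as a construction rather than as a counting device. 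If some $e\in T$ has $p_e\geq c_e$, take $H_0,H_1$ to be point masses that differ only on $r_e\leq d_e$ conflict neighbours of $e$, with values $\pm(n/r_e)^{1/q}$ and signs arranged so the contrast adds. They are indistinguishable whenever $e$ is observed, so $\tv\leq 1-c_e$, while the separation is $2(r_e/n)^{1-1/q}$. This is Case~1 of Lemma~\ref{lem:lower_bound_tunable}.

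In the complementary case ($p_e<c_e$ for all $e\in T$), taking $H_0$ to be the point mass at $0$ is too weak once $q>2$. The total variation then equals the probability that some spike is observed. Your union bound therefore affords only $k\lesssim|T|/\sum_{T}c_e$ spikes, giving risk of order $\bigl(|T|/(n\sum_T c_e)\bigr)^{2-2/q}$. The paper proceeds differently in two steps:
\begin{itemize}
\item Markov's inequality gives $|R_Z\cap T|\leq 2\sum_T c_e$ with probability at least $1/2$.
\item It then compares the balanced mixtures $H(1/2,T)$ and $H(1/2+\delta,T)$, with $\delta\asymp(\sum_T c_e)^{-1/2}$. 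In these, a $1/2$ (resp.\ $1/2+\delta$) fraction of $T$ carries spikes of size $(n/|T|)^{1/q}$. Lemma~\ref{lem:tv_bound_general}, a hypergeometric likelihood-ratio computation, keeps the total variation below $7/8$.
\end{itemize}
The resulting risk is of order $(|T|/n)^{2-2/q}/\sum_T c_e$. The two bounds agree at $q=2$ but not beyond. Take $q=\infty$, a $d$-regular $\cgraph$, $T=\cvert$ and $d\lesssim\sqrt{n}$: the theorem asserts order $d^2/n^2$. Balancing the neighbour-hiding bound $c\,(d/n)^2$ against your $(cn)^{-2}$ gives only $d^{4/3}/n^2$. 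Random signs or a $\chi^2$ bound cannot fix this while $H_0$ is a point mass; both hypotheses must be nondegenerate mixtures. Finally, the term $\sqrt{\dhar(\cgraph,T,2-2/q)}$ does not come from optimizing spike heights. It enters through the thresholds $c_e=t^*(n/r_e)^{2-2/q}$ chosen to balance the two cases, with $r_e\leq d_e$ lowered toward $|T|$ when $\dmin(\cgraph,T)\geq|T|$.
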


For a fixed subset of vertices $T$ in the conflict graph, the lower bound in Theorem~\ref{thm:local_lower_bound_general} involves the minimum of three terms: the minimum degree in $T$, the generalized harmonic mean degree in $T$, and the size of $T$.
In order for the lower bound to be large, there must exist a subset $T$ for which all three of these quantities are large.
In particular, the second term indicates that we must choose $T$ such that its (generalized harmonic) average degree is large.
At this point, the first and third terms are in tension: we need to select a large number of vertices $T$, but we do not want the minimum degree to be small.
This means the supremum will typically not be achieved by the entire subset of vertices $T = \cvert$ because the resulting lower bound will be (at most) the minimum degree in $\cgraph$, which could be quite small, i.e. constant order.
There is less tension between these three terms when many exposures have approximately the same degree in the conflict graph.

The intuition that the ``bulk'' of the degrees in the conflict graph governs the minimax risk can be formalized using the critical degree, which is a quantile of the degree distribution.
More precisely, we define the \emph{degree distribution} of $\cgraph$ as $F_{\cgraph} : \Reals \to [0,1]$ where
\[
F_{\cgraph}(t) = \frac{1}{2n} \sum_{e_{i,k} \in \cvert} \indicator{ d_{i,k} \leq t } \enspace.
\]
From the degree distribution, we can define for any $q \geq 2$ the \emph{critical degree} $d^*_q(\cH)$ of $\cH$ as the largest integer $k$ such that there are at least $k^{\frac{2q-2}{3q-2}}$ nodes in $\cH$ with degree at least $k$, i.e. 
\[
d^*_q(\cgraph) = \sup\setb[\Bigg]{k \in [n] : 1 - F_\cH(k) \geq \frac{ k^{\frac{2q-2}{3q-2}} }{n}} \enspace. 
\]
The supremum can be understood as follows: keep searching for vertices with degree at least $k$ and we stop the first time there are fewer than $k^{\frac{2q-2}{3q-2}}$ of them.
In this way, the critical degree describes a way to resolve the trade-off between finding a subset of vertices that is large in size but also has a large (generalized harmonic) mean of degrees, which is a central tension in the lower bound of Theorem~\ref{thm:local_lower_bound_general}.
Note that the critical degree is decreasing as $q$ increases.

The following corollary of Theorem~\ref{thm:local_lower_bound_general} provides a lower bound based on the critical degree of $\cH$, which is the bound mentioned in the Main Result of Section~\ref{sec:intro}.

\begin{restatable}{corollary}{localLowerBoundCor}\label{cor:local_lower_bound_general}
    Let $G$ be an interference network, $\tau$ be a contrastive effect.
    For all moment parameters $q \geq 2$, we have
\[
\cR(G,\tau,q) \geq \localconst  \cdot \frac{(d^*_q(\cH))^{\frac{4(q-1)^2}{q(3q - 2)}}}{n^{2 - \frac{2}{q}}} \enspace.
\]
\end{restatable}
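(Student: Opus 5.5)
The plan is to apply Theorem~\ref{thm:local_lower_bound_general} with a single well-chosen $T$: a set of high-degree vertices whose size matches the threshold in the definition of the critical degree. Write $k = d^*_q(\cgraph)$, $r = 2 - 2/q$, and $a = \frac{2q-2}{3q-2}$.

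By definition of $k$, at least $2n(1-F_\cgraph(k)) \geq k^a$ vertices of $\cgraph$ have degree at least $k$. (The normalization is $1/(2n)$ against a threshold of $k^a/n$, so we even get $2k^a$, which only helps.) Since the number of such vertices is an integer, I would let $T$ consist of exactly $m = \lceil k^a \rceil$ of them, so that $k^a \leq m \leq 2k^a$.

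Next I evaluate the three terms in the theorem for this $T$.
\begin{itemize}
\item $\dmin(\cgraph,T) \geq k$.
\item Every $d_{i,k} \geq k$, so $\sum_{e_{i,k}\in T} d_{i,k}^{-r} \leq m k^{-r}$. Hence
\[
\dhar(\cgraph,T,r) \geq m / (m^{1/r} k^{-1}) = k\, m^{1-1/r},
\qquad 1 - 1/r = \frac{q-2}{2q-2}.
\]
Using $m \geq k^a$ gives $\dhar \geq k^{1 + a(q-2)/(2q-2)} = k^{1 + (q-2)/(3q-2)} = k^{(4q-4)/(3q-2)}$, so $\sqrt{\dhar(\cgraph,T,r)} \geq k^{a}$.
\item $|T| = m \geq k^a$.
\end{itemize}
Because $a \leq 1$, the minimum of the three terms is at least $k^a$. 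Raising to the power $r = 2(q-1)/q$ gives $k^{a \cdot 2(q-1)/q} = k^{4(q-1)^2/(q(3q-2))}$. This is exactly the claimed exponent, with constant $\localconst$ inherited from the theorem.

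The main obstacle is the side conditions of Theorem~\ref{thm:local_lower_bound_general}, namely $|T| \geq 5$ and $n \geq 5$, which the corollary does not assume.
\begin{itemize}
\item If $m \geq 5$, the argument above applies directly.
\item If $k^a < 5$, the target bound is only a constant times $n^{-(2-2/q)}$. For this case I would use Theorem~\ref{thm:global_lower_bound} instead, with any $T$ of $5$ vertices containing a conflicting pair $e_{i,1}, e_{i,0}$; these are always adjacent since $e_{i,1} \neq e_{i,0}$. Then $\indset(\cgraph,T) \leq 4$, giving a bound of order $(5/n)^{2-2/q}$. This dominates the target because $k^{4(q-1)^2/(q(3q-2))}$ is bounded by a constant when $k^a < 5$.
\item For tiny $n$ (and to keep the constant uniform), I would instead fall back on a direct two-point Le Cam argument via Proposition~\ref{prop:lecam}. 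It perturbs a single unit's outcome $y_i(e_{i,1})$ by $n^{1/q}$, the largest change allowed by the moment constraint. This changes $\ate$ by $n^{1/q-1}$, and with probability at least $1/2$ under any design the perturbed exposure is unobserved, which yields risk $\gtrsim n^{-(2-2/q)}$.
\end{itemize}
Checking that these fallback constants are at least $\localconst$ times the small-$k$ target is routine but is where care is needed.
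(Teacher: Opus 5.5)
Your main argument is the paper's proof. Take $T$ to be at least $(d^*_q(\cH))^{\frac{2q-2}{3q-2}}$ exposures of degree at least $d^*_q(\cH)$, bound $\sum_{e_{i,k}\in T} d_{i,k}^{-(2-2/q)} \le |T|\,(d^*_q(\cH))^{-(2-2/q)}$, and check that all three terms in Theorem~\ref{thm:local_lower_bound_general} are at least $(d^*_q(\cH))^{\frac{2q-2}{3q-2}}$ before raising to the power $2-2/q$.

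The paper's proof silently ignores the side conditions $|T|\ge 5$ and $n\ge 5$, so your fallbacks go beyond it. The one correction: in the single-unit Le Cam argument, perturb whichever of $e_{i,1}, e_{i,0}$ the design realizes with probability at most $1/2$. Such an exposure always exists because the two events are disjoint, whereas a fixed $e_{i,1}$ could be observed almost surely.
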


The critical degree provides a resolution of the tension between the three terms of Theorem~\ref{thm:local_lower_bound_general} by subsets $T$ consisting of nodes with the largest degree.
This simplifies the lower bound, but hides the fundamental tension.
To understand the implications of Corollary~\ref{cor:local_lower_bound_general}, it is informative to consider the lower bound for $d$-regular conflict graphs, for which the critical degree is $d^*_q(\cgraph) = d$.
Under $q=2$ moment restrictions, the minimax lower bound in Corollary~\ref{cor:local_lower_bound_general} is of rate $d^{1/2} / n$.
Under more restrictive $q = \infty$ conditions, the lower bound becomes $d^{4/3} / n^2$.
As anticipated from monotonicity of the minimax risk as a function of $q$, we have that $d^{4/3} / n^2 \leq d^{1/2} / n$ because $d \leq n$.

If the average degree in the conflict graph is large, then a standard argument using Markov's inequality implies that there should be a sufficiently large subset of vertices with sufficiently large degree. Indeed, we can show that the critical degree is always lower bounded by the average degree in the conflict graph (see Proposition~\ref{prop:critical_to_avg} in Section~\ref{sec:local_lower_bound_appendix} of the appendix).

We now present a proof sketch for Theorem~\ref{thm:local_lower_bound_general}.
For simplicity, we focus on second moment restrictions ($q=2$) and make the simplifying assumption that all degrees in the conflict graph $\cH$ are lower bounded by $d$, in which case the lower bound will be $\sqrt{d}/n$, as realized by $T = [n]$. 
This case captures the essential obstacles for establishing the general claim, while avoiding some technical details of the argument.
A notable difference of this proof compared to the global lower bound of Section~\ref{sec:global_lower_bound} is that the mixture distributions $H_0$ and $H_1$ will now be selected \emph{adaptively} depending on the experimental design $\design$ under consideration. 

\begin{proof}[Proof Sketch of Theorem~\ref{thm:local_lower_bound_general}]
Assume $d_i \geq d$ for all $i \in V_\cH$. 
For any exposure $e=e_{i,k} \in V(\cH)$, associated with a unit $i$ in the graph $G$, let $N(e)$ be the set of neighboring exposures in $\cH$ and let $A_e$ be the event that we observe the outcome of $i$ for exposure $e$, i.e.
\[
A_e = \setb[\Big]{z \in \setb{0,1}^n: h_i(z) = e} \enspace.
\]
Let $R_z$ denote the set of exposures whose outcomes we observe under $z$.
By definition, we have
\[
|R_z| = \sum_{e \in V(\cH)} \indicator{A_e}(z) \enspace.
\]
Consider an arbitrary design $\design$ and let $p \in (0,1/2)$ be a parameter that will depend on $\design$, but will be chosen later.
Our construction will be design-adaptive, depending on the size of exposure probabilities.
If at least one exposure has large probability, we can exploit that exposure to hide a large causal effect in its conflicting exposures which are unlikely to be observed.
If all exposure probabilities are small, then it is unlikely that we will observe many potential outcomes at all, in which case we can hide the causal effect among these outcomes.
The parameter $p$ represents what is to be considered small and large exposure probabilities and is chosen at the end to optimize the resulting lower bound.
Let us now formally distinguish between these two cases.

\begin{itemize}
    \item \textbf{Case 1:} Suppose there exists an exposure $e \in V(\cH)$ such that $\design(E_e) \geq p$. In that case, we define $H_0$ and $H_1$ to be supported on potential outcome functions $y^{(0)},y^{(1)}$ respectively, where
    \[
    y_i^{(1)}(e_{i,k}) = \begin{cases}
        \sqrt{\frac{n}{|N(e)|}} & \text{if } e_{i,k} \in N(e) \\
        0 & \text{otherwise}
    \end{cases}
    \quad , \quad 
    y_i^{(0)}(e_{i,k}) = 0 \quad \text{for all } e_{i,k} \in V(\cH) \enspace.
    \]
    By construction, we have that 
    \[
    d(H_0,H_1) = \frac{1}{n} \sum_{e' \in N(e)} \paren[\Bigg]{\sqrt{\frac{n}{|N(e)|}} - 0} = \sqrt{\frac{|N(e)|}{n}} \geq \sqrt{\frac{d}{n}}\enspace.
    \]
    Also, if event $E_e$ occurs, then we get to observe the outcome $y_i(e)$. By definition of the conflict graph, this implies that we do not observe any of the outcomes of exposures $e' \in N(e)$. Since $H_0,H_1$ only differ in the outcomes of $N(e)$, they are impossible to distinguish under that event. Thus,
    \[
    d_{\mathrm{TV}}(Q_{H_0}^{(\design)}, Q_{H_1}^{(\design)}) \leq 1 - \design(E_e) \leq 1 - p
    \]
    Therefore, using Proposition~\ref{prop:lecam} for this particular design $\design$, we have 
    \begin{align}
    \inf_{\eate} \mathrm{Risk}(\design,\eate) & \geq \frac{d(H_0,H_1)^2}{8} \cdot \paren[\big]{1 - d_{\mathrm{TV}}(Q_{H_0}^{(\design)}, Q_{H_1}^{(\design)})} 
     \geq \frac{dp}{8n} \enspace. \label{eq:case1}
    \end{align}
    \item \textbf{Case 2:} Suppose for all exposures $e \in V(\cH)$ we have that $\design(E_e) < p$. In that case, define the event $A$ that at most $4p n $ exposures are observed in total, i.e.
    \[
    A = \setb{z \in \setb{0,1}^n: |R_z|  \leq 4pn} \enspace,
    \]
    By Markov's inequality,
    \[
    \design(A^c) = \design\paren[\Bigg]{|R_z| \geq 4pn} \leq \frac{\E[\Big]{|R_z|}}{4pn} = \frac{\sum_{e \in V(\cH)} \design(E_e)}{4pn} \leq \frac{p \cdot |V(\cH)|}{4pn} =  \frac{1}{2} \enspace.
    \]
    Define $H_0 = H(1/2)$ and $H_1 = H(1/2 + \delta)$ where $\delta = 0.026/\sqrt{4pn}$. Then, if $z \in A$, by definition  $|R_z| \leq 4pn$. By Lemma~\ref{lem:tv_bound}, for any $z \in A$, we have
    \[
    d_{\mathrm{TV}}(Q_{H_0}(\cdot|z), Q_{H_1}(\cdot|z)) \leq \frac{7}{8} \enspace.
    \]
    The law of total probability then implies that 
\[
1 - d_{\mathrm{TV}}(Q_{H_0}^{(\design)}, Q_{H_1}^{(\design)}) \geq \design(A) \cdot (1 -c) \geq \frac{1}{2}\paren[\Big]{1 - \frac{7}{8}} = \frac{1}{16}\enspace.
\]
Therefore, using Proposition~\ref{prop:lecam} for this particular design $\design$, we get
\begin{align}
    \inf_{\eate} \mathrm{Risk}(\design,\eate) & \geq \frac{d(H_0,H_1)^2}{8} \cdot
    \paren[\big]{1 - d_{\mathrm{TV}}(Q_{H_0}^{(\design)}, Q_{H_1}^{(\design)})} \geq \frac{4}{8 \cdot 2pn} \cdot \frac{1}{16} = \bigOmega[\Bigg]{\frac{1}{pn}}  \label{eq:case2}
\end{align}
\end{itemize}
 Since any design $\design$ falls in exactly one of the two cases above, the minimax risk is lower bounded by the minimum of \eqref{eq:case1} and \eqref{eq:case2}.
 By selecting $ p = \bigTheta{1/\sqrt{d}}$ to optimize this minimum, we get $\cR(G,\ate,2) = \bigOmega{\sqrt{d}/n}$, as desired.
\end{proof}

	\section{Upper Bounds on the Minimax Risk}\label{sec:upper_bounds}

In this section, we present general upper bounds on the minimax risk of network experiment problems.
One can view the analysis of previously proposed statistical procedures as implicitly deriving upper bounds on the minimax risk.
However, most analyses in the literature are appropriate for providing sufficient conditions for consistency and may not accurately describe the minimax risk itself.
Additionally, most works consider only a single moment restriction, usually $q=2$ or $q = \infty$.
In contrast, the upper bounds we present in this section are improved (i.e. smaller) and hold for arbitrary moment restrictions.

\subsection{Unbiased Statistical Procedures}\label{sec:upper_bound_cgd}

In order to upper bound the minimax risk, we must construct a statistical procedure and analyze its maximum risk.
We begin by considering a class of unbiased statistical procedures, which we then refine in the next section thru a bias-variance trade-off.
We summarize these upper bounds below in Theorem~\ref{thm:conflict_graph_design} and defer a discussion of the underlying statistical procedure until the end of this section.

\begin{restatable}{theorem}{cgd}
    \label{thm:conflict_graph_design}
    Let $G$ be an interference network, $\tau$ be a contrastive effect.
    For any moment parameter $q \geq 2$, the minimax risk is bounded as
	\[
	\risk(G, \ate, q)
	\leq  \constant \cdot \frac{\lamH^{\frac{2}{q}}\cdot d_{\mathrm{avg}}(\cH)^{1 - \frac{2}{q}}}{n} \enspace,
	\]
	where $\lamH$ is the largest eigenvalue of the adjacency matrix of $\cH$ and $d_{\mathrm{avg}}(\cH)$ is the average degree of $\cH$.
	Moreover, this bound on the minimax risk is achieved by an unbiased statistical procedure.
\end{restatable}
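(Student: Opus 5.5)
We will exhibit an explicit randomized procedure, a conflict-graph design paired with a Horvitz--Thompson estimator, and bound its worst-case variance over $\momodelq$. Because the estimator is unbiased, the MSE equals the variance. By the randomization result of Section~\ref{sec:randomness_appendix}, a randomized procedure suffices to upper bound $\risk(G,\ate,q)$.

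\textbf{Design.} Following \citet{kandiros2024conflict}, work directly on $\cgraph$.
\begin{enumerate}
\item Draw independent uniform priorities $U_e \sim \mathrm{Unif}[0,1]$ for each exposure $e \in \cvert$.
\item Each exposure $e$ is independently marked \emph{desired} with probability $\mu \in (0,1]$; $\mu$ is tuned later.
\item Call $e$ \emph{selected} if it is desired and no desired neighbor in $\cgraph$ has a smaller priority. The selected exposures form an independent set, hence are jointly realizable by some intervention $Z$; choose that intervention, filling unconstrained units arbitrarily (e.g.\ by coin flips).
\item $e_{i,k}$ is observed whenever it is selected. Use $p_e = \Pr{e \text{ selected}}$, which is exactly computable:
\[
p_e = \mu \int_0^1 (1-\mu u)^{d_e}\,du \;\ge\; \mu \cdot \frac{c}{1+\mu d_e}.
\]
\end{enumerate}

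\textbf{Estimator.} Take
\[
\eate = \frac1n \sum_{i}\sum_{k} (-1)^{1-k}\, y_i(e_{i,k})\,\frac{\indicator{e_{i,k}\text{ selected}}}{p_{e_{i,k}}}.
\]
This is unbiased.

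\textbf{Variance decomposition.} Write $x_e = y_i(e_{i,k})$ for $e = e_{i,k}$. Then
\[
n^2\Var{\eate} = \sum_e x_e^2 \frac{1-p_e}{p_e} + \sum_{e\ne e'} \pm x_e x_{e'} \frac{\Cov{\cdot}}{p_e p_{e'}}.
\]
For adjacent $e \sim e'$, joint selection is impossible, so the covariance term is $-1$ up to sign and bounded by $|x_e x_{e'}|$. For non-adjacent pairs at distance two, the covariance comes from shared neighbors. I expect to show, as in \citet{kandiros2024conflict}, that the normalized covariance $\Cov{\cdot}/(p_ep_{e'})$ is at most $O(\mu^2 |N(e)\cap N(e')|)$ in magnitude, or even nonpositive (selections are negatively correlated). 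Pairs at distance $\ge 3$ are independent.

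If negative association holds, the non-adjacent terms can be dropped or bounded by a constant times the adjacent terms. That leaves
\[
n^2\Var{\eate} \lesssim \sum_e x_e^2\,\frac{1+\mu d_e}{\mu} + x^\tran A x,
\]
where $A$ is the adjacency matrix of $\cgraph$. The second term is at most $\lamH \norm{x}_2^2$.

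\textbf{Interpolating between moments.} Under the $q$-th moment constraint, $\sum_e x_e^2 d_e$ is controlled by Hölder with exponents $q/2$ and $q/(q-2)$:
\[
\sum_e x_e^2 d_e \le \paren[\Big]{\sum_e |x_e|^q}^{2/q}\paren[\Big]{\sum_e d_e^{q/(q-2)}}^{1-2/q}.
\]
This form does not directly give $d_{\mathrm{avg}}$. A cleaner route is to bound the full quadratic form $x^\tran M x$, where $M = \diag{d} + A$ (or the relevant covariance matrix), in two ways:
\begin{itemize}
\item For $q=2$, via $\norm{M}_{op} \le 2\lamH$ (using $\max_e d_e \le$ something with $\lamH$; actually $\sum_e x_e^2 d_e$ needs $\dmax$, so reweighting via $\mu$ is required).
\item For $q=\infty$, via $x^\tran M x \le \norm{x}_\infty^2 \sum_{e,e'} |M_{ee'}| \le 2n \cdot 2n\,d_{\mathrm{avg}}$.
\end{itemize}
Riesz--Thorin style interpolation between $\ell_2$ and $\ell_\infty$ norms, using $\norm{x}_q \le n^{1/q}$, then yields $n^{2/q}\lamH^{2/q}(n\,d_{\mathrm{avg}})^{1-2/q}$. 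Dividing by $n^2$ gives the claimed rate $\lamH^{2/q} d_{\mathrm{avg}}^{1-2/q}/n$.

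\textbf{Main obstacle.} The crux is the diagonal term $\sum_e x_e^2 d_e$, which naively gives $\dmax$ rather than $\lamH$. To handle it, I would choose the desire probability $\mu_e$ vertex-specifically, proportional to the Perron eigenvector $v_e$ of $A$. The identity $\sum_{e'\sim e} v_{e'} = \lamH v_e$ should then make $p_e \gtrsim 1/\lamH$-type bounds hold uniformly. Verifying the covariance bounds for distance-two pairs under this heterogeneous $\mu$ is the delicate computation. Tracking constants through it, up to $\constant$, is where I expect most of the work.
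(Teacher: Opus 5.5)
\textbf{Verdict: there is a genuine gap.} Your skeleton matches the paper's: a randomized procedure, a Horvitz--Thompson estimator, and a covariance split into diagonal, adjacent and distance-two terms. But the design you propose cannot attain the bound, and neither of your two repairs closes the gap.

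\textbf{The random-priority design fails at $q=2$.} With random priorities, an exposure is selected only if it precedes all of its desired neighbors. Hence
\[
p_e=\mu\int_0^1(1-\mu u)^{d_e}\,du=\frac{1-(1-\mu)^{d_e+1}}{d_e+1}\le\frac{1}{d_e+1}
\]
for every $\mu$. Take the DTE on a star network with center $c$. In $\cgraph$ the exposure $e_{c,1}$ has degree $2n-1$, while $\lamH=\Theta(\sqrt n)$ and $\davgH=O(1)$. Set $y_c(e_{c,1})=\sqrt n$ and all other outcomes to $0$, which is admissible for $q=2$. Your estimator then has variance $\frac1n(1/p_{e_{c,1}}-1)\ge 2-\frac1n$, against the target $\lamH/n\asymp n^{-1/2}$.

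\textbf{Riesz--Thorin cannot repair this.} It interpolates the $q\to q/(q-1)$ norm of one fixed matrix, so it presupposes a single design that is good at both endpoints. On this example no such design exists for your estimator. At $q=2$ you need $p_{e_{c,1}}\gtrsim n^{-1/2}$. At $q=\infty$, take $y_\ell(e_{\ell,1})=1$ for every leaf $\ell$ and all else $0$. No $e_{\ell,1}$ can be observed when $e_{c,1}$ is, so on that event the data coincide with those under $y\equiv 0$. Hence any procedure with risk $O(1/n)$ must observe $e_{c,1}$ with probability $O(1/n)$, which contradicts the $q=2$ requirement.

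\textbf{Perron-weighted $\mu_e$ does not work either.} It does not depend on $q$, so the same objection applies. Even at $q=2$ it does not give $p_e\gtrsim 1/\lamH$ uniformly: $p_e\le\mu_e$, and Perron entries can lie far below $\norm{v}_\infty/\lamH$. They decay geometrically along pendant paths and vanish on components not attaining $\lamH$. In \citet{kandiros2024conflict} the eigenvector enters only through the ordering, not through the probabilities.

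\textbf{Distance-two terms cannot be dropped.} Selections of distance-two pairs are positively correlated, since both events require their common neighbors not to block them. The normalized covariance is also first order in the desire probabilities, not second order as you expected. These terms must be bounded.

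\textbf{The missing idea is a $q$-dependent design with a deterministic importance ordering.} The paper proceeds as follows.
\begin{enumerate}
\item It sets $p_e=1/(2\lamH^{2/q}d_e^{1-2/q})$ and marks $e$ desired with probability $p_e/2$.
\item It builds the ordering by repeatedly placing in the last free position the vertex of the remaining set $S$ that minimizes $w_e(S)=\sum_{e'\in\mathcal{N}(e)\cap S}p_{e'}$.
\item Three facts bound the average of $w_e(S)$ over $S$ by $\frac12$: $d_{e'}(S)\le d_{e'}$, concavity of $x\mapsto x^{2/q}$, and the fact that every induced subgraph has average degree at most $\lamH$.
\item Hence every exposure satisfies $\sum_{e'\in\mathcal{N}_b^\pi(e)}p_{e'}\le\frac12$, which gives $\Pr{E_e}\ge\frac{p_e}{2}e^{-1/2}$.
\item H\"older with exponents $q/2$ and $q/(q-2)$ then gives exactly $\sum_e x_e^2/p_e\le 2\norm{x}_q^2\,\lamH^{2/q}\bigl(\sum_e d_e\bigr)^{1-2/q}$, because $(d_e^{1-2/q})^{q/(q-2)}=d_e$. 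This replaces the $\norm{d}_{q/(q-2)}$ you ran into.
\item Adjacent pairs have normalized covariance $-1$, and distance-two pairs have normalized covariance at most $\frac23\sum_{r\in\mathcal{N}(e)\cap\mathcal{N}(e')}p_r$. Both are controlled by further H\"older arguments against the adjacency matrix of $\cgraph$, at the same $\lamH^{2/q}\davgH^{1-2/q}/n$ rate.
\end{enumerate}
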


Theorem~\ref{thm:conflict_graph_design} provides upper bounds on the minimax risk which depend on the moment restriction under consideration.
Under second moment restrictions ($q=2$), we recover the $\lamH / n$ bound of \citet{kandiros2024conflict}.
For uniformly bounded outcomes ($q=\infty$), we obtain an improved rate of $\davgH / n$.
For intermediate moments $q \in (2,\infty)$, the resulting upper bound interpolates between these two quantities.
Like the true minimax risk, our upper bounds are decreasing as the network experiment problem becomes easier, i.e. $q$ increases so the moment condition becomes more restrictive.

The difference between $\davgH$ and $\lamH$ becomes more pronounced under degree heterogeneity.
For example, the largest adjacency eigenvalue always lies between the average and maximum degrees: $\davgH \leq \lamH \leq \dmaxH$.
When there is low degree heterogeneity, meaning that all vertices have roughly the same degree, then all three quantities are roughly equal.
On the other hand, the quantities can be of different asymptotic orders in the presence of degree heterogeneity.
An illustrative example which demonstrates this separation is when $\cgraph$ is a star graph, where one ``center'' node is connected to all remaining ``leaf'' nodes and and no leaf nodes are connected.
Then we would have $\davgH \approx 2$, $\lamH \approx \sqrt{n}$, and $\dmaxH \approx n$.
Degree heterogeneity is commonly observed in social networks where degree distributions are heavy-tailed, i.e. a small number of people are more popular than the average person.
In this case, the degree heterogeneity of the underlying network $G$ will induce degree heterogeneity in the conflict graph $\cgraph$ and, as a result, the quantities $\lamH$ and $\davgH$ will have different asymptotic orders.

The interplay between moment restrictions and degree heterogeneity in the minimax rate can be understood by considering how much of the total ``mass'' of the potential outcomes can be concentrated on a few number of subjects.
The key idea can be explained as follows: under less restrictive moment conditions, the ``mass'' of the potential outcomes is allowed to be more concentrated on a few number of subjects, which in turn means that the maximal risk will depend on the smallest exposure probabilities among all subjects.
If the conflict graph has heterogeneous degrees, then increasing the probability of observing an exposure with many conflicts is only possible by decreasing other exposure probabilities.
On the other hand, under more restrictive moment conditions, the ``mass'' of potential outcomes cannot be hidden in just a few subjects.
This means that it can be advantageous to select an experimental design under which a few exposures are observed with small probability if doing so allows most exposures to be observed with larger probability.

Underlying Theorem~\ref{thm:conflict_graph_design} is a new family of experimental designs which are extensions of the Conflict Graph Design of \citet{kandiros2024conflict} to accommodate general moment restrictions.
At a high level, the Conflict Graph Design works by sampling independent auxiliary ``desired inclusion'' variables and then resolving conflicts according to a particular importance ordering.
The distribution of the auxiliary variables and the importance ordering depend on the underlying moment restriction $q$.
For the setting of $q=2$ considered by \citet{kandiros2024conflict}, auxiliary variables have identical distributions and the importance ordering is given by the largest eigenvector of the adjacency matrix $\cgraph$.
For the case of $q = \infty$, the distribution of each auxiliary variable depends on the exposure's degree and the importance ordering is simply the degree ordering.
Detailed description and analyses of this new family of experimental designs may be found in Section~\ref{sec:cgd_appendix} of the supplement.

\subsection{Bias-Variance Tradeoff}\label{sec:bias_variance}

In this section, we consider bias-variance trade-offs which can further improve the upper bounds on the minimax rate.
The key idea is simple: if a few ``central'' or ``high degree'' vertices in $\cgraph$ cause $\lamH$ or $\davgH$ to be large, then we can choose to ignore their outcomes when constructing our experimental design and estimator so that the resulting variance is improved at the cost of some additional bias. 
Similar ideas have been explored in the context of AIPW estimation in network experiments \citep{zocklein2025regression}.
The challenge will be to identify the optimal set $S$ of vertices to ignore.

More formally, let $S \subseteq \cvert$ be a subset of the vertices in the conflict graph.
Let $\ccgraphS$ denote the subgraph of $\cgraph$ induced by the vertices in $\cvert \setminus S$.
Let $\ate_S$ be the contrastive treatment effect restricted to the exposures in $\cvert \setminus S$, i.e. 
\[
\ate_S = \frac{1}{n} \sum_{i=1}^n \paren[\Big]{y_i(e_{i,1})\indicator{e_{i,1} \notin S} - y_i(e_{i,0})\indicator{e_{i,0} \notin S}}\enspace.
\]
If we were to target the estimand $\ate_S$ by running the Conflict Graph Design from Theorem~\ref{thm:conflict_graph_design} on the subgraph $\cgraph(\cvert \setminus S)$, then our estimator $\eate$ is unbiased for the restricted estimand $\ate_S$, i.e. $\E{\eate} = \ate_S$.
Thus, its variance and bias for the actual estimand $\ate$ can be bounded as
\[
\Var{\eate} = 
\bigO[\bigg]{
	\frac{\lambda(\ccgraphS)^{\frac{2}{q}} \cdot {\davg(\ccgraphS)}^{1 - \frac{2}{q}}}{n} 
}
\quadand
\paren[\Big]{\E{\eate} - \tau}^2 = 
\bigO[\bigg]{  \paren[\bigg]{ \frac{|S|}{n} }^{2 - \frac{2}{q}} } 
\enspace.
\]
Based on these bounds, we can define the following proxy for the mean squared error of the estimator $\eate$, as it depends on the subset $S$ that is to be removed:
\[
Q_{q}(S,\cH) := 
\lambda(\ccgraphS)^{\frac{2}{q}} \cdot {\davg(\ccgraphS)}^{1 - \frac{2}{q}}
+ \frac{ \abs{S}^{2-\frac{2}{q} }}{ n^{1-\frac{2}{q}} }
\enspace.
\]
The best possible MSE using this approach would then be given by the subset $S$ that minimizes $Q_{q}(S,\cH)$.
This is the content of the main result of this section (for the proof, see Appendix~\ref{sec:exponential_bv_appendix}).

\begin{restatable}{theorem}{biasvarianceexponential}
\label{thm:bias_variance_exponential}
Let $G$ be an interference network, $\tau$ be a contrastive effect.
For any moment parameter $q \geq 2$, the corresponding minimax risk is bounded as
\[
\cR(G,\tau,q) \leq  \constant \cdot \frac{\min_{S \subseteq \cvert} Q_{q}(S,\cH)}{n}
\enspace.
\]
\end{restatable}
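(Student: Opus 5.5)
Fix an arbitrary subset $S \subseteq \cvert$ and build a procedure whose maximum risk is at most $\constant \cdot Q_q(S,\cH)/n$. Since $\risk(G,\ate,q)$ is an infimum over all procedures and $S$ is arbitrary, minimizing over $S$ then gives the theorem. The procedure runs the (generalized) Conflict Graph Design of Theorem~\ref{thm:conflict_graph_design} on the induced subgraph $\ccgraphS$, with the corresponding unbiased Horvitz--Thompson-type estimator built only from exposures in $\cvert \setminus S$. Outcomes of exposures in $S$ are simply ignored, which means implicitly imputing them as $0$. The MSE then splits as $\Var{\eate} + (\E{\eate}-\ate)^2$, and I would bound the two terms separately.

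\textbf{Variance.} The design on $\ccgraphS$ is a valid design for the original experiment. Any independent set of $\ccgraphS$ is an independent set of $\cH$ and hence realizable by some intervention; I will make sure this is how the design in Theorem~\ref{thm:conflict_graph_design} is phrased, namely as a distribution over realizable exposure sets. The estimator is unbiased for $\ate_S$, and its variance depends only on outcomes of exposures in $\cvert\setminus S$. The moment constraint for those outcomes, $\frac1n\sum_{i:\,e_{i,k}\notin S}|y_i(e_{i,k})|^q\le 1$, is implied by membership in $\momodelq$. Applying the variance analysis behind Theorem~\ref{thm:conflict_graph_design} with $\cH$ replaced by $\ccgraphS$, and keeping normalization by $n$ (the number of units) rather than by $|\cvert\setminus S|/2$, gives $\Var{\eate}\le \constant\,\lambda(\ccgraphS)^{2/q}\davg(\ccgraphS)^{1-2/q}/n$. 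The main obstacle is this normalization. Theorem~\ref{thm:conflict_graph_design} is stated for a conflict graph arising from a full estimand on $n$ units. I would therefore restate its proof for a general vertex-weighted graph in which some units contribute only one exposure. If instead I apply the theorem directly to a graph with $m<2n$ vertices, the rescaling of the moment constraint introduces factors like $(n/m)^{\cdot}$, and I would need to check that these work in the favorable direction. Because the per-exposure variance bounds depend only on the local structure of $\ccgraphS$ and on outcome magnitudes, I expect the direct reanalysis to give the same constant $\constant$.

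\textbf{Bias.} $\E{\eate}-\ate = \ate_S-\ate = -\frac1n\sum_{e_{i,k}\in S}\pm y_i(e_{i,k})$. By H\"older's inequality, $\bigl|\sum_{e\in S} y(e)\bigr| \le |S|^{1-1/q}\bigl(\sum_{e\in S}|y(e)|^q\bigr)^{1/q}\le |S|^{1-1/q}(2n)^{1/q}$, after splitting over $k\in\{0,1\}$. Hence the squared bias is at most $4^{1/q}|S|^{2-2/q}n^{2/q-2} = 4^{1/q}\,\frac{|S|^{2-2/q}}{n^{1-2/q}}\cdot\frac1n$. This is the second term of $Q_q(S,\cH)/n$, with constant at most $2\le\constant$.

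\textbf{Combining.} Adding the two bounds gives MSE $\le \constant\,Q_q(S,\cH)/n$ uniformly over $y\in\momodelq$, after checking that the bias constant is absorbed, e.g. by $\max(\constant,2)=\constant$ applied to the sum. Taking the infimum over procedures and then the minimum over $S$ completes the proof. The degenerate case $S=\cvert$ needs only the convention that the empty graph has $\lambda=\davg=0$ and the estimator is $0$.
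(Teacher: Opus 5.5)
Your proposal is correct and follows essentially the same route as the paper. Like the paper, you run the Conflict Graph Design with its modified Horvitz--Thompson estimator on $\ccgraphS$, split the MSE into variance plus squared bias, bound the squared bias by H\"older, and bound the variance by re-running the analysis behind Theorem~\ref{thm:conflict_graph_design} on the induced subgraph. The normalization concern you flag resolves in the favorable direction: that analysis actually bounds the variance by a constant times $\lambda(\ccgraphS)^{2/q}\,\bigl(2|E(\ccgraphS)|\bigr)^{1-2/q}\,n^{2/q-2}$, and $2|E(\ccgraphS)| = |\cvert\setminus S|\cdot\davg(\ccgraphS)\le 2n\,\davg(\ccgraphS)$; your bias constant $4^{1/q}$ is, if anything, more careful than the paper's.
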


The upper bound on the minimax rate in Theorem~\ref{thm:bias_variance_exponential} is an improvement upon the one presented in Theorem~\ref{thm:conflict_graph_design}.
To see this, observe that selecting $S = \emptyset$ results in the same rate, and indeed the same statistical procedure.
Whether the optimal set $S^* = \argmin_{S \subset \cvert} Q_{q}(S, \cgraph)$ offers a substantially different rate depends on the heterogeneity of the conflict graph.
If the vertices of the conflict graph all have degree roughly $d$, then the variance term is $\bigO{d}$ because in this case $\lamH \approx \davgH \approx d$.
In this case, we would need to remove $S = \bigOmega{n}$ vertices before the variance term would become $\littleO{d}$, at which point the squared bias term would already be $\bigOmega{1}$.
Thus, the bias-variance trade-off does not improve the minimax rate when $\cgraph$ has homogeneous degrees.

On the other hand, the bias-variance trade-off offers a large improvement when the conflict graph has significant degree heterogeneity.
The reason is that removing a small number of vertices will substantially decrease the variance term whereas the small number of them ensures the increase in bias is minimal.
Consider the illustrative example where $\cgraph$ is the star graph and let us focus on $q = 2$.
In this case, the unbiased procedure of Theorem~\ref{thm:conflict_graph_design} yields a rate of $1 / \sqrt{n}$ whereas by removing the center node $S = \{1\}$, we have that $\lambda(\ccgraphS) = 1$, so that the resulting rate from Theorem~\ref{thm:bias_variance_exponential} is $1/n$.
Moreover, the lower bounds establish that $1/n$ is indeed the exact minimax-rate for the star graph.

Finding the optimal subset $S^*$ that minimizes $Q_q(S,\cgraph)$ is a combinatorial optimization problem.
The problem is similar to the Partial Vertex Cover \citep{hochba1997approximation} which is NP-Hard, so a computationally feasible algorithm to find $S^*$ is unlikely to exist.
However, building upon approximation algorithms for Partial Vertex Cover, we show in the following proposition that there is a polynomial-time algorithm which computes a set $S$ that is within a logarithmic factor of optimal, i.e. $Q_{q}(S, \cgraph) \leq \log(n) \cdot Q_{q}(S^*, \cgraph)$.
This yields a computationally efficient approach to the bias-variance trade-off that incurs only a logarithmic increase in the risk.
This is the content of the proposition below and we refer to Section~\ref{sec:approximation_algo_appendix} in the supplement for more details.

\begin{restatable}{proposition}{approximationalgorithm}
\label{prop:approximation_algorithm}
Let $G$ be an interference network, $\tau$ be a contrastive effect.
For any moment parameter $q \geq 2$, there exists a design-estimator pair $(\design,\eate)$ that can be implemented in polynomial time whose maximum risk is bounded as
\[
\sup_{y \in \momodelq}
\mathrm{MSE}(\design,\eate, \ate, y) 
\leq  \bigO[\Big]{(\log n)^{2 - \frac{2}{q}}} \cdot \frac{ \min_{S \subseteq V(\cH)} Q_{q}(S,\cgraph)}{n} \enspace.
\]
\end{restatable}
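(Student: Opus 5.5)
We will obtain Proposition~\ref{prop:approximation_algorithm} by replacing the exact minimizer $S^*$ in Theorem~\ref{thm:bias_variance_exponential} with a polynomial-time surrogate $\hat S$. Once we have a set $\hat S$ satisfying
\[
Q_q(\hat S,\cH) \leq \bigO[\big]{(\log n)^{2-\frac{2}{q}}} \cdot Q_q(S^*,\cH),
\]
we run the Conflict Graph Design of Theorem~\ref{thm:conflict_graph_design} on the induced subgraph $\ccgraph{\hat S}$ and use the associated estimator, which is unbiased for $\ate_{\hat S}$. The variance and bias bounds displayed before Theorem~\ref{thm:bias_variance_exponential} then bound the maximum risk by $\constant \cdot Q_q(\hat S,\cH)/n$. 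That design (with eigenvector or degree orderings) can be sampled in polynomial time, since it only needs independent auxiliary variables, a leading eigenvector, and degrees. So the whole task reduces to a polynomial-time approximation of $\min_S Q_q(S,\cH)$.

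To build $\hat S$, we first replace the variance term by a tractable proxy. Write $\Lambda(S) = \lambda(\ccgraphS)^{2/q}\davg(\ccgraphS)^{1-2/q}$. Since $\davg \le \lambda \le \dmax$, we have $\Lambda(S) \le \dmax(\ccgraphS)$, and a maximum-degree target is a partial-vertex-cover type constraint. The plan is to guess the value $D$ of the variance term up to a factor of $2$, which needs only $\bigO{\log n}$ candidate values. For each guess, we compute a set $S_D$ of approximately minimum size whose removal leaves a graph with appropriately small spectral or degree proxy. For the pure degree version, removing all vertices of degree greater than $D$ is exact. Because removals also lower the degrees of the remaining vertices, we would instead use a greedy peeling argument: repeatedly delete the vertex whose removal most reduces the excess degree mass, and invoke the standard set-cover style greedy analysis to get an $\bigO{\log n}$ approximation in $|S|$. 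We then return the best $S_D$ over all guesses, as measured by $Q_q$.

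The main obstacle is relating the tractable proxy back to $\lambda(\ccgraphS)$ at the optimum. The optimal set $S^*$ guarantees only that $\Lambda(S^*)$ is small, not that $\dmax(\ccgraph{S^*})$ is small. To bridge this, we would show that if $\Lambda(S^*) \le D$, then the graph $\ccgraph{S^*}$ has at most $\bigO{|S^*|}$ extra vertices, or a controlled amount of edge mass, above degree $D$. For $q=2$, the fact that $\lambda \ge \sqrt{\dmax}$ is the tool to use, possibly at the cost of working with $D^2$ scales. We would then lose a factor of $\log n$ in $|S|$. Since $|S|$ enters $Q_q$ as $|S|^{2-2/q}$, this loss becomes exactly $(\log n)^{2-2/q}$, and the variance term is preserved up to constants. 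If this degree-based bridge is too lossy for intermediate $q$, the fallback is to run a greedy method directly on $\lambda$ via eigenvector mass. This means iteratively deleting the vertices with the largest entries of the leading eigenvector, and bounding the number of rounds through a potential-function argument on $\lambda$.
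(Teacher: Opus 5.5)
Your reduction matches the paper: once you have a polynomial-time $\hat S$ with $Q_q(\hat S,\cH)\le O((\log n)^{2-2/q})\,Q_q(S^*,\cH)$, running the Conflict Graph Design on $\ccgraph{\hat S}$ and adding the bias and variance bounds finishes the proof. The gap is that you never construct such an $\hat S$, and the degree-threshold route you sketch cannot construct it.

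\textbf{The bridge is false.} For $q=2$, let $\ccgraph{S^*}$ be a disjoint union of many stars $K_{1,D^2}$, with $S^*=\emptyset$; this is optimal up to a constant when $n\gg D^3$. Then $\lambda=D$, yet roughly $2n/D^2$ vertices have degree above $D$. Passing to scale $D^2$ via $\lambda\ge\sqrt{\dmax}$ only certifies $\lambda(\ccgraph{S_{D^2}})\le D^2$, which is a polynomial loss in the variance term.

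\textbf{Scoring by the true $Q_q$ does not rescue the family $\{S_D\}$.} Let $\cH$ be $k$ disjoint stars $K_{1,m}$ plus one $K_{a,N}$, with $N<m<aN$ and $a\le\sqrt m$. Up to constants, this is the direct-effect conflict graph when $G$ is such a union.
\begin{itemize}
\item Deleting the $a$ hubs gives $Q_2\lesssim\sqrt m+a$.
\item Every threshold at or above the star-center degree returns $S_D=\emptyset$, with $Q_2(\emptyset)\asymp\sqrt{aN}$.
\item Every smaller threshold forces a deletion inside each star, so $|S_D|\ge k$.
\item No threshold isolates the hubs, because their degree is below that of the star centers.
\end{itemize}
With $m\asymp k\asymp n^{1/2}$, $a=n^{1/4}$ and $N=n^{9/20}$, the best candidate has $Q_2\gtrsim n^{7/20}$ while the optimum is $\lesssim n^{1/4}$. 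That is a polynomial gap, not a logarithmic one.

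\textbf{The fallback is unanalyzed.} Peeling the largest leading-eigenvector entries comes with no approximation guarantee, and proving one is precisely the hard step. In addition, for $q>2$ you must control $\davg$, the remaining edge count, at the same time. Neither a $\dmax$-only nor a $\lambda$-only procedure does that.

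\textbf{The missing idea.} Treat the two factors of the variance term as two separate covering constraints, each nonincreasing under vertex deletion, and invoke known approximation algorithms for each. The paper enumerates thresholds $k_1\le|E(\cH)|$ and $k_2\le\lceil\lambda(\cH)\rceil$. For each pair it takes:
\begin{itemize}
\item $S_1$, a $2$-approximate Partial Vertex Cover leaving at most $k_1$ edges (Gandhi et al.);
\item $S_2$, a bicriteria solution for spectral-radius reduction with $|S_2|=O(\log n)$ times the optimum and $\lambda(\ccgraph{S_2})\le 2k_2$ (Saha et al.).
\end{itemize}
It then keeps the union $S_1\cup S_2$ that minimizes $Q_q$. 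At $k_1^*=|E(\ccgraph{S^*})|$ and $k_2^*=\lceil\lambda(\ccgraph{S^*})\rceil$, the set $S^*$ is feasible for both problems. By monotonicity, the corresponding union has size $O(|S^*|\log n)$, at most $k_1^*$ edges, and spectral radius at most $2k_2^*$. So the variance term grows by at most a constant factor and the bias term by $(\log n)^{2-2/q}$.
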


	\section{Tightness of Minimax Analysis}\label{sec:comparisons}

The minimax risk introduced in Section~\ref{sec:def-minimax-risk} is a function of the underlying network $G$ and the contrastive estimand $\ate$.
While we present upper and lower bounds that use different information from the conflict graph $\cgraph$, the bounds themselves do not always match in the usual sense, i.e. they are not equal up to a constant.
In this section, we show that the upper and lower bounds are both tight within the class of regular graphs.
Moreover, these bounds are nearly matching for a uniformly random regular graph, characterizing the minimax rate in an average-case sense.

To demonstrate the tightness of our upper and lower bounds, we consider the class of \emph{$d$-regular} graphs, where each vertex has exactly $d$ neighbors.
We denote the class of $d$-regular graphs on $n$ nodes by $\cG_{d,n}$, though we simply write $\cG_d$ when $n$ is clear from context.
Using triangular array asymptotics, we consider the degree parameter $d$ to be possibly growing with $n$.

Restricting our attention to the class of regular graphs is useful because it is parametrized by a single parameter $d$.
This has the benefit of simplifying the presentation of our upper and lower bounds on the minimax rate.
On the other hand, the class is rich enough to contain a variety of different graphs.
We further focus our attention on the bounds obtained for second moment restrictions, $q = 2$.
Our goal is two fold: we aim to show (1) how the upper and lower bounds depend on $d$ and that (2) they provide tight bounds on the minimax risk within this class $\cG_{d,n}$.

\subsection{Direct Treatment Effect}

We first focus on estimating the Direct Effect $\ate_{\mathrm{DTE}}$, which was defined in Section~\ref{sec:preliminaries}.
As discussed there, the conflict graph $\cgraph$ for this estimand behaves roughly like the original interference graph $G$. 
In particular, as we have seen in Corollary~\ref{corollary:direct-rate}, the following upper and lower bounds on the minimax risk hold for any $d$-regular graph $G$:
\begin{equation}\label{eq:upper_lower_d_regular}
\frac{\sqrt{d+1}}{n} \lesssim \cR(G,\ate_{\mathrm{DTE}},2) \lesssim \frac{d+1}{n} \enspace.
\end{equation}
Note that we have removed the independent set $\indset(G)$ term from the lower bound because it is not governed by $d$, though it will play a role in our later analysis.
When the degree $d$ is constant relative to the number of units $n$, then the bounds match up to a constant factor. 
A particular case of interest is the Stable Unit Treatment Value Assumption (SUTVA), where there is no interference and the graph $G$ is empty. 
In that case, we obtain that the minimax risk of estimating the Average Treatment Effect (ATE) is $\Theta(1/n)$, which is the classical parametric rate of estimation.

When $d$ is growing with $n$, there is a multiplicative gap of $\sqrt{d}$ between the upper and lower bounds.
We will demonstrate that there are families of graphs in $\cG_{d}$ for which the minimax rate coincides with the upper bound as well as families for which the minimax rate coincides with the lower bound.
The next proposition establishes tightness of the upper bound.

\begin{restatable}{proposition}{upperboundtightness}
    \label{prop:d_regular_upper_bound_tightness}
    For any sequence $d = \{d^{(n)}\}_{n=1}^\infty$ with $d = \littleO{n}$, 
    \[
    \sup_{G \in \cG_{d}} \cR(G,\ate_{\mathrm{DTE}},2) \asymp \frac{d}{n} \enspace.
    \]
\end{restatable}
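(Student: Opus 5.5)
The upper bound $\lesssim d/n$ holds for every $G \in \cG_d$ by Corollary~\ref{corollary:direct-rate}: for a $d$-regular graph $\lambda(G) = d$, so $\cR(G,\ate_{\mathrm{DTE}},2) \lesssim (d+1)/n \lesssim d/n$ once $d \geq 1$. (For $d=0$ the rate is $1/n$, and $d/n$ should then be read as $(d+1)/n$.) It therefore remains to exhibit, for each $n$, a $d$-regular graph whose minimax risk is $\gtrsim d/n$. I would use the global lower bound, Theorem~\ref{thm:global_lower_bound} with $q=2$, in the specialized form noted after it. Taking $T = \setb{e_{i,1} : i \in [n]}$ gives $\indset(\cgraph,T) = \indset(G)$ and $|T| = n$, so
\[
\cR(G,\ate_{\mathrm{DTE}},2) \gtrsim \frac{1}{\indset(G)} .
\]
The task thus reduces to choosing a $d$-regular graph with $\indset(G) \lesssim n/d$.

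The natural choice is a disjoint union of cliques $K_{d+1}$. Its independence number is exactly the number of cliques, $n/(d+1)$, which yields the bound $(d+1)/n \asymp d/n$. This requires $(d+1) \mid n$. When that fails, write $n = m(d+1) + r$ with $0 \leq r \leq d$ and $m \geq 1$. I would then modify $r' = d+1+r$ of the vertices into a single $d$-regular component on $r'$ vertices: since $d+1 \leq r' < 2(d+1)$, such a component exists whenever $r' d$ is even, for example a complement of a suitable regular graph or a circulant graph. Its independence number is at most $r'/(d+1) \cdot O(1)$, because any $d$-regular graph on fewer than $2(d+1)$ vertices has $\indset \leq r' - d \leq d+2$. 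That bound is too big, so this component needs a better estimate. A cleaner route is the bound $\indset \leq n\,(1 - d/\lambda_{\min}\cdots)$, or simply choosing the component to be the complement of a perfect matching (or near-perfect structure) on $r'$ vertices. That graph has degree $r'-2$, which is not $d$ in general. The workable fix is to use circulants $C_{r'}(\pm1,\dots,\pm d/2)$ on $r'$ vertices: an independent set in such a circulant has gaps larger than $d/2$ between consecutive elements, so $\indset \leq 2r'/d = O(1)$. Together with the $m$ cliques this gives $\indset(G) \leq m + O(1) = O(n/d)$ when $d = o(n)$, since then $m \geq 1$ and $m \asymp n/d$. Parity is a genuine constraint only when $nd$ is odd, and in that case $\cG_{d,n}$ is empty anyway.

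In fact, the circulant argument makes the cliques unnecessary. Take $G$ to be the circulant on $\mathbb{Z}_n$ connecting each vertex to the $d/2$ nearest vertices on each side (for odd $d$, with $n$ even, also add the antipodal edge). Any independent set must have consecutive elements more than $d/2$ apart around the cycle, so $\indset(G) \leq 2n/d$. This gives $\cR \gtrsim d/n$ directly. The assumption $d = o(n)$, or even $d \leq n/2$, ensures the circulant is well defined and $d$-regular.

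The main obstacle is only this construction bookkeeping: making sure a $d$-regular graph with $\indset(G) = O(n/d)$ exists for every admissible pair $(n,d)$. Once it does, the $\asymp$ follows by combining the universal upper bound with the global lower bound applied to that graph.
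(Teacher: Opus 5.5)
Your final argument is correct and is essentially the paper's proof. The upper bound comes from the conflict graph design, using $\lambda(G)=d$. The lower bound comes from Theorem~\ref{thm:global_lower_bound} with $T=\setb{e_{i,1}: i\in[n]}$, applied to the chorded cycle (your circulant), whose independence number is $O(n/d)$: the paper shows this by partitioning the cycle into consecutive cliques, and your gap argument is equivalent. For odd $d$, your antipodal matching is a slightly cleaner choice than the paper's unspecified perfect matching, because it cannot coincide with existing chords. The abandoned clique/remainder discussion in your middle paragraph can simply be deleted.
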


For the proof, we consider the family of $d$-chorded cycles, where the maximum independent set is $\bigO{d/n}$. We then use the global lower bound of Theorem~\ref{thm:global_lower_bound} to establish the matching lower bound of $\bigOmega{d/n}$.
For further details, see Section~\ref{sec:supp_tightness_upper} of the supplementary.

Next, we show that the lower bound of $\sqrt{d}/n$ is also attainable up to constant factors for a family of graphs in $\cG_{d}$ with $d = \bigTheta{n^{2/3}}$.

\begin{restatable}{proposition}{lowerboundtightness}\label{prop:d_regular_lower_bound_tightness}
    There exists a sequence $d = \{d^{(n)}\}_{n=1}^\infty$ such that $d = \bigTheta{n^{2/3}}$ and
    \[
    \inf_{G \in \cG_{d}} \cR(G, \ate_{\mathrm{DTE}} ,2) \asymp \frac{\sqrt{d}}{n} \enspace.
    \]
\end{restatable}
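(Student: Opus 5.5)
The lower bound half of Proposition~\ref{prop:d_regular_lower_bound_tightness} holds for every $d$-regular graph. The lower bound in \eqref{eq:upper_lower_d_regular} (equivalently Corollary~\ref{cor:local_lower_bound_general} with $q=2$, since $d^*_2(\cgraph)\ge \davg(\cgraph)\gtrsim d$ by Proposition~\ref{prop:critical_to_avg}) gives $\cR(G,\ate_{\mathrm{DTE}},2)\gtrsim \sqrt{d}/n$ for all $G\in\cG_d$. So the content of the proof is the upper bound. For a carefully chosen sequence $G_n\in\cG_{d}$ with $d=\Theta(n^{2/3})$, I need a design--estimator pair whose maximum risk over $\cM(G,\ate,2)$ is $O(\sqrt d/n)$. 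Neither Theorem~\ref{thm:conflict_graph_design} nor Theorem~\ref{thm:bias_variance_exponential} can do this: for a regular graph both give $d/n$. The procedure must therefore be specific to the chosen family.

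\textbf{Candidate family and first attempt.} I would first look at disjoint unions of blocks of size $\Theta(d)$, for example copies of $K_{d,d}$. The exponent $2/3$ suggests the number of blocks $m=n/(2d)$ should be of order $\sqrt d$. Inside a $K_{d,d}$ block, choosing a side and treating each vertex on that side independently with probability $1/2$ reveals, for every vertex on that side, either $e_{i,1}$ or $e_{i,0}$, and the revealed exposures are independent across vertices. The within-side part of a Horvitz--Thompson variance is then $O(1/n)$. The difficulty is the block-level choice of side. The indicators for the two sides are perfectly anticorrelated, so an outcome function that puts mass $\sqrt{n/d}$ on one block with opposite signs on the two sides produces a covariance term of order $d/n$. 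That matches the upper bound, not the lower bound, so this naive design is insufficient.

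\textbf{Refined plan.} Break the block-level correlation, so that no set of $\Theta(d)$ exposures is revealed or hidden together. The worst-case mass allowed under $q=2$ is $\sum_i y_i^2\le n$, which concentrates on $\Theta(d)$ units. The variance contributed by a group of $g$ jointly revealed exposures, each revealed with probability $\pi$, scales like $g/(\pi n)$, while independent revelation contributes $1/(\pi n)$. I would therefore choose $G$ as a $d$-regular graph made of many overlapping dense pieces rather than disjoint blocks; a blow-up of a sparse expander or a design-theoretic (projective-plane-like) incidence structure is my first choice. The aim is that a randomized ``pick a random independent configuration, then Bernoulli-treat'' design makes each exposure's revelation indicator correlated with only $O(\sqrt d)$ others, with $\pi=\Omega(1)$ for control exposures and $\pi\gtrsim 1/\sqrt d$ for treated exposures. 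The two case thresholds in the local lower-bound sketch suggest that this balance, in which the largest correlated groups have size $\sqrt d$, is exactly what attains $\sqrt d/n$. The condition $d=n^{2/3}$ (so that $n/d=\sqrt d$) should come out as the regime where such a structure exists and is regular.

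\textbf{Main obstacle.} The hard step is the covariance bound: $\sum_{e,e'}\Cov(\indicator{A_e},\indicator{A_{e'}})\,y(e)y(e')/(\pi_e\pi_{e'})\lesssim \sqrt d\, n$ uniformly over $\sum y^2\le n$. This is an operator-norm bound on the HT covariance matrix, which I would reduce to a spectral bound on the graph encoding joint revelation. I would first verify it numerically and combinatorially on the blown-up incidence structure, and fall back to adjusting block sizes if the spectral norm comes out as $d$ instead of $\sqrt d$.
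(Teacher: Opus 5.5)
Your lower-bound half is correct and is what the paper does: every exposure in the DTE conflict graph of a $d$-regular $G$ has degree at least $d$, so Corollary~\ref{cor:local_lower_bound_general} gives $\sqrt d/n$. The gap is the upper bound, which is the entire content of the proposition. You never fix a graph family or a design. Worse, the two targets you set for the design are incompatible with the rate $\sqrt d/n$.

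First, you ask for control exposures to be revealed with probability $\Omega(1)$. But $e_{i,0}$ conflicts with $e_{i,1}$ and with $e_{j,1}$ for every $j\in N(i)$, so it has $d+1$ neighbors in the conflict graph. Apply Case~1 in the proof of Lemma~\ref{lem:lower_bound_tunable}: suppose a design observes $e_{i,0}$ with probability $c$, and flip the sign of outcomes $\pm\sqrt{n/(d+1)}$ on those $d+1$ exposures, which are invisible whenever $e_{i,0}$ is observed. This forces risk at least $c(d+1)/(2n)$ for \emph{every} estimator. Hence any procedure attaining $\sqrt d/n$ must observe every exposure, treated or control, with probability $O(1/\sqrt d)$.

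Second, you ask each revelation indicator to be correlated with only $O(\sqrt d)$ others. But two conflicting exposures are never observed together. So for any Horvitz--Thompson-type estimator, the normalized indicators $\indicator{A_e}/\Pr{A_e}$ of conflicting exposures have covariance exactly $-1$, and every exposure has at least $d$ conflicting exposures. On the treated arm these $-1$ entries sit exactly on the edges of $G$. So the $\sqrt d$ bound cannot come from sparsity of correlations; it has to come from cancellation in the spectrum.

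That cancellation is what the paper engineers, using exact algebraic regularity. It takes $G=\Gamma(Q(4,s))$, the collinearity graph of the generalized quadrangle, with $n=(s+1)(s^2+1)$, $d=s(s+1)=\Theta(n^{2/3})$, and adjacency eigenvalues $s(s+1)$, $s-1$, $-(s+1)$. It uses the ovoids of $Q(4,s)$: independent sets of size $s^2+1$ such that, via Witt's extension theorem, every vertex lies in the same number of ovoids and every non-adjacent pair lies in the same number of ovoids.

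The design draws a uniformly random ovoid $I_f$, treats its vertices independently with probability $1/2$, and leaves everyone else in control. With $A_{i,k}=\{i\in I_f,\ Z_i=k\}$, one gets $\Pr{A_{i,k}}=1/(2(s+1))$ for all $i,k$. Adjacent pairs have covariance $-1$ and non-adjacent pairs have covariance $1/s$, so the per-arm covariance matrix is exactly $aI-(1+\tfrac1s)\mat{A}_G+\tfrac1s\mathbf 1\mathbf 1^\top$ with $a=\Theta(s)$. The positive $1/s$ correlations cancel the Perron eigenvalue of $\mat{A}_G$: the $\mathbf 1$-direction eigenvalue of the last two terms is $(s^2-1)/s$. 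The remaining eigenvalues are $O(s)$, so the variance is $O(s/n)=O(\sqrt d/n)$.

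Your ``projective-plane-like incidence structure'' points toward the right family. But the proof rests on two specific properties, neither of which your plan identifies or verifies: a strongly regular graph whose nontrivial eigenvalues are $O(\sqrt d)$, and a family of independent sets of size $\Theta(n/\sqrt d)$ that is uniform on vertices and on non-adjacent pairs.
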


For the proof, we focus on a family of graphs called the \emph{Generalized Quadrangles}, which are well studied in finite geometry \citep{tits1959trialite,payne2009finite}. The crucial property of these graphs is the existence of large independent sets that have very well-behaved intersections with each other.
Combined with other strong connectivity properties, this allows us to construct an improved design tailored for this graph family that achieves the $\sqrt{d}/n$ estimation rate.
More details can be found in Section~\ref{sec:supp_tightness_lower} of the supplementary.
Together, these results imply that both the upper and lower bounds from~\ref{eq:upper_lower_d_regular} in terms of the degree parameter are tight.

Finally, we investigate the behavior of the minimax risk for the ``average'' graph in $\cG_{d}$, by considering a graph sampled uniformly at random from $\cG_{d}$.

\begin{restatable}{corollary}{randomdregulardte}
\label{cor:random_d_regular}
Suppose $G$ is a uniformly random $d$-regular graph with $d = o(n)$.
Then, with probability at least $1 - o(1)$ over the random choice of $G$, 
    \[
    \frac{d}{n \cdot \log d} \lesssim \cR(G, \ate_{\mathrm{DTE}} ,2) \lesssim \frac{d}{n} \enspace.
    \]
\end{restatable}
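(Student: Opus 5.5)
The upper bound follows from Corollary~\ref{corollary:direct-rate}. That corollary gives $\cR(G,\ate_{\mathrm{DTE}},2) \lesssim \lambda(G)/n$, and for a $d$-regular graph $\lambda(G) = d$. This holds deterministically, for every $d$-regular $G$, so no probabilistic input is needed for this direction.

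For the lower bound I would use the global bound of Theorem~\ref{thm:global_lower_bound} with $q=2$. Take $T = \setb{e_{i,1} : i \in [n]}$. As noted after that theorem, for the DTE the induced subgraph of $\cgraph$ on $T$ is exactly $G$, so $\indset(\cgraph,T) = \indset(G)$ and $|T|/n = 1$. This gives
\[
\cR(G,\ate_{\mathrm{DTE}},2) \gtrsim \frac{1}{\indset(G)}.
\]
It then suffices to show that, with probability $1-o(1)$, a uniformly random $d$-regular graph satisfies $\indset(G) \lesssim n\log d/d$. For bounded $d$ the claim is trivial: $\indset(G) \le n$ gives a lower bound of order $1/n \asymp d/(n\log d)$. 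So the work is for $d \to \infty$ with $d = o(n)$.

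I would take the independence-number bound for random regular graphs from the literature. These results (Bollob\'as; Frieze--\L uczak for fixed or slowly growing $d$; Cooper--Frieze--Reed--Riordan, Krivelevich--Sudakov--Vu--Wormald, and more recent work covering the full range $d=o(n)$) give $\indset(G) \le (2+o(1))\, n\log d/d$ with high probability.

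If I had to prove it directly, I would run a first-moment computation. The expected number of independent sets of size $k$ is at most $\binom{n}{k}(1-k/n)^{dk/2}$ up to configuration-model corrections. This tends to zero once $k = C n\log d/d$ with $C > 2$, since $\binom{n}{k} \le (en/k)^k = \exp(k\log(ed/(C\log d)))$ while the edge term is $\exp(-(1+o(1))dk^2/(2n)) = \exp(-(C/2+o(1))k\log d)$.

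The main obstacle is justifying the edge-avoidance probability uniformly over the whole range $d = o(n)$. The configuration model is only contiguous with the uniform model for $d$ small, roughly $d = o(\sqrt n)$. For larger $d$ one needs either switching arguments or sandwiching against $G(n,p)$ with $p \approx d/n$ (Kim--Vu, Dudek et al., Gao--Isaev--McKay). Since $G(n,p)$ has independence number $(2+o(1))\log(np)/p$ with high probability, and the sandwiching couples the random regular graph to contain a $G(n,p')$ with $p' \sim p$, independent sets in the regular graph are independent in the contained $G(n,p')$. That gives the required upper bound on $\indset(G)$.

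Combining the two directions, the upper bound holds deterministically and the lower bound holds with probability $1-o(1)$, which proves the corollary.
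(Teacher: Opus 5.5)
Your proposal is correct and follows the paper's proof. The upper bound is the deterministic case $\lambda(G)=d$ of Corollary~\ref{corollary:direct-rate}. The lower bound applies Theorem~\ref{thm:global_lower_bound} with $T=\setb{e_{i,1} : i\in[n]}$, so that $\indset(\cgraph,T)=\indset(G)$, together with the literature bound $\indset(G)=\bigO{n\log d/d}$ with high probability, which the paper assembles from \citet{frieze1992independence} and \citet{krivelevich2001random}.

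Your explicit handling of bounded $d$ covers a case the paper leaves implicit, since its cited bound needs $d\ge c_1$. Your first-moment aside is optional, but one point is imprecise: for growing $d$ the configuration model is simple only with probability about $e^{-d^2/4}$, so it is not contiguous to the uniform model. That loss must be beaten by the first-moment bound, which works only up to roughly $d\approx n^{1/3}$; beyond that, the sandwiching route you mention is what is actually needed.
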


Corollary~\ref{cor:random_d_regular} shows that for the overwhelming majority of $d$-regular graphs, the minimax risk of estimating the DTE is equal to the upper bound of Corollary~\ref{corollary:direct-rate} up to a $\log d$ factor. 
To prove that, we utilize the fact that  if $d = \littleO{n}$, then with high probability the largest independent set in $G$ has size at most $\bigO{n \log d / d}$ \citep[see e.g.][]{cooper2002random}, together with our global lower bound from Theorem~\ref{thm:global_lower_bound}.
See Section~\ref{sec:random_d_regular_appendix} of the supplementary for more details.

\subsection{Global Average Treatment Effect}

When the estimand of interest is the GATE $\ate_{\mathrm{GATE}}$, the conflict graph $\cgraph$ is similar in structure to the two hop graph $G^2$ of the underlying network $G$, as demonstrated in Section~\ref{sec:conflict_graph}.
In particular, Corollary~\ref{corollary:gate-rate} provides the following upper and lower bounds on the minimax risk that hold for any graph $G$:

\begin{equation}
    \label{eq:upper_lower_gate}
    \frac{\sqrt{d_{\mathrm{avg}}(G^2)}}{n} \lesssim \cR(G, \ate_{\mathrm{GATE}} ,2) \lesssim \frac{\lambda(G^2)+1}{n} \enspace.
\end{equation}

Even when the graph $G$ is $d$-regular, the quantities $\sqrt{d_{\mathrm{avg}}(G^2)}$ and $\lambda(G^2)$ are unfortunately not determined by the degree parameter $d$.
The reason is that the size of a two-hop neighborhood can range from $d$ to $d^2$.
However, the following theorem shows that when the network $G$ is random $d$-regular, $\sqrt{d_{\mathrm{avg}}(G^2)}$ and $\lambda(G^2)$ are well-approximated by $d$ and $d^2$ so that there is a multiplicative gap of order $d$ between them:

\begin{restatable}{corollary}{randomdregulargate}
\label{cor:random_d_regular_gate}
Suppose $G$ is a random $d$-regular graph with $d = o(\sqrt{n})$.
Then, with probability at least $1 - o(1)$ over the random choice of $G$, 
    \[
    \frac{d}{n } \lesssim \cR(G, \ate_{\mathrm{GATE}}, 2) \lesssim \frac{d^2}{n} \enspace.
    \]
\end{restatable}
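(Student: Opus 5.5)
The plan is to apply Corollary~\ref{corollary:gate-rate}, which gives
\[
\frac{\sqrt{\davg(G^2)}}{n} \lesssim \risk(G,\ate_{\mathrm{GATE}}) \lesssim \frac{\lambda(G^2)}{n},
\]
and then control the two graph quantities for a random $d$-regular graph. For the upper bound nothing probabilistic is needed. $G^2$ has maximum degree at most $d + d(d-1) = d^2$, since each vertex reaches at most $d$ neighbours and at most $d(d-1)$ vertices at distance two. Hence $\lambda(G^2) \le \dmax(G^2) \le d^2$, and the upper bound $d^2/n$ holds deterministically for every $d$-regular $G$.

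For the lower bound we need $\davg(G^2) \gtrsim d^2$ with probability $1-o(1)$. Then $\sqrt{\davg(G^2)}/n \gtrsim d/n$, as claimed.

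The degree of $i$ in $G^2$ equals $|N(i)\cup N^{(2)}(i)|$, which is $d(d-1)+d$ minus the number of collisions among two-step walks from $i$. Collisions arise when two distinct paths $i\to j\to k$ and $i\to j'\to k$ end at the same $k$, or when $k$ lands back in $\extneighi$. Each such collision is witnessed by a cycle of length at most $4$ through $i$ (a $3$-cycle or a $4$-cycle). Therefore
\[
\sum_i \deg_{G^2}(i) \ge n d^2 - c\,(\#\text{3-cycles}+\#\text{4-cycles}) \cdot O(1).
\]

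It then suffices to bound the expected number of short cycles. In the configuration model, or for uniform random regular graphs with $d = o(\sqrt n)$ via the switching/contiguity estimates of McKay–Wormald–Wysocka, the expected number of $k$-cycles is $\asymp (d-1)^k/(2k)$. This gives $O(d^3+d^4)=O(d^4)$ short cycles in expectation. Markov's inequality then bounds the number of short cycles by $O(d^4\omega_n)$ with probability $1-o(1)$, for any slowly growing $\omega_n$. The resulting loss in total $G^2$-degree is $O(d^4\omega_n)$, compared with $nd^2$. Since $d=o(\sqrt n)$, we have $d^4 = o(nd^2)$, so choosing $\omega_n$ growing slowly enough keeps the loss $o(nd^2)$. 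Hence $\davg(G^2)\ge (1-o(1))d^2$ with high probability.

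The main obstacle is making the short-cycle count rigorous for uniform random $d$-regular graphs when $d$ grows with $n$. The configuration model is simple with vanishing probability once $d\to\infty$, so we cannot just condition on simplicity. I would first try citing the McKay–Wormald–Wysocka result on short cycles in random regular graphs, which covers $(d-1)^{2g-1}=o(n)$ and so handles cycles up to length $4$ when $d = o(n^{1/3})$. For $n^{1/3}\lesssim d=o(\sqrt n)$, I would use a direct switching argument, or alternatively the sandwich/coupling results of Kim–Vu and Gao–Isaev–McKay comparing random regular graphs to $G(n,p)$ with $p = d/n$. In $G(n,p)$ the expected number of $4$-cycles is about $(np)^4/8 = d^4/8$, which is $o(nd^2)$, and a monotone coupling transfers the required upper bound on $4$-cycles, hence the lower bound on $\davg(G^2)$.
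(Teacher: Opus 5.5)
Your proposal is correct and is essentially the paper's proof. The upper bound is deterministic from $\lambda(G^2)\le d_{\max}(G^2)\le d^2$, and the lower bound charges the deficit $nd^2-\sum_i d_i(G^2)$ to triangles and $4$-cycles, whose expected counts $\sim d^3/6$ and $\sim d^4/8$ are $o(nd^2)$ when $d=o(\sqrt{n})$, so Markov gives $\davg(G^2)=(1+o_p(1))d^2$.

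The paper takes these expectations directly from Theorem~3(a) of McKay--Wormald--Wysocka, stated to hold on the whole range $d=o(\sqrt{n})$, so your switching/sandwich fallback is unnecessary. Your range claim is also off: the Poisson condition $(d-1)^{2g-1}=o(n)$ at $g=4$ gives only $d=o(n^{1/7})$, not $d=o(n^{1/3})$. Your inequality form of the cycle-charging is more careful than the paper's stated identity $\sum_i d_i(G^2)=nd^2-6X_3-4X_4$, which fails as an equality (e.g.\ for $K_4$) and is only needed as a lower bound.
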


Corollary~\ref{cor:random_d_regular_gate} together with Corollary~\ref{cor:random_d_regular} provides evidence for the widely held belief that the DTE is easier to estimate than the GATE.
To see this, observe that when $G$ is a random $d$-regular graph, then these two corollaries together yield that $\risk(G, \ate_{\mathrm{DTE}}) \lesssim d/n$ whereas $\risk(G, \ate_{\mathrm{GATE}}) \gtrsim d/n$.
To the best of our knowledge, this provides the first theoretical evidence of the difference between the rates at which different network effects can be estimated.
	
	\section{Conjectured Computational Hardness}\label{sec:conjecture}

In the previous section, we have shown that while the derived upper and lower bounds on the minimax risk do not match in all cases, they are both tight for the class of regular graphs and are in fact matching for random regular graphs.
However, this still leaves open the question of whether a simple characterization of the minimax risk is possible.
In this section, we present two conjectures against the existence of an efficient algorithm which can approximately compute the minimax risk and discuss their consequences.
For the purposes of conciseness, we gloss over certain technical aspects of computational complexity theory.

At the heart of our conjectures is an intimate connection between the experimental design problem and computational problems regarding independent sets in graphs.
As noted in Section~\ref{sec:conflict_graph}, the fact that $\risk(G, \ate, q)$ can be expressed as $f_q(\cgraph)$ for some function $f_q$ relies on the translation of the experimental design problem into a problem regarding distributions over maximal independent sets of $\cgraph$.
The central question considered in this section is whether this function $f_q : \allgraphs \to \Reals_{\geq 0}$ is efficiently computable in any meaningful sense.

The computer science literature has established that computational problems involving the independent sets of a graph are provably hard.
For example, it is NP-Hard to calculate the size of the largest independent set $\indset(G)$ of a graph $G$ \citep{karp1972reducibility}.
In fact, even approximating $\indset(G)$ to within a multiplicative $n^{1-\epsilon}$ factor is NP-Hard for all $\epsilon > 0$ \citep{hastad1996clique,zuckerman2006linear}.
This $n^{1-\epsilon}$ inapproximability result presumes that the input graph is a priori unrestricted so that the size of the maximal independent set could range from $1$ to $n$.
By focusing specifically on $d$-regular graphs where the size of the largest independent set is known to lie between $n/d$ and $n$, the inapproximability can be strengthened further still: approximating $\indset(G)$ to within a $\bigO{d/\log^2 d}$ multiplicative factor for a $d$-regular graph $G$ is hard assuming the Unique Games Conjecture \citep{austrin2011inapproximability}.
Based on these results and the connection between the minimax rate of network experiment problems and the structure of independent sets in the conflict graph, we present our two conjectures:

\begin{conjecture} \label{conjecture:approximation}
	Given a network $G$, contrastive effect $\ate$, and moment parameter $q$, the minimax risk $\risk(\ate, G, q) = f_q(\cgraph)$ is NP-Hard to approximate within a multiplicative factor $n^c$ for some $c \in (0,1)$.
\end{conjecture}

\begin{conjecture} \label{conjecture:d-regular-approx}
	Given a $d$-regular graph $G$ and the direct effect $\ate_{\mathrm{DTE}}$, the minimax risk $\risk(G, \ate_{\mathrm{DTE}}, q) = f_q(\cgraph)$ is NP-Hard to approximate within a multiplicative factor of $d^{1/2 - \epsilon}$ for all $\epsilon > 0$.
\end{conjecture}

Conjecture~\ref{conjecture:approximation} posits that even an approximation of the minimax rate is computationally challenging for general network experiment problems.
Conjecture~\ref{conjecture:d-regular-approx} is a strengthening of Conjecture~\ref{conjecture:approximation} specifically for the direct effect, which states that even determining whether the true minimax rate is close to our derived upper bound $d/n$ or lower bound $\sqrt{d} / n$ is computationally infeasible.
As discussed above, our primary evidence for these conjectures is the close relationship between the minimax rate $\risk(G, \ate, q)$ and the structure of maximal independent sets in the conflict $\cgraph$ as well as the general inapproximability results for maximum independent set.
Note that the computational hardness of approximating the minimax rate would not preclude an explicit construction of a minimax optimal statistical procedure; however, implementing such a procedure would likely be computationally inefficient, otherwise one could use Monte Carlo simulations of this procedure to approximate the minimax rate.


These conjectures posit that the minimax rate does not admit a simple description.
If true, these conjectures would have significant consequences for the practical relevance of minimax theory in network experiments.
Perhaps the most valuable aspect of minimax theory is to provide a theoretical benchmark that certain statistical procedures should pass, e.g. being minimax rate-optimal.
If the minimax rate of a network experiment is not efficiently computable, then the rate optimality of a particular statistical procedure cannot be assessed on an arbitrary network and effect.
In this case, we see two possible alternatives.
The first is to show rate optimality of statistical procedures within a certain family of graphs in a worst-case or average-case sense.
For example, Corollary~\ref{cor:random_d_regular} on the rate optimality of Conflict Graph Design for random $d$-regular graphs may be seen as an example of this type of analysis.
The second alternative is to formulate a notion of computationally efficient minimax risk, which restricts the statistical procedure to be implementable in polynomial time.
From this perspective, understanding both the optimal computationally efficient statistical procedure and the rate of its corresponding risk would be natural next steps. 
	
	\section{Conclusion} \label{sec:conclusion}

In this paper, we have studied design-based minimax optimal rates for network experiments.
Our upper and lower bounds provide further understanding of how the minimax rate depends on the conflict graph $\cgraph$.
Generally speaking, our results show that the minimax rates are governed by its ``bulk connectivity'', in the sense that a few well-connected vertices will not affect the rates.
Moreover, the analysis suggests an interplay between the strength of moment restrictions and the extent to which small but well-connected subgraphs can affect the minimax rates.

We believe that the most interesting open directions posed by this work are further investigation into the computational complexity of network experiments.
Is approximately computing the minimax risk $\risk(G, \ate, q)$ computationally hard?
In Section~\ref{sec:conjecture}, we have provided several well-motivated conjectures that suggest the answer is yes.
Progress towards resolving these conjectures in any direction would provide deeper understanding into the fundamental obstructions in constructing optimal statistical procedures.
If statistical procedures which attain the minimax risk require exponential computational resources, then what is the best rates achievable by a computationally efficient experimental design and estimator?
Questions like these lie in the intersection of statistics, computational complexity, and graph theory.
Computational considerations notwithstanding, an exact characterization of the minimax rates in terms of the conflict graph (i.e. a description of $f_q(\cgraph)$) would be valuable in further understanding the essence of these statistical limits.
Doing this would ostensibly require both improved statistical lower bounds and experimental designs, which would be interesting in their own right regardless of computational tractability.
	
	\printbibliography
	
	\newpage
		
	\appendix
	
	\addcontentsline{toc}{section}{Appendix} 
	\part{Appendix} 
	\parttoc 
	\newpage

	\section{Minimax Theory for Design-Based Inference}\label{sec:minimax_theory_appendix}

In this section, we present the proofs of our general results involving the minimax risk for estimating arbitrary causal effects. 
The subsections are organized as follows:
\begin{itemize}
    \item In Section~\ref{sec:supp_minimax_general}, we present a general definition of minimax risk for arbitrary restrictions on the potential outcome functions.
    \item In Section~\ref{sec:monotonicity_appendix}, we establish various monotonicity properties of the minimax risk. 
    \item In Section~\ref{sec:randomness_appendix}, we show that additional randomness does not improve estimation.
    \item In Section~\ref{sec:supp_outcome_restrictions}, we prove that the minimax risk is invariant to some natural restrictions of the potential outcome functions.
    \item Finally, in Section~\ref{sec:supp-cg-characterizes-risk}, we prove that the minimax risk is characterized by the conflict graph.
\end{itemize}

\subsection{A General Definition of Minimax Risk}\label{sec:supp_minimax_general}

In this section, we briefly describe how to extend the definition of minimax optimality to more general design-based causal inference.
We focus on the design-based framework proposed by setting introduced by \citet{harshaw2022design}.

In the framework of \citet{harshaw2022design}, the researcher will posit, for each subject $i \in [n]$, a \emph{subject-level model space} $\cM_i$ which is a linear subspace of measurable functions.
The model spaces serve to encode the structure of the potential outcome functions.
The \emph{entire model} is a subset $\cM \subseteq \cM_1 \times \dots \times \cM_n$, which may incorporate additional moment restrictions.
The \emph{causal estimand} is a linear functional $\tau : \cM \to \R$.

Generalizing the definition in Section~\ref{sec:def-minimax-risk}, we define the \emph{minimax risk} of estimating a causal estimand $\tau$ over a model $\cM$ on the potential outcomes as follows:
\begin{equation}\label{eq:minimax_risk_general}
    \cR(\cM,\tau) 
    = \inf_{\design,\eate} \sup_{y \in \cM} \Esub[\big]{Z \sim \design}{\paren{\eate(Z,y(Z)) - \tau(y)}^2} \enspace.
\end{equation}
When $\ate$ is a contrastive effect and $\model$ is an ANI model, then the definition of minimax risk above reduces to the one presented in Section~\ref{sec:def-minimax-risk}.

\subsection{Monotonicity of the Minimax Risk (Proposition~\ref{prop:monotonicity})}\label{sec:monotonicity_appendix}

We first establish a monotonicity result which holds for arbitrary restrictions on the potential outcome functions and for arbitrary estimands. We then use it to prove each of the monotonicity properties in Proposition~\ref{prop:monotonicity}.

\begin{restatable}{lemma}{monotonicitygeneral}
\label{prop:monotonicity_model}
If $\cM \subseteq \cM'$ then $\cR(\cM,\tau) \leq \cR(\cM',\tau)$.
\end{restatable}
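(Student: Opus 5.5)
The argument works pointwise in the design--estimator pair and then passes to the infimum. Fix an arbitrary design $\design$ and estimator $\eate$. Because the risk functional is defined identically on both models, we can compare the worst-case risks $\sup_{y \in \cM}$ and $\sup_{y \in \cM'}$ of the same pair. Write
\[
\cR(\design,\eate,\cM,\tau) = \sup_{y \in \cM} \Esub[\big]{Z \sim \design}{\paren{\eate(Z,y(Z)) - \tau(y)}^2}.
\]
Every $y \in \cM$ also lies in $\cM'$. Its mean squared error is the same number whether we regard it as an element of $\cM$ or of $\cM'$, because the MSE depends only on $(\design,\eate,y)$ and the value $\tau(y)$. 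The supremum over the larger set therefore dominates the supremum over the smaller one:
\[
\cR(\design,\eate,\cM,\tau) \le \cR(\design,\eate,\cM',\tau).
\]

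Next, take the infimum over all pairs $(\design,\eate)$ on both sides. The class of admissible procedures does not depend on the model, since designs are probability measures on $\Omega$ and estimators are measurable maps $\Omega \times \R^n \to \R$. Taking the infimum preserves a pointwise inequality, which gives $\cR(\cM,\tau) \le \cR(\cM',\tau)$.

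The only point needing care is that $\tau$ must be the same functional on both models. This means taking $\tau$ defined on $\cM'$ and using its restriction to $\cM$. That holds in all intended uses, since the contrastive effect is defined on all of $\model$. There is no real obstacle beyond stating this convention explicitly. The conventions $\sup \emptyset = 0$ and values in $[0,\infty]$ keep the inequality valid in degenerate cases, for example when some risk is infinite.
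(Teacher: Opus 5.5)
Your proposal is correct and matches the paper's proof: fix a design--estimator pair, observe that the supremum over $\cM$ is dominated by the supremum over $\cM' \supseteq \cM$, and take the infimum over all pairs on both sides. Your remark that $\tau$ should be read as a single functional on $\cM'$, restricted to $\cM$, is a sensible clarification that the paper leaves implicit.
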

\begin{proof}
    Let us fix any estimator $\eate$ and a design $P$. Then, since $\cM$ is contained in $\cM'$, we have that
\[
\mathrm{Risk}(P,\eate) = \sup_{y \in \cM} \Esub{Z \sim P}{\paren{\eate(Z,Y(Z)) - \tau(y)}^2} \leq \sup_{y \in \cM'} \Esub{Z \sim P}{\paren{\eate(Z,Y(Z)) - \tau(y)}^2} = \mathrm{Risk}(P,\eate) \enspace.
\]
Taking the infimum over all estimators $\eate$ and designs $P$ on both sides gives the desired result.
\end{proof}

Now let us prove Proposition~\ref{prop:monotonicity} using the above general monotonicity result. For convenience, we restate it below.

\monotonicity*

\begin{proof}
    The proof of each of the monotonicity properties relies on applying Lemma~\ref{prop:monotonicity_model} to the appropriate sets of potential outcome functions. We now show how to do that for each statement separately.
    \begin{enumerate}
        \item \textbf{Monotonicity in q.} This will follow immediately from Lemma~\ref{prop:monotonicity_model} if we could show that $\cM(G,q,C) \subseteq \cM(G,q')$ for any $q' \leq q$. Consider any tuple of potential outcome functions $y \in \cM(G,q)$. For any $k \in \{0,1\}$, we have that
    \[
    \frac{1}{n} \sum_{i=1}^n \abs{y_i(\contrastz{i}{k})}^{q'} 
    = \frac{1}{n} \sum_{i=1}^n \paren{\abs{y_i(\contrastz{i}{k})}^{q}}^{\frac{q'}{q}}
    \leq \paren[\Big]{\frac{1}{n} \sum_{i=1}^n \abs{y_i(\contrastz{i}{k})}^{q}}^{\frac{q'}{q}}
    \leq 1 \enspace,
    \]
    where the first inequality follows from concavity of $x \mapsto x^{q'/q}$ and Jensen's inequality and the final inequality follows from the definition of $y \in \cM(G,q)$
    Therefore, $y \in \cM(G,q')$, which implies that $\cM(G,q) \subseteq \cM(G,q')$.

    \item \textbf{Monotonicity in $G$.} Finally, for the monotonicity in $G$, it suffices to establish that if $G' \subseteq G$, then $\cM(G',q) \subseteq \cM(G,q)$, since then Lemma~\ref{prop:monotonicity_model} applies. To prove this, it suffices to show that $\cM(G') \subseteq \cM(G)$, i.e. the potential outcome functions that satisfy ANI with respect to $G'$ also satisfy ANI with respect to $G$. But this also follows from the definition of graph inclusion. To distinguish between the two different graphs, we use $\tilde{N}_H(i)$ to denote the extended neighborhood of a node $i$ in graph $H$. In particular, suppose $y \in \cM(G')$. Then, let $i \in [n]$ and $z,z' \in \setb{0,1}^n$ be two treatment assignments such that for all $i$ the exposure of $i$ under $z$ and $z'$ is the same with respect to $G$, i.e.
    \[
    \setb{j \in \tilde{N}_{G}(i) : z_j = 1} = \setb{j \in \tilde{N}_{G}(i) : z_j' = 1} \enspace.
    \]
    Since $G' \subseteq G$, we have that $\tilde{N}_{G'}(i) \subseteq \tilde{N}_{G}(i)$, which implies that
    \[
    \setb{j \in \tilde{N}_{G'}(i) : z_j = 1} = \setb{j \in \tilde{N}_{G'}(i) : z_j' = 1} \enspace.
    \]
    Since $y \in \cM(G')$, the last equality implies that $y_i(z) = y_i(z')$. Since this holds for all such $i$ and all $z,z'$, we have that $y \in \cM(G)$, which completes the proof.
    \end{enumerate}
\end{proof}

\subsection{Randomized Estimators }\label{sec:randomness_appendix}

In Section~\ref{sec:preliminaries}, we defined our setup so that the randomness only came from the treatment assignment. 
However, since the experimenter controls the data collection process, they could also utilize additional randomness in choosing the treatment and in constructing the estimator for the causal estimand of interest.
In this section, we show that allowing a statistical procedure to use additional randomness does not improve the minimax risk.

In particular, let $S$ be the sample space of the additional randomness and let $\mathcal{S}$ be a $\sigma$-algebra on $S$. Since the arguments below work for any choice of $S$ and $\mathcal{S}$, we supress the dependence on them in the following definitions.

We define a \emph{design with randomness} as a probability measure $\design_{\mathrm{rand}} : \mathcal{F} \otimes \mathcal{S} \to [0,1]$ over the product space of treatment assignments and the additional randomness. 
Drawing a sample from $\design_{\mathrm{rand}}$ produces a pair $(Z,X)$, where $Z \in \Omega$ is the treatment assignment that is used by the experimenter and $X \in S$ is the additional randomness, which could be used by the estimator. 
We emphasize again that $\design_{\mathrm{rand}}$ is the only source of randomness in this problem and it is controlled by the experimenter.
Also, the experimenter could correlate the additional randomness with the treatment assignment $Z$, which implies that $\design_{\mathrm{rand}}$ is not necessarily a product measure over $\Omega$ and $S$.
A \emph{randomized estimator} $\eate_{\mathrm{rand}} : \Omega \times S \times \R^n  \to \R$ is a measurable function with respect to the product $\sigma$-algebra $\mathcal{F} \otimes \mathcal{S} \otimes \cB(\R^n) $, where $\cB(\R^n)$ is the Borel $\sigma$-algebra on $\R^n$. 
The pair $(\design_{\mathrm{rand}}, \eate_{\mathrm{rand}})$ is referred to as a \emph{randomized statistical procedure}.
In this setting, the experimenter first draws a random pair $(Z,X)$ according to the design with randomness $\design_{\mathrm{rand}}$, where $Z \in \Omega$ and $X \in S$. After applying the treatment $Z$, they observe outcomes $Y(Z)$ and output the estimate $\eate_{\mathrm{rand}}(Z,X,Y(Z))$ of the effect $\tau$.

In analogy with the definitions in Section~\ref{sec:supp_minimax_general}, we can define the \emph{Randomized Minimax Risk} for a given causal effect $\tau$ and model $\cM$ as the minimum risk over all designs with randomness and all randomized estimators, i.e.

\[
\mathcal{R}_{\mathrm{rand}}(\cM,\tau) = \inf_{\design_{\mathrm{rand}},\eate_{\mathrm{rand}}} \sup_{y \in \cM} \Esub{(Z,X) \sim \design_{\mathrm{rand}}}{\paren{\eate_{\mathrm{rand}}(Z,X,Y(Z)) - \tau(y)}^2} \enspace. 
\]

We now show that the minimax risk of randomized statistical procedures is the same as that of deterministic procedures. In particular, we establish that for any pair of design with randomness and randomized estimator, there exists a deterministic estimator that uses only the treatment part of the randomness and has the same or smaller MSE for all potential outcomes functions. 

\begin{restatable}{proposition}{randomness}
\label{prop:randomness_model}
For any model $\cM$ and any effect $\tau$, $\cR_{\mathrm{rand}}(\cM,\tau) = \cR(\cM,\tau)$.
\end{restatable}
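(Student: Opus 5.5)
The inequality $\cR_{\mathrm{rand}}(\cM,\tau) \leq \cR(\cM,\tau)$ is immediate: any deterministic procedure $(\design,\eate)$ is a randomized one, obtained by taking $S$ to be a singleton (or $\design_{\mathrm{rand}} = \design \otimes \delta_{x_0}$) and $\eate_{\mathrm{rand}}(z,x,y) = \eate(z,y)$. Its risk under every $y$ is unchanged, so the infimum over the larger class can only be smaller.

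For the reverse inequality, fix a randomized procedure $(\design_{\mathrm{rand}},\eate_{\mathrm{rand}})$. Let $\design$ be the marginal of $\design_{\mathrm{rand}}$ on $\Omega$. Because $\Omega = \setb{0,1}^n$ is finite, the regular conditional distribution of $X$ given $Z=z$ exists trivially: for each $z$ with $\design(z)>0$ set $\mu_z(\cdot) = \design_{\mathrm{rand}}(\setb{z}\times \cdot)/\design(z)$. Define the deterministic estimator by Rao--Blackwellization over the auxiliary randomness,
\[
\eate(z,w) = \int_S \eate_{\mathrm{rand}}(z,x,w)\, d\mu_z(x), \qquad w \in \R^n,
\]
and set $\eate(z,w)=0$ when $\design(z)=0$ or when the integral is not finite. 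Measurability in $w$ follows from Fubini--Tonelli applied to the jointly measurable $\eate_{\mathrm{rand}}(z,\cdot,\cdot)$, and measurability in $z$ is automatic since $\Omega$ is finite.

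Now fix any $y \in \cM$ and $z$ with $\design(z)>0$. The observed vector $w = y(z)$ is a deterministic function of $z$ alone. Conditional Jensen, i.e.\ convexity of $t\mapsto (t-\tau(y))^2$, then gives
\[
\paren[\big]{\eate(z,y(z)) - \tau(y)}^2 \leq \int_S \paren[\big]{\eate_{\mathrm{rand}}(z,x,y(z))-\tau(y)}^2 d\mu_z(x).
\]
Averaging over $z\sim\design$ and using the disintegration $\design_{\mathrm{rand}}(dz,dx) = \design(dz)\mu_z(dx)$ shows that the MSE of $(\design,\eate)$ is at most that of $(\design_{\mathrm{rand}},\eate_{\mathrm{rand}})$ for every $y$. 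Taking the supremum over $y$ and then the infimum over randomized procedures yields $\cR(\cM,\tau)\leq \cR_{\mathrm{rand}}(\cM,\tau)$.

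The only delicate point is integrability. If the randomized procedure has finite risk at $y$, then $\int(\eate_{\mathrm{rand}}-\tau(y))^2d\mu_z<\infty$ for each $z$ in the support of $\design$, so the conditional mean is finite there and Jensen applies. The zero default is only used on a $y$-dependent null set, but that set is contained in $\setb{z:\design(z)=0}$, which has probability zero. The difficulty is that $\eate$ must be defined independently of $y$. Finiteness of the integral at $w=y(z)$ for a given $y$ is all that is needed, and the default value is used only where the randomized risk is already infinite at that $y$. In that case the inequality holds trivially, so the argument goes through.
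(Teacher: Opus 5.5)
Your proposal is correct and follows the same route as the paper: take the $\Omega$-marginal $\design$ of $\design_{\mathrm{rand}}$, define the deterministic estimator as the conditional mean of $\eate_{\mathrm{rand}}$ over the auxiliary randomness given $Z$, and apply Jensen's inequality to $t \mapsto (t-\tau(y))^2$ before taking the supremum over $y$ and the infimum over procedures. Your extra care fills in details the paper leaves implicit: the conditional law (trivial since $\Omega$ is finite), measurability, and integrability (finite randomized risk at $y$ forces the conditional mean to be finite at every $(z,y(z))$ with $\design(z)>0$, while infinite risk makes the inequality vacuous).
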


\begin{proof}
    Since deterministic estimators are a special case of randomized estimators, we have by definition that
    \[
    \mathcal{R}_{\mathrm{rand}}(\cM,\tau) \leq \mathcal{R}(\cM,\tau) \enspace.
    \]
    To establish the reverse inequality, let's fix any randmized experimental design $\design_{\mathrm{rand}}$ and any randomized estimator $\eate_{\mathrm{rand}}$. Since $\design_{\mathrm{rand}}$ is a probability measure over the product space $\Omega \times S$, let's denote the marginal distribution of $\design_{\mathrm{rand}}$ over $\Omega$ by $\design$. 
    Now, define the estimator $\eate : \Omega \times \R^n \to \R$ as
    \[
    \eate(Z,y(Z)) = \Esub[\Big]{X \sim \design_{\mathrm{rand}}(\cdot \mid Z)}{\eate_{\mathrm{rand}}(Z,X,y(Z))} \enspace.
    \]
    Clearly, $\eate$ is a deterministic estimator that only depends on the treatment assignment $Z$ and the observed outcomes $y(Z)$.

    Let $y \in \cM$ be a potential outcome function in model $\cM$.
    Then, we have that
    \begin{align*}
    \Esub[\Bigg]{(Z,X) \sim \design_{\mathrm{rand}}}{\paren[\Big]{\eate_{\mathrm{rand}}(Z,X,y(Z)) - \tau(y)}^2} & = \Esub[\Bigg]{Z \sim \design}{\Esub[\Big]{X \sim \design_{\mathrm{rand}}(\cdot \mid Z)}{\paren{\eate_{\mathrm{rand}}(Z,X,y(Z)) - \tau(y)}^2}} \\
    & \geq \Esub[\Bigg]{Z \sim \design}{\paren[\Big]{\Esub[\Big]{X \sim \design_{\mathrm{rand}}(\cdot \mid Z)}{\eate_{\mathrm{rand}}(Z,X,y(Z))} - \tau(y)}^2} \\
    \intertext{by using Jensen's inequality for the convex function $x \mapsto x^2$}\\
    &= \Esub[\Bigg]{Z \sim \design}{\paren[\Big]{\eate(Z,y(Z)) - \tau(y)}^2} \enspace.
    \end{align*}
    Therefore, the statistical procedure $(\design, \eate)$ has smaller or equal MSE than the procedure $(\design_{\mathrm{rand}}, \eate_{\mathrm{rand}})$ that operates with extra randomness, for all potential outcome functions $y \in \cM$. Therefore, taking supremum over all $y \in \cM$ and then infimum over all pairs $(\design_{\mathrm{rand}}, \eate_{\mathrm{rand}})$, we have
    \[
    \mathcal{R}_{\mathrm{rand}}(\cM,\tau) \geq \mathcal{R}(\cM,\tau) \enspace,
    \]
    which completes the proof.
\end{proof}

\subsection{Variations of Outcome Restrictions}\label{sec:supp_outcome_restrictions}

In this section, we show that the minimax risk remains unaffected under some natural variations of the outcome restrictions introduced in Section~\ref{sec:preliminaries}. 

We start by showing that if we change the scaling of the constraint on the outcomes, the minimax risk scales accordingly.
In particular, for an interference network $G$, effect $\ate$, moment parameter $q \geq 2$ and positive real number $C>0$, we define the following set of potential outcome functions:
\[
\cM(G,\tau,q,C)=
\setb[\Big]{
	y \in \model: 
	\paren[\Big]{ \frac{1}{n} \sum_{i=1}^n \abs{ y_i(e_{i,k}) }^q  }^{1/q} \leq C
	\text{ for } k \in \setb{0,1}
} \enspace.
\]
This generalizes the $q$th moment restricted model $\momodelq$ defined in Section~\ref{sec:preliminaries}, which is defined with $C = 1$.
Likewise, we can define the minimax risk of estimating the effect $\ate$ over this set of potential outcome functions as
\[
\cR(G,\tau,q,C) = \inf_{\design,\eate} \sup_{y \in \cM(G,\tau,q,C)} \Esub{Z \sim \design}{\paren{\eate(Z,y(Z)) - \tau(y)}^2} \enspace.
\]
We now show that changing the bound on the outcomes results in a simple scaling of the minimax risk. Therefore, it is sufficient to consider the case $C=1$ without loss of generality.

\begin{proposition}
\label{prop:scaling}
Let $G$ be an interference network, $\ate$ be a contrastive effect and $q \geq 2$ be a moment parameter. Then, for any positive real number $C>0$, we have that
\[
\cR(G,\tau,q,C) = C^2 \cdot \cR(G,\tau,q,1) \enspace.
\]
\end{proposition}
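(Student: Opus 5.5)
The plan is a scaling argument: multiplication by $C$ is a bijection between $\cM(G,\tau,q,1)$ and $\cM(G,\tau,q,C)$, and every statistical procedure for one model can be rescaled into a procedure for the other.

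First, the models correspond. The ANI model $\model$ is a linear space: the defining condition $y_i(z)=y_i(z')$ is preserved under scalar multiplication. The moment constraint is positively homogeneous, since $\paren{\frac1n\sum_i|Cy_i(e_{i,k})|^q}^{1/q} = C\paren{\frac1n\sum_i|y_i(e_{i,k})|^q}^{1/q}$, with the obvious modification for $q=\infty$. Hence $y\in\cM(G,\tau,q,1)$ if and only if $Cy\in\cM(G,\tau,q,C)$. Linearity of the contrastive effect also gives $\tau(Cy)=C\tau(y)$.

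Second, the procedures correspond. Given any estimator $\eate$ designed for the unit model, define $\eate_C(z,w)=C\,\eate(z,w/C)$ for $z\in\Omega$ and $w\in\R^n$. This map is measurable because $w\mapsto w/C$ is Borel. For any design $\design$ and any $y\in\cM(G,\tau,q,1)$, we get
\[
\Esub{Z\sim\design}{\paren{\eate_C(Z,Cy(Z))-\tau(Cy)}^2} = C^2\,\Esub{Z\sim\design}{\paren{\eate(Z,y(Z))-\tau(y)}^2}.
\]
Taking the supremum over $y$ and using the bijection from the first step, the maximum risk of $(\design,\eate_C)$ over $\cM(G,\tau,q,C)$ equals $C^2$ times the maximum risk of $(\design,\eate)$ over $\cM(G,\tau,q,1)$. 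Taking the infimum over $(\design,\eate)$ gives $\cR(G,\tau,q,C)\le C^2\cR(G,\tau,q,1)$.

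For the reverse inequality, I apply the same construction with $1/C$ in place of $C$, swapping the roles of the two models. This gives $\cR(G,\tau,q,1)\le C^{-2}\cR(G,\tau,q,C)$, and combining the two inequalities yields equality.

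There is no real obstacle here. The only points to verify are that the rescaled estimator is still measurable and that the design is untouched by the rescaling; both hold because the design does not depend on $y$ and the rescaling acts only on the outcome coordinates.
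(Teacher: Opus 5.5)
Your proposal is correct and follows essentially the same route as the paper. The paper defines the rescaled estimator $\eate_{U,V}(Z,y(Z)) = \frac{U}{V}\,\eate(Z,(V/U)\,y(Z))$ and uses homogeneity of the moment constraint and linearity of $\tau$ to get $\cR(G,\tau,q,U)\le \frac{U^2}{V^2}\,\cR(G,\tau,q,V)$; it then applies this with $(U,V)=(C,1)$ and $(1,C)$, exactly as you do with $C$ and $1/C$. The only cosmetic difference is that you record the exact bijection $y\mapsto Cy$ between the two models, whereas the paper only needs the one-sided inclusion.
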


\begin{proof}
    For any $L>0$ and potential outcome function $y$, we denote by $Ly$ the potential outcome function defined as $(Ly)_i(z) = L \cdot y_i(z)$ for all $i \in [n]$ and $z \in \setb{0,1}^n$.
    Let $U,V>0$ be any positive real numbers.
    Let $\design$ be any design and $\eate$ be any estimator. We define the following estimator $\eate_{U,V}$ as
    \[
    \eate_{U,V}(Z,y(Z)) = \frac{U}{V} \cdot \eate(Z, (V/U) \cdot y(Z)) \enspace.
    \]
    In particular, $\eate_{U,V}$ scales the observed outcomes by $V/U$, applies the estimator $\eate$, and then scales the output by $U/V$.

    Now consider an arbitrary potential outcome function $y \in \cM(G,\tau,q,U)$. Then, we have that $(V/U) \cdot y \in \cM(G,\tau,q,V)$, since for any $k \in \setb{0,1}$, we have that
    \[
    \paren[\Big]{ \frac{1}{n} \sum_{i=1}^n \abs{ ((V/U) \cdot y)_i(e_{i,k}) }^q  }^{1/q} = \frac{V}{U} \paren[\Big]{ \frac{1}{n} \sum_{i=1}^n \abs{ y_i(e_{i,k}) }^q  }^{1/q} \leq \frac{V}{U} \cdot U = V \enspace.
    \]
    Also, by linearity we have 
    \[
    \tau((V/U) \cdot y) = (V/U) \cdot \tau(y) \enspace.
    \]
    Therefore, for any $y \in \cM(G,\tau,q,U)$, we have that
    \begin{align*}
    \Esub[\Big]{\design}{\paren[\big]{\eate_{U,V}(Z,y(Z)) - \ate(y)}^2} &= \Esub[\Bigg]{\design}{\paren[\Big]{\frac{U}{V}\eate(Z,(V/U)\cdot y(Z)) - \frac{U}{V}\ate((V/U)\cdot y)}^2}\\
    &= \frac{U^2}{V^2} \cdot \Esub[\Bigg]{\design}{\paren[\Big]{\eate(Z,(V/U)\cdot y(Z)) - \ate((V/U)\cdot y)}^2}\\
    &\leq \frac{U^2}{V^2} \cdot \sup_{y \in \cM(G,\tau,q,V)} \Esub[\Bigg]{\design}{\paren[\Big]{\eate(Z, y(Z)) - \ate(y)}^2}\enspace,
    \end{align*}
    where in the last step we used the fact that $(V/U) y \in \cM(G,\tau,q,V)$. 
    Therefore, for any statistical procedure $(\design,\eate)$, we can find another statistical procedure $(\design, \eate_{U,V})$ whose maximal error under $\cM(G,\tau,q,V)$ is upper bounded by the scaled maximal error of $(\design,\eate)$ under $\cM(G,\tau,q,U)$. 
    It follows that
    \begin{equation}\label{eq:risk_inequality}
        \cR(G,\tau,q,U) \leq \frac{U^2}{V^2}\cdot \cR(G,\tau,q,V)
    \end{equation}
    We can now apply inequality~\ref{eq:risk_inequality} twice, one with $U = C, V = 1$ and one with $U = 1, V = C$ to obtain the result.
\end{proof}

We now show that the outcomes of units corresponding to exposures that do not appear in the estimand of interest do not affect the minimax risk. In particular, we show that even if we restrict these outcomes to have value $0$, the minimax risk remains the same.

Specifically, for any graph $G$ and estimand $\ate$ with relevant exposures $e_{i,1},e_{i,0}$ for $i \in [n]$, let $\cM^0(G,\tau)$ be the model of outcomes that satisfy ANI with respect to $G$ and have zero outcome in the non-relevant exposures, i.e.
\[
\cM^0(G,\tau) = 
\setb[\Big]{
    y \in \cM(G): 
	y_i(e) = 0 \text{ , if } e \neq e_{i,0},e_{i,1} ,\forall i \in [n]
}
\]
Then, for any graph $G$, estimand $\ate$ and $q \geq 2$, we can define the model of potential outcome functions with bounded $q$ moments and zero non-relevant outcomes as follows
\[
\cM^0(G,\tau,q)=
\cM^0(G,\tau) \cap \cM(G,\tau ,q)
 \enspace.
\]
Analogously to the previous definitions, we can define the minimax risk with respect to outcome functions in $\cM^0(G,\tau,q)$ as follows
\[
\cR^0(G,\tau,q) = \inf_{\design,\eate} \sup_{y \in \cM^0(G,\tau,q)} \Esub{Z \sim \design}{\paren{\eate(Z,y(Z)) - \tau(y)}^2} \enspace.
\]
In the next proposition, we show that the minimax risk is unaffected by the restrictions in the non-relevant exposures.

\begin{proposition}
    For any graph $G$, estimand $\ate$ and $q \geq 2$, 
    \[
    \cR^0(G,\tau,q) = \cR(G,\tau,q) \enspace.
    \]
\end{proposition}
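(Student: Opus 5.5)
The proof establishes the two inequalities separately.

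The direction $\cR^0(G,\tau,q) \leq \cR(G,\tau,q)$ is immediate. Every $y \in \cM^0(G,\tau,q)$ lies in $\momodelq$, because the moment restriction only involves the estimand-relevant exposures. So $\cM^0(G,\tau,q) \subseteq \momodelq$, and Lemma~\ref{prop:monotonicity_model} gives the inequality.

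For the reverse direction $\cR(G,\tau,q) \leq \cR^0(G,\tau,q)$, we show that any procedure $(\design,\eate)$ for the zeroed model can be converted into a procedure $(\design,\tilde\tau)$ for the full model with the same maximum risk. The design is unchanged. Given a realized intervention $z$ and observed outcomes $y(z)$, define a modified outcome vector $\tilde y(z)$ by
\[
\tilde y_i(z) = y_i(z) \cdot \indicator{ h_i(z) \in \setb{e_{i,0}, e_{i,1}} }.
\]
That is, outcomes of units whose realized exposure is relevant are kept, and all others are zeroed. This is computable from $z$ alone, since $h_i(z)$ is a known function of $z$ and $G$. Set $\tilde\tau(z, y(z)) = \eate(z, \tilde y(z))$.

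Next, for each $y \in \momodelq$, let $y^0$ be the potential outcome function that agrees with $y$ on relevant exposures and equals $0$ on all other exposures. Defining it exposure-wise, $y^0_i(e) = y_i(e)\indicator{e \in \setb{e_{i,0},e_{i,1}}}$, guarantees $y^0 \in \model$. The moment constraints are unchanged, so $y^0 \in \cM^0(G,\tau,q)$. Moreover $\tau(y^0)=\tau(y)$, since $\tau$ depends only on relevant exposures. By construction $\tilde y(z) = y^0(z)$ for every $z$. Therefore
\[
\Esub{Z\sim\design}{(\tilde\tau(Z,y(Z))-\tau(y))^2} = \Esub{Z\sim\design}{(\eate(Z,y^0(Z))-\tau(y^0))^2}.
\]
The right-hand side is at most $\sup_{y' \in \cM^0(G,\tau,q)} \Esub{Z\sim\design}{(\eate(Z,y'(Z))-\tau(y'))^2}$. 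Taking the supremum over $y$ on the left, then the infimum over $(\design,\eate)$, gives the reverse inequality.

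The main point needing care is checking that $y^0$ is a valid element of $\model$ and that $\tilde\tau$ is measurable. Both follow because the masking depends only on $z$ through the finitely many exposure maps $h_i$. Composing the measurable $\eate$ with the measurable map $(z,y) \mapsto (z,\tilde y)$ therefore preserves measurability.
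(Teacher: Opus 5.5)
Your proof is correct and follows the paper's argument. One direction is the trivial inclusion $\cM^0(G,\tau,q)\subseteq\cM(G,\tau,q)$; for the other, you zero out the estimand-irrelevant outcomes, feed them to the zeroed-model estimator, and use $\tau(y^0)=\tau(y)$ together with $y^0\in\cM^0(G,\tau,q)$. Your explicit remark that the mask is computable from $z$ through the exposure maps $h_i$ makes precise a point the paper leaves implicit when it writes $\eate_0(z,y^0(z))$.
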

\begin{proof}
    Since $\cM^0(G,\tau,q) \subseteq \cM(G,\tau,q)$, we trivially have
    \[
    \cR^0(G,\tau,q) \leq \cR(G,\tau,q) \enspace.
    \]
    We thus focus on showing the reverse inequality. 
    For any outcome function $y \in \cM(G)$, we denote by $y^0$ the corresponding outcome function that agrees with $y$ in the relevant exposures and has $0$ outcome everywhere else, i.e.
    \[
    y_i^0(z) = \begin{cases}
        y_i(z) &\text{ if } h_i(z) \in \setb{e_{i,0},e_{i,1}}\\
        0 &\text{ otherwise}
    \end{cases}\enspace.
    \]
    Clearly, if $y \in \cM(G,\tau,q)$, then $y^0 \in \cM(G,\tau,q)$, since the moment constraint is not affected by outcomes of exposures that do not appear in $\tau$. 
    Also, we always have $\tau(y) = \tau(y^0)$, since the estimand doesn't depend on the outcomes we are zeroing out.

    Now, let $(\design,\eate_0)$ be an arbitrary statistical procedure. We will construct an estimator $\eate$, such that the worst case performance of $(\design,\eate)$ under $\cM(G,\tau,q)$ is no worse than than of $(\design,\eate_0)$ under $\cM^0(G,\tau,q)$. 
    Indeed, we define $\eate$ in terms of $\eate_0$ as follows
    \[
    \eate(z,y(z)) = \eate_0(z,y^0(z))
    \]
    In words, $\eate$ works by zeroing out the non-relevant outcomes and then applying $\eate_0$ to the result. 
    Then, for any $y \in \cM(G,\tau,q)$ 
    \begin{align*}
         \Esub[\Big]{\design}{\paren[\big]{\eate(Z,y(Z)) - \ate(y)}^2} &= 
         \Esub[\Big]{\design}{\paren[\big]{\eate_0(Z,y^0(Z)) - \ate(y)}^2}\\
        &= \Esub[\Big]{\design}{\paren[\big]{\eate_0(Z,y^0(Z)) - \ate(y^0)}^2}
    \end{align*}
    Taking the supremum over $y \in \cM(G,\tau,q)$ on both sides yields
    \begin{align*}
        \sup_{y \in \cM(G,\tau,q)} \Esub[\Big]{\design}{\paren[\big]{\eate(Z,y(Z)) - \ate(y)}^2} &= 
        \sup_{y \in \cM(G,\tau,q)} \Esub[\Big]{\design}{\paren[\big]{\eate_0(Z,y^0(Z)) - \ate(y^0)}^2}\\
        &= \sup_{y \in \cM^0(G,\tau,q)} \Esub[\Big]{\design}{\paren[\big]{\eate_0(Z,y(Z)) - \ate(y)}^2}\enspace.
    \end{align*}
    The last step follows by the following observation, which follows by the definitions.
    \[
    \cM^0(G,\tau,q) = \setb[\Big]{y^0: y \in \cM(G,\tau,q)} \enspace.
    \]
    Since the above holds for an arbitrary statistical procedure $(\design,\eate)$, we conclude that
    \[
    \cR(G,\tau,q) \leq \cR^0(G,\tau,q) \enspace.
    \]
    The result follows.
\end{proof}

\subsection{Conflict Graph Characterizes Minimax Risk (Proposition~\mainref{prop:cg-characterize-minimax})} \label{sec:supp-cg-characterizes-risk}

In this section, we show that while the conflict graph abstracts away many features of the experimental design problem, it contains the essential information to characterize the minimax risk.
More precisely, we will show that the minimax risk is a function of the conflict graph.
In the main body, this is formalized as Proposition~\mainref{prop:cg-characterize-minimax}, which we reproduce below for completeness.
Recall that $\allgraphs$ is the set of all labeled graphs.

\cgcharacterization*

Our proof strategy will proceed by defining two minimax problems on an arbitrary labeled graph $\cgraph$.
In these minimax problems, each of the vertices in $\cgraph$ has an associated outcome and the target estimand is the average of the outcomes.
A random independent set in $\cgraph$ is selected and the outcomes of its vertices is revealed. 
The minimax problem is to construct both the distribution over independent sets and the estimator.
We first consider distributions over all independent sets, and then the restriction to distributions supported on maximal independent sets.
Through a series of reductions, we will show that the minimax risk of these graph theoretic problems exactly corresponds to the minimax risk of the original network experiment problem.

We begin by formally defining these minimax problems.
Consider a labeled graph $\cgraph = (\cvert, \cedges)$ with $2n$ vertices.
Note that we have assumed the number of vertices is even, for simplicity.
Consider a function $u : \cvert \to \Reals$ which maps each vertex to a real number and may be identified with a vector $u \in \Reals^{|\cvert|}$.
For a moment parameter $q \geq 2$, define the \emph{$q$th moment restricted model}
\[
\cM(\cgraph, q) = \setb{ u \in \Reals^{\cvert} : 
	\paren[\Big]{ \frac{1}{n} \sum_{i=1}^{n} \abs{u(i)}^q }^{1/q} \leq 1 
	\text{ and }
	\paren[\Big]{ \frac{1}{n} \sum_{i=n+1}^{2n} \abs{u(i)}^q }^{1/q} \leq 1 
}
\enspace,
\]
which restricts the $q$th moment for the first half and second half of the outcomes.
Note that the labelling of the graph $\cgraph$ is required to give meaning to ``first half'' and ``second half''.
The labelling will also be required to define the contrast between two groups of vertices.
Formally, we define the \emph{contrastive functional} $\ate : \cM(\cgraph, q) \to \Reals$ as
\[
\ate(u) = \frac{1}{n} \sum_{i = n+1}^{2n} u(i) - \frac{1}{n} \sum_{i=1}^{n} u(i) \enspace.
\]

In these minimax problems, we must select a probability measure over independent sets in $\cgraph$, which we formalize below.
Recall that a subset of vertices $S \subseteq \cvert$ is \emph{independent} if no pair of vertices $i, j \in S$ is adjacent in $\cgraph$.
We denote the independent sets of $\cgraph$ as $\indset(\cgraph)$.
We have that the independent sets form a measureable space $(\indset(\cgraph), \mathcal{P}(\indset(\cgraph)))$, where $\mathcal{P}(\indset(\cgraph))$ is the power set.
Let $\mathbb{D}(\indset(\cgraph))$ be the set of all probability measures on independent sets $\indset(\cgraph)$.

What is observed in these minimax problems is the outcomes for each of the vertices in the (randomly) selected independent set and this is the data which must be used to estimate the average outcome.
In order to simplify the formal notation, we introduce an auxiliary variable $*$ which will denote that no outcome has been observed.
Given an independent set $S \in \indset(\cgraph)$, define the vector $u_S \in \Reals^{\cvert}$ where $u_S(i) = u(i)$ if $i \in S$ and $u_S(i) = *$ if $i \notin S$.
Thus, the experimenter observes $(S, u_S) \in \indset(\cgraph) \times (\Reals_*)^{\cvert}$, where $\Reals_* = \Reals \cup \setb{*}$.
An \emph{estimator} is a measureable function $\eate : \indset(\cgraph) \times (\Reals_*)^{\cvert} \to \Reals$.
We denote the set of all real-valued measurable functions on $\indset(\cgraph) \times (\Reals_*)^{\cvert}$ as $\Gamma(\indset(\cgraph))$.

With this notation in place, we can now define the first minimax problem for labelled graphs.
For a graph $\cgraph$ and a moment parameter $q \geq 2$, define the \emph{independent set minimax risk} $\risk(\cgraph, q)$ as 
\[
\risk(\cgraph, q) = \inf_{\substack{ 
	\design \in \mathbb{D}(\indset(\cgraph)) 
	\\ \eate \in \Gamma(\indset(\cgraph))} 
	} 
	\sup_{u \in \cM(\cgraph, q)}
	\Esub[\Big]{S \sim \design}{ \paren{ \ate(u) - \eate(S, u_S) }^2 }
	\enspace.
\]
The independent minimax risk describes the best achievable mean squared error in this graph estimation problem, defined solely in terms of the labelled graph $\cgraph$.

The second minimax problem is similar, but considers the restriction to observe only maximal independent sets in $\cgraph$.
This restriction is natural{\textemdash}given that we get to observe all outcomes in an independent set, restricting the choice of independent set to be maximal ensures that we observe as much information as possible.
Formally, an independent set $S \in \indset(\cgraph)$ is \emph{maximal} if for any $A \in \indset(\cgraph)$ with $S \subseteq A$, we have that $A = S$.
In other words, $S$ is not contained in a strictly larger independent set.
We denote the set of maximal independent sets in $\cgraph$ by $\maxindset(\cgraph)$.
In an analogous fashion, we denote the set of probability measures on $\maxindset(\cgraph)$ by $\mathbb{D}(\maxindset(\cgraph))$ and the set of real-valued measurable functions on $\maxindset(\cgraph) \times (\Reals_*)^{\cvert}$ as $\Gamma(\maxindset(\cgraph))$.
For a moment parameter $q \geq 2$, we define the \emph{maximal independent set minimax risk} as 
\[
\mirisk(\cgraph, q) = \inf_{\substack{ 
	\design \in \mathbb{D}(\maxindset(\cgraph)) 
	\\ \eate \in \Gamma(\maxindset(\cgraph))} 
	} 
	\sup_{u \in \cM(\cgraph, q)}
	\Esub[\Big]{S \sim \design}{ \paren{ \ate(u) - \eate(S, u_S) }^2 }
	\enspace.
\]
Because this minimax risk considers a restricted class of probability distributions and estimators, it follows directly from the definition that $\risk(\cgraph, q) \leq \mirisk(\cgraph,q)$.

In order to show that the conflict graph $\cgraph$ characterizes the minimax risk of the original network experiment problem $\risk(G, \ate, q)$, we will establish a series of reductions between these three minimax problems:
\[
\risk(G, \ate, q) \geq \risk(\cgraph, q) \geq \mirisk(\cgraph, q) \geq \risk(G, \ate, q)
\enspace,
\]
which will in turn establish equality amongst all three minimax risks.
Each inequality will be established in its own lemma.

In order to aid in these reductions, we establish some basic propositions which draw the necessary connections between outcomes which are observed under network interventions $z \in \Omega$ and independent sets in the corresponding conflict graph $\cgraph$.
For an intervention $z \in \Omega$, define $R_z \subseteq \cvert$ to be the subset of exposures which are observed under $z$, i.e. 
\[
R_z = \setb[\Big]{ e_{i,k} : \expmapi(z) = e_{i,k} } \enspace.
\]
First, we show that each intervention $z \in \Omega$ corresponds to an independent set in the conflict graph $\cgraph$.

\begin{proposition} \label{prop:intervention-to-ind-set}
	Fix an interference network $G$ and contrastive estimand $\ate$.
	For each intervention $z \in \Omega$, the observed exposures $R_z$ form an independent set in the conflict graph $\cgraph$, i.e. $R_z \in \indset(\cgraph)$.
\end{proposition}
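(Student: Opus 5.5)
This is a direct unpacking of the definitions, arguing by contradiction. Suppose $R_z$ is not independent in $\cgraph$. Then there are two exposures $e_{i,k}, e_{j,\ell} \in R_z$ that are adjacent in $\cgraph$.

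Membership in $R_z$ means both exposures are observed under $z$:
\[
\expmapi(z) = e_{i,k} \quadand \expmap{j}(z) = e_{j,\ell} \enspace.
\]
Adjacency in $\cgraph$ means, by the definition of a conflict, that no intervention $z' \in \Omega$ satisfies $\expmapi(z') = e_{i,k}$ and $\expmap{j}(z') = e_{j,\ell}$ simultaneously. Taking $z' = z$ contradicts this. Hence no two vertices of $R_z$ are adjacent, and $R_z \in \indset(\cgraph)$.

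One bookkeeping point concerns the case $i = j$ with $k \neq \ell$, which might look like a separate case. It is not: $\expmapi(z)$ is a single value, so it cannot equal both $e_{i,1}$ and $e_{i,0}$ when these differ. This is guaranteed by the standing assumption $e_{i,1} \neq e_{i,0}$. So each unit contributes at most one vertex to $R_z$, and this situation is in any case already covered by the conflict definition applied to the pair $(e_{i,1}, e_{i,0})$.

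The only place requiring care, which is mild, is keeping the vertices of $\cgraph$ as labelled pairs $(i,k)$ rather than bare subsets of $[n]$. Otherwise, coinciding subsets for different units could be conflated. With that convention fixed, the argument is a one-line contradiction and has no real obstacle.
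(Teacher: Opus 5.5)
Your proof is correct and is essentially the paper's argument: two exposures both realized by the same $z$ cannot, by the definition of a conflict, be adjacent in $\cgraph$, so $R_z$ is independent. Your remarks on the case $i=j$ and on treating vertices as labelled pairs are harmless bookkeeping that the paper leaves implicit.
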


\begin{proof}
	Consider two exposures $ e_{i,k}, e_{j,\ell}$ which are both observed under intervention $z \in \Omega$, i.e. $e_{i,k}, e_{j,\ell} \in R_z$.
	Because they are both observed under $z$, they cannot be in conflict, by definition.
	Thus, they are not adjacent in the conflict graph $\cgraph$.
	This establishes that none of the exposures in $R_z$ are adjacent, therefore it is in an independent set.
\end{proof}

It is worth noting that the reverse direction is not necessarily true, in the following sense: there are independent sets $A \in \indset(\cgraph)$ for which there is not intervention $z \in \Omega$ such that $R_z = A$.
However, the following proposition shows that this reverse direction is true when we restrict $A$ to be a maximal independent set.

\begin{proposition} \label{prop:maxindset-to-intervention}
	Fix an interference network $G$ and contrastive estimand $\ate$.
	There exists an injective function $f: \maxindset(\cgraph) \to \Omega$ with the following property: each maximal independent set $A \in \maxindset(\cgraph)$ has a corresponding intervention $z = f(A)$ satisfying $A = R_z$.
\end{proposition}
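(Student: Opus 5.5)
Given a maximal independent set $A \in \maxindset(\cgraph)$, I will build the intervention $z$ directly from the exposures in $A$. Each vertex $e_{i,k} \in A$ is a subset of $\extneighi$ and prescribes a treatment on every unit of $\extneighi$: $z_j = 1$ if $j \in e_{i,k}$ and $z_j = 0$ if $j \in \extneighi \setminus e_{i,k}$.

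The first step is consistency of these prescriptions. Take $e_{i,k}, e_{j,\ell} \in A$. By the characterization of conflicts in Section~\ref{sec:conflict_graph}, two exposures are in conflict exactly when they prescribe different treatments to some unit of $\extneighi \cap \extneigh{j}$. Since $A$ is independent, no such unit exists. This also covers the case $i = j$: there $e_{i,1} \neq e_{i,0}$ always differ on some unit of $\extneighi$, so they conflict and cannot both lie in $A$. Hence the prescriptions agree on every unit covered by $\bigcup_{e_{i,k} \in A} \extneighi$. I set $z_j$ to the common prescribed value on this union and $z_j = 0$ outside it. By construction $\expmapi(z) = e_{i,k}$ for each $e_{i,k} \in A$, so $A \subseteq R_z$.

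The second step uses maximality to get $R_z = A$. By Proposition~\ref{prop:intervention-to-ind-set}, $R_z$ is an independent set in $\cgraph$, and it contains $A$. Maximality of $A$ then forces $R_z = A$. I define $f(A) = z$.

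Injectivity is immediate: if $f(A) = f(A') = z$, then $A = R_z = A'$.

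The only delicate point is the first step, namely checking that pairwise non-conflict really gives a globally consistent assignment. Because every constraint is a prescription on a single unit, pairwise consistency is enough, so I expect no real obstacle. I also need the convention that a unit may contribute at most one exposure (since $e_{i,1}$ and $e_{i,0}$ conflict); this is what lets $R_z$ be read as a subset of $\cvert$ without ambiguity.
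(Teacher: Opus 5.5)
Your proof is correct and follows the paper's approach. The paper builds the same intervention, iteratively rather than in one shot, checks $A \subseteq R_z$ using that exposures in an independent set agree on overlapping extended neighborhoods, and then uses maximality to conclude $R_z = A$. Your injectivity argument ($A = R_{f(A)}$, so $z \mapsto R_z$ is a left inverse of $f$) is more direct than the paper's, which instead finds adjacent vertices in $A \setminus A'$ and $A' \setminus A$ that force different treatments.
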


\begin{proof}
	Fix a maximal independent set $A$.
	We will construct an intervention $z \in \Omega$ such that $R_z = A$.
	Our construction will proceed in an iterative process.
	Suppose that $A$ has $r$ vertices, which we arbitrarily order as $e_{i_1,k_1}, e_{i_2, k_2} \dots e_{i_r, k_r}$.
	Initialize an intervention $z^{(0)} = \paren{0, \dots 0}$, which corresponds to all subject receiving control.
	For $t = 1 \dots r$, define the intervention $z^{(t)}$ from $z^{(t-1)}$ by the update: for all $j \in [n]$,
	\[
	z^{(t)}(j) \gets 
	\left\{
	\begin{array}{lr}
	z^{(t-1)(j)} &\text{if } j \notin \extneigh{i_t} \\
	1 &\text{if } j \in \extneigh{i_t} \text{ and } j \in e_{i_t,k_t}
	\end{array}
	\right.
	\]
	In other words, we iterate over the exposures in $A$, updating the intervention so that all exposures $e_{i_1, k_1} \dots e_{i_t, k_t}$ are realized by intervention $z^{(t)}$.
	An important aspect in this construction is that if $z^{(t)}(j)$ is set to $1$ at some iteration $t$, then it remains set to $1$ throughout the process.
	This is due to the fact that $A$ forms an independent set, and thus all exposures $e_{i_t, k_t} \in A$ agree on the treatment assignments of their extended neighborhoods.
	Once the iterative procedure finishes, we have that all exposures in $A$ are observed simultaneously under intervention $z = z^{(r)}$.

	The argument above establishes that $A \subseteq R_z$ and requires only that $A$ is an independent set in $\cgraph$.
	In order to establish the reverse containments, i.e. that $R_z \subseteq A$, we need to invoke maximality of $A$.
	But this is now simple: because $A \subseteq R_z$, the definition of maximality implies that $A = R_z$.

	The argument above shows that the function $f: \maxindset(\cgraph) \to \Omega$ with the desired property exists, but it remains to be shown that it is injective.
	Consider two unique maximal independent set $A$ and $A'$.
	By maximality of $A$ and $A'$, there must exist vertices $e_{i,k} \in A \setminus A'$ and $e_{j,\ell} \in A' \setminus A$ which are adjacent and thus in conflict.
	In this case, the construction above would assign different individual treatments to at least one common neighbor of $i$ and $j$.
	It follows that $f(A) \neq f(A')$ and thus $f$ is injective.
\end{proof}

Finally, we need to endow the conflict graph with a labelling scheme so that the moment restrictions and contrastive estimand agree between the network experiment problem and the independent set problems.
Given an interference network $G$ and a contrastive effect $\ate$, we will always assign the following labelling to its vertices: exposure $e_{i,k}$ gets the label $i + n*k$.
The following proposition shows that this corresponds to a suitable bijection between the classes $\momodelq$ and $\cM(\cgraph,q)$, under which the estimand of interest is also preserved.
In a slight abuse of notation, we will always be using the more restricted potential outcome model $\cM^0(G,\tau,q)$ considered in Section~\ref{sec:supp_outcome_restrictions}, though we suppress this notation for simplicity since the minimax risks are equal.

\begin{proposition} \label{prop:outcome-equivalence}
	Fix an interference network $G$, a contrastive estimand $\ate$, and let $\cgraph$ be the correspondingly labelled conflict graph.
	The function $\phi : \momodelq \to \cM(\cgraph, q)$ given by
	\[
	\phi( y )(i,k) = y_i(e_{i,k}) 
	\]
	is a well-defined bijection satisfying
	$\ate(y) = \ate(\phi(y))$.
\end{proposition}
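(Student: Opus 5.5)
The plan is to verify three things in turn: that $\phi$ lands in $\cM(\cgraph,q)$, that it is injective and surjective, and that it preserves the estimand. Throughout, $\momodelq$ is read as the zero-padded model $\cM^0(G,\tau,q)$ from Section~\ref{sec:supp_outcome_restrictions}, as stated just before the proposition. Under the labelling $e_{i,k}\mapsto i+nk$, the vector $\phi(y)$ has its first $n$ coordinates equal to $y_i(e_{i,0})$ and its last $n$ coordinates equal to $y_i(e_{i,1})$.

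\textbf{Well-definedness.} By the exposure-mapping form of the ANI model, $y_i(e_{i,k})$ is a single real number for each $i$ and $k$. The two moment constraints defining $\momodelq$ for $k=0,1$ are literally the constraints on the first and second halves of $\phi(y)$ in the definition of $\cM(\cgraph,q)$. Hence $\phi(y)\in\cM(\cgraph,q)$.

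\textbf{Injectivity.} Suppose $\phi(y)=\phi(y')$ with $y,y'\in\cM^0(G,\tau,q)$. Then $y$ and $y'$ agree on all relevant exposures. Both vanish on every irrelevant exposure, by the definition of $\cM^0$. Since each $y_i(z)$ depends on $z$ only through $\expmapi(z)$, we conclude $y_i(z)=y'_i(z)$ for all $i$ and $z$.

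\textbf{Surjectivity.} Given $u\in\cM(\cgraph,q)$, define $y_i(z)=u(i+nk)$ if $\expmapi(z)=e_{i,k}$, and $y_i(z)=0$ otherwise. This is unambiguous because $e_{i,1}\neq e_{i,0}$, which we assumed without loss of generality. By construction $y_i$ depends on $z$ only through $\expmapi(z)$, so $y\in\model$. It vanishes on irrelevant exposures and satisfies the moment constraints because $u$ does, and $\phi(y)=u$.

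\textbf{Estimand preservation.} The identity $\ate(y)=\frac1n\sum_i y_i(e_{i,1})-\frac1n\sum_i y_i(e_{i,0})=\ate(\phi(y))$ is immediate from the labelling.

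The only subtle step is surjectivity. Its content is that every exposure-level assignment of values extends to a genuine potential outcome function, which relies on the exposure sets being exactly the level sets of $\expmapi$ and on $e_{i,1}\neq e_{i,0}$. I would also remark that the construction is consistent on interventions realizing no relevant exposure, where the value is $0$ as $\cM^0$ requires.
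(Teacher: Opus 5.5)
Your proposal is correct and takes the same route as the paper, which simply calls this a bookkeeping exercise: the labelling $e_{i,k}\mapsto i+nk$ sends the $k=0$ outcomes to the first half of the coordinates and the $k=1$ outcomes to the second half, so the moment constraints and the contrast match coordinate by coordinate. Your write-up makes explicit what the paper leaves implicit: injectivity requires reading $\momodelq$ as the zero-padded model $\cM^0(G,\ate,q)$, and surjectivity uses $e_{i,1}\neq e_{i,0}$ together with the fact that every exposure in $\expseti$ is realized by some intervention.
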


\begin{proof}
	The proof is essentially a bookkeeping exercise.
	Both claims follow because the bijection maps the potential outcomes $y_i(e_{i,0})$ to the first half of vertices labelled $1 \dots n$ and the potential outcomes $y_i(e_{i,1})$ to the second half of vertices labelled $n+1 \dots 2n$.
\end{proof}

We are now ready to proceed with the three reductions. 
Each of these reductions relies on a similar proof strategy, the key structure of which we provide now.
Abstractly, we seek to show that the minimax risk for Problem A is greater than that of Problem B.
Given a statistical procedure $(\design, \eate)$ for Problem A, we construct a statistical procedure $(\design^*, \eate^*)$ for Problem B so that the maximum risk of the second procedure is upper bounded by that of the first, establishing the minimax reduction.
Each construction has essentially the following components:
\begin{itemize}
	\item \textbf{Natural Mapping}:
	We show the existence of a natural mapping $f$ from the sample space in Problem A to the sample space in Problem B.
	Importantly, the observed information in Problem B completely determines the observed information in the pre-image of $f$ for Problem A.

	\item \textbf{New Probability Measure}:
	The new probability measure $\design^*$ is defined as the pushforward measure of the original probability measure $\design$ under this mapping, i.e. roughly speaking $\design^*(E) = \design(f^{-1}(E))$.

	\item \textbf{New Estimator}:
	The new estimator $\eate^*$ is defined as the conditional expectation of the original estimator $\eate$ under the pre-image of $f$, i.e. roughly speaking, $\eate^*(E) = \E{\eate(x) \mid x \in f^{-1}(E)}$.
	Because $f$ is such that observed information in Problem B completely determines the observed information in the pre-image of $f$ for Problem A, this is a well-defined estimator for Problem B.

	\item \textbf{Jensen's Inequality}:
	An application of Jensen's inequality shows that the maximum risk of the original procedure  $(\design, \eate)$ for Problem A provides an upper bound on the maximum risk of the new procedure $(\design^*, \eate^*)$ for Problem B.
	Thus, the minimax risk is bounded as desired.
\end{itemize}

We will start with the first reduction, which shows that the minimax risk of the network experiment problem is at most the minimax risk of the independent set problem.

\begin{lemma} \label{lemma:network-to-graphical}
	Fix an interference network $G$, a contrastive estimand $\ate$, and let $\cgraph$ be the corresponding conflict graph.
	For every moment parameter $q \geq 2$, $\risk(G, \ate, q) \geq \risk(\cgraph, q)$.
\end{lemma}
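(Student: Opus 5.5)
Given a procedure $(\design, \eate)$ for the network experiment problem, I will build a procedure $(\design^*, \eate^*)$ for the independent set problem on $\cgraph$ whose maximum risk is no larger. The template is the one described just before the statement: a natural map, a pushforward measure, a conditional-expectation estimator, and then Jensen's inequality.

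\textbf{Step 1 (reduction of the model).} I will work throughout with the restricted model $\cM^0(G,\tau,q)$. By the proposition in Section~\ref{sec:supp_outcome_restrictions}, this does not change $\risk(G,\ate,q)$. The bijection $\phi$ of Proposition~\ref{prop:outcome-equivalence} then identifies each $y$ with a vector $u=\phi(y)\in\cM(\cgraph,q)$, with $\ate(y)=\ate(u)$. Every observed outcome $y_i(Z)$ is determined by $u_{R_Z}$ and $Z$: it equals $u(e_{i,k})$ if $h_i(Z)=e_{i,k}$ is relevant, and $0$ otherwise.

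\textbf{Step 2 (map, measure, estimator).} The natural map is $f:\Omega\to\indset(\cgraph)$, $f(z)=R_z$, which is well defined by Proposition~\ref{prop:intervention-to-ind-set}. I set $\design^*=\design\circ f^{-1}$. For an independent set $S$ with $\design^*(S)>0$, I define
\[
\eate^*(S,u_S)=\sum_{z: R_z=S}\frac{\design(z)}{\design^*(S)}\,\eate\bigl(z,y^{u}(z)\bigr),
\]
and I define $\eate^*$ arbitrarily (say as $0$) on null sets. Here $y^u=\phi^{-1}(u)$. This is a legitimate estimator because, for each $z$ with $R_z=S$, the observed vector $y^u(z)$ depends only on $z$ and $u_S$, by Step 1. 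Measurability is automatic, since $\Omega$ and $\indset(\cgraph)$ are finite and the expression is a finite combination of measurable functions of $u_S$.

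\textbf{Step 3 (Jensen).} Fix $u$ and let $y=\phi^{-1}(u)$. Conditioning on $R_Z=S$ and applying Jensen's inequality to $x\mapsto x^2$ gives
\[
\Esub{S\sim\design^*}{(\eate^*(S,u_S)-\ate(u))^2}\le \Esub{Z\sim\design}{(\eate(Z,y(Z))-\ate(y))^2}.
\]
Taking the supremum over $u$, using the bijectivity of $\phi$, and then taking the infimum over $(\design,\eate)$ gives $\risk(\cgraph,q)\le\risk(G,\ate,q)$.

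The main obstacle is the well-definedness check in Step 2: showing that $y(z)$ is a function of $(z,u_S)$ alone once $S=R_z$. This is exactly where the zero-irrelevant-outcome model is needed, because otherwise unobserved, non-relevant outcomes would carry information not present in $u_S$. Everything else is bookkeeping.
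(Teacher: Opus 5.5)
Your proposal is correct and is essentially the paper's own argument. You push $\design$ forward under $z \mapsto R_z$, take $\eate^*$ to be the conditional expectation of $\eate$ given $R_Z = S$, and conclude by Jensen's inequality, all within the zero-irrelevant-outcome model $\cM^0(G,\tau,q)$ so that $\phi$ is a bijection and $y(z)$ is determined by $(z, u_{R_z})$. You handle null sets and explain why $\cM^0$ is needed somewhat more explicitly than the paper does, and for this direction you only need the trivial inequality $\cR^0(G,\tau,q) \le \cR(G,\tau,q)$, not the full equality.
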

\begin{proof}
	Fix a design-estimator pair $(\design, \eate)$ for the network experiment problem, i.e. $\design \in \mathcal{D}(\Omega)$ is a probability measure on interventions and $\eate \in \Gamma(\Omega)$ is a real-valued measureable function on network outcomes.
	Our goal is to construct a procedure $(\design^*, \eate^*)$ for the independent set problem, where $\design^* \in \mathbb{D}(\indset(\cgraph))$ is a probability measure on independent sets and $\eate^* \in \Gamma(\indset(\cgraph))$ is a measureable function on the observations from an independent set.

	We consider the natural mapping $f: \Omega \to \indset(\cgraph)$ from network interventions to independent sets given by
	Proposition~\ref{prop:intervention-to-ind-set}; namely, that the intervention $z$ can be mapped to the independent set $R_z$ of observed exposures in the conflict graph $\cgraph$. 
	Crucially, all interventions $z$ which map to the same independent set $R_z$ provide the same information about the outcomes.
	
	Let $\design^*$ be the pushforward measure of $\design$ thru $f$. 
	In particular, the new probability measure assigns the following mass to each independent set:
	\[
	\design^*(S) 
	= \sum_{z \in \Omega} \indicator{ f(z) = S } \cdot \design(z)
	= \design(  \setb{ z : f(z) =S } )
	\enspace.
	\]

	The new estimator $\eate^*$ is defined via taking a conditional expectation of the original estimator $\eate$ under the original design $\design$ where the conditioning event is that $\setb{f(z) = S}$, i.e.
	\[
	\eate^*(S, u_S) = \Esub[\Big]{ Z \sim \design }{ \eate(Z, \phi^{-1}(u)(Z)) \mid f(z) = S }
	\enspace,
	\]
	where $\phi : \momodelq \to \cM(\cgraph,q)$ is the bijection described in Proposition~\ref{prop:outcome-equivalence}.
	Note that $\eate^*(S, u_S)$ is in fact a well-defined estimator because the outcomes observed in $u_S$ provide enough information to evaluate $\phi^{-1}(u)(z)$ for each intervention $z$ such that $f(z) = S$.

	Finally, let us bound the risk of the new procedure in terms of the old one.
	Fix functions $y \in \momodelq$ and $u \in \cM(\cgraph, q)$ satisfying $\phi(y) = u$.
	Using Jensen's inequality, we have that
	\begin{align*}
		\Esub[\Big]{ z \sim \design }{ \paren[\big]{ \ate(y) - \eate(z, y(z)) }^2 } 
		&\geq \Esub[\Big]{ z \sim \design }{
			\paren[\Big]{
				\ate(y) - \E{ \eate(z, y(z)) \mid f(z) = S }
			}^2
		} \\
		&= \Esub[\Big]{ S \sim \design^* }{
			\paren[\Big]{
				\ate(u) - \eate^*(S, u_S)
			}
		} \enspace,
	\end{align*}
	where the last line used the definition of the new procedures $(\design^*, \eate^*)$ together with the fact that $\ate(y) = \ate(u)$ for $u = \phi(y)$ (Proposition~\ref{prop:outcome-equivalence}).
	Because $\phi$ is a bijection, we have that the maximum risk of the first procedure $(\design, \eate)$ is an upper bound on the second procedure $(\design^*, \eate^*)$, i.e.
	\[
	\sup_{y \in \momodelq} 
	\Esub{z \sim \design}{ \paren[\big]{ \ate(y) - \eate(z, y(z)) }^2  }
	\geq 
	\sup_{u \in \cM(\cgraph, q)}
	\Esub{S \sim \design^*}{ \paren[\big]{ \ate(u) - \eate(S, u_S) }^2  }
	\enspace.
	\]
	To establish the claim, we consider the infimum over all procedures $(\design, \eate)$ and their mappings to $(\design^*, \eate^*)$. 
\end{proof}

We are now ready to proceed to the next reduction.
This time, we show that the independent set minimax risk is at least as large as the restricted independent set minimax risk.

\begin{lemma} \label{lemma:graphical-to-restricted}
	Fix a labeled graph $\cgraph$.
	For any moment parameter $q \geq 2$, we have that $\risk(\cgraph, q) \geq \mirisk(\cgraph, q)$.
\end{lemma}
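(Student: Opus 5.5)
We prove Lemma~\ref{lemma:graphical-to-restricted} with the same four-step reduction template used for Lemma~\ref{lemma:network-to-graphical}: a natural mapping, a pushforward measure, a conditional-expectation estimator, and Jensen's inequality. Fix an arbitrary procedure $(\design,\eate)$ with $\design \in \mathbb{D}(\indset(\cgraph))$ and $\eate \in \Gamma(\indset(\cgraph))$. Our goal is a procedure $(\design^*,\eate^*)$ with $\design^* \in \mathbb{D}(\maxindset(\cgraph))$ whose maximum risk is no larger.

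\textbf{Natural mapping.} Fix once and for all a deterministic map $g : \indset(\cgraph) \to \maxindset(\cgraph)$ with $S \subseteq g(S)$ for every $S$. For instance, we can greedily add vertices in label order to $S$ until it is maximal. Such a map exists because every independent set extends to a maximal one, since $\cgraph$ is finite.

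\textbf{New measure and estimator.} Let $\design^*$ be the pushforward of $\design$ under $g$, so $\design^*(A) = \design(\setb{S : g(S) = A})$. For $A$ in the support of $\design^*$, define
\[
\eate^*(A, u_A) = \sum_{S : g(S) = A} \frac{\design(S)}{\design^*(A)} \, \eate(S, u_S),
\]
and set $\eate^*$ to be arbitrary, say $0$, elsewhere. This estimator is well defined because each $S$ in the preimage satisfies $S \subseteq A$, so $u_S$ is a coordinate restriction of $u_A$: entries outside $S$ are replaced by $*$. Hence $\eate^*$ depends only on the observation $(A,u_A)$. Measurability is immediate, since it is a finite combination of measurable functions composed with coordinate restrictions, and the index spaces are finite.

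\textbf{Jensen step.} For any fixed $u \in \cM(\cgraph,q)$, condition on $g(S) = A$ and apply Jensen's inequality to $x \mapsto (\ate(u) - x)^2$:
\[
\Esub[\big]{S\sim\design}{(\ate(u) - \eate(S,u_S))^2} \ge \Esub[\big]{A\sim\design^*}{(\ate(u) - \eate^*(A,u_A))^2}.
\]
Take the supremum over $u$ on both sides. Then take the infimum over $(\design,\eate)$, noting that every such pair yields an admissible pair for the maximal-independent-set problem. This gives $\risk(\cgraph,q) \ge \mirisk(\cgraph,q)$.

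The only real subtlety is the well-definedness in the estimator step, namely that the data observed on $A$ determine every $u_S$ for $S \in g^{-1}(A)$. Containment $S \subseteq A$ settles this. Everything else is routine bookkeeping with finite sums.
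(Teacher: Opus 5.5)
Your proposal is correct and follows the paper's own proof essentially step for step. Both arguments use an inclusion-respecting map from independent sets to maximal independent sets, take the pushforward of the design, define the new estimator as a conditional expectation (which you write out explicitly as a finite weighted sum, set arbitrarily off the support), and finish with Jensen's inequality, a supremum over outcomes, and an infimum over procedures.
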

\begin{proof}
	Fix a pair $(\design, \eate)$ for the independent set problem where $\design \in \mathbb{D}(\indset(\cgraph))$ is a probability measure on independent sets and $\eate \in \Gamma(\indset(\cgraph))$ is a measureable function of observed outcomes.
	Our goal is to construct a procedure $(\design^*, \eate^*)$ for maximal independent set problem where $\design^* \in \mathbb{D}(\maxindset(\cgraph))$ is a probability measure on maximal independent sets and $\eate^* \in \Gamma(\maxindset(\cgraph))$ is a measureable function on the observations from maximal independent sets.
	
	To this end, consider any mapping $f : \indset(\cgraph) \to \maxindset(\cgraph)$ that maps independent sets to maximal independent sets and also satisfies the additional inclusion principle: for all independent sets $S$, we have that $S \subseteq f(S)$.
	For our purposes, any mapping satisfying the inclusion principle will suffice.
	Crucially, any such mapping satisfies the following property: the outcomes we observe in a maximal independent set $A$ completely determine all the outcome information we would observed in a subset $S \subseteq A$. 

	Let $\design^*$ be the pushforward measure of $\design$ through $f$.
	In particular, the new probability measure assigns the following mass to each maximal independent set:
	\[
	\design*(A) 
	= \sum_{S \in \indset(\cgraph)} \indicator{f(S) = A} \cdot \design(S)
	= \design( ( \setb{ S : f(S) = A } )
	\enspace.
	\]

	The new estimator is defined via taking a conditional expectation of the original estimator $\eate$ under $\design$ where the conditioning event is that $\setb{f(S) = A}$, i.e.
	\[
	\eate^*(A, u_A)
	= \Esub[\Big]{ S \sim \design }{
		\eate(S, u_S) \mid f(S) = A
	}
	\enspace.
	\]
	Note that this is indeed a well-defined estimator because the observed outcomes in the larger independent set $A$ provide enough information to evaluate $u_S$ for each subset $S \subseteq A$.

	Finally, upper bounding the risk of the new procedure in terms of the old one is an application of Jensen's inequality together with the definition of the newly constructed procedure.
	For an arbitrary $u \in \cM(\cgraph,q)$, we have that
	\begin{align*}
		\Esub[\Big]{S \sim \design}{ (\ate(u) - \eate(S,u_S))^2 }
		&\geq \Esub[\Big]{S \sim \design}{ \paren[\Big]{ \ate(u) - \E{ \eate(S, u_S) \mid f(u_S) = A} }^2 } \\
		&= \Esub[\Big]{A \sim \design^*}{  \paren[\Big]{ \ate(u) - \eate^*(A, u_A) }^2 }
	\end{align*}
	The inequality between minimax risks follows by taking the supremum over outcomes and infimum over procedures.
\end{proof}

Finally, we arrive at the third reduction.
This time, we show that the minimax risk of the network experiment problem is at most the minimax risk of the associated maximal independent set problem.
This reduction is more direct and so we do not need to use any conditional expectation nor Jensen's inequality.

\begin{lemma} \label{lemma:restricted-to-network}
	Fix an interference network $G$, a contrastive estimand $\ate$, and let $\cgraph$ be the corresponding conflict graph.
	For every moment parameter $q \geq 2$, $\mirisk(\cgraph, q) \geq \risk(G, \ate, q)$.
\end{lemma}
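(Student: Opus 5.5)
Given a procedure $(\design, \eate)$ for the maximal independent set problem, with $\design \in \mathbb{D}(\maxindset(\cgraph))$ and $\eate \in \Gamma(\maxindset(\cgraph))$, I will build a network procedure $(\design^*, \eate^*)$ whose maximum risk over $\momodelq$ equals the maximum risk of $(\design,\eate)$ over $\cM(\cgraph,q)$. Taking the infimum over $(\design,\eate)$ then gives $\risk(G,\ate,q) \leq \mirisk(\cgraph,q)$. As the text before the lemma anticipates, no conditional expectation is needed.

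The natural mapping is the injective $f : \maxindset(\cgraph) \to \Omega$ from Proposition~\ref{prop:maxindset-to-intervention}. It satisfies $R_{f(A)} = A$ for every maximal independent set $A$.
\begin{itemize}
\item \emph{Design.} Let $\design^*$ be the pushforward of $\design$ under $f$, so $\design^*(z) = \design(A)$ if $z = f(A)$ and $\design^*(z) = 0$ if $z$ is not in the image of $f$. Injectivity makes this well defined and gives $\design^*(f(A)) = \design(A)$.
\item \emph{Estimator.} For $z = f(A)$, the observed data $y(z)$ determine $y_i(e_{i,k})$ for exactly those exposures $e_{i,k} \in R_z = A$. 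That is, they determine $u_A$ for $u = \phi(y)$, with $\phi$ the bijection of Proposition~\ref{prop:outcome-equivalence}. So set $\eate^*(z, y(z)) = \eate(f^{-1}(z), u_A)$, where $u_A$ is read off from $y(z)$ by placing $y_i(z)$ at vertex $e_{i,k}$ whenever $\expmapi(z) = e_{i,k}$ and $*$ elsewhere.
\item \emph{Off-image interventions.} On interventions outside the image of $f$, set $\eate^*$ to $0$. These have $\design^*$-probability zero, so they do not affect the risk.
\end{itemize}
Measurability is automatic because $\Omega$ is finite.

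For the risk computation, fix $y \in \momodelq$ and let $u = \phi(y) \in \cM(\cgraph,q)$. By Proposition~\ref{prop:outcome-equivalence}, $\ate(y) = \ate(u)$. Then
\[
\Esub{Z\sim\design^*}{(\eate^*(Z,y(Z)) - \ate(y))^2} = \sum_{A} \design(A)\,(\eate(A,u_A) - \ate(u))^2 = \Esub{A\sim\design}{(\eate(A,u_A)-\ate(u))^2}.
\]
The supremum over $y$ of the left side equals the supremum over $u$ of the right side, because $\phi$ is a bijection.

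The main subtlety is the identification of the observed data: one must check that $y(f(A))$ encodes exactly $u_A$ and nothing more. The worry would be extra information on non-relevant exposures. This is handled by working, as the paper does, with the restricted model $\cM^0(G,\tau,q)$, where non-relevant outcomes are identically zero and carry no information; the minimax risks of the two models agree by the earlier proposition.

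The second check is that for $e_{i,k} \in A$ we have $\expmapi(f(A)) = e_{i,k}$. This is exactly $R_{f(A)} = A$, and it means the position of each observed outcome in $u_A$ is determined by $z$.

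Combining this lemma with Lemmas~\ref{lemma:network-to-graphical} and \ref{lemma:graphical-to-restricted} closes the chain of inequalities. All three risks are therefore equal, which proves Proposition~\ref{prop:cg-characterize-minimax} with $f_q(\cgraph) = \risk(\cgraph,q)$.
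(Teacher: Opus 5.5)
Your proposal is correct and follows the paper's proof essentially step for step. You push $\design$ forward through the injective map $f$ of Proposition~\ref{prop:maxindset-to-intervention}, set $\eate^*(z,y(z)) = \eate(f^{-1}(z), \phi(y)_{f^{-1}(z)})$ on the image of $f$, and conclude that the two maximum risks coincide because $\phi$ is a bijection with $\ate(y) = \ate(\phi(y))$. Your $f^{-1}(z)$ is the correct reading of what the paper writes as $A = f(z)$. In this direction the passage to $\cM^0(G,\tau,q)$ is not even needed: $\eate^*$ can ignore the non-relevant observed outcomes, and $\phi$ is surjective on the full model, which is enough to equate the suprema.
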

\begin{proof}
	Fix a procedure $(\design, \eate)$ for the maximal independent set problem where $\design \in \mathcal{D}(\maxindset(\cgraph))$ is a probability measure over maximal independent sets and $\eate \in \Gamma(\maxindset(\cgraph))$ is a real-valued measurable function on maximal independent sets.
	Our goal is to construct a statistical procedure $(\design^*, \eate^*)$ for the network experimental design problem, where $\design^* \in \mathbb{D}(\Omega)$ is a measure on network interventions and $\eate^* \in \Gamma(\Omega)$ is a measurable function on the observations.

	Consider the injective mapping $f : \maxindset(\cgraph) \to \Omega$.
	The function has two key properties: first, it is injective so that maximal independent sets are in bijection to a subset of interventions.
	Second, given a maximal independent set $A$ with intervention $f(A) = z$, it holds that $A$ is exactly the observed outcomes under intervention $z$.
	Thus, the potential outcomes observed under the intervention $z$ exactly correspond to the vertex outcomes that would be observed under $A$. 
	This is the crucial property that we need for this final reduction. 
	
	The new design and estimator are simple: they are essentially reconstructing exactly the procedure $(\design, \eate)$.
	Fix a network intervention $z \in \Omega$.
	If there exists a maximal independent set $A$ such that $f(A) = z$, then assign probability mass $\design^*(z) = \design(A)$.
	Otherwise, if no such $A$ exists then $\design^*(z) = 0$.
	We only need to define the estimator on interventions in the image of $f$.
	For these interventions $z \in f(\maxindset(\cgraph))$, define the new estimator as 
	\[
	\eate^*(z, y(z)) = \eate(A, \phi(y)_A) \enspace,
	\]
	where $A = f(z)$ and $\phi$ is the bijection in Proposition~\ref{prop:outcome-equivalence}.
	Note that $\phi(y)_A$ can be reconstructed exactly from the information observed in $y(z)$, so that $\eate^*$ is indeed a well-defined estimator.

	Now we show that the risks are equivalent.
	Consider potential outcome function $y \in \momodelq$ with $u = \phi(y)$.
	By the definitions above, we have the equality 
	\[
	\Esub[\Big]{A \sim \design}{ (\ate(u) - \eate(A,u_A) )^2  }
	= \Esub[\Big]{z \sim \design^*}{ (\ate(y) - \eate(z,y(z)) )^2  }
	\enspace,
	\]
	where we have used that $\ate(y) = \ate(u)$ from Proposition~\ref{prop:outcome-equivalence}.
	Because $\phi$ is a bijection, the maximum risks of the two procedures are equal:
	\[
	\sup_{u \in \cM(\cgraph,q)} \Esub[\Big]{A \sim \design}{ (\ate(u) - \eate(u,u_A) )^2  }
	= 
	\sup_{y \in \momodelq} \Esub[\Big]{z \sim \design^*}{ (\ate(y) - \eate(z,y(z)) )^2  }
	\enspace.
	\]
	The final inequality between the minimax risks follows by taking the infimum of both sides and using the mapping from $(\design, \eate) \mapsto (\design^*, \eate^*)$.
\end{proof}

After completing these three reductions, we are now ready to prove Proposition~\ref{prop:cg-characterize-minimax}.

\begin{proof}[Proof of Proposition \mainref{prop:cg-characterize-minimax}]
	Fix an interference network $G$ and a constrastive estimand $\ate$ and let $\cgraph$ be the resulting conflict graph.
	Fix a moment parameter $q \geq 2$.
	The lemmas~\ref{lemma:network-to-graphical}, \ref{lemma:graphical-to-restricted}, and \ref{lemma:restricted-to-network} demonstrate the series of inequalities of the three minimax risks
	\[
	\risk(G, \ate, q) \geq \risk(\cgraph, q) \geq \mirisk(\cgraph, q) \geq \risk(G, \ate, q)
\enspace,
	\]
	which in turn establishes that they are all equal.
	Thus, we have that the minimax risk of the network experiment $\risk(G, \ate, q)$ is equal to the minimax risk of the maximum independent set problem $\mirisk(\cgraph, q)$.
	In this sense, we can take $f_q = \mirisk(\cgraph, q)$.
\end{proof}

The proof above show that the minimax risk of the network experiment problem can be completely characterized by the conflict graph, albeit through a similarly complex minimax formulation.
The reductions themselves show that the structure of the maximal independent sets in $\cgraph$ contain the relevant information.

	\section{Lower Bounds on the Minimax Risk}\label{sec:lower_bounds_appendix}
We start by presenting the proof of LeCam's method in Section~\ref{sec:lecam_appendix}. We then continue with the proof of the global lower bound in Section~\ref{sec:global_lower_bound_appendix}, followed by the proof of the local lower bound in Section~\ref{sec:local_lower_bound_appendix}.

\subsection{Le Cam's Method (Proposition~\ref{prop:lecam})}\label{sec:lecam_appendix}

For convenience, we restate the proposition here.

\lecam*

As we discussed in Section~\ref{sec:lower_bound_tools}, the proof proceeds in two steps. In the first step, we argue that if we could solve the estimation problem, we could also test between two different hypotheses about the potential outcome functions. 
In the second step, we connect the error of the best testing procedure for this problem with the total variation distance between the observed distributions under these two hypotheses. 

We start by presenting a Lemma regarding the first step.

\begin{lemma}\label{lem:estimation_to_testing}
    Let $H_0,H_1$ be two distributions over $\cM(G,\tau,q)$ supported on finite subsets $M_0,M_1$ respectively. Let 
    \[
    \tau_0 = \inf_{y \in M_0}\ate(y) \quad \text{and} \quad \tau_1 = \sup_{y \in M_1}\ate(y) \enspace.
    \]
    Assume that $\tau_1 < \tau_0$. Let $(\design,\eate)$ be an arbitrary design-estimator pair for effect $\ate$. Define the test function $\psi : \setb{0,1}^n \times \R^n \to \setb{0,1}$ as
\[
\psi(z,y(z)) = \begin{cases}
0 & \text{if } \eate(z,y(z)) > \frac{\tau_0 + \tau_1}{2} \\
1 & \text{otherwise}
\end{cases}
\]
Then, we have that 
\begin{itemize}
    \item If $y \in M_0$,
    \[
    \design(\psi(Z,y(Z)) = 1) \leq \frac{4}{d(H_0,H_1)^2} \cdot \Esub{Z \sim \design}{\paren{\eate(Z,y(Z)) - \tau(y)}^2} \enspace.
    \]
    \item If $y \in M_1$, 
    \[
    \design(\psi(Z,y(Z)) = 0) \leq \frac{4}{d(H_0,H_1)^2} \cdot \Esub{Z \sim \design}{\paren{\eate(Z,y(Z)) - \tau(y)}^2} \enspace.
    \]
\end{itemize}

\end{lemma}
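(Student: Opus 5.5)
The plan is a direct Chebyshev--Markov argument, once $d(H_0,H_1)$ is identified with the gap $\tau_0 - \tau_1$.

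\textbf{Identifying the separation.} Since $M_0$ and $M_1$ are finite, the infimum defining $\tau_0$ and the supremum defining $\tau_1$ are attained. For any $y \in M_0$ and $y' \in M_1$ we have $\tau(y) \geq \tau_0 > \tau_1 \geq \tau(y')$. Hence $\abs{\tau(y) - \tau(y')} \geq \tau_0 - \tau_1$, with equality at the optimizers, so $d(H_0,H_1) = \tau_0 - \tau_1 > 0$. Write $m = (\tau_0+\tau_1)/2$ for the threshold used by $\psi$. Then $m$ lies at distance exactly $d(H_0,H_1)/2$ from both $\tau_0$ and $\tau_1$.

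\textbf{Case $y \in M_0$.} On the event $\psi(Z,y(Z)) = 1$ we have $\eate(Z,y(Z)) \leq m$. Because $\tau(y) \geq \tau_0$, this gives
\[
\abs{\eate(Z,y(Z)) - \tau(y)} \geq \tau(y) - m \geq \tau_0 - m = \frac{d(H_0,H_1)}{2} \enspace.
\]
So the event $\setb{\psi = 1}$ is contained in $\setb{ (\eate(Z,y(Z)) - \tau(y))^2 \geq d(H_0,H_1)^2/4 }$. Markov's inequality applied to the nonnegative random variable $(\eate(Z,y(Z)) - \tau(y))^2$ under $Z \sim \design$ then yields the first bullet, with constant $4/d(H_0,H_1)^2$.

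\textbf{Case $y \in M_1$.} This is symmetric. On $\setb{\psi = 0}$ we have $\eate(Z,y(Z)) > m$, while $\tau(y) \leq \tau_1$. Therefore $\eate(Z,y(Z)) - \tau(y) > m - \tau_1 = d(H_0,H_1)/2$, and the same Markov step gives the second bullet.

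There is no real obstacle here. The only points needing care are that finiteness of the supports makes $d(H_0,H_1)$ equal to $\tau_0-\tau_1$, and that the weak versus strict inequality in the definition of $\psi$ is harmless, since both events are contained in the corresponding tail event.
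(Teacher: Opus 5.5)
Your proposal is correct and follows essentially the same route as the paper: identify $d(H_0,H_1)=\tau_0-\tau_1$, note that the event where the test errs forces squared error at least $d(H_0,H_1)^2/4$, and conclude by Markov's inequality (the paper phrases the last step via a conditional expectation and proves only the first bullet, calling the second analogous). As a minor remark, finiteness of the supports is not needed for the identification: since $\tau(y)-\tau(y')>0$ for all $y\in M_0$, $y'\in M_1$, the infimum of $\tau(y)-\tau(y')$ equals $\tau_0-\tau_1$ whether or not the extrema are attained.
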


\begin{proof}
    We only prove the first statement, since the second one follows by an analogous argument.
    First of all, by definition of $\tau_0$ and $\tau_1$, we have that
\[
d(H_0,H_1) = \inf_{y \in M_0, y' \in M_1} \abs{\tau(y) - \tau(y')} = \tau_0 - \tau_1 \enspace.
\]
    By definition of $\psi$, if $\psi(z,y(z)) = 1$, then 
\[
\eate(z,y(z)) \leq \frac{\tau_0 + \tau_1}{2} \enspace.
\]
For any $y \in M_0$, we have that $\tau(y) \geq \tau_0$. Therefore, in the event that $\psi(z,y(z)) = 1$, we have that
\[
\paren[\big]{\eate(z,y(z)) - \tau(y)} \geq \paren[\Big]{\frac{\tau_0 + \tau_1}{2} - \tau_0}^2 = \frac{d(H_0,H_1)^2}{4}\enspace.
    \]
    Thus, using Markov's inequality, we have that 
    \begin{align*}
    \Esub[\Big]{Z \sim \design}{\paren{\eate(Z,y(Z)) - \tau(y)}^2} &\geq \design(\psi(Z,y(Z)) = 1) \cdot \Esub[\Big]{Z \sim \design}{\paren{\eate(Z,y(Z)) - \tau(y)}^2 \mid \psi(Z,y(Z)) = 1} \\
    &\geq \design(\psi(Z,y(Z)) = 1) \cdot \frac{d(H_0,H_1)^2}{4} \enspace.
    \end{align*}
\end{proof}

Note that in Lemma~\ref{lem:estimation_to_testing} the randomness comes only from the treatment assignment $Z$, while the outcomes are fixed.

We are now ready to present the proof of Proposition~\ref{prop:lecam}, which includes the second step of connecting the testing error with the total variation distance between the observed distributions.

\begin{proof}[Proof of Proposition~\ref{prop:lecam}]
    We have that 
    \begin{align*}
        \cR(G,\tau,q,C) & = \inf_{\design,\eate} \sup_{y \in \cM(G,\tau,q)} \Esub[\Big]{Z \sim \design}{\paren[\big]{\eate(Z,y(Z)) - \tau(y)}^2} \\
        & \geq \inf_{\design,\eate} \paren[\Bigg]{\frac{1}{2} \max_{y \in M_0} \Esub[\Big]{Z \sim \design}{\paren[\big]{\eate(Z,y(Z)) - \tau(y)}^2} + \frac{1}{2} \max_{y \in M_1} \Esub[\Big]{Z \sim \design}{\paren[\big]{\eate(Z,y(Z)) - \tau(y)}^2}} \\
        & \geq \inf_{\design,\eate} \paren[\Bigg]{ \frac{1}{2} \Esub[\Big]{y \sim H_0}{\Esub{Z \sim \design}{\paren{\eate(Z,y(Z)) - \tau(y)}^2}} + \frac{1}{2} \Esub[\Big]{y \sim H_1}{\Esub{Z \sim \design}{\paren{\eate(Z,y(Z)) - \tau(y)}^2}}} \\
        & \geq \frac{d(H_0,H_1)^2}{8} \inf_{\design,\eate} \paren[\Bigg]{\Prsub[\Big]{y \sim H_0, Z \sim \design}{\psi(Z,y(Z)) = 1} + \Prsub[\Big]{y \sim H_1, Z \sim \design}{\psi(Z,y(Z)) = 0}} \enspace,
    \end{align*}
    where in the last step we used Lemma~\ref{lem:estimation_to_testing}.
    Now, define the event $E = \setb{(z,y) : \psi(z,y) = 1}$. This event depends on the observed data. Recally that $Q_{H_0}^{(\design)}$ and $Q_{H_1}^{(\design)}$ are the distributions of the observed data under $H_0$ and $H_1$, respectively (see Section~\ref{sec:lower_bound_tools} for the definition). Then, rewriting the last inequality yields
    \begin{align*}
        \cR(G,\tau,q,C) &\geq \frac{d(H_0,H_1)^2}{8} \inf_{\design,\eate} \paren[\Bigg]{Q_{H_0}^{(\design)}(E) + Q_{H_1}^{(\design)}(E^c)} \\
        & = \frac{d(H_0,H_1)^2}{8} \inf_{\design,\eate} \paren[\Bigg]{1 - \paren[\Big]{Q_{H_1}^{(\design)}(E) - Q_{H_0}^{(\design)}(E)}} \\
        & \geq \frac{d(H_0,H_1)^2}{8} \inf_{\design} \paren[\Bigg]{1 - \sup_{A \in \cF \otimes \cB(\R^n)} \paren[\Big]{Q_{H_1}^{(\design)}(A) - Q_{H_0}^{(\design)}(A)}} \\
        \intertext{where the supremum is taken over all measurable sets $A$}\\
        & = \frac{d(H_0,H_1)^2}{8} \inf_{\design} \paren[\Bigg]{1 - \tv\paren[\Big]{Q_{H_0}^{(\design)}, Q_{H_1}^{(\design)}}} \enspace,
    \end{align*}
    where in the last step we used the definition of the total variation distance.
\end{proof}

\subsection{Lower Bound using Global Information (Theorem \ref{thm:global_lower_bound})}\label{sec:global_lower_bound_appendix}
Throughout this section, when we refer to distributions over the outcome functions, they will always be supported on a finite number of elements of $\cM(G,\ate,q)$, as noted in Section~\ref{sec:lower_bound_tools}.
For a distribution $H$ over outcome functions, we use the notation $y \sim H$ to denote that $y$ is drawn according to $H$.
We recall the notation used in Section~\ref{sec:global_lower_bound}. In particular, the following definition is useful.

\begin{definition}
Let $p \in (0,1)$ and $T \subseteq V_\cH$. We define the distribution $H(p,T)$ to be the distribution over outcome functions in $\cM(G,\ate,q)$ that is sampled in the following way.
\begin{itemize}
    \item First, we sample a uniformly random subset of $S \subseteq T$, of size $\lfloor p |T| \rfloor$.
    \item Then, we set all outcomes for all exposures that are not used to define the effect $\ate$ to be $0$, i.e.
    \[
    y_i(z) = 0 \quad \text{for all $i \in [n]$ and $z \in \setb{0,1}^n$ such that $h_i(z) \notin \setb{e_{i,0}, e_{i,1}}$} \enspace.
    \]
    \item For the remaining exposures, we set 
    \[
    y_i(e_{i,k}) = \begin{cases}
\paren[\Big]{\frac{n}{|T|}}^{\frac{1}{q}} & \text{if $e_{i,k} \in S$ and $k = 1$}  \\
-\paren[\Big]{\frac{n}{|T|}}^{\frac{1}{q}} & \text{if $e_{i,k} \in S$ and $k = 0$}  \\
0 & \text{if $e_{i,k} \notin S$}
\end{cases}
    \]
\end{itemize}
\end{definition}

In the sequel, we will always assume that $p$ is such that $p |T|$ is an integer, which can always be done by rounding off $p$.

We will denote by $\cM(p,T)$ the support of $H(p,T)$, for any $p \in (0,1), T \subseteq V_\cH$.
We start by presenting a few immediate properties of the distribution $H(p,T)$.

\begin{lemma}\label{lem:supp_Hp}
    The following are true for all $p,r \in (0,1), T \subseteq V_\cH$, causal effect $\ate$, conflict graph $\cH$ and integer $q \geq 2$:
    \begin{enumerate}[label=(\roman*)]
        \item\label{prop:support} $\cM(p,T) \subseteq \cM(G,\ate,q)$.
        \item\label{prop:ate} For all $y \in \cM(p,T)$, 
        $$\ate(y) = p \cdot \paren[\Bigg]{\frac{|T|}{n}}^{1 - \frac{1}{q}} \enspace.$$
        \item\label{prop:separation} $$d(H(p,T),H(r,T)) = |p-r| \cdot \paren[\Bigg]{\frac{|T|}{n}}^{1 - \frac{1}{q}} \enspace.$$
    \end{enumerate}
\end{lemma}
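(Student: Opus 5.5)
The plan is to verify all three items directly from the construction of $H(p,T)$. Each outcome function in the support $\cM(p,T)$ is determined by a subset $S \subseteq T$ with $|S| = p|T|$, an integer by our rounding convention. So it suffices to check properties of a single such $y$, for an arbitrary admissible $S$.

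For \ref{prop:support}, I would first check that $y \in \model$. Recall that $y_i$ is defined as a function of the exposure $\expmapi(z)$ only: it takes the prescribed value on $e_{i,1}$ and $e_{i,0}$ and the value $0$ on every other exposure. This assignment is well defined because $e_{i,1} \neq e_{i,0}$, as assumed without loss of generality in Section~\ref{sec:preliminaries}. Since $y_i(z)$ depends on $z$ only through $\expmapi(z)$, it depends only on the treatments of $\extneighi$, so $y \in \model$. For the moment condition, fix $k \in \setb{0,1}$. The only nonzero terms in $\frac1n\sum_i |y_i(e_{i,k})|^q$ come from exposures $e_{i,k} \in S$, and each contributes $\frac1n \cdot \frac{n}{|T|} = \frac{1}{|T|}$. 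Hence the sum is at most $|S|/|T| = p \le 1$, and taking the $q$th root keeps it at most $1$. For $q = \infty$ one would instead note that the maximum is $(n/|T|)^{0} = 1$; the statement restricts to integer $q$, so I would only remark on this case.

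For \ref{prop:ate}, the key observation is that the sign convention makes every element of $S$ contribute positively to the contrast. An exposure $e_{i,1} \in S$ contributes $+(n/|T|)^{1/q}$ to $y_i(e_{i,1})$, and an exposure $e_{i,0} \in S$ contributes $-(-(n/|T|)^{1/q}) = +(n/|T|)^{1/q}$ through the term $-y_i(e_{i,0})$. Therefore
\[
\ate(y) = \frac{|S|}{n}\paren[\Big]{\frac{n}{|T|}}^{1/q} = \frac{p|T|}{n}\paren[\Big]{\frac{n}{|T|}}^{1/q} = p\paren[\Big]{\frac{|T|}{n}}^{1-\frac1q},
\]
independent of which $S$ was drawn.

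Item \ref{prop:separation} then follows immediately. By \ref{prop:ate}, $\ate$ is constant on $\cM(p,T)$ and on $\cM(r,T)$, so the infimum in the definition of $d$ is attained by any pair and equals $|p-r|(|T|/n)^{1-1/q}$.

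There is no real obstacle in this argument. The only points needing care are the well-definedness of $y$ as an element of the ANI model, and the sign bookkeeping that makes the contributions from $k=0$ and $k=1$ add rather than cancel.
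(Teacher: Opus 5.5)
Your proposal is correct and matches the paper's own direct verification. The paper bounds each nonzero outcome by $(n/|T|)^{1/q}$ to get the moment condition, observes that the sign convention makes every element of $S$ contribute $+(n/|T|)^{1/q}$ to the contrast so that $\ate$ is constant on the support, and then reads off the separation. Your explicit checks that $y$ lies in the ANI model and that the moment bound holds separately for each $k$ are a bit more careful than the paper's one-line argument, but the approach is the same.
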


\begin{proof}
    Property~\ref{prop:support} follows by noticing that for every $y \in \cM(p)$ we have $|y_i(e_{i,k})| \leq (n/|T|)^{1/q}$ for all $i \in [n]$ and $k \in \setb{0,1}$.
Therefore, we have that
\[
\frac{1}{n} \sum_{i=1}^n \paren[\Big]{|y_i(e_{i,1})|^q + |y_i(e_{i,0})|^q} \leq \frac{|T|}{n} \cdot \frac{n}{|T|} = 1 \enspace.
\]
Property~\ref{prop:ate} follows by noticing that for every $y \in \cM(p,T)$, we have that 
\begin{align*}
\ate(y) &= \frac{1}{n} \sum_{i=1}^n \paren[\Big]{y_i(e_{i,1}) + y_i(e_{i,0})} \\
&= \frac{1}{n} \sum_{i=1}^n \paren[\Big]{\indicator{e_{i,1} \in S}\cdot \paren[\Big]{\frac{n}{|T|}}^{1/q}  - \indicator{e_{i,0} \in S } \cdot \paren[\Big]{-\paren[\Big]{\frac{n}{|T|}}^{1/q}}} \\
&= \frac{1}{n} \cdot |S| \cdot \paren[\Big]{\frac{n}{|T|}}^{1/q} \\
&= p \cdot \paren[\Bigg]{\frac{|T|}{n}}^{1 - \frac{1}{q}} \enspace.
\end{align*}
Property~\ref{prop:separation} follows immediately from the second.
\end{proof}

For an arbitrary distribution on the outcome functions $H$ and an arbitrary design $\design$, we denote by $Q_H^{(\design)}$ the distribution of the observed outcomes and treatment assignment when the potential outcomes are drawn from $H$ and the treatment assignment is drawn from $\design$ independently, i.e. the pushforward of the product measure $H \otimes \design$ under the map $(z,y)\mapsto (z,y(z))$.
Note that the distribution $Q_H^{(\design)}$ depends on the design $\design$. Also, for an intervention $z \in \setb{0,1}^n$, we denote by $Q_H^{(\design)}(\cdot | z)$ the conditional distribution of the observed outcomes conditioned on the treatment assignment being equal to $z$. 
The next proposition shows that this conditional distribution does not depend on the design $\design$, as long as $z$ has positive probability under $\design$.

\begin{proposition}\label{prop:conditional_independence}
    For any distribution $H$ over the outcome functions, any designs $\design$ and any intervention $z \in \setb{0,1}^n$ with $\design(z) > 0$, we have that for any $u \in \R^n$,
    $$
    Q_H^{(\design)}(u | z) = \Prsub{y \sim H}{y(z)= u} \enspace.
    $$
\end{proposition}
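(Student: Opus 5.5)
The plan is to compute the conditional probability directly from the definition of $Q_H^{(\design)}$ as a pushforward of the product measure $\design \otimes H$. Both measures are discrete in the relevant sense: $\design$ lives on the finite set $\Omega = \setb{0,1}^n$, and $H$ is supported on a finite subset of $\cM(G,\ate,q)$, as stipulated at the start of this section. So every probability involved is a finite sum, and conditioning on $\setb{Z = z}$ with $\design(z) > 0$ is elementary conditioning on an event of positive probability. No regular conditional distributions are needed.

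First, for fixed $z$ and $u \in \R^n$, I would write the joint probability of the observed data taking the value $(z,u)$. By the pushforward definition, this is the $(\design \otimes H)$-measure of the set $\setb{(z',y) : z' = z,\ y(z') = u}$. That set factors as $\setb{z} \times \setb{y : y(z) = u}$, so by the product structure
\[
Q_H^{(\design)}(z,u) = \design(z) \cdot \Prsub{y \sim H}{y(z) = u} \enspace.
\]
The key point is independence of $Z$ and $y$: the design never depends on the outcome function.

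Next, I would compute the marginal of the first coordinate. Summing the previous display over all $u$ in the (finite) range of $y(z)$ under $H$, or equivalently taking $u$ to range over the whole space, gives $Q_H^{(\design)}(\setb{z} \times \R^n) = \design(z)$. Dividing the joint probability by this positive marginal yields $Q_H^{(\design)}(u \mid z) = \Prsub{y \sim H}{y(z) = u}$, which is free of $\design$. This is exactly the claim.

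There is no real obstacle here; the only care needed is measure-theoretic bookkeeping. One should note that $\setb{y : y(z) = u}$ is measurable in the countable–cocountable $\sigma$-algebra $\cA$. Since $H$ has finite support, it suffices to intersect this set with the support, which makes it a finite set. One should also state that the claim is understood for $u$ in the finite set of attainable outcome vectors, with both sides zero otherwise.
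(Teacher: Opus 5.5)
Your proposal is correct and is essentially the paper's argument: since $Z \sim \design$ and $y \sim H$ are drawn independently, the joint probability of $\setb{Z = z,\ y(Z) = u}$ factors as $\design(z)\cdot\Prsub{y \sim H}{y(z) = u}$, and dividing by $\design(z) > 0$ gives the claim. One point is slightly overstated: the full set $\setb{y : y(z) = u}$ is typically neither countable nor cocountable, so only its intersection with the finite support of $H$ lies in $\cA$. That intersection is exactly what you end up using, and it is more care than the paper itself takes.
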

\begin{proof}
    By definition, we have that for any $u \in \R^n$,
    \begin{align*}
         Q_H^{(\design)}(u | z) &= \Pr{ y(Z) = u \mid Z = z } \\
         &= \frac{\Pr{y(Z) = u, Z = z }}{\Pr{Z = z}}\\
         &= \frac{\Prsub{y \sim H}{y(z)= u} \design(z)}{\design(z)} \\
         \intertext{using the independence of the sampled $y$ from the sampled $Z$}\\
         &= \Prsub{y \sim H}{y(z)= u} \enspace.
    \end{align*}
\end{proof}

Since the conditional distribution does not depend on the design, we will simply denote it by $Q_H(\cdot | z)$ from now on, i.e.
\[
Q_H(u | z) \triangleq \Prsub{y \sim H}{y(z)= u} \enspace.
\]

Property~\ref{prop:ate} of Lemma~\ref{lem:supp_Hp} shows that each distribution $H(p,T)$ is supported on outcomes that have a fixed value of $\ate$.
Therefore, if we could observe all counterfactual outcomes, we could easily distinguish between $H(p,T)$ and $H(q,T)$ for any $p \neq q$.
However, the fundamental problem of causal inference is that we always observe a subset of the outcomes. As defined in Section~\ref{sec:global_lower_bound}, for any $z \in \setb{0,1}^n$, we denote by $R_z$ the subset of the $2n$ exposures that we get to observe when the treatment assignment is $z$, i.e.
\[
R_z = \setb[\bigg]{h_i(z) : h_i(z) \in \setb{e_{i,0}, e_{i,1}}, i \in [n]} \enspace.
\]

We now state and prove a lemma, which quantifies the tradeoff between separating the two distributions $H(p,T), H(q,T)$ and distinguishing between them from the observed outcomes. This is a generalization of Lemma~\ref{lem:tv_bound} that appeared in the main text for the case $T = V_\cH$.

\begin{lemma}\label{lem:tv_bound_general}
    Let $z \in \setb{0,1}^n$ be an intervention, $T \subseteq V_\cH$ an arbitrary subset with $|T| \geq 5$ and suppose $|R_z \cap T| \leq |T|/2$. 
    Let $H_0 = H(1/2, T)$ and $H_1 = H(1/2 + \delta, T)$, where
\[
\delta \leq \frac{0.026}{\sqrt{|R_z \cap T|}}\enspace.
\]
Then,
\[
d_{\mathrm{TV}}(Q_{H_0}^{(z)}, Q_{H_1}^{(z)}) \leq \frac{7}{8} \enspace.
\]
\end{lemma}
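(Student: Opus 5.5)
Fix $z$ and let $m = |R_z \cap T|$, $N = |T|$. Under $H(p,T)$ the only information revealed about $y$ by the observation $y(z)$ is which exposures of $R_z \cap T$ belong to the random set $S$. Exposures in $R_z \setminus T$ always carry outcome $0$, and non-relevant exposures are also $0$. So the observed outcomes are a deterministic, injective function of $S \cap R_z \cap T$, and by the data-processing inequality
\[
\tv\paren[\big]{Q_{H_0}(\cdot\mid z), Q_{H_1}(\cdot\mid z)} = \tv\paren[\big]{\mathcal{L}_{0}(S\cap W), \mathcal{L}_{1}(S \cap W)}, \qquad W = R_z \cap T .
\]
Here $\mathcal{L}_p$ is the law when $S$ is a uniform subset of $T$ of size $pN$. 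By exchangeability, the conditional law of $S \cap W$ given $K = |S \cap W|$ is uniform over $K$-subsets of $W$ under both hypotheses. Hence the total variation reduces to that between two hypergeometric laws,
\[
K_0 \sim \mathrm{Hyp}(N, N/2, m) \quad\text{and}\quad K_1 \sim \mathrm{Hyp}(N, (1/2+\delta)N, m).
\]
If $m=0$ the distance is $0$, so we may assume $m \geq 1$.

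Next, I bound this hypergeometric total variation through a divergence that tensorizes or is easy to compute. The cleanest route is Hellinger or $\chi^2$. Pinsker combined with a KL bound also works: $\tv \leq \sqrt{\mathrm{KL}/2}$, and reaching $7/8$ only needs $\mathrm{KL} \leq 2\cdot(7/8)^2 \approx 1.53$, which leaves generous slack. I would first try to compare each hypergeometric with the binomial $\mathrm{Bin}(m,p)$. The assumption $m \leq N/2$ ensures sampling without replacement is not degenerate. A direct alternative is a likelihood-ratio calculation: write
\[
\frac{P(K_1=k)}{P(K_0=k)} = \frac{\binom{N_1}{k}\binom{N-N_1}{m-k}}{\binom{N_0}{k}\binom{N-N_0}{m-k}},
\]
with $N_0 = N/2$ and $N_1 = N/2+\delta N$. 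The log of this ratio is a telescoping sum of terms $\log\frac{N_1 - j}{N_0-j}$ and $\log\frac{N-N_1-j}{N-N_0-j}$ for $j< m \leq N/2$. Since $j < N/2$ keeps the denominators of order $N$, each term is $\bigO{\delta N/(N/2-j)}$. Summed, this gives a log-ratio of roughly $c\,\delta\,(2k-m)$ plus a quadratic correction. Under $K_0$, $2K_0-m$ has variance at most $m$, so the KL divergence (or $\chi^2$) is at most $C\delta^2 m$ for an explicit constant $C$.

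The main obstacle is controlling that constant uniformly, and in particular handling indices $j$ close to $N/2$. When $m = N/2$ and $k$ is near $m$, the factor $N_0 - j$ can become as small as $1$, so the per-term bound blows up. I would try to handle this with a truncation. Restrict to the high-probability event $\abs{K_0 - m/2} \leq t\sqrt{m}$, whose complement has probability at most $1/(4t^2)$ by Chebyshev, since the hypergeometric variance is at most $m/4$. On this event $k \leq m/2 + t\sqrt m \leq N/4 + t\sqrt{m}$, which keeps $N_0 - j \gtrsim N/4$ once $N \geq 5$ is moderately large. Very small $N$ would be handled crudely using $\delta \leq 0.026$. The total variation is then bounded by
\[
\Pr(\text{bad}) + \sup_{\text{good } k}\abs{\frac{P(K_1=k)}{P(K_0=k)} - 1}.
\]
With $\delta\sqrt{m} \leq 0.026$, the log-ratio on the good event is $\bigO{0.026\, t}$. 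Choosing $t=2$ gives a total variation of at most about $1/16 + $ small. This is well below $7/8$, and the loose target is what makes such a blunt argument sufficient.

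Finally, Lemma~\ref{lem:tv_bound} is the special case $T = V_\cH$. The theorem-level use only needs the bound for each fixed $z$, which is then averaged over the design as in \eqref{eq:tv_conditional}.
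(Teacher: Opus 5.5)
Your plan is, in substance, the paper's own argument. You restrict $K=|S\cap W|$ to a window of width $\Theta(\sqrt m)$ around $m/2$, and bound the $P_0$-mass outside it by Chebyshev. Inside the window you control the likelihood ratio $r_k=P_1(k)/P_0(k)$, with $P_i(k)$ the probability that $K_i=k$. You do this through the telescoping product $\prod_{j<k}\bigl(1+\frac{2\delta N}{N-2j}\bigr)\prod_{j<m-k}\bigl(1-\frac{2\delta N}{N-2j}\bigr)$ together with $\delta\sqrt m\le 0.026$. Your exchangeability reduction to the hypergeometric count cleanly states what the paper uses implicitly: its summand depends on $A$ only through $|A|$. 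Your bound $d_{\mathrm{TV}}\le P_0(\text{bad})+\sup_{\text{good}}|r_k-1|$ is also tidier than the paper's case analysis on $E\cap F$.

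The gap is the window $|k-m/2|\le 2\sqrt m$, combined with deferring the small-$|T|$ case. With $m\le N/2$, this window only gives $N/2-j\ge N/2-k+1\ge N/4-2\sqrt{N/2}+1$. That bound is vacuous for $N\le 23$ and remains a small fraction of $N$ well beyond that. So the factors $\frac{2\delta N}{N-2j}$ are not $O(\delta)$, and your claim that the log-ratio is $O(0.026\,t)$ is not established uniformly in $|T|\ge 5$. Likewise, the condition guaranteeing $P_1(k)>0$ throughout the window would read $\delta\le\frac14-\sqrt{2/N}$, which is nonpositive for $N\le 32$. ``Handled crudely'' does not fill this, and this regime is precisely where the constant $0.026$ in the statement is determined.

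The fix is the paper's narrower window $|k-m/2|\le\sqrt{m/2}$:
\begin{itemize}
\item Both $N-2(k-1)$ and $N-2(m-k-1)$ are then at least $N/2-\sqrt N+2$.
\item Hence, with $D(N)=N/(N/2-\sqrt N+2)\le 8/3$, every factor $\frac{2\delta N}{N-2j}$ is at most $2D(N)\delta\le 1/2$.
\item The log-ratio is at most $2\sqrt2\,D(N)\,\delta\sqrt m+8D(N)^2\delta^2 m<0.235$.
\item $P_1(k)>0$ on the window once $\delta\le\frac14-\frac1{2\sqrt N}$; its value $0.0264$ at $N=5$ is the binding constraint.
\end{itemize}
Chebyshev now only certifies $P_0(\text{bad})\le 1/2$. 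Your decomposition still gives $d_{\mathrm{TV}}\le \frac12+(e^{0.235}-1)<0.77\le\frac78$, with no case split on $|T|$.

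You may instead have had in mind that integer set sizes force $\delta N\in\mathbb{Z}$, so $\delta N\le 0.026N<1$ makes $H_0=H_1$ for $N\le 38$. That does dispose of small $|T|$. You would still owe an explicit, non-asymptotic check of the $2\sqrt m$ window for all $N\ge 39$.
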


\begin{proof}
    We will assume that 
    \[
    \delta \leq \frac{C}{\sqrt{|R_z \cap T|}}
    \]
    where $C>0$ is a constant that will be determined later according to the constraints.

    Any point $u \in \R^n$ that is in the support of $Q_{H_0}^{(z)}$ or $Q_{H_1}^{(z)}$ must have the property that $u_i \in \setb{-(n/|T|)^{1/q},0,(n/|T|)^{1/q}}$ for all $i \in [n]$, since these are all the possible outcomes under both distributions.
    Let $U = y(z)$ be the random vector of observed outcomes under $Q_{H_0}^{(z)}$ or $Q_{H_1}^{(z)}$. Let $S$ be the random set that is sampled under $H_0$ or $H_1$. Then, $U$ is uniquely determined by $S \cap (R_z \cap T)$. Indeed, if we know $S \cap (R_z \cap T)$, then we have
\[
y_i(z) = \begin{cases}
(n/|T|)^{1/q} & \text{if $e_{i,1} \in S \cap (R_z \cap T)$}  \\
-(n/|T|)^{1/q} & \text{if $e_{i,0} \in S \cap (R_z \cap T)$}  \\
0 & \text{otherwise}
\end{cases}
\]
    Hence, $U$ is determined by $S \cap (R_z \cap T)$. Conversely, if we know $U$, then we can determine $S \cap (R_z \cap T)$ as follows:
\[
S \cap (R_z \cap T) = \setb[\Big]{e_{i,1} : i \in [n] \text{ s.t. } y_i(z) = (n/|T|)^{1/q}} \cup \setb[\Big]{e_{i,0} : i \in [n] \text{ s.t. } y_i(z) = -(n/|T|)^{1/q}} \enspace.
\]
Therefore, the distribution of $U$ is the same as the distribution of $S \cap (R_z \cap T)$.
This allows us to compute the total variation distance between $Q_{H_0}(\cdot|z)$ and $Q_{H_1}(\cdot|z)$ as follows:
\begin{align*}
    d_{\mathrm{TV}}(Q_{H_0}(\cdot|z), Q_{H_1}(\cdot|z)) &= \frac{1}{2} \sum_{A \subseteq R_z \cap T} \abs[\Big]{\Prsub{H_1}{S \cap (R_z \cap T) = A} - \Prsub{H_0}{S \cap (R_z \cap T) = A}}\\
    &= \frac{1}{2} \sum_{A \subseteq R_z \cap T}  \Prsub{H_0}{S \cap (R_z \cap T) = A}  \abs[\Bigg]{\frac{\Prsub{H_1}{S \cap (R_z \cap T) = A}}{\Prsub{H_0}{S \cap (R_z \cap T) = A}} - 1} \enspace.
\end{align*}
Let us now compute these probabilities. For convenience we denote $s = |R_z \cap T|$ and $t = |T|$. Suppose $|A| = k$ and we will restrict our attention to $k$ such that $|k - s/2| \leq \sqrt{s/2}$. 
We will show later that it is enough to restrict to such subsets $A$. 

Then, under $H_0$ we have $|S| = t/2$. There are ${t - s \choose t/2 - k}$ ways to choose the remaining $t/2 - k$ elements of $S$ that are not in $R_z \cap T$, which leads to
\[
\Prsub{H_0}{S \cap (R_z \cap T) = A} = \frac{{t - s \choose t/2 - k}}{{t \choose t/2}} \enspace.
\]
By assumption,  $s \leq t/2$.
This means that $t - s \geq t/2$, and hence the above probability is positive for all $A$ with $|A| \leq s$.

On the other hand, under $H_1$ we have $|S| = t/2 + \delta \cdot t = t(1/2 + \delta)$.
There are ${t - s \choose t(1/2 + \delta) - k}$ ways to choose the remaining $t(1/2 + \delta) - k$ elements of $S$ that are not in $R_z \cap T$, which leads to
\[
\Prsub{H_1}{S \cap (R_z \cap T) = A} = \frac{{t - s \choose t(1/2 + \delta) - k}}{{t \choose t(1/2 + \delta)}} \enspace.
\]
Note that for this probability to be nonzero, we need $t(1/2 + \delta) - k \leq t - s$, since we need to have enough elements outside of $R_z \cap T$ to choose from. 
Using the fact that $k \geq s/2 - \sqrt{s/2}$, it is sufficient to have 
\[
\frac{s}{2} + \sqrt{\frac{s}{2}} \leq t\paren[\Big]{\frac{1}{2} - \delta}
\]
Also, using the assumption $s \leq t/2$, it sufficient to have
\[
\frac{t}{4} + \sqrt{\frac{t}{4}} \leq t\paren[\Big]{\frac{1}{2} - \delta} \enspace.
\]
Since $\delta \le C$, for the above to be satisfied, it is sufficient to have
\begin{equation}\label{eq:constraint_1}
C \leq \frac{1}{4} - \frac{1}{2\sqrt{t}} \enspace.
\end{equation}
For the right hand side to be positive, we need $t \geq 5$.
We will choose $C$ such that~\eqref{eq:constraint_1} holds, so for now we can assume that for all $k$ such that $|k - s/2| \leq \sqrt{s/2}$, both probabilities under $H_0$ and $H_1$ are nonzero.

For such a subset $A$ of size $k$, we have
\begin{align*}
    \abs[\Bigg]{\frac{\Prsub{H_1}{S \cap (R_z \cap T) = A}}{\Prsub{H_0}{S \cap (R_z \cap T) = A}} - 1} &= \abs[\Bigg]{\frac{{t - s \choose t(1/2 + \delta) - k}}{{t \choose t(1/2 + \delta)}} \cdot \frac{{t \choose t/2}}{{t - s \choose t/2 - k}} - 1} \\
    &= \abs[\Bigg]{\frac{\prod_{l=0}^{s-k-1}(t(1/2 - \delta) - l) \cdot \prod_{l=0}^{k-1}(t(1/2 + \delta) - l)}{\prod_{l=0}^{s-k-1}(t/2 - l) \cdot \prod_{l=0}^{k-1}(t/2-l)} - 1}\\
    &= \abs[\Bigg]{\prod_{l=0}^{s-k-1} \paren[\Big]{1 - \frac{2\delta t}{t-2l}} \cdot \prod_{l=0}^{k-1} \paren[\Big]{1 + \frac{2\delta t}{t-2l}} - 1}
\end{align*}

We would like to turn these products into a single exponential factor. To do that, we will utilize the following elementary inequality, which can be proved via taking derivatives.
\begin{equation}\label{eq:exp_inequality}
1 + x = \exp(x + b(x)) \quad \text{where} \quad |b(x)| \leq 2x^2 \quad \text{for all $|x| \leq 1/2$} \enspace.
\end{equation}

To apply this inequality, we need to ensure that the absolute value of each factor in the products is at most $1/2$.
For all $l \leq k - 1$ we have
\begin{align}
\abs[\Bigg]{\frac{2\delta t}{t-2l}} &\leq \frac{2\delta t}{t - 2(k-1)} \nonumber \\
&\leq \frac{2\delta t}{t - 2(s/2 + \sqrt{s/2}-1)} \nonumber \\
&\leq \frac{2\delta t}{t/2 -\sqrt{t} + 2} \nonumber\\
&\leq C  \frac{2t}{t/2 -\sqrt{t} + 2} \label{eq:less12}
\end{align}
Therefore, we have the following constraint on $C$.
\begin{equation}
\label{eq:constraint_2}
C \leq \frac{1}{8} - \frac{1}{4\sqrt{t}} + \frac{1}{2t}\enspace.
\end{equation}

We will choose $C>0$ so that~\eqref{eq:constraint_2} is satisfied.
The same exact constraint arises in the case $l \leq s - k - 1$.
Therefore, applying inequality~\eqref{eq:exp_inequality} to each factor in the products, we have that 
\begin{align*}
&\prod_{l=0}^{s-k-1} \paren[\Big]{1 - \frac{2\delta t}{t-2l}} \cdot \prod_{l=0}^{k-1} \paren[\Big]{1 + \frac{2\delta t}{t-2l}}  \\
&= \exp\paren[\big]{\delta_1(t) + \delta_2(t)} \enspace,
\end{align*}
where
\begin{align*}
\delta_1(t)
&= -\sum_{l=0}^{s-k-1} \frac{2\delta t}{t-2l} + \sum_{l=0}^{k-1} \frac{2\delta t}{t-2l}\\
\delta_2(t) &= \sum_{l=0}^{s-k-1} b\paren[\Big]{\frac{2\delta t}{t-2l}} + \sum_{l=0}^{k-1} b\paren[\Big]{\frac{2\delta t}{t-2l}}
\end{align*}

Let us bound $|\delta_1(t)|$ first.
Assume without loss of generality that $k \geq s/2$, so that $k-1 \geq s-k-1$. The other case is symmetric. Then, we have
\begin{align*}
|\delta_1(t)| &=
\abs[\Bigg]{-\sum_{l=0}^{s-k-1} \frac{2\delta t}{t-2l} + \sum_{l=0}^{k-1} \frac{2\delta t}{t-2l}}\\
 &= 
\abs[\Bigg]{\sum_{l=s-k}^{k-1} \frac{2\delta t}{t-2l}} \\
&\leq \abs{2k - s} \cdot \frac{2\delta t}{t-2(k-1)} \\
&\leq \frac{2 \sqrt{2s}\delta t}{t - 2(s + s/2 -1)}\\
\intertext{using $k \leq s/2 + \sqrt{s/2}$}\\
&\leq \frac{2\sqrt{2}  t}{t/2 - \sqrt{t} + 2} \cdot C \enspace.
\end{align*}

We now bound the second error term.
\begin{align*}
|\delta_2(t)| &=
\abs[\Bigg]{\sum_{l=0}^{s-k-1} b\paren[\Big]{\frac{2\delta t}{t-2l}} + \sum_{l=0}^{k-1} b\paren[\Big]{\frac{2\delta t}{t-2l}}} \\
&\leq 2 \cdot \sum_{l=0}^{s-k-1} \paren[\Bigg]{\frac{2\delta t}{t-2l}}^2 + 2 \cdot \sum_{l=0}^{k-1} \paren[\Bigg]{\frac{2\delta t}{t-2l}}^2 \\
&\leq 8(s-k)\delta^2 t^2 \cdot \frac{1}{(t-2(s-k-1))^2} + 8k\delta^2 t^2 \cdot \frac{1}{(t-2(k-1))^2} \\
&\leq 8 s \delta^2  \cdot \frac{t^2}{(t/2 - \sqrt{t} + 2)^2} \\
&\leq C^2 \cdot \frac{8t^2}{(t/2 - \sqrt{t} + 2)^2}\enspace,
\end{align*}
where we have used again the facts $k \leq s/2 + \sqrt{s/2}$ and $s \leq t/2$.
As a result, we have that for any $k$ such that $|k - s/2| \leq \sqrt{s}$,
\begin{align*}
\abs[\Bigg]{\frac{\Prsub{H_1}{S \cap (R_z \cap T) = A}}{\Prsub{H_0}{S \cap (R_z \cap T) = A}} - 1} &= \abs[\Bigg]{\exp\paren[\Bigg]{\delta_1(t) + \delta_2(t)}-1} \\
&\leq \exp\paren[\Bigg]{\abs{\delta_1(t)} + \abs{\delta_2(t)}}-1
\end{align*}
using the fact that $|e^x - 1| \leq e^{|x|} - 1$ for all $x$.
We would like to establish that
\begin{equation}\label{eq:prob_goal}
  \abs[\Bigg]{\frac{\Prsub{H_1}{S \cap (R_z \cap T) = A}}{\Prsub{H_0}{S \cap (R_z \cap T) = A}} - 1} < \frac{1}{2}\enspace,  
\end{equation}

which by the previous calculations is implied by the following inequality
\[
\abs{\delta_1(t)} + \abs{\delta_2(t)} < \log(3/2) \enspace.
\]
Let us denote 
\[
D(t) = \frac{t}{t/2 - \sqrt{t} + 2} \enspace.
\]
Then, combining the bounds on $|\delta_1(t)|$ and $|\delta_2(t)|$, it is sufficient to have
\[
2\sqrt{2}D(t) C + 8 D(t)^2 C^2 < \log(3/2) \enspace.
\]
Solving this quadratic inequality for $C$, we have that it is sufficient to have
\begin{equation}\label{eq:constraint_3}
C < \frac{-\sqrt{2}+ \sqrt{2 + 8  \log(3/2)}}{8 D(t)} \enspace.
\end{equation}

Thus, inequality~\eqref{eq:prob_goal} will hold as long as the three constraints~\eqref{eq:constraint_1},~\eqref{eq:constraint_2} and~\eqref{eq:constraint_3} are satisfied for $C>0$ and $t \geq 5$.
Since all the constraints are improving as $t$ increases, it is sufficient to check that they are satisfied for $t = 5$.
Substituting $t = 5$ into the right hand sides of the three constraints, we get the conditions
\begin{align*}
    C \leq 0.026 \\
    C \leq 0.113 \\
    C \leq 0.049
\end{align*}
Among these, we choose the most restrictive one, which is $C \leq 0.026$.

We next show that the contribution of the sets $A$ with $|k - s/2| \leq \sqrt{s}$ to the total variation distance is non-negligible.
To do that, it suffices to show that the total probability of such sets under $H_0$ is lower bounded by a constant. 
We can prove this is the case using Chebyshev's inequality.
In particular, we have that
\[
|A| = \sum_{i \in R_z \cap T} \indicator{i \in S}
\]
Let us calculate the expectation and variance of this random variable under $H_0$. By linearity of expectation
\[
\Esub{H_0}{|A|} = \sum_{i \in R_z \cap T} \Prsub{H_0}{i \in S} = |R_z \cap T| \cdot \frac{1}{2} = \frac{s}{2} \enspace,
\]
using the fact that $\Prsub{H_0}{i \in S} = 1/2$ for all $i \in R_z \cap T$. 
We also have
\begin{align*}
\Varsub{H_0}{|A|} &= \sum_{i \in R_z \cap T} \Varsub{H_0}{\indicator{i \in S}} + \sum_{i \neq j \in R_z \cap T} \Covsub{H_0}{\indicator{i \in S}, \indicator{j \in S}} \\
&= \frac{s}{4} + s(s-1)\paren[\Big]{\frac{t/2(t/2 - 1)}{t(t-1)} - \frac{1}{4}} \\
&\leq \frac{s}{4} \enspace.
\end{align*}
Therefore, by Chebyshev's inequality, we have that
\[
\Prsub[\Bigg]{H_0}{\abs[\Big]{|A| - \frac{s}{2}} \geq \sqrt{\frac{s}{2}}} \leq \frac{\Varsub{H_0}{|A|}}{\frac{s}{2}} \leq \frac{1}{2} \enspace.
\]
We are now ready to upper bound the total variation distance. In particular, define the events $E,F$ as follows
\begin{align*}
&E = \setb[\Bigg]{A:\abs[\Big]{|A| - \frac{s}{2}} \leq \sqrt{\frac{s}{2}}} \\
&F = \setb[\Bigg]{A : \Prsub{H_0}{S \cap (R_z \cap T) = A} \geq \Prsub{H_1}{S \cap (R_z \cap T) = A}} 
\end{align*}

We have established that $\Prsub{H_0}{E} \geq 1/2$. This means that $\Prsub{H_0}{E \cap F} \geq 1/4$ or $\Prsub{H_0}{E \cap F^c} \geq 1/4$. Without loss of generality, let's assume the first is true, otherwise a similar argument applies.
Then, we can write the total variation distance as
\begin{align*}
d_{\mathrm{TV}}(Q_{H_0}(\cdot|z), Q_{H_1}(\cdot|z)) &= \sum_{A \in F} \Prsub{H_0}{S \cap (R_z \cap T) = A} - \Prsub{H_1}{S \cap (R_z \cap T) = A} \\
&= \sum_{A \in E \cap F} \paren[\Big]{\Prsub{H_0}{S \cap (R_z \cap T) = A} - \Prsub{H_1}{S \cap (R_z \cap T) = A}} \\
&\quad\quad+ \sum_{A \in E^c \cap F} \paren[\Big]{\Prsub{H_0}{S \cap (R_z \cap T) = A} - \Prsub{H_1}{S \cap (R_z \cap T) = A}} \\
&= \sum_{A \in E \cap F} \Prsub{H_0}{S \cap (R_z \cap T) = A} \cdot \paren[\Bigg]{1 - \frac{\Prsub{H_1}{S \cap (R_z \cap T) = A}}{\Prsub{H_0}{S \cap (R_z \cap T) = A}}}\\
&\quad\quad + \sum_{A \in E^c \cap F} \paren[\Big]{\Prsub{H_0}{S \cap (R_z \cap T) = A} - \Prsub{H_1}{S \cap (R_z \cap T) = A}} \\
&< \frac{1}{2} \cdot \Prsub{H_0}{E \cap F} + \Prsub{H_0}{E^c \cap F} \\
&= \Prsub{H_0}{F} - \frac{1}{2} \cdot \Prsub{H_0}{E \cap F} \\
&\leq 1 - \frac{1}{2} \cdot \frac{1}{4}\\
&= \frac{7}{8} \enspace.
\end{align*}

This concludes the proof.
\end{proof}

We are now ready to prove Theorem~\ref{thm:global_lower_bound}.

\begin{proof}[Proof of Theorem~\ref{thm:global_lower_bound}]
Let's fix an arbitrary subset $T \subseteq V_\cH$ of size $t = |T| \geq 5$. Recall that we denote by $\cI(T,\cH)$ the largest independent set in the subgraph of the conflict graph $\cH$ induced by $T$. Assume for now that $|\cI(T,\cH)| \leq |T|/2$.
We define $H_0 = H(1/2,T)$ and $H_1 = H(1/2 + \delta,T)$ for $\delta = 0.026/\sqrt{|\cI(T,\cH)|}$.
By property~\ref{prop:support} of Lemma~\ref{lem:supp_Hp}, we have that the support of $H_0$ and $H_1$ is contained in $\cM(G,\tau,q)$ for any $q \geq 2$.

Next, we establish that for any $z \in \setb{0,1}^n$, we have $|R_z \cap T| \leq |\cI(T,\cH)|$. It suffices to show that the exposures in $R_z \cap T$ is an independent set in the induced subgraph $\cH(T)$.
To see that, note that for any $e_{i,k}, e_{j,l} \in R_z \cap T$, by definition we have $h_i(z) = e_{i,k}$ and $h_j(z) = e_{j,l}$. This means that $e_{i,k},e_{j,l}$ are not connected in the conflict graph $\cH$, thus also not connected in the induced subgraph $\cH(T)$. Thus $R_z \cap T$ is an independent set of $\cH(T)$.

Let $\design$ be an arbitrary design and lett $\mathrm{supp}(\design)\subseteq \setb{0,1}^n$ be the support of $\design$.
As we have remarked earlier, all outcomes in the support of $H_0$ and $H_1$ take values in the set 
\[
\setb{-(|T|/n)^{-1/q},0,(|T|/n)^{-1/q}}\enspace.
\]
Therefore, the observed outcomes $y(Z)$ under $H_0$ or $H_1$ take values in the set 
\[
U_n = \setb{-(|T|/n)^{-1/q},0,(|T|/n)^{-1/q}}^n\enspace.
\]
We can write the total variation distance between the observed distributions under $H_0$ and $H_1$ as follows
\begin{align*}
d_{\mathrm{TV}}(Q_{H_0}^{(\design)}, Q_{H_1}^{(\design)}) &= \frac{1}{2} \sum_{z \in \setb{0,1}^n} \sum_{u \in U_n} \abs[\Bigg]{\Prsub{H_1}{Z = z, y(Z) = u} - \Prsub{H_0}{Z = z, y(Z) = u}} \\
&= \frac{1}{2} \sum_{z \in \mathrm{supp}(\design)} \design(z) \cdot \sum_{u \in U_n} \abs[\Bigg]{\Prsub{H_1}{y(z) = u | Z = z} - \Prsub{H_0}{y(z) = u | Z = z}} \\
&= \sum_{z \in \mathrm{supp}(\design)} \design(z) \cdot d_{\mathrm{TV}}(Q_{H_0}(\cdot|z), Q_{H_1}(\cdot|z)) \\
&\leq \sum_{z \in \mathrm{supp}(\design)} \design(z) \cdot \frac{7}{8}\\
\intertext{using Lemma~\ref{lem:tv_bound} and the fact that $|R_z \cap T| \leq |\cI(T)|$ and $|\cI(T)| \leq |T|/2$}\\
&= \frac{7}{8} \enspace.
\end{align*}
By property~\ref{prop:separation} of Lemma~\ref{lem:supp_Hp} we have that
\[
d(H_0,H_1) = \delta \paren[\Big]{\frac{|T|}{n}}^{1 - \frac{1}{q}} = \frac{0.026}{\sqrt{|\cI(T)|}}\cdot \paren[\Big]{\frac{|T|}{n}}^{1 - \frac{1}{q}} \enspace.
\]

We will use LeCam's method (Proposition~\ref{prop:lecam}) with the same choice of the two distributions $H_0$ and $H_1$ for all designs $\design$. Applying the bound yields
\[
\cR(G,\ate,q) \geq \frac{d(H_0,H_1)^2}{8} \cdot \inf_\design \paren[\Big]{1 - d_{\mathrm{TV}}(Q_{H_0}^{(\design)}, Q_{H_1}^{(\design)})} \geq 1.05 \cdot 10^{-5} \cdot \frac{1}{|\cI(\cH)|} \cdot \paren[\Big]{\frac{|T|}{n}}^{2 - \frac{2}{q}}  \enspace.
\]
This proves the desired lower bound for the case $|\cI(T,\cH)| \leq |T|/2$.
We now handle the case $|\cI(T,\cH)| > |T|/2$. 
First of all, by assumption we have that $(e_{i,0}, e_{i,1}) \in E(\cH(T))$ for all $i \in [n]$, which means that any independent set in $\cH$ contains at most one of the two exposures for each unit. Therefore, we have $|R_z| \leq n$.
This means that $T = V_\cH$ satisfies $|\cI(T,\cH)| \leq |T|/2$, so applying the previously obtained bound with $T = V_\cH$ yields
\[
\cR(G,\ate,q) \geq 1.05 \cdot 10^{-5} \cdot 2^{2 - \frac{2}{q}} \cdot \frac{1}{n} \enspace.
\]
Now, consider an arbitrary subset $T \subseteq V_\cH$ with $|\cI(T,\cH)| > |T|/2$.
We have that 
\begin{align*}
    1.05 \cdot 10^{-5} \cdot \frac{1}{|\cI(T,\cH)|} \cdot \paren[\Big]{\frac{|T|}{n}}^{2 - \frac{2}{q}} &\leq  
    1.05 \cdot 10^{-5} \cdot \frac{2}{|T|} \cdot \paren[\Big]{\frac{|T|}{n}}^{2 - \frac{2}{q}} \\
    &= 1.05 \cdot 10^{-5} \cdot \frac{2|T|^{1 - \frac{2}{q}}}{n^{2 - \frac{2}{q}}} \\
    &\leq 1.05 \cdot 10^{-5} \cdot 2^{2 - \frac{2}{q}} \cdot \frac{1}{n} \\
    &\leq \cR(G,\ate,q) \enspace,
\end{align*}
where in the last step we used the previously obtained bound for $T = V_\cH$.
Thus, the bound holds for all subsets $T \subseteq V_\cH$ with $|T| \geq 5$.
The proof is now complete.
\end{proof}

\subsection{Lower Bound using Local Information (Theorem~\ref{thm:local_lower_bound_general})}\label{sec:local_lower_bound_appendix}

In this section, we present the proof of Theorem~\ref{thm:local_lower_bound_general}. Let's recall that for any exposure $e_{i,k} \in T$, we denote by $d_{i,k}$ the degree of $e_{i,k}$ in the conflict graph $\cH$. We also denote by $d_{\mathrm{min}}(T)$ the minimum degree of any exposure in $T$, and by $\dhar( T,r)$ the harmonic mean of the degrees of the exposures in $T$ with parameter $r$, i.e.
\[
d_{\mathrm{min}}(T) = \min_{e_{i,k} \in T} d_{i,k} \quad , \quad \dhar( T,r) = \frac{\abs{T}}{\paren[\Big]{\sum_{e_{i,k} \in T} 1 / d_{i,k}^r}^{\frac{1}{r}}} \enspace.
\]
We omit the dependence on $\cH$ since it is clear from the context.
We repeat the statement for completeness.

\localLowerBoundGeneral*

Before proving Theorem~\ref{thm:local_lower_bound_general}, we establish an intermediate Lemma that contains the main idea of the argument. It establishes a lower bound on the minimax risk in terms of some tunable parameters $c_{i,k}$.

\begin{lemma}\label{lem:lower_bound_tunable}
    Let $G$ be a graph on $n$ vertices, $\tau$ an arbitrary effect and $\cH$ the conflict graph. 
    Then, for any subset of exposures $T \subseteq V_\cH$, any integer $q \geq 2$, any integers $r_{i,k}$ that satisfy $r_{i,k} \leq d_{i,k}$ for all $i \in [n], k \in \setb{0,1}$ and any positive numbers $c_{i,k} \in (0,1)$ for $i \in [n], k \in \setb{0,1}$ that satisfy
    \[
    \sum_{e_{i,k} \in T} c_{i,k} \geq \lowerboundconst \enspace,
    \]
    we have
    \[
    \cR(G,\tau,q) \geq \frac{\riskconst}{n^{2 - \frac{2}{q}}} \cdot \min \paren[\Bigg]{\min_{e_{i,k} \in T} c_{i,k} r_{i,k}^{2-\frac{2}{q}} , \frac{|T|^{2 - \frac{2}{q}}}{\sum_{e_{i,k} \in T} c_{i,k} }}\enspace.
    \]
\end{lemma}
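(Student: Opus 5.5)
We follow the design-adaptive two-case argument from the proof sketch of Theorem~\ref{thm:local_lower_bound_general}. Instead of a single threshold $p$, we use vertex-specific thresholds given by the $c_{i,k}$. Fix an arbitrary design $\design$. For each exposure $e = e_{i,k}$, let $A_e$ be the event that $e$ is observed, i.e.\ $h_i(Z) = e_{i,k}$. The split is:
\begin{itemize}
\item Case 1: some $e_{i,k} \in T$ has $\design(A_{e_{i,k}}) \geq c_{i,k}$;
\item Case 2: every $e_{i,k} \in T$ has $\design(A_{e_{i,k}}) < c_{i,k}$.
\end{itemize}
In each case we produce $H_0, H_1$ depending on $\design$ and apply Proposition~\ref{prop:lecam}. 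Since every design falls in one of the two cases, the minimax risk is at least the minimum of the two resulting bounds.

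\textbf{Case 1.} Fix $e = e_{i,k}$ with $\design(A_e) \ge c_{i,k}$, and choose $r_{i,k}$ neighbors $N' \subseteq N(e)$ in $\cH$. This is possible since $r_{i,k} \le d_{i,k}$. Let $y^{(0)} \equiv 0$. Let $y^{(1)}$ assign value $\pm (n/r_{i,k})^{1/q}$ to the exposures in $N'$, with sign $+$ for $k'=1$ and $-$ for $k'=0$, and $0$ elsewhere.

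We first check membership in the model. Each half of the moment constraint sums to at most $r_{i,k} \cdot (n/r_{i,k})/n = 1$, so $y^{(1)} \in \momodelq$. Next, the effect separation is
\[
d = \frac{r_{i,k}\,(n/r_{i,k})^{1/q}}{n} = (r_{i,k}/n)^{1-1/q}.
\]
On $A_e$ no exposure in $N'$ is observed, by the definition of conflict. The two hypotheses therefore induce identical observations on that event, so $\tv \le 1 - c_{i,k}$. Proposition~\ref{prop:lecam} then gives a bound of order $c_{i,k}\, r_{i,k}^{2-2/q}/n^{2-2/q}$, which is the first term of the minimum.

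\textbf{Case 2.} Here
\[
\E{|R_Z \cap T|} \le \textstyle\sum_{e_{i,k} \in T} c_{i,k} =: \Sigma .
\]
By Markov, the event $B = \{|R_Z\cap T| \le 4\Sigma\}$ has probability at least $3/4$. Take $H_0 = H(1/2,T)$ and $H_1 = H(1/2+\delta,T)$ with $\delta = 0.026/\sqrt{4\Sigma}$.

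For $z \in B$ we apply Lemma~\ref{lem:tv_bound_general}, which needs two hypotheses:
\begin{itemize}
\item $|R_z\cap T| \le |T|/2$: this holds whenever $4\Sigma \le |T|/2$.
\item the $\delta$ condition: since $\delta$ is computed with the upper bound $4\Sigma$, we have $\delta \le 0.026/\sqrt{|R_z \cap T|}$ on $B$.
\end{itemize}
The lemma then gives a conditional TV of at most $7/8$ on $B$. Conditioning on $Z$ as in~\eqref{eq:tv_conditional}, we get
\[
1-\tv \ge \tfrac34\cdot\tfrac18 .
\]
By Lemma~\ref{lem:supp_Hp}, the separation is $\delta (|T|/n)^{1-1/q}$. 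The bound is therefore of order
\[
\frac{1}{\Sigma}\Big(\frac{|T|}{n}\Big)^{2-2/q} = \frac{|T|^{2-2/q}}{n^{2-2/q}\,\Sigma},
\]
which is the second term of the minimum.

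\textbf{Edge cases and the main obstacle.} Two technicalities remain. First, we need $H(1/2+\delta,T)$ to be well defined, which requires $\delta$ small. This is where the hypothesis $\Sigma \ge \lowerboundconst$ enters: it forces $\delta = 0.026/\sqrt{4\Sigma}$ to be bounded by a small constant. We also round $p|T|$ to an integer; for $|T|\ge 5$ this costs only constants. Second, and this is the main obstacle, if $4\Sigma > |T|/2$ we cannot apply Lemma~\ref{lem:tv_bound_general}. We plan to handle this by comparison. In that regime the target second term is at most $8 |T|^{1-2/q}/n^{2-2/q} \lesssim 1/n$ (using $|T|\le 2n$). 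Such a rate is already dominated by the global bound of Theorem~\ref{thm:global_lower_bound} with $T = \cvert$, whose independent sets have size at most $n$ as shown in its proof, so the lower bound still holds after adjusting constants. Constant bookkeeping then yields $\riskconst$.
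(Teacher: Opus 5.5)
Your proposal is correct and is essentially the paper's proof. It uses the same design-adaptive split on whether some $e_{i,k}\in T$ has $\design(A_{i,k})\ge c_{i,k}$. In Case~1 it hides mass on $r_{i,k}$ conflicting neighbors in the same way; the paper takes $y^{(0)}=-y^{(1)}$ rather than $y^{(0)}\equiv 0$, which only changes constants. In Case~2 it uses the same $H(1/2,T)$ versus $H(1/2+\delta,T)$ mixtures with Lemma~\ref{lem:tv_bound_general} on a Markov event.

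The edge case you flag is one the paper passes over. It asserts $|R_z\cap T|\le 2\Sigma\le |T|/2$, although $c_{i,k}<1$ only gives $\Sigma<|T|$. It is rescued only because every invocation in the proof of Theorem~\ref{thm:local_lower_bound_general} separately checks $\Sigma\le |T|/4$. Your fallback to Theorem~\ref{thm:global_lower_bound} with $T=\cvert$ is what makes the lemma hold as stated. Both arguments still tacitly need $|T|\ge 5$ for Lemma~\ref{lem:tv_bound_general}.

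The one inaccuracy is your closing claim that the constants give $\riskconst$. With threshold $4\Sigma$ and $\design(B)\ge 3/4$, Case~2 yields $3\cdot 0.026^2/2^{10}\approx 1.98\cdot 10^{-6}$, not $\riskconst$. At the level $\riskconst$, your edge comparison $8\cdot 2^{1-2/q}\cdot\riskconst\le 1.05\cdot 10^{-5}\cdot 2^{2-2/q}$ also narrowly fails.

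The fix is to use the paper's threshold $2\Sigma$ instead.
\begin{itemize}
\item Then $\design(A_T)\ge 1/2$, and $\delta=0.026/\sqrt{2\Sigma}<1/2$ precisely because $\Sigma\ge\lowerboundconst$.
\item Case~2 then gives $0.026^2/2^8\ge\riskconst$.
\item The edge regime becomes $\Sigma>|T|/4$. There the second term is at most $4\cdot 2^{1-2/q}/n$, and $4\cdot\riskconst\le 2\cdot 1.05\cdot 10^{-5}$, so the global bound covers it with the stated constant.
\end{itemize}
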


\begin{proof}
    We define the event $A_{i,k}$ as the event that $i$ gets exposure $e_{i,k}$, i.e.
    \[
    A_{i,k} = \setb[\Big]{h_i(z) = e_{i,k}}
    \]
    
    Recall that for each exposure $e_{i,k} \in T$, $\mathcal{N}(e_{i,k})$ is the set of neighboring exposures of $e_{i,k}$ in $\cH$. By definition, $|\mathcal{N}(e_{i,k})| = d_{i,k}$. Consider an arbitrary subset $U_{i,k} \subseteq \mathcal{N}(e_{i,k})$ of size $r_{i,k}$ for each $e_{i,k} \in T$. 

    Let $\design$ be an arbitrary design. We distinguish between two cases.

    \paragraph{Case 1:} There exists $i \in [n], k \in \setb{0,1} $ with $e_{i,k} \in T$ such that $\design(A_{i,k}) \geq c_{i,k}$. We define $H_0$ and $H_1$ to be supported on potential outcome functions $y^{(0)},y^{(1)}$ respectively, where
\[
y_i^{(1)}(e) = \begin{cases}
    \paren[\Big]{\frac{n}{r_{i,k}}}^{1/q} & \text{if } e = e_{j,1} \in U_{i,k} \\
    -\paren[\Big]{\frac{n}{r_{i,k}}}^{1/q} & \text{if } e = e_{j,0} \in U_{i,k} \\
    0 & \text{otherwise}
\end{cases}
\quad , \quad
y_i^{(0)}(e) = \begin{cases}
-\paren[\Big]{\frac{n}{r_{i,k}}}^{1/q} & \text{if } e = e_{j,1} \in U_{i,k} \\
\paren[\Big]{\frac{n}{r_{i,k}}}^{1/q} & \text{if } e = e_{j,0} \in U_{i,k} \\
0 & \text{otherwise}
\end{cases} \enspace.
\]
Let us first verify that these outcome functions satisfy the moment restriction. We have for $l \in \setb{0,1}$,
\[
\sum_{j=1}^n \paren[\Big]{\abs[\big]{y_j^{(l)}(e_{j,1})}^q + \abs[\big]{y_j^{(l)}(e_{j,0})}^q} = \sum_{e \in U_{i,k}} \paren[\Bigg]{\paren[\Bigg]{\frac{n}{r_{i,k}}}^{1/q}}^q = n \enspace.
\]
Therefore, $y^{(0)},y^{(1)} \in \cM(G,q)$. 
Since $H_0$ and $H_1$ are supported on single potential outcome functions, their separation is simply
\begin{align*}
d(H_0,H_1) &= |\ate(y^{(0)}) - \ate(y^{(1)})| \\
&= \frac{1}{n} \sum_{e \in U_{i,k}} \paren[\Bigg]{\paren[\Bigg]{\frac{n}{r_{i,k}}}^{1/q} - \paren[\Bigg]{-\paren[\Bigg]{\frac{n}{r_{i,k}}}^{1/q}}} \\
&= 2 \cdot \paren[\Bigg]{\frac{r_{i,k}}{n}}^{1 - \frac{1}{q}} \enspace.
\end{align*}

Suppose event $A_{i,k}$ occurs, i.e. we observe the outcome of unit $i$ for exposure $e_{i,k}$. By definition of the conflict graph, this implies that we do not observe the outcomes of any of the exposures in $U_{i,k}$. Since $H_0,H_1$ only differ in the outcomes of exposures in $U_{i,k}$, they are impossible to distinguish under that event. 
In other words, for any $z \in A_{i,k}$, we have $Q_{H_0}(\cdot|z) = Q_{H_1}(\cdot|z)$ and hence
\[
d_{\mathrm{TV}}(Q_{H_0}(\cdot|z), Q_{H_1}(\cdot|z)) = 0 \enspace,
\]
which implies that
\begin{align*}
    d_{\mathrm{TV}}(Q_{H_0}^{(\design)}, Q_{H_1}^{(\design)}) &= \sum_{z \in \mathrm{supp}(\design)} \design(z) \cdot d_{\mathrm{TV}}(Q_{H_0}(\cdot|z), Q_{H_1}(\cdot|z)) \\
    &= \sum_{z \notin A_{i,k}} \design(z) \cdot d_{\mathrm{TV}}(Q_{H_0}(\cdot|z), Q_{H_1}(\cdot|z)) \\
    &\leq 1 - \design(A_{i,k}) \leq 1 -  c_{i,k} \enspace.
\end{align*}
Following the proof of Proposition~\ref{prop:lecam} for this particular design $\design$, we have
\begin{align}
    \inf_{\eate} \mathrm{Risk}(\design,\eate) &\geq \frac{d^2(H_0,H_1)}{8} \cdot \paren[\big]{1 - d_{\mathrm{TV}}(Q_{H_0}^{(\design)}, Q_{H_1}^{(\design)})} \nonumber \\
    &\geq \frac{1}{2} \cdot \paren[\Bigg]{\frac{r_{i,k}}{n}}^{2 - \frac{2}{q}} \cdot c_{i,k} \enspace. \label{eq:case1_general}
\end{align}

\paragraph{Case 2:} For all $i \in [n], k \in \setb{0,1}$ with $e_{i,k} \in T$ we have $\design(A_{i,k}) <  c_{i,k}$. In that case, define the event $A_T$ as
\[
A_T = \setb[\Big]{z \in \setb{0,1}^n: \sum_{e_{i,k} \in T} \indicator{A_{i,k}}(z) \leq 2 \cdot \sum_{e_{i,k} \in T} c_{i,k}} = 
\setb[\Big]{z \in \setb{0,1}^n: |R_z \cap T| \leq 2 \cdot \sum_{e_{i,k} \in T} c_{i,k}} \enspace.
\]
By Markov's inequality, we have
\begin{align*}
\design(A_T^c) &= \design\paren[\Bigg]{\sum_{e_{i,k} \in T} \indicator{A_{i,k}} > 2 \cdot \sum_{e_{i,k} \in T} c_{i,k}}\\
&\leq \frac{\E[\Big]{\sum_{e_{i,k} \in T} \indicator{A_{i,k}}}}{2 \cdot \sum_{e_{i,k} \in T} c_{i,k}} \\
&= \frac{\sum_{e_{i,k} \in T} \design(A_{i,k})}{2 \cdot \sum_{e_{i,k} \in T} c_{i,k}} \\
&< \frac{1}{2} \enspace.
\end{align*}

Thus, $\design(A_T) \geq 1/2$. Now, define $H_0 = H(1/2, T)$ and $H_1 = H(1/2 + \delta, T)$ where 
\[\
\delta = \frac{0.026}{ \sqrt{2  \cdot \sum_{e_{i,k} \in T} c_{i,k}}} \enspace,
\]
where $0.026$ is the constant from Lemma~\ref{lem:tv_bound_general}. Note that for this construction to be well-defined, we need $\delta < 1/2$, which is equivalent to
\begin{equation}\label{eq:c_constraint}
\sum_{e_{i,k} \in T} c_{i,k} > 2 \cdot 0.026^2 \approx 1.352 \cdot 10^{-3} \enspace.
\end{equation}
Thus, we choose the constant $1.36 \cdot 10^{-3}$ in the statement of the lemma to ensure the above inequality is satisfied.

By Lemma~\ref{lem:supp_Hp}
\[
d(H_0,H_1) = \delta \cdot \paren[\Bigg]{\frac{|T|}{n}}^{1 - \frac{1}{q}} = \frac{0.026}{\sqrt{2\sum_{e_{i,k} \in T} c_{i,k}}} \cdot \paren[\Bigg]{\frac{|T|}{n}}^{1 - \frac{1}{q}}\enspace.
\]
For any $z \in A_T$, we have 
\[
|R_z \cap T| \leq 2 \cdot \sum_{e_{i,k} \in T} c_{i,k} \leq \frac{|T|}{2}\enspace.
\]
Therefore, applying Lemma~\ref{lem:tv_bound_general}, we have that for any $z \in A_T$
\[
d_{\mathrm{TV}}(Q_{H_0}(\cdot|z), Q_{H_1}(\cdot|z)) \leq \frac{7}{8} \enspace.
\]
By the law of total probability, we have
\begin{align*}
1 - d_{\mathrm{TV}}(Q_{H_0}^{(\design)}, Q_{H_1}^{(\design)}) &= \sum_{z \in \mathrm{supp}(\design)} \design(z) \cdot (1 - d_{\mathrm{TV}}(Q_{H_0}(\cdot|z), Q_{H_1}(\cdot|z))) \\
&\geq \sum_{z \in A_T} \design(z) \cdot (1 - d_{\mathrm{TV}}(Q_{H_0}(\cdot|z), Q_{H_1}(\cdot|z))) \\
&\geq \design(A_T) \cdot (1 - \frac{7}{8}) \\
&\geq \frac{1}{16} \enspace.
\end{align*}

Therefore, following the proof of Proposition~\ref{prop:lecam} for this particular design $\design$, we have
\begin{align}
    \inf_{\eate} \mathrm{Risk}(\design,\eate) &\geq \frac{d^2(H_0,H_1)}{8} \cdot \paren[\big]{1 - d_{\mathrm{TV}}(Q_{H_0}^{(\design)}, Q_{H_1}^{(\design)})} \nonumber \\
    &\geq \frac{0.026^2 }{2^8 \cdot \sum_{e_{i,k} \in T} c_{i,k}} \cdot  \paren[\Bigg]{\frac{|T|}{n}}^{2 - \frac{2}{q}} \\
    &\geq \frac{2.64 \cdot 10^{-6}}{\sum_{e_{i,k} \in T} c_{i,k} } \paren[\Bigg]{\frac{|T|}{n}}^{2 - \frac{2}{q}}\enspace. \label{eq:case2_general}
\end{align}

Since any possible design $\design$ falls into either Case 1 or Case 2, we can combine the lower bounds from both cases (inequalities~\ref{eq:case1_general} and~\ref{eq:case2_general}) to obtain a lower bound on the minimax risk as follows
\begin{equation}
\cR(G,\tau,q) \geq \riskconst \cdot \min\paren[\Bigg]{\min_{e_{i,k} \in T} c_{i,k}\paren[\Big]{\frac{r_{i,k}}{n}}^{2 - \frac{2}{q}}, \frac{1}{\sum_{e_{i,k} \in T} c_{i,k}}\paren[\Bigg]{\frac{|T|}{n}}^{2 - \frac{2}{q}}} \enspace\label{eq:objective}
\end{equation}
\end{proof}

We are now ready to prove Theorem~\ref{thm:local_lower_bound_general}. The general strategy will be to invoke Lemma~\ref{lem:lower_bound_tunable} with the appropriate parameters. As we shall see, this choice depends crucially on the properties of each subset $T$ of exposures.

\begin{proof}[Proof of Theorem~\ref{thm:local_lower_bound_general}]
We distinguish between various cases based on the properties of the subset $T$.

\paragraph{Case 1:} Suppose $d_{\mathrm{min}}(T) < |T|$. In that case, it suffices to prove that
\begin{equation}\label{eq:case1_goal}
\cR(G,\tau,q) \geq \frac{\riskconst}{n^{2 - \frac{2}{q}}} \cdot \min\paren[\Bigg]{\frac{d_{\mathrm{min}}(T)^{2-\frac{2}{q}}}{4}, \frac{|T|^{1-\frac{1}{q}}}{\sqrt{\sum_{e_{i,k} \in T} \frac{1}{d_{i,k}^{2 - \frac{2}{q}}}}}} \enspace.
\end{equation}
Suppose $e_{\mathrm{min}} \in T$ is an exposure of miminum degree $d_{\mathrm{min}}(T)$ in $T$. 
Set $r_{i,k} = d_{i,k}$ for all $e_{i,k} \in T$.
Let 
\begin{gather*}
t^* = \frac{1}{n^{2 - \frac{2}{q}}} \cdot \min\paren[\Bigg]{\frac{\min_{e_{i,k} \in T} r_{i,k}^{2-\frac{2}{q}}}{4}, \frac{|T|^{1-\frac{1}{q}}}{\sqrt{\sum_{e_{i,k} \in T} \frac{1}{r_{i,k}^{2 - \frac{2}{q}}}}}}\\
c_{i,k} = t^* \paren[\Big]{\frac{n}{r_{i,k}}}^{2 - \frac{2}{q}} 
\end{gather*}
We will invoke Lemma~\ref{lem:lower_bound_tunable} with the above choice of $r_{i,k}$ and $c_{i,k}$. Let us first check that the $c_{i,k}$ satisfy the required assumptions of Lemma~\ref{lem:lower_bound_tunable}. We have
\[
c_{i,k} \leq \frac{1}{n^{2 - \frac{2}{q}}} \cdot \paren[\Big]{\frac{n}{r_{i,k}}}^{2 - \frac{2}{q}} \cdot \frac{\min_{e_{i,k} \in T}  r_{i,k}^{2-\frac{2}{q}}}{4} \leq 1 \enspace.
\]
Furthermore, we have
\begin{align*}
\sum_{e_{i,k} \in T} c_{i,k} &= t^* \cdot n^{2 - \frac{2}{q}} \cdot \sum_{e_{i,k} \in T} \frac{1}{r_{i,k}^{2 - \frac{2}{q}}} \\
&= \min\paren[\Bigg]{\frac{\min_{e_{i,k} \in T} r_{i,k}^{2-\frac{2}{q}}}{4}, \frac{|T|^{1-\frac{1}{q}}}{\sqrt{\sum_{e_{i,k} \in T} \frac{1}{r_{i,k}^{2 - \frac{2}{q}}}}}} \cdot \sum_{e_{i,k} \in T} \frac{1}{r_{i,k}^{2 - \frac{2}{q}}} \\
&= \min\paren[\Bigg]{\frac{\min_{e_{i,k} \in T} r_{i,k}^{2-\frac{2}{q}}}{4} \sum_{e_{i,k} \in T} \frac{1}{r_{i,k}^{2 - \frac{2}{q}}} , |T|^{1-\frac{1}{q}} \cdot \sqrt{\sum_{e_{i,k} \in T} \frac{1}{r_{i,k}^{2 - \frac{2}{q}}}}} 
\end{align*}
From here, we can lower bound the quantity as follows.
\[
\sum_{e_{i,k} \in T} c_{i,k} \geq \frac{1}{4} \geq \lowerboundconst \enspace,
\]
where we lower bounded the sum by the term corresponding to the exposure $e_{\mathrm{min}}$.

We can also upper bound the sum as follows.

\begin{align*}
    \sum_{e_{i,k} \in T} c_{i,k}  &\leq \frac{\min_{e_{i,k} \in T} r_{i,k}^{2-\frac{2}{q}}}{4} \sum_{e_{i,k} \in T} \frac{1}{r_{i,k}^{2 - \frac{2}{q}}} \\
    &\leq \frac{1}{4} \cdot |T| \enspace.
\end{align*}

Therefore, all assumptions of Lemma~\ref{lem:lower_bound_tunable} are satisfied. Invoking the lemma, we get
\begin{align*}
\cR(G,\tau,q) &\geq \frac{\riskconst}{n^{2 - \frac{2}{q}}} \cdot \min\paren[\Bigg]{\min_{e_{i,k} \in T} c_{i,k} r_{i,k}^{2-\frac{2}{q}} , \frac{|T|^{2 - \frac{2}{q}}}{\sum_{e_{i,k} \in T} c_{i,k} }} \\
&= \frac{\riskconst}{n^{2 - \frac{2}{q}}} \cdot \min\paren[\Bigg]{t^* n^{2 - \frac{2}{q}}, \frac{|T|^{2 - \frac{2}{q}}}{t^* n^{2 - \frac{2}{q}} \cdot \sum_{e_{i,k} \in T} \frac{1}{r_{i,k}^{2 - \frac{2}{q}}}}} \\
&\geq \riskconst \cdot \min\paren[\Bigg]{ t^* , \frac{|T|^{1 - \frac{1}{q}}}{ n^{2 - \frac{2}{q}} \cdot \sqrt{\sum_{e_{i,k} \in T} \frac{1}{r_{i,k}^{2 - \frac{2}{q}}}}}}\\
&\geq \riskconst \cdot t^* \enspace,
\end{align*}
where we used the definition of $t^*$ in the last two steps.
Thus, our claim is complete in that case.

\paragraph{Case 2:} Suppose $d_{\mathrm{min}}(T) \geq |T|$. In that case, it suffices to prove that

\begin{equation}\label{eq:case2_goal}
\cR(G,\tau,q) \geq  \frac{\localconst}{n^{2 - \frac{2}{q}}}  \min\paren[\Bigg]{|T|^{2 - \frac{2}{q}},\frac{|T|^{1-\frac{1}{q}}}{\sqrt{\sum_{e_{i,k} \in T} \frac{1}{d_{i,k}^{2 - \frac{2}{q}}}}}}\enspace.
\end{equation}

We distinguish between two sub-cases.

\begin{itemize}
    \item \textbf{Case 2.1:} Suppose
    \[
     \frac{|T|^{1-\frac{1}{q}}}{\sqrt{\sum_{e_{i,k} \in T} \frac{1}{d_{i,k}^{2 - \frac{2}{q}}}}} \leq |T|^{2 - \frac{2}{q}} \enspace,
    \]
    which can equivalently written as
    \begin{equation}\label{eq:case2_subcase1}
        |T|^{1-\frac{1}{q}} \sqrt{\sum_{e_{i,k} \in T} \frac{1}{d_{i,k}^{2 - \frac{2}{q}}}} \geq 1 \enspace.
    \end{equation}
    In that case, set $r_{i,k} = d_{i,k}$ for all $e_{i,k} \in T$ and define
\begin{gather*}
t^* = \frac{1}{n^{2 - \frac{2}{q}}}  \min\paren[\Bigg]{\frac{|T|^{2 - \frac{2}{q}}}{4},\frac{|T|^{1-\frac{1}{q}}}{\sqrt{\sum_{e_{i,k} \in T} \frac{1}{d_{i,k}^{2 - \frac{2}{q}}}}}}\\
c_{i,k} = t^* \paren[\Big]{\frac{n}{d_{i,k}}}^{2 - \frac{2}{q}}
\end{gather*}

We will show that the $c_{i,k}$ satisfy the assumptions of Lemma~\ref{lem:lower_bound_tunable}. We have
\[
c_{i,k} \leq \frac{1}{n^{2 - \frac{2}{q}}}  \paren[\Big]{\frac{n}{d_{i,k}}}^{2 - \frac{2}{q}} \cdot \frac{|T|^{2 - \frac{2}{q}}}{4} \leq  \frac{d_{\mathrm{min}}(T)^{2 - \frac{2}{q}}}{4d_{i,k}^{2 - \frac{2}{q}}} \leq 1 \enspace,
\]
where we have used the fact that $|T| \leq d_{\mathrm{min}}(T)$.
Furthermore, 
\begin{align*}
\sum_{e_{i,k} \in T} c_{i,k} &= t^* \cdot n^{2 - \frac{2}{q}} \cdot \sum_{e_{i,k} \in T} \frac{1}{d_{i,k}^{2 - \frac{2}{q}}} \\
&= \min\paren[\Bigg]{\frac{|T|^{2 - \frac{2}{q}}}{4} \cdot \sum_{e_{i,k} \in T} \frac{1}{d_{i,k}^{2 - \frac{2}{q}}}, |T|^{1-\frac{1}{q}} \cdot \sqrt{\sum_{e_{i,k} \in T} \frac{1}{d_{i,k}^{2 - \frac{2}{q}}}}}\enspace.
\end{align*}
We can lower bound the sum as
\[
\sum_{e_{i,k} \in T} c_{i,k} \geq \frac{1}{4} \geq \lowerboundconst \enspace,
\]
where in the last step we used \eqref{eq:case2_subcase1}.
We can also upper bound the sum as follows.
\begin{align*}
    \sum_{e_{i,k} \in T} c_{i,k} &\leq \frac{|T|^{2 - \frac{2}{q}}}{4} \sum_{e_{i,k} \in T} \frac{1}{d_{i,k}^{2 - \frac{2}{q}}}\\
      &\leq \frac{d_{\mathrm{min}}(T)^{2-\frac{2}{q}}}{4} \sum_{e_{i,k} \in T} \frac{1}{d_{i,k}^{2 - \frac{2}{q}}} \\
    &\leq \frac{1}{4} \cdot |T| \enspace,
\end{align*}
where we used the fact that $|T| \leq d_{\mathrm{min}}(T)$ in the last step.
Thus, all assumptions of Lemma~\ref{lem:lower_bound_tunable} are satisfied. Invoking the lemma, we get
\begin{align*}
\cR(G,\tau,q) &\geq \frac{\riskconst}{n^{2 - \frac{2}{q}}} \cdot \min\paren[\Bigg]{\min_{e_{i,k} \in T} c_{i,k} d_{i,k}^{2-\frac{2}{q}} , \frac{|T|^{2 - \frac{2}{q}}}{\sum_{e_{i,k} \in T} c_{i,k} }} \\
&\geq \frac{\riskconst}{n^{2 - \frac{2}{q}}} \cdot \min\paren[\Bigg]{ t^* n^{2 - \frac{2}{q}}, \frac{|T|^{2 - \frac{2}{q}}}{t^* n^{2 - \frac{2}{q}} \cdot \sum_{e_{i,k} \in T} \frac{1}{d_{i,k}^{2 - \frac{2}{q}}}}} \\
\intertext{using the fact that $d_{\mathrm{min}}(T) \geq |T|$}\\
&\geq \riskconst \cdot \min\paren[\Bigg]{ t^* , \frac{|T|^{1 - \frac{1}{q}}}{ n^{2 - \frac{2}{q}} \cdot \sqrt{\sum_{e_{i,k} \in T} \frac{1}{d_{i,k}^{2 - \frac{2}{q}}}}}}\\
&\geq \riskconst \cdot t^* \enspace,
\end{align*}
where we used the definition of $t^*$ in the last two steps. Inequality~\eqref{eq:case2_goal} now follows.

\item \textbf{Case 2.2:} Suppose
\[
    \frac{|T|^{1-\frac{1}{q}}}{\sqrt{\sum_{e_{i,k} \in T} \frac{1}{d_{i,k}^{2 - \frac{2}{q}}}}} > |T|^{2 - \frac{2}{q}} \enspace,
    \]
which can equivalently be written as
\begin{equation}\label{eq:case2_subcase2}
    |T|^{1-\frac{1}{q}} \sqrt{\sum_{e_{i,k} \in T} \frac{1}{d_{i,k}^{2 - \frac{2}{q}}}} < 1 \enspace.
\end{equation}
We will show that we can pick $r_{i,k} \leq d_{i,k}$ with $r_{i,k} \geq |T|$ for all $e_{i,k} \in T$ such that

\begin{equation}\label{eq:case2_subcase2_subgoal}
    1 \leq |T|^{2-\frac{2}{q}} \sum_{e_{i,k} \in T} \frac{1}{r_{i,k}^{2 - \frac{2}{q}}} \leq 2
\end{equation}

Indeed, we can achieve this by iteratively picking $e_{i,k} \in T$ with $r_{i,k} > |T|$ and setting $r_{i,k} = r_{i,k} - 1$ until the left-hand side of \eqref{eq:case2_subcase2_subgoal} is satisfied. We first prove that this process will terminate in a finite number of steps and $r_{i,k} \geq |T|$ at termination.
Indeed, suppose we reach the point where $r_{i,k} = |T|$ for all $e_{i,k} \in T$. Then, we have
\[
|T|^{2-\frac{2}{q}} \sum_{e_{i,k} \in T} \frac{1}{r_{i,k}^{2 - \frac{2}{q}}} = |T|^{2-\frac{2}{q}} \sum_{e_{i,k} \in T} \frac{1}{|T|^{2 - \frac{2}{q}}} = |T| \geq 1 \enspace.
\]
Therefore, the process will terminate at some point and at termination we have $r_{i,k} \geq |T|$ for all $e_{i,k} \in T$. Furthermore, consider a single update step where we pick $e_{i,k} \in T$ and set $r_{i,k} = r_{i,k} - 1$. The value of the objective changes by at most
\[
|T|^{2-\frac{2}{q}} \cdot \paren[\Bigg]{\frac{1}{(r_{i,k}-1)^{2 - \frac{2}{q}}} - \frac{1}{r_{i,k}^{2 - \frac{2}{q}}}} 
\leq 1
\]
since $r_{i,k} > |T|$ by definition of the process. Thus, the objective increases by at most $1$ at every update. Therefore, the first time the objective exceeds $1$, \eqref{eq:case2_subcase2_subgoal} will be satisfied.
From now on, let $r_{i,k}$ be the values obtained at termination of the above process. We set
\begin{gather*}
t^* = \frac{1}{n^{2 - \frac{2}{q}}}  \min\paren[\Bigg]{\frac{\min_{e_{i,k} \in T} r_{i,k}^{2 - \frac{2}{q}}}{4},\frac{|T|^{1-\frac{1}{q}}}{\sqrt{\sum_{e_{i,k} \in T} \frac{1}{r_{i,k}^{2 - \frac{2}{q}}}}}}\\
c_{i,k} = t^* \paren[\Big]{\frac{n}{r_{i,k}}}^{2 - \frac{2}{q}}
\end{gather*}
We will show that the $c_{i,k}$ satisfy the assumptions of Lemma~\ref{lem:lower_bound_tunable}. We have
\[
c_{i,k} \leq \frac{1}{n^{2 - \frac{2}{q}}}  \paren[\Big]{\frac{n}{r_{i,k}}}^{2 - \frac{2}{q}} \cdot \frac{\min_{e_{i,k} \in T} r_{i,k}^{2 - \frac{2}{q}}}{4} \leq 1 \enspace.
\]
Furthermore,
\begin{align*}
\sum_{e_{i,k} \in T} c_{i,k} &= t^* \cdot n^{2 - \frac{2}{q}} \cdot \sum_{e_{i,k} \in T} \frac{1}{r_{i,k}^{2 - \frac{2}{q}}} \\
&= \min\paren[\Bigg]{\frac{\min_{e_{i,k} \in T} r_{i,k}^{2 - \frac{2}{q}}}{4} \cdot \sum_{e_{i,k} \in T} \frac{1}{r_{i,k}^{2 - \frac{2}{q}}}, |T|^{1-\frac{1}{q}} \cdot \sqrt{\sum_{e_{i,k} \in T} \frac{1}{r_{i,k}^{2 - \frac{2}{q}}}}} \\
&\geq \min\paren[\Bigg]{|T|^{2 - \frac{2}{q}} \cdot \sum_{e_{i,k} \in T} \frac{1}{r_{i,k}^{2 - \frac{2}{q}}}, |T|^{1-\frac{1}{q}} \cdot \sqrt{\sum_{e_{i,k} \in T} \frac{1}{r_{i,k}^{2 - \frac{2}{q}}}}}\\
&\geq \lowerboundconst \enspace,
\end{align*}
where in the last step we used \eqref{eq:case2_subcase2_subgoal}.
We can also upper bound the sum as follows.
\begin{align*}
    \sum_{e_{i,k} \in T} c_{i,k} &\leq \frac{\min_{e_{i,k} \in T} r_{i,k}^{2 - \frac{2}{q}}}{4} \sum_{e_{i,k} \in T} \frac{1}{r_{i,k}^{2 - \frac{2}{q}}}\\
    &\leq \frac{1}{4} \cdot |T| \enspace,
\end{align*}

Thus, all assumptions of Lemma~\ref{lem:lower_bound_tunable} are satisfied. Invoking the lemma, we get
\begin{align}
\cR(G,\tau,q) &\geq \frac{\riskconst}{n^{2 - \frac{2}{q}}} \cdot \min\paren[\Bigg]{\min_{e_{i,k} \in T} c_{i,k} r_{i,k}^{2-\frac{2}{q}} , \frac{|T|^{2 - \frac{2}{q}}}{\sum_{e_{i,k} \in T} c_{i,k} }}\nonumber \\
&\geq \riskconst \cdot \min\paren[\Bigg]{ t^* , \frac{|T|^{1 - \frac{1}{q}}}{ n^{2 - \frac{2}{q}} \cdot \sqrt{\sum_{e_{i,k} \in T} \frac{1}{r_{i,k}^{2 - \frac{2}{q}}}}}}\nonumber\\
&\geq \riskconst \cdot t^* \enspace. \label{eq:risk_final}
\end{align}
Now all we have to do is connect $t^*$ to the desired lower bound of \eqref{eq:case2_goal}. This is because the lower bound is in terms of the original degrees, whereas in $t^*$ some degrees have been ``truncated''. We have
\begin{align*}
    t^* &= \frac{1}{n^{2 - \frac{2}{q}}}  \min\paren[\Bigg]{\frac{\min_{e_{i,k} \in T} r_{i,k}^{2 - \frac{2}{q}}}{4},\frac{|T|^{1-\frac{1}{q}}}{\sqrt{\sum_{e_{i,k} \in T} \frac{1}{r_{i,k}^{2 - \frac{2}{q}}}}}}\\
    &\geq \frac{1}{n^{2 - \frac{2}{q}}}  \min\paren[\Bigg]{\frac{|T|^{2 - \frac{2}{q}}}{4},\frac{|T|^{1-\frac{1}{q}}}{\sqrt{\sum_{e_{i,k} \in T} \frac{1}{r_{i,k}^{2 - \frac{2}{q}}}}}} \\
    &\geq \frac{1}{n^{2 - \frac{2}{q}}}  \min\paren[\Bigg]{\frac{|T|^{2 - \frac{2}{q}}}{4},\frac{|T|^{2-\frac{2}{q}}}{\sqrt{2}}}\\
    \intertext{where we use \eqref{eq:case2_subcase2_subgoal}}\\
    &= \frac{1}{4}\frac{1}{n^{2 - \frac{2}{q}}}  |T|^{2 - \frac{2}{q}} \\
    &\geq \frac{1}{4}\frac{1}{n^{2 - \frac{2}{q}}} \min\paren[\Bigg]{|T|^{2 - \frac{2}{q}},\frac{|T|^{1-\frac{1}{q}}}{\sqrt{\sum_{e_{i,k} \in T} \frac{1}{d_{i,k}^{2 - \frac{2}{q}}}}}} \enspace
\end{align*}
Plugging this back to \eqref{eq:risk_final}, we obtain
\[
\cR(G,\tau,q) \geq \frac{\riskconst}{4}\cdot \frac{1}{n^{2 - \frac{2}{q}}} \min\paren[\Bigg]{|T|^{2 - \frac{2}{q}},\frac{|T|^{1-\frac{1}{q}}}{\sqrt{\sum_{e_{i,k} \in T} \frac{1}{d_{i,k}^{2 - \frac{2}{q}}}}}} \enspace,
\]
which establishes the desired lower bound in that case.
\end{itemize}

We have established the desired lower bound in all cases, therefore the proof is complete.
\end{proof}

We now give the proof of Corollary~\ref{cor:local_lower_bound_general}, which is restated here for completeness. 
Recall that in Section~\ref{sec:local_lower_bound} we defined $F_\cH$ to be the degree distribution function of the conflict graph $\cH$.
We also defined the \emph{critical degree} of a conflict graph $\cH$ with respect to a moment $q \geq 2$ as follows
\[
d^*_q(\cH) = \sup\setb[\Bigg]{d \in [n] : 1 - F_\cH(d) \geq \frac{d^{\frac{2q - 2}{3q - 2}}}{n}} \enspace. 
\]

\localLowerBoundCor*

\begin{proof}
    By definition of $d^*_q(\cH)$, there exist a subset of exposures $T \subseteq V_\cH$ with $|T| \geq (d^*_q(\cH))^{\frac{2(q-1)}{3q - 2}}$ such that $d_{i,k} \geq d^*_q(\cH)$ for all $e_{i,k} \in T$. Therefore, we can invoke Theorem~\ref{thm:local_lower_bound_general} with the above choice of $T$ to get
\begin{align*}
\cR(G,\tau,q) &\geq \frac{\localconst}{n^{2 - \frac{2}{q}}} \cdot \min\paren[\Bigg]{|T|^{2 - \frac{2}{q}},\frac{|T|^{1-\frac{1}{q}}}{\sqrt{\sum_{e_{i,k} \in T} \frac{1}{d_{i,k}^{2 - \frac{2}{q}}}}}, d_{\mathrm{min}}(T)^{2 - \frac{2}{q}}} \\
&\geq \frac{\localconst}{n^{2 - \frac{2}{q}}} \cdot \min\paren[\Bigg]{|T|^{2 - \frac{2}{q}},\frac{|T|^{1-\frac{1}{q}}}{\sqrt{\frac{|T|}{(d^*_q(\cH))^{2 - \frac{2}{q}}}}}, (d^*_q(\cH))^{2 - \frac{2}{q}}} \\
&= \frac{\localconst}{n^{2 - \frac{2}{q}}} \cdot \min\paren[\Bigg]{|T|^{2 - \frac{2}{q}},|T|^{\frac{1}{2} - \frac{1}{q}}\cdot (d^*_q(\cH))^{1 - \frac{1}{q}}, (d^*_q(\cH))^{2 - \frac{2}{q}}} \\
&\geq \frac{\localconst}{n^{2 - \frac{2}{q}}} \cdot \min\paren[\Bigg]{(d^*_q(\cH))^{\frac{2q-2}{3q-2}\paren{2 - \frac{2}{q}}},(d^*_q(\cH))^{\frac{2q-2}{3q-2}\paren{\frac{1}{2} - \frac{1}{q}} + 1 - \frac{1}{q}}, (d^*_q(\cH))^{2 - \frac{2}{q}}} \\
&= \localconst  \cdot \frac{(d^*_q(\cH))^{\frac{4(q-1)^2}{q(3q - 2)}}}{n^{2 - \frac{2}{q}}} \enspace.
\end{align*}
\end{proof}

Finally, we present the proof of Proposition~\ref{prop:critical_to_avg} that lower bounds the critical degree by the average degree of the conflict graph $\cH$.

\begin{restatable}{proposition}{avgDegree}\label{prop:critical_to_avg}
    Let $G$ be a graph and $\tau$ a contrastive effect, with $\cH$ being the associated conflict graph. We denote by $d_{\mathrm{avg}}(\cH)$ the average degree of $\cH$. Then, for any $q \geq 2$, we have
\[
d^*_q(\cH) \geq \frac{d_{\mathrm{avg}}(\cH)}{4} \enspace.
\]
\end{restatable}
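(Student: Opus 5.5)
The plan is a reverse-Markov (counting) argument on the degree sequence of $\cH$. Write $\alpha = \frac{2q-2}{3q-2}$ and note that $\alpha \in [1/2, 2/3]$ for $q \geq 2$, so in particular $x^{\alpha} \leq x$ for $x \geq 1$. Since $|\cvert| = 2n$, the condition defining $d^*_q(\cH)$ at an integer $k$ reads
\[
N_k := \#\setb{e_{i,k'} \in \cvert : d_{i,k'} > k} \geq 2\, k^{\alpha}.
\]
It therefore suffices to exhibit an integer $k \in [n]$ with $k \geq \davg(\cH)/4$ satisfying this inequality.

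The key step is to lower bound $N_k$ using the degree sum. Every degree is at most $2n-1 < 2n$, and the degrees not exceeding $k$ contribute at most $k$ each. Splitting the sum $\sum_{e} d_e = 2n\,\davg(\cH)$ accordingly gives
\[
2n\,\davg(\cH) \leq 2n \cdot k + N_k \cdot 2n,
\qquad\text{so}\qquad N_k \geq \davg(\cH) - k .
\]
Now write $D = \davg(\cH)$ and take $k = \lceil D/4 \rceil$. We have $k \leq n$ because $D \leq 2n-1$.
\begin{itemize}
\item If $D$ is large enough that $\lceil D/4 \rceil \leq D/2$ (which holds once $D \geq 4$), then $N_k \geq D/2$.
\item It then remains to verify $2k^{\alpha} \leq D/2$. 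Using $k \leq D/2$ and $\alpha \leq 2/3$, this reduces to an elementary inequality of the form $(D/2)^{2/3} \leq D/4$, which holds for all $D$ above an absolute constant (about $D \geq 16$).
\end{itemize}
In this regime $d^*_q(\cH) \geq k \geq D/4$.

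The main obstacle is the bookkeeping for small average degree, where rounding $k$ to an integer and the strict inequality $d > k$ in $1 - F_{\cH}$ matter. For $D$ below the threshold I would first try $k = \lfloor D/4 \rfloor$ combined with the slack $x^{\alpha} \leq x$, and I would also use that $N_k \geq D - k$ is quite crude there. If that still fails, I would argue directly from the structure of $\cH$. Each pair $e_{i,1}, e_{i,0}$ is adjacent, because $e_{i,1} \neq e_{i,0}$ are both exposures of the unit $i$. Hence every degree is at least $1$ and the required count at $k=1$ is only $2$, so the small constant cases can be checked by hand. When $D < 4$ the target $D/4 < 1$ is met as soon as the supremum is at least $1$, or under the natural convention for it. I would state this small-$D$ convention explicitly, since it is the only place where the clean argument needs care.
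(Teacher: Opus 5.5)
Your bound $N_k\ge D-k$ with $k=\lceil D/4\rceil$ is correct and settles $D\ge 16$. The remaining range, however, is a genuine gap, and the patches you propose do not close it.

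Taking $k=\lfloor D/4\rfloor$ cannot help. $d^*_q(\cH)$ is an integer, so $d^*_q(\cH)\ge D/4$ requires a witness $k\ge\lceil D/4\rceil$. There is also nothing finite to ``check by hand'': $D$ ranges over an interval while $n$ and $\cH$ are arbitrary.

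More importantly, in part of the window no argument that uses only the degree sum and $d_e\le 2n-1$ can work. Take three degrees equal to $2n-1$ and all others equal to $2$. This gives $D=5-\frac{9}{2n}\in(4,5)$ for $n\ge 5$, so $\lceil D/4\rceil=2$. Yet $N_2=3<2\cdot 2^{\alpha}$ once $2^\alpha>3/2$, i.e.\ roughly $q>3.4$.

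The missing ingredient is that this degree sequence is not graphical. Let $H=\setb{e: d_e\ge 3}$. The handshake identity gives $\sum_{e\in H}d_e\le |H|(|H|-1)+\sum_{e\notin H}d_e$. Hence $2nD\le |H|(|H|-1)+4(2n-|H|)\le 8n$ whenever $|H|\le 3$, so $D>4$ forces $N_2\ge 4>2\cdot 2^{\alpha}$.

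With this the proof closes, range by range:
\begin{itemize}
\item For $D>5$ and $2<D\le 4$, your crude bound already suffices once you use $N_k\ge\lceil D-k\rceil$ and $k^\alpha\le k^{2/3}$.
\item For $4<D\le 5$, the handshake step above covers it.
\item For $1<D\le 2$ you need $N_1\ge 2$. The partner edges give this, but not via ``all degrees are at least $1$''. The point is that a vertex $v$ of degree at least $2$ has a neighbour $u$ other than its partner, and $u$ is then adjacent to both $v$ and its own partner.
\item $D=1$ is the perfect-matching case. Under the displayed formula the defining set is empty there, so a convention is genuinely needed.
\end{itemize}

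The paper runs the counting in the opposite direction. Failure of the defining condition at $d^*+1$ bounds the number of vertices of degree above $d^*$. This gives $D\le (d^*+1)^\alpha+d^*\le 4d^*$ directly in terms of the integer $d^*$, so no rounding issue arises.

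The paper also reads the definition verbally: at least $k^\alpha$ vertices of degree at least $k$. You used the displayed formula instead, which demands $\#\setb{e: d_e>k}\ge 2k^\alpha$. That is stronger by a factor of $2$ and a strict inequality, and this discrepancy is what creates your window. Under the verbal reading, your own bound $\#\setb{e: d_e\ge k}=N_{k-1}\ge D-k+1$ at $k=\lceil D/4\rceil$ works for every $D$ with no case analysis. Conversely, under the displayed formula the paper's chain only yields $D<2(d^*+1)^\alpha+d^*+1$. That is at most $4d^*$ only when $d^*\ge 2$, so the case $d^*=1$ needs the same handshake step.
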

\begin{proof}
    Let $A,B \subseteq V_\cH$ be the subsets of exposures that have degree at least $d^*_q(\cH) + 1$ and at most $d^*_q(\cH)$, respectively, i.e.
    \begin{align*}
    A = \setb[\Big]{e_{i,k} \in V_\cH : d_{i,k} \geq d^*_q(\cH) + 1}\\
    B = \setb[\Big]{e_{i,k} \in V_\cH : d_{i,k} \leq d^*_q(\cH)} \enspace.
    \end{align*}
    Clearly $A$ and $B$ partition $V_\cH$. By definition of the critical degree, we have
    \[
    |A| \leq (d^*_q(\cH) + 1)^{\frac{2q - 2}{3q - 2}} \enspace.
    \]
    Thus, we have that
    \begin{align*}
    d_{\mathrm{avg}}(\cH) &= \frac{\sum_{e_{i,k} \in V_\cH} d_{i,k}}{|V_\cH|}  \\
    &= \frac{\sum_{e_{i,k} \in A} d_{i,k} + \sum_{e_{i,k} \in B} d_{i,k}}{|V_\cH|} \\
    &\leq \frac{|A|n + |B|d^*_q(\cH)}{|V_\cH|} \\
    &\leq \frac{2n(d^*_q(\cH) + 1)^{\frac{2q - 2}{3q - 2}} + 2nd^*_q(\cH)}{|V_\cH|} \\
    &= (d^*_q(\cH) + 1)^{\frac{2q - 2}{3q - 2}}+d^*_q(\cH)  \\
    &\leq 2^{\frac{2q-2}{3q - 2}}\cdot (d^*_q(\cH))^{\frac{2q - 2}{3q - 2}} + d^*_q(\cH) \\
    &\leq 4d^*_q(\cH) \enspace,
    \end{align*}
    using the fact that $2^{\frac{2q-2}{3q - 2}} \leq 3$ for all $q \geq 2$. Rearranging the above inequality, we get the conclusion.
\end{proof}

	\section{Upper Bounds on the Minimax Risk}\label{sec:upper_bounds_appendix}

In this section, we present the proofs of our general upper bounds on the minimax risk. We start by presenting the analysis of the generalized Conflict Graph Design in Section~\ref{sec:cgd_appendix} and then continue to the optimal bias-variance tradeoff in Section~\ref{sec:bias_variance_appendix}.

\subsection{Analysis of the Conflict Graph Design (Theorem~\ref{thm:conflict_graph_design})}\label{sec:cgd_appendix}

In this section, we present the proof of Theorem~\ref{thm:conflict_graph_design}, which we restate here for convenience.

\cgd*

For $q=2$, this upper bound was established in \citet{kandiros2024conflict}. In particular, they introduced the Conflict Graph Design (CGD) and provided an unbiased estimator which achieves a worst case variance of $\bigO{\lambda(G)/n}$. 
We will follow the general strategy of the Conflict Graph Design, although as we will see there are important obstacles to overcome for general moments.

We now provide a roadmap for this section. 

\begin{itemize}
\item In Section~\ref{sec:ordering}, we introduce the importance ordering that will be used for our design and establish a crucial property. In Section~\ref{sec:cgd_general_appendix} we describe the design for arbitrary moments $q$. 
\item In Section~\ref{sec:estimator} we introduce the estimator and prove some basic properties of it. 
\item In Section~\ref{sec:probability_bounds} we bound the covariance terms that arise in the variance of the estimator. \item In Section~\ref{sec:matrix_bound} we bound suitable norms of matrices that are related to the covariance matrix of the design. 
\item In Section~\ref{sec:proof_cgd} we present the proof of Theorem~\ref{thm:conflict_graph_design} using all the above ingredients.
\end{itemize}

Throughout this section, we will denote by $G$ the interference graph, $\ate$ the effect of interest, $\cH$ the conflict graph associated with $G$ and $\ate$. For a node $e_{i,k} \in V(\cH)$, we will denote by
$\mathcal{N}(e_{i,k})$ the set of neighbors of $e_{i,k}$ in $\cH$. 
We denote by $d_{i,k} = |\mathcal{N}(e_{i,k})|$ the degree of node $e_{i,k}$ in the conflict graph $\cH$.
Also, for a unit $i \in V(G)$, we denote by $\tilde{N}(i)$ the set consisting of $i$ and its neighbors in $G$.

\subsubsection{Importance Ordering}\label{sec:ordering}

An \emph{importance ordering} is a bijection $\pi:V(\cH) \mapsto [2n]$ that ``ranks'' the vertices in the conflict graph $\cH$ from the most important to the least important.
For an importance ordering $\pi$ and a node $e_{i,k} \in V(\cH)$, we denote by
$\mathcal{N}^\pi_b(e_{i,k})$ the set of neighbors of $e_{i,k}$ in $\cH$ that appear before $e_{i,k}$ in the importance ordering $\pi$, i.e.
\[
\mathcal{N}^\pi_b(e_{i,k}) = \setb{e_{j,l} \in \mathcal{N}(e_{i,k}) : \pi(e_{j,l}) < \pi(e_{i,k})} \enspace.
\]

In \citet{kandiros2024conflict}, they proposed an importance ordering based on the maximum eigenvector in the graph that ensures every node has a bounded number of more important neighbors. This property was crucial for upper bounding the variance of their design.

We will show that there is a different ordering that behaves more favorably for general moments $q$. 
We first provide pseudocode for the procedure that produces the ordering in Algorithm~\ref{alg:ordering}. 

\begin{algorithm}[ht]
\KwIn{Conflict graph $\cH=(V_\cH, E_\cH)$ and moment $q$.}
\KwOut{Mapping $\pi: V_\cH \to [2n]$ representing the importance ordering.}
    $S \gets V_\cH$ \\
    \For{$e_{i,k} \in V_\cH$}{
    Set $
    p_{i,k} \gets \frac{1}{2 \lamH^{\frac{2}{q}} d_{i,k}^{1-\frac{2}{q}}} 
    $
    }
    \For{$c = 1$ to $2n$}{
        \For{$e_{i,k} \in S$}{
            $w_{i,k} \gets \sum_{e_{j,l} \in \mathcal{N}(e_{i,k}) \cap S} p_{j,l}$ 
        }
        $e_{i^*,k^*} \gets \argmin_{e_{i,k} \in S} w_{i,k}$ \label{algline:select_min} \\
        $\pi(e_{i^*,k^*}) = 2n - c + 1$ \label{algline:ordering_update}\\
        $S \gets S \setminus \setb{e_{i^*,k^*}}$\\
    }
\caption{Importance Ordering}
\label{alg:ordering}
\end{algorithm}

The ordering $\pi$ in Algorithm~\ref{alg:ordering} is constructed iteratively, starting from the least important node and ending with the most important node. At each step, $S$ represents the set of nodes that have not been ordered yet. We pick the node whose neighbors in $S$ have the smallest sum of $p_{i,k}$ values and assign it the next available position in the ordering.

In the case $q=2$, we observe that the ordering of Algorithm~\ref{alg:ordering} is constructed by iteratively selecting the node with smallest degree in the remaining induced graph, placing them in the last available spot in the ordering and removing them from the graph. This ordering of the nodes is used in the classical proof of Wilf's Theorem \citep{wilf1967eigenvalues}, which states that the chromatic number of a graph is upper bounded by the largest eigenvalue of its adjacency matrix. In \citet{kandiros2024conflict}, they use a different ordering based on the leading eigenvector of the adjacency matrix, even though they acknowledge that the ordering based on the smallest induced degree also satisfies the desired property.

Since the value of the $p_{i,k}$ variables is different for different values of $q$, the ordering produced by Algorithm~\ref{alg:ordering} will be different for different values of $q$.
Indeed, 
the ordering for $q = \infty$ is determined by the degrees in the original graph, while for
$q = 2$ it is constructed using the degrees in the induced subgraph that is iteratively produced. Thus,
the ordering could be very different for different values of $q$.

In the next lemma, we establish the crucial property of the ordering produced by Algorithm~\ref{alg:ordering} that will be used in the analysis of our design later.


\begin{lemma}\label{lem:ordering}
Let $\cH$ be a conflict graph with maximum eigenvalue $\lamH$.
Let $q \geq 2$ be an integer.
For every node $e_{i,k} \in V(\cH)$ with degree $d_{i,k}$ in the conflict graph $\cH$, let 
\[
p_{i,k} = \frac{1}{2 \lamH^{\frac{2}{q}} d_{i,k}^{1-\frac{2}{q}}} \enspace.
\]
Suppose $\pi: V_\cH \to [2n]$ is the ordering of the vertices in $\cH$ that is produced by Algorithm~\ref{alg:ordering} with input $\cH$ and $q$. Then, for every node $e_{i,k} \in V_\cH$
\[
\sum_{e_{j,l} \in \mathcal{N}_b^\pi(e_{i,k})} p_{j,l} \leq 1/2 \enspace.
\]
\end{lemma}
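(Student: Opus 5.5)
The plan is to reduce the lemma to a statement about arbitrary induced subgraphs and then apply an averaging argument. The first step is bookkeeping about Algorithm~\ref{alg:ordering}. Consider the iteration at which $e_{i,k}$ is selected. At that moment the set $S$ consists of $e_{i,k}$ together with all vertices not yet removed. Every such vertex later receives a strictly smaller value of $\pi$ (the algorithm fills positions $2n, 2n-1, \dots$ in decreasing order), while every vertex already removed has a larger value. Hence $\mathcal{N}(e_{i,k}) \cap S = \mathcal{N}^\pi_b(e_{i,k})$. It follows that $\sum_{e_{j,l} \in \mathcal{N}_b^\pi(e_{i,k})} p_{j,l}$ equals the weight $w_{i,k}$ computed at that iteration, and by line~\ref{algline:select_min} this is $\min_{e \in S} w_e$. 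It therefore suffices to prove the following: for every nonempty $S \subseteq V_\cH$,
\[
\min_{v \in S} \sum_{u \in \mathcal{N}(v) \cap S} p_u \;\leq\; \frac{1}{2} .
\]

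For the second step, I would bound the minimum by the uniform average over $S$ and swap the order of summation. Let $d^S_u = |\mathcal{N}(u) \cap S|$ denote the degree of $u$ in the induced subgraph $\cH(S)$. Then
\[
\frac{1}{|S|}\sum_{v \in S} \sum_{u \in \mathcal{N}(v)\cap S} p_u = \frac{1}{|S|}\sum_{u \in S} p_u\, d^S_u = \frac{1}{2\lamH^{2/q}\,|S|} \sum_{u\in S} \frac{d^S_u}{d_u^{1-2/q}} .
\]
Since $d^S_u \le d_u$ and $1-2/q \ge 0$, each summand satisfies $d^S_u / d_u^{1-2/q} \le (d^S_u)^{2/q}$. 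Because $x \mapsto x^{2/q}$ is concave for $q \ge 2$, Jensen's inequality gives
\[
\frac{1}{|S|}\sum_{u\in S}(d^S_u)^{2/q} \le \davg(\cH(S))^{2/q} .
\]

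The third step is the spectral comparison $\davg(\cH(S)) \le \lambda(\cH(S)) \le \lamH$. The first inequality is the Rayleigh quotient evaluated at the all-ones vector. The second follows from the fact that the adjacency matrix of an induced subgraph is a principal submatrix of a nonnegative matrix, so its top eigenvalue cannot exceed that of the whole matrix (Perron--Frobenius / Cauchy interlacing). Combining these, the average is at most $\lamH^{2/q}/(2\lamH^{2/q}) = 1/2$, so the minimum is as well.

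I expect the main obstacle to be choosing the right averaging weights: a naive argument that uses the maximum degree would lose the $\lamH$ normalization. The uniform average combined with the trick $d^S_u/d_u^{1-2/q} \le (d^S_u)^{2/q}$ and Jensen is what makes the exponents match $\lamH^{2/q}$. The edge cases need only a brief check. If $\lamH = 0$, then $\cH$ has no edges and every sum is zero. If some $d_u = 0$, then $d^S_u = 0$ and that term contributes nothing to the sum.
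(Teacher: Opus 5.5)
Your proposal is correct and follows the paper's proof essentially step for step. Both arguments identify $\mathcal{N}^\pi_b(e_{i,k})$ with $\mathcal{N}(e_{i,k})\cap S$ at the iteration where $e_{i,k}$ is selected, bound the minimum weight by the average over $S$, swap the summation, use $d^S_u \le d_u$ to reduce each term to $(d^S_u)^{2/q}$, apply Jensen, and compare the induced average degree with $\lamH$. One small addition on your side: you justify $\davg(\cH(S)) \le \lambda(\cH(S)) \le \lamH$ via the Rayleigh quotient and principal-submatrix monotonicity, a step the paper only asserts.
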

\begin{proof}
    For an arbitrary subset $S \subseteq [2n]$, we denote by $d_{i,k}(S)$ the number of neighbors of $e_{i,k}$ in $S$, i.e., $d_{i,k}(S) = |\mathcal{N}(e_{i,k}) \cap S|$. Also, for every node $e_{i,k}$, we define the quantity
\[
w_{i,k}(S) = \sum_{e_{j,l} \in \mathcal{N}(e_{i,k}) \cap S} p_{j,l}
\]

We will show that for every subset $S$, there exists a node $e_{i,k} \in S$ with $w_{i,k}(S) \leq 1/2$. It suffices to show that the average value of $w_{i,k}(S)$ over all $e_{i,k} \in S$ is at most $1/2$. Indeed, we have
\begin{align*}
\frac{1}{|S|} \sum_{e_{i,k} \in S} w_{i,k}(S) & = \frac{1}{|S|} \sum_{e_{i,k} \in S} \sum_{e_{j,l} \in \mathcal{N}(e_{i,k}) \cap S} p_{j,l} \\
& = \frac{1}{|S|} \sum_{e_{i,k} \in S} \sum_{e_{j,l} \in \mathcal{N}(e_{i,k}) \cap S} \frac{1}{2 \lambda(G)^{2/q} d_{j,l}^{1-2/q}} \\
& = \frac{1}{|S|} \sum_{e_{j,l} \in S} \sum_{e_{i,k} \in \mathcal{N}(e_{j,l}) \cap S} \frac{1}{2 \lambda(G)^{2/q} d_{j,l}^{1-2/q}} \\
\intertext{by exhanging the order of summation of $i$ and $j$}\\
& = \frac{1}{|S|} \sum_{e_{j,l} \in S} \frac{d_{j,l}(S)}{2 \lambda(G)^{2/q} d_{j,l}^{1-2/q}} \\
& \leq \frac{1}{|S|} \sum_{e_{j,l} \in S} \frac{d_{j,l}(S)}{2 \lambda(G)^{2/q} d_{j,l}(S)^{1-2/q}} \\
\intertext{using the fact that $d_{j,l}(S) \leq d_{j,l}$ for all $S$ by definition}\\
& = \frac{1}{|S|} \sum_{e_{j,l} \in S} \frac{d_{j,l}(S)^{2/q}}{2 \lambda(G)^{2/q}} \\
& = \frac{1}{2 \lambda(G)^{2/q}} \frac{1}{|S|} \sum_{e_{j,l} \in S} d_{j,l}(S)^{2/q} \\
& \leq \frac{1}{2 \lambda(G)^{2/q}} \paren[\Big]{\frac{1}{|S|} \sum_{e_{j,l} \in S} d_{j,l}(S)}^{2/q} \\
\intertext{using Jensen's inequality for the function $x \mapsto x^{2/q}$, which is concave for $q \in (2,\infty)$}\\
& \leq \frac{1}{2 \lambda(G)^{2/q}} \paren[\big]{\lambda(G)}^{2/q} \\
\intertext{using the fact that the average degree of any induced subgraph of $G$ is at most the maximum eigenvalue}\\
& = 1/2 \enspace.
\end{align*}

We now use this property to analyze Algorithm~\ref{alg:ordering}. 
Let $e_{i,k} \in V_\cH$ be an arbitrary node. We know that $e_{i,k}$ will be selected exactly once as $e_{j^*,l^*}$ in line~\ref{algline:select_min}, since this line is executed exactly $|V_\cH|$ times and every time a different node is selected. 
Let $c$ be the iteration in which $e_{i,k}$ is selected as $e_{j^*,l^*}$. When line~\ref{algline:ordering_update} is executed in that iteration, we have $\pi(e_{i,k}) = |V_\cH| - i + 1$. Moreover, at that point the set $S$ contains exactly the nodes that are more important than $e_{i,k}$ in the ordering $\pi$, since all the less important nodes have already been selected in previous iterations and removed from $S$. Thus, we have that $\mathcal{N}^\pi_b(e_{i,k}) = \mathcal{N}(e_{i,k}) \cap S$. 
Therefore, 
\[
\sum_{e_{j,l} \in \mathcal{N}_b^\pi(e_{i,k})} p_{j,l}  = \sum_{e_{j,l} \in \mathcal{N}(e_{i,k}) \cap S} p_{j,l} = w_{i,k}(S) \enspace.
\]

By the property we established above, we have
\[
\sum_{e_{j,l} \in \mathcal{N}_b^\pi(e_{i,k})} p_{j,l}  = w_{i,k}(S) = \min_{e_{j,l} \in S} w_{j,l}(S) \leq \frac{1}{|S|} \sum_{e_{j,l} \in S} w_{j,l}(S) \leq 1/2 \enspace,
\]
as desired. Since $e_{i,k}$ was an arbitrary node, this establishes the desired property for all nodes in the graph.
\end{proof}

In the limit $q = \infty$, Lemma~\ref{lem:ordering} provides a way of selecting the ordering. However, it turns out that in this special case there is a alternative way of ordering the nodes, which is simply to order them in decreasing order of degree. We present this property as a separate lemma for completeness.

\begin{lemma}\label{lem:ordering_infinity}
    For every node $e_{i,k} \in V(\cH)$ with degree $d_{i,k}$ in the conflict graph $\cH$, let 
\[
p_{i,k} = \frac{1}{2 d_{i,k}} \enspace.
\]
Suppose $\pi:V_\cH \to [2n]$ is the ordering of the vertices in decreasing value of their degree in $\cH$ , i.e.
$d_{i,k} \leq d_{j,l}$ if and only if $\pi_{i,k} \geq \pi_{j,l}$.
Then, for every node $e_{i,k}$
\[
\sum_{e_{j,l} \in \mathcal{N}_b^\pi(e_{i,k})} p_{j,l} \leq 1/2 \enspace.
\]
\end{lemma}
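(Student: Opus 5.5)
The plan is to bound the sum directly from the degree ordering, without any spectral argument. Fix a node $e_{i,k}$ and look at its earlier neighbors $\mathcal{N}_b^\pi(e_{i,k})$, meaning the neighbors $e_{j,l}$ with $\pi(e_{j,l}) < \pi(e_{i,k})$.

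The ordering puts higher degrees first, so every such $e_{j,l}$ satisfies $d_{j,l} \geq d_{i,k}$. We need one convention here: ties in degree are broken arbitrarily. With any tie-breaking rule, if $\pi(e_{j,l}) < \pi(e_{i,k})$ then we cannot have $d_{j,l} < d_{i,k}$, since that would force $\pi(e_{j,l}) > \pi(e_{i,k})$ by the stated equivalence. Hence $p_{j,l} = 1/(2d_{j,l}) \leq 1/(2 d_{i,k})$ for every earlier neighbor.

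Next we count. Trivially $|\mathcal{N}_b^\pi(e_{i,k})| \leq |\mathcal{N}(e_{i,k})| = d_{i,k}$. Combining the two facts gives
\[
\sum_{e_{j,l} \in \mathcal{N}_b^\pi(e_{i,k})} p_{j,l} \leq d_{i,k} \cdot \frac{1}{2 d_{i,k}} = \frac12 .
\]
If $d_{i,k} = 0$, the set of earlier neighbors is empty and the sum is $0$, so isolated nodes cause no trouble; the corresponding $p_{i,k}$ is then formally infinite but never appears in any sum.

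There is no real obstacle here. The only point needing care is making the tie-breaking convention explicit, so that "appears before" really implies "has degree at least as large". The lemma is the $q=\infty$ analogue of Lemma~\ref{lem:ordering}, with the eigenvalue factor disappearing because the exponent $2/q$ is $0$.
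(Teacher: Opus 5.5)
Your proof is correct and is essentially the paper's argument: every earlier neighbor has degree at least $d_{i,k}$, so each term is at most $1/(2d_{i,k})$, and there are at most $d_{i,k}$ such terms. Your explicit tie-breaking convention and the remark about isolated nodes are sensible clarifications that the paper's proof leaves implicit. In fact the literal ``if and only if'' ordering cannot exist as stated when two nodes share a degree, so the tie-breaking convention is needed.
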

\begin{proof}
Let $e_{i,k}$ be an arbitrary node in $V_\cH$. By definition of the ordering, we have that for every $e_{j,l} \in  \mathcal{N}_b^\pi(e_{i,k})$ it holds that $d_{j,l} \geq d_{i,k}$. Thus, we have
\[
\sum_{e_{j,l} \in \mathcal{N}_b^\pi(e_{i,k})} p_{j,l} = \sum_{e_{j,l} \in \mathcal{N}_b^\pi(e_{i,k})} \frac{1}{2 d_{j,l}} \leq \sum_{e_{j,l} \in \mathcal{N}_b^\pi(e_{i,k})} \frac{1}{2 d_{i,k}} \leq \frac{d_{i,k}}{2 d_{i,k}} = 1/2 \enspace,
\]
as desired.
\end{proof}

\subsubsection{Conflict Graph Design for General Moment Restrictions}\label{sec:cgd_general_appendix}

Since Theorem~\ref{thm:conflict_graph_design} aims to upper bound the minimax risk, it suffices to present a design-estimator pair whose risk is upper bounded by the desired quantity.
We start by providing the details of our design for arbitrary moments $q$, given in Algorithm~\ref{alg:CGD}.

\begin{algorithm}[ht]
\KwIn{Maximum eigenvalue $\lamH$, graph $G$, conflict graph $\cH$ and moment $q$.}
	\KwOut{Random intervention $Z \in \mathcal{Z} = \setb{0,1}^n$, variables $U_i$ for all $i \in V(\cH)$.}
    Importance ordering $\pi$ $\gets$ Algorithm~\ref{alg:ordering} with input $\cH$ and $q$ \;
    \For{$e_{i,k} \in V(\cH)$}{
    $
p_{i,k} \gets \frac{1}{2 \lamH^{\frac{2}{q}} d_{i,k}^{1-\frac{2}{q}}} 
$\;
    }
    $C \gets 2$ \;

	Sample desired exposure variables $\setb{U_{i,k} : e_{i,k} \in V(\cH)}$ independently and identically as
	$$
	U_{i,k} \gets
	\begin{cases*}
		e  \quad\text{with probability } \frac{p_{i,k}}{C }\\
		*  \quad\text{with probability } 1 - \frac{p_{i,k}}{C }
	\end{cases*}
	$$
	Initialize the intervention vector $Z \gets \vec{0}$ \;
	\For{$e_{i,k} \in V(\cH)$}{
		\If{$U_{i,k} = e $ and $U_{r,l} = *$ for all $e_{r,l} \in \mathcal{N}^\pi_b(e_{i,k})$\label{algline:exposure-condition}}{
			Update intervention vector: set 
            $
            Z_j = \indicator[\big]{j \in h_i(e_{i,k})} \quad \text{for all } j \in \widetilde{N}(i)
			$
			\label{algline:set-intervention} \;
		}
	}
\caption{Conflict Graph Design (CGD)}
\label{alg:CGD}
\end{algorithm}

The design takes as input an importance ordering $\pi$, which is simply a total ordering of the vertices in the graph $G$, the maximum eigenvalue $\lamH$ of the adjacency matrix of the conflict graph $\cH$, the graph $G$ itself and the moment $q$. Note that even though the goal is to generate the random intervention $Z \in \setb{0,1}^n$, the algorithm also outputs the variables $U_{i,k}$ for all $e_{i,k} \in V(\cH)$. This is because this additional randomness, which is correlated with the treatment assignment, will be useful for defining the estimator. Thus, this falls under the category of a \emph{randomized experimental design}, as was discussed in Section~\mainref{sec:def-minimax-risk} and defined in Section~\ref{sec:randomness_appendix}. 

The general idea of the design is that every exposure $e_{i,k}$ in the conflict graph is assigned some random variable $U_{i,k}$, which determines whether we ``desire to be observe'' this exposure. As dictated in Line~\ref{algline:exposure-condition}, if we desire to observe the outcome under exposure $e_{i,k}$ (i.e. $U_{i,k} = e$), and its more important neighbors in the conflict graph are not desired to be observed (i.e. $U_{r,l} = *$ for all $e_{r,l} \in \mathcal{N}^\pi_b(e_{i,k})$), then we set the intervention $Z$ in such a way that the exposure $e_{i,k}$ is indeed observed (Line~\ref{algline:set-intervention}). 

The success of this design relies on the fact that we get to see the outcomes of all exposures with large enough probability. 
In turn, this depends on the importance ordering $\pi$ that is chosen. Indeed, if an exposure has many more important neighbors, it is unlikely that it will ever be ``allowed'' to be observed.
We will use the importance ordering produced by Algorithm~\ref{alg:ordering}. As we will see, the property established in Lemma~\ref{lem:ordering} for this ordering will be crucial for showing that every exposure gets observed with large enough probability.

\subsubsection{Effect Estimator}\label{sec:estimator}

As we alluded to earlier, our estimator will use the residual randomness of the variables $U_{i,k}$. To that end, for every node $e_{i,k} \in V(\cH)$, define the event $E_{i,k}$ as
\[
E_{i,k} = \setb[\Big]{U_{i,k} = e \text{ and } U_{j,l} = * \text{ for all } e_{j,l} \in \mathcal{N}^\pi_b(e_{i,k})} \enspace.
\]
This is the event that we desire to observe exposure $e_{i,k}$ and all of its more important neighbors ``allow'' it to do so.
We note that similar events were used in the definition of the estimator by \citet{kandiros2024conflict}.

The first observation is that if the event $E_{i,k}$ occurs, then in the assignment $Z$ that is produced by the design, the outcome of exposure $e_{i,k}$ will be observed for unit $i$. A similar fact was established as Lemma 3.2 in \citet{kandiros2024conflict}. We give here the statement and proof for completeness.

\begin{lemma}\label{lem:desired_exposure}
    For every node $e_{i,k} \in V(\cH)$, if the event $E_{i,k}$ occurs, then if $Z \in \setb{0,1}^n$ is the assignment that is produced by the design of Algorithm~\ref{alg:CGD}, $h_i(Z) = e_{i,k}$, i.e. unit $i$ receives the exposure $e_{i,k}$.
\end{lemma}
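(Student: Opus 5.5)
The plan is to track how the intervention vector $Z$ evolves during the loop of Algorithm~\ref{alg:CGD} and show that the coordinates in $\extneigh{i}$ end up exactly matching $e_{i,k}$. The argument rests on two observations.

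First, when the loop reaches $e_{i,k}$, the condition in Line~\ref{algline:exposure-condition} holds precisely because $E_{i,k}$ occurred. Line~\ref{algline:set-intervention} then sets $Z_j = \indicator{j \in e_{i,k}}$ for every $j \in \extneigh{i}$. Immediately after this step, $h_i(Z) = e_{i,k}$.

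Second, no later or earlier update can disturb these coordinates in a conflicting way. Suppose some other exposure $e_{r,l}$ also triggers an update, so that the condition of Line~\ref{algline:exposure-condition} holds for it, and in particular $U_{r,l} = e$. I would split into cases according to whether $e_{r,l}$ is adjacent to $e_{i,k}$ in $\cH$.

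\textbf{Adjacent case.} Either $e_{r,l}$ precedes $e_{i,k}$ in $\pi$ or it follows it.
\begin{itemize}
\item If $e_{r,l}$ precedes $e_{i,k}$, then $e_{r,l} \in \mathcal{N}^\pi_b(e_{i,k})$. The event $E_{i,k}$ forces $U_{r,l} = *$, which contradicts $U_{r,l} = e$.
\item If $e_{r,l}$ follows $e_{i,k}$, then $e_{i,k} \in \mathcal{N}^\pi_b(e_{r,l})$. The triggering condition for $e_{r,l}$ requires $U_{i,k} = *$, which contradicts $E_{i,k}$.
\end{itemize}
So no adjacent exposure ever triggers an update.

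\textbf{Non-adjacent case.} Here $e_{r,l}$ and $e_{i,k}$ are not in conflict. By the definition of conflict, the two exposures prescribe identical treatments on $\extneigh{i} \cap \extneigh{r}$. Any overwrite by $e_{r,l}$ of a coordinate $j \in \extneigh{i}$ (necessarily $j \in \extneigh{r}$) therefore writes the same value $\indicator{j \in e_{i,k}}$.

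Since Line~\ref{algline:set-intervention} only modifies coordinates in $\extneigh{r}$, the coordinates in $\extneigh{i}$ after the loop equal those prescribed by $e_{i,k}$. This holds whether or not other updates occur before or after the update for $e_{i,k}$. Hence $h_i(Z) = e_{i,k}$.

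The main subtlety is the ordering issue: an update may happen before the update for $e_{i,k}$ and then be overwritten, or happen after it and overwrite it. The case analysis above handles both directions symmetrically, since conflicting exposures never both trigger and non-conflicting exposures always agree on shared coordinates.
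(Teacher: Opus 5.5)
Your proof is correct and follows essentially the paper's route. Once the update for $e_{i,k}$ fires, $h_i(Z)=e_{i,k}$, and any later change to a coordinate in $\widetilde{N}(i)$ would have to come from an exposure in conflict with $e_{i,k}$, which cannot fire under $E_{i,k}$. Your two-sided treatment of adjacent exposures is slightly more careful than the paper's. The paper only rules out less important conflicting exposures, so it tacitly assumes the loop of Algorithm~\ref{alg:CGD} runs in the order $\pi$, whereas your argument works for any loop order.
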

\begin{proof}
    Suppose event $E_{i,k}$ occurs. Then, by definition, the conditions in line~\ref{algline:exposure-condition} will be satisfied for node $e_{i,k}$, so that line~\ref{algline:set-intervention} in Algorithm~\ref{alg:CGD} will be executed for node $e_{i,k}$. This means that after this execution, the intervention vector $Z$ will be such that $Z_j = \indicator{j \in h_i(e_{i,k})}$ for all $j \in \widetilde{N}(i)$. Thus, after this execution vector $Z$ satisfies $h_{i}(Z) = e_{i,k}$.

    We now argue that the values of $Z$ for $j \in \widetilde{N}(i)$ will not change for the remainder of the algorithm. To see this, note that in order for $Z$ to change, line~\ref{algline:set-intervention-bias} would have to be executed for some different node $e_{r,l} \in V(\cH)$ that comes after $e_{i,k}$ in the importance ordering.
    If the assignment of $Z$ for some $j \in \widetilde{N}(i)$ changes, this means that $j \in \widetilde{\mathcal{N}}(i) \cap \widetilde{\mathcal{N}}(r)$ and $\indicator{j \in h_i(e_{i,k})} \neq \indicator{j \in h_r(e_{r,l})}$. By the definition of the conflict graph $\cH$, this means that $e_{i,k}$ and $e_{r,l}$ are neighbors in $\cH$. In order for line~\ref{algline:set-intervention-bias} to be executed for node $e_{r,l}$, the conditions in line~\ref{algline:exposure-condition-bias} must be met. In particular, we should have $U_{i,k} = *$, since $e_{i,k}$ is a more important neighbor of $e_{r,l}$ in $\cH$. However, event $E_{i,k}$ implies that $U_{i,k} = e$, so this is a contradiction. Therefore, the values of $Z$ for $j \in \widetilde{N}(i)$ will not change, therefore at the end of Algorithm~\ref{alg:CGD} we will have $h_{i}(Z) = e_{i,k}$, as desired.
\end{proof}

Lemma~\ref{lem:desired_exposure} shows that if event $E_{i,k}$ occurs, then we get to observe the outcome for exposure $e_{i,k}$. 
Thus, a natural estimator for the effect $\ate$ would be to do inverse propensity weighting based on the events $E_{i,k}$.
The estimator that arises was referred to as the \emph{modified Horvitz-Thompson estimator} in \citet{kandiros2024conflict} and is defined as follows.

\begin{equation}\label{eq:modified_ht}
\eate = \frac{1}{n} \sum_{i=1}^n \paren[\Bigg]{  \frac{\indicator{E_{i,1}}}{\Pr{E_{i,1}}}  - \frac{\indicator{E_{i,0}}}{\Pr{E_{i,0}}} } \cdot Y_i  \enspace.
\end{equation}
Note that $\eate$ crucially depends on the residual randomness of the variables $U_{i,k}$, since the events $E_{i,k}$ are defined in terms of the $U_{i,k}$. Therefore, strictly speaking $\eate$ is a randomized estimator.

As in \citet{kandiros2024conflict}, let us first establish that $\eate$ is unbiased for $\ate$. 

\begin{lemma}
    For any $y \in \cM(G)$, the estimator $\eate$ is unbiased for $\ate$, i.e. $\E{\eate} = \ate$.
\end{lemma}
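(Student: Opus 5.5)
The plan is to compute $\E{\eate}$ term by term using linearity of expectation. The main issue is that $Y_i$ is the realized outcome $y_i(Z)$, which in general depends on the whole intervention. What rescues the computation is that the indicator $\indicator{E_{i,k}}$ multiplies it.

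We write
\[
\eate = \frac{1}{n}\sum_{i=1}^n \paren[\Big]{\frac{\indicator{E_{i,1}}\, y_i(Z)}{\Pr{E_{i,1}}} - \frac{\indicator{E_{i,0}}\, y_i(Z)}{\Pr{E_{i,0}}}}.
\]
The key step is to invoke Lemma~\ref{lem:desired_exposure}. On the event $E_{i,k}$, the produced intervention satisfies $h_i(Z) = e_{i,k}$. Since $y \in \cM(G)$ is an exposure mapping model, $y_i(Z) = y_i(e_{i,k})$ on that event. This gives the pointwise identity
\[
\indicator{E_{i,k}}\, y_i(Z) = \indicator{E_{i,k}}\, y_i(e_{i,k}).
\]
The right-hand side is a deterministic constant times an indicator. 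Taking expectations gives $\E{\indicator{E_{i,k}} y_i(Z)} / \Pr{E_{i,k}} = y_i(e_{i,k})$. Summing over $i$ and $k$ with the signs of the contrast then yields $\E{\eate} = \frac{1}{n}\sum_i \paren{y_i(e_{i,1}) - y_i(e_{i,0})} = \ate(y)$.

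The only point needing care, and the main obstacle, is that the division by $\Pr{E_{i,k}}$ must be legitimate, i.e.\ $\Pr{E_{i,k}} > 0$. This follows from independence of the $U$ variables:
\[
\Pr{E_{i,k}} = \frac{p_{i,k}}{C}\prod_{e_{j,l}\in \mathcal{N}_b^\pi(e_{i,k})}\paren[\Big]{1-\frac{p_{j,l}}{C}}.
\]
Each $p_{j,l} \le 1/2 < C$, so every factor is positive. If one wants a quantitative bound as well, Lemma~\ref{lem:ordering} together with $1-x \ge e^{-2x}$ for $x \le 1/4$ gives $\Pr{E_{i,k}} \ge \frac{p_{i,k}}{C} e^{-1/2}$. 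Positivity alone suffices for unbiasedness, and the quantitative bound is useful later for the variance analysis.

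Vertices of degree zero make $p_{i,k}$ ill-defined when $q > 2$. For these I would adopt the convention that $p_{i,k}$ is set to $1/2$, or that such exposures are always observed; either way the argument above is unchanged.
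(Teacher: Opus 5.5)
Your proposal is correct and is essentially the paper's proof: Lemma~\ref{lem:desired_exposure} gives $\indicator{E_{i,k}}\, y_i(Z) = \indicator{E_{i,k}}\, y_i(e_{i,k})$, linearity of expectation finishes the argument, and your explicit check that $\Pr{E_{i,k}}>0$ fills in a detail the paper leaves implicit. Your degree-zero caveat is unnecessary: since $e_{i,1}\neq e_{i,0}$, each exposure conflicts with its partner, so every vertex of $\cH$ has $d_{i,k}\ge 1$ and $\lamH\ge 1$, which is exactly what gives $p_{i,k}\le 1/2$.
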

\begin{proof}
    By Lemma~\ref{lem:desired_exposure} and the fact that $y \in \cM(G)$, we have that if $E_{i,k}$ occurs, then $h_i(Z) = e_{i,k}$, so that
    $$
    y_i(Z)  = y_i(h_i(Z))= y_i(e_{i,k})\enspace.
    $$
    Thus, we have
\begin{align*}
\E{\eate} & = \frac{1}{n} \paren[\Bigg]{\sum_{i=1}^{n}  \frac{\Pr{E_{i,1}}}{\Pr{E_{i,1}}} y_i(e_{i,1}) - \sum_{i=1}^{n}  \frac{\Pr{E_{i,0}}}{\Pr{E_{i,0}}} y_i(e_{i,0})} 
= \frac{1}{n} \paren[\Bigg]{\sum_{i=1}^{n}  y_i(e_{i,1}) - \sum_{i=1}^{n}  y_i(e_{i,0})} 
= \ate \enspace.
\qedhere
\end{align*}
\end{proof}

\subsubsection{Bounds on Correlation Terms}\label{sec:probability_bounds}

We now present bounds on the diagonal and off-diagonal terms of the covariance matrix of $\eate$.
Our proof will follow the same general structure as the proof for $q=2$ in \cite{kandiros2024conflict}, but with some important modifications. A key challenge in \cite{kandiros2024conflict} was to calculate the marginal and joint probabilities of the events $E_{(i,k)}$. A crucial part of the argument was a linearization step, which used the fact that the importance ordering is such that each $i$ has at most $\lamH$ neighbors that appear before $i$ in the ordering (the so called \emph{more important neighbors}).
Here, we will need a generalization of this property, since the probabilities are non-uniform. In particular, we will use the crucial property of the ordering that was presented in Lemma~\ref{lem:ordering} to justify the linearization that will allow us to calculate the probabilities of the events $E_{i,k}$.

We start with the bound on the diagonal terms of the covariance matrix of $\eate$, which amount to lower bounding the probability of the events $E_{i,k}$.

\begin{lemma}\label{lem:marg_prob}
    Let $q \geq 2$ be a moment restriction. Suppose we sample the $U_{i,k},Z$ variables according to the design in Algorithm~\ref{alg:CGD} with importance ordering $\pi$ given by Lemma~\ref{lem:ordering}. For every node $e_{i,k} \in V(\cH)$
    we have that if $C \geq 2$, then
    \[
    \Var[\Big]{\frac{\indicator{E_{i,k}}}{\Pr{E_{i,k}}}} \leq \frac{C e^{\frac{1}{C}}}{p_{i,k}}\enspace.
    \]
\end{lemma}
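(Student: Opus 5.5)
The plan is to compute the variance exactly and reduce everything to a lower bound on $\Pr{E_{i,k}}$. Since $\indicator{E_{i,k}}$ is a Bernoulli variable,
\[
\Var[\Big]{\frac{\indicator{E_{i,k}}}{\Pr{E_{i,k}}}} = \frac{1 - \Pr{E_{i,k}}}{\Pr{E_{i,k}}} \leq \frac{1}{\Pr{E_{i,k}}} \enspace,
\]
so it suffices to show $\Pr{E_{i,k}} \geq p_{i,k} e^{-1/C} / C$.

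To lower bound this probability, I will use the definition of $E_{i,k}$ together with the independence of the desired-exposure variables $U_{j,l}$. The event $E_{i,k}$ requires $U_{i,k} = e$ and $U_{j,l} = *$ for every $e_{j,l} \in \mathcal{N}^\pi_b(e_{i,k})$. The vertex $e_{i,k}$ is not its own neighbour, so these are independent events on distinct variables. This gives
\[
\Pr{E_{i,k}} = \frac{p_{i,k}}{C} \prod_{e_{j,l} \in \mathcal{N}^\pi_b(e_{i,k})} \paren[\Big]{1 - \frac{p_{j,l}}{C}} \enspace.
\]

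The key step is to linearize the product using Lemma~\ref{lem:ordering}, which is the reason the ordering was built by Algorithm~\ref{alg:ordering}. Each $p_{j,l} \leq 1/2$, because $\lamH \geq 1$ when $d_{j,l} \geq 1$ and degree-zero vertices are never neighbours. With $C \geq 2$, each $x_{j,l} = p_{j,l}/C$ is therefore at most $1/4$, and I can use $1 - x \geq e^{-2x}$ on $[0,1/2]$. Lemma~\ref{lem:ordering} gives $\sum_{\mathcal{N}^\pi_b(e_{i,k})} p_{j,l} \leq 1/2$, so
\[
\prod_{e_{j,l} \in \mathcal{N}^\pi_b(e_{i,k})} \paren[\Big]{1 - \frac{p_{j,l}}{C}} \geq \exp\paren[\Big]{-\frac{2}{C} \sum p_{j,l}} \geq e^{-1/C} \enspace.
\]
Combining this with the exact formula yields $\Pr{E_{i,k}} \geq p_{i,k} e^{-1/C}/C$, and the variance bound follows.

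The only delicate point is this linearization. The inequality $1 - x \geq e^{-2x}$ must be valid on the relevant range, which is why I need the check that $p_{j,l}/C \leq 1/4$. The constant $2$ in the exponent must then combine with the $1/2$ from Lemma~\ref{lem:ordering} to give exactly $e^{-1/C}$. If the range check failed for some vertex, I would instead rely directly on $\sum p_{j,l} \leq 1/2$ to keep every term small.
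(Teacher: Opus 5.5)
Your proposal is correct and follows the paper's proof essentially step for step. Both arguments write down the exact product formula for $\Pr{E_{i,k}}$ using independence of the $U$'s, apply $1-x \ge e^{-2x}$ termwise, and invoke Lemma~\ref{lem:ordering} to obtain the factor $e^{-1/C}$. The only difference is how you check that $p_{j,l}/C \le 1/4$: you use $\lambda(\cH) \ge 1$ and $d_{j,l} \ge 1$, which is valid because $e_{i,0}$ and $e_{i,1}$ are always adjacent. The paper instead reads this bound off the sum bound of Lemma~\ref{lem:ordering}, which is exactly the fallback you mention.
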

\begin{proof}
    We start by noticing that 
    \[
\Var[\Big]{\frac{\indicator{E_{i,k}}}{\Pr{E_{i,k}}}} = \frac{1}{\Pr{E_{i,k}}} - 1 \enspace.
\]
    By the definition of the design, we have
\begin{align*}
\Pr{E_{(i,k)}} & = \Pr[\big]{U_{i,k} = e , U_{j,l} = * \text{ for all } e_{j,l} \in \mathcal{N}_b^\pi(e_{i,k})} \\
&= \Pr{U_{i,k} = e} \prod_{e_{j,l} \in \mathcal{N}_b^\pi(e_{i,k})}\Pr{U_{j,l} = * } \\
& = \frac{p_{i,k}}{C} \prod_{e_{j,l} \in \mathcal{N}_b^\pi(e_{i,k})} \paren[\Big]{1 - \frac{p_{j,l}}{C}} \enspace,
\end{align*}
where we used the independence of the variables $U_{i,k}$. Now, consider the function $f(x) = 1 - x - e^{-2x}$ for $x \in [0,1]$. We have that
\[
f'(x) = -1 + 2 e^{-2x} \quad,\quad f''(x) = -4e^{-2x}
\]
Clearly, $f''(x) < 0$ for all $x \in [0,1]$, so that $f'$ is decreasing. Also, note that the unique root of $f'$ is $\ln(2)/2 > 1/4$. Thus, as long as $x \leq 1/4$, we have $f'(x) > 0$, which implies that $f$ is increasing in the interval $[0,1/4]$. Since $f(0) = 0$, it follows that $f(x) \geq 0$ for all $x \in [0,1/4]$. In particular, we have $1 - x \geq e^{-2x}$ for all $x \in [0,1/4]$. Now, apply this inequality with $x = p_j/C$. By Lemma~\ref{lem:ordering}, we have for all $i$
\[
\sum_{j \in \mathcal{N}_b^\pi(i)} p_j \leq \frac{1}{2}
\]
This implies that $p_{i,k}/ C \leq 1/4$ for all $i$ as long as $C \geq 2$. Therefore, we can apply the inequality $1 - x \geq e^{-2x}$ to each term in the product, yielding
\[
\prod_{e_{j,l} \in \mathcal{N}_b^\pi(e_{i,k})} \paren[\Big]{1 - \frac{p_{j,l}}{C}} \geq \prod_{e_{j,l} \in \mathcal{N}_b^\pi(e_{i,k})} e^{-2p_{j,l}/C} = e^{-2 \sum_{e_{j,l} \in \mathcal{N}_b^\pi(e_{i,k})} p_{j,l} / C} \geq e^{-1/C} \enspace,
\]
from which the Lemma follows.
\end{proof}

Next, we provide bounds on the covariance between two terms corresponding to different exposures.
These involve bounding the joint probabilities of events $E_{i,k}$ and $E_{j,l}$.

\begin{lemma}\label{lem:joint_prob}
    Let $q \geq 2$ be a moment restriction. Suppose we sample the $U_{i,k},Z$ variables according to the design in Algorithm~\ref{alg:CGD} with importance ordering $\pi$ given by Lemma~\ref{lem:ordering} and $C \geq 1$.
    For every pair of nodes $e_{i,k},e_{j,l} \in V(\cH)$, if $d(e_{i,k},e_{j,l})$ denotes the graph distance between $e_{i,k}$ and $e_{j,l}$ in $\cH$, then we have the following:
    \begin{itemize}
        \item If $d(e_{i,k},e_{j,l}) = 1$, then 
        \[
        \Cov[\Big]{\frac{\indicator{E_{i,k}}}{\Pr{E_{i,k}}}, \frac{\indicator{E_{j,l}}}{\Pr{E_{j,l}}}} = -1 \enspace.
        \]
        \item If $d(e_{i,k},e_{j,l}) = 2$, then
        \[
        \abs[\Big]{\Cov[\Big]{\frac{\indicator{E_{i,k}}}{\Pr{E_{i,k}}}, \frac{\indicator{E_{j,l}}}{\Pr{E_{j,l}}}}} \leq \frac{2}{2C - 1}\cdot \sum_{e_{r,s} \in N(e_{i,k}) \cap N(e_{j,l})} p_{r,s} \enspace.
        \]
        \item If $d(e_{i,k},e_{j,l}) > 2$, then
        \[
        \Cov[\Big]{\frac{\indicator{E_{i,k}}}{\Pr{E_{i,k}}}, \frac{\indicator{E_{j,l}}}{\Pr{E_{j,l}}}} = 0 \enspace.
        \]
    \end{itemize}
\end{lemma}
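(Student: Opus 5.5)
We will write the covariance of the normalized indicators as
\[
\Cov[\Big]{\frac{\indicator{E_{i,k}}}{\Pr{E_{i,k}}}, \frac{\indicator{E_{j,l}}}{\Pr{E_{j,l}}}} = \frac{\Pr{E_{i,k} \cap E_{j,l}}}{\Pr{E_{i,k}}\Pr{E_{j,l}}} - 1 \enspace,
\]
so each case reduces to computing the joint probability relative to the product of the marginals. Recall that $E_{i,k}$ depends only on the independent variables $U_{i,k}$ and $U_{r,s}$ for $e_{r,s} \in \mathcal{N}^\pi_b(e_{i,k})$. Write $D_{i,k} = \setb{e_{i,k}} \cup \mathcal{N}^\pi_b(e_{i,k})$ for the index set on which $E_{i,k}$ depends.

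\textbf{Distance one.} If $e_{i,k}$ and $e_{j,l}$ are adjacent, then without loss of generality $\pi(e_{i,k}) < \pi(e_{j,l})$, so $e_{i,k} \in \mathcal{N}^\pi_b(e_{j,l})$. The event $E_{j,l}$ then requires $U_{i,k} = *$, while $E_{i,k}$ requires $U_{i,k} = e$. Hence the events are disjoint, the joint probability is zero, and the covariance equals $-1$.

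\textbf{Distance greater than two.} The sets $D_{i,k}$ and $D_{j,l}$ are then disjoint. A shared element would be either one of the two vertices, which would make them adjacent, or a common neighbor, which would put them at distance two. By independence of the $U$ variables the events are independent, so the covariance is $0$.

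\textbf{Distance two.} This is the main case. The two vertices are not adjacent, so $e_{i,k} \notin D_{j,l}$ and $e_{j,l} \notin D_{i,k}$. The overlap is $W = \mathcal{N}^\pi_b(e_{i,k}) \cap \mathcal{N}^\pi_b(e_{j,l})$, which is a subset of $N(e_{i,k}) \cap N(e_{j,l})$. Factoring the product measure, the shared variables in $W$ enter the joint event once, with the requirement $U_{r,s} = *$, but enter the product of the marginals twice. Therefore
\[
\frac{\Pr{E_{i,k} \cap E_{j,l}}}{\Pr{E_{i,k}}\Pr{E_{j,l}}} = \prod_{e_{r,s} \in W} \paren[\Big]{1 - \frac{p_{r,s}}{C}}^{-1} \geq 1 \enspace,
\]
so the covariance is nonnegative and equals $\prod_{W}(1-p_{r,s}/C)^{-1} - 1$. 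The remaining step is a linearization bound. Lemma~\ref{lem:ordering} gives $\sum_{W} p_{r,s} \leq 1/2$, so each $x_{r,s} = p_{r,s}/C \leq 1/(2C)$. We will use
\[
\prod_{W} (1-x_{r,s})^{-1} - 1 \leq \frac{1}{1 - \sum_W x_{r,s}} - 1 = \frac{\sum_W x_{r,s}}{1-\sum_W x_{r,s}} \enspace,
\]
where the first inequality follows from $\prod(1-x) \geq 1 - \sum x$. Since $\sum_W x_{r,s} \leq 1/(2C)$, the right-hand side is at most
\[
\frac{\sum_W p_{r,s}/C}{1 - 1/(2C)} = \frac{2}{2C-1}\sum_W p_{r,s} \enspace.
\]
Enlarging $W$ to $N(e_{i,k}) \cap N(e_{j,l})$ gives the stated bound.

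The point that needs care is the condition on $C$ that keeps the denominators positive. The assumption $C \geq 1$ together with $\sum p \leq 1/2$ from Lemma~\ref{lem:ordering} gives $\sum_W x_{r,s} \leq 1/2$, so the bound is valid.
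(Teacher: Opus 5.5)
Your proof is correct and follows the same route as the paper. At distance one the two events are disjoint; at distance greater than two they depend on disjoint sets of independent variables; and at distance two the ratio reduces to $\prod(1-p_{r,s}/C)^{-1}$ over the shared more-important neighbors, which is linearized via $\prod(1-x)\ge 1-\sum x$ together with $\sum p_{r,s}\le 1/2$ from Lemma~\ref{lem:ordering}. Your observation that the distance-two covariance is in fact nonnegative is a small sharpening the paper does not state.
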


\begin{proof}
    By definition, the event $E_{i,k}$ depends on the values of the variables $U_{i,k}$ and $U_{j,l}$ for all $j \in \mathcal{N}(e_{i,k})$. Thus, if $d(e_{i,k},e_{j,l})> 2$, then the events $E_{i,k}$ and $E_{j,l}$ depend on disjoint sets of variables, so they are independent, which implies that the covariance is zero. If $d(e_{i,k},e_{j,l}) = 1$, then the events $E_{i,k}$ and $E_{j,l}$ are mutually exclusive. Indeed, suppose without loss of generality that $e_{i,k}$ appears before $e_{j,l}$ in the ordering, then if $E_{i,k}$ happens we must have $U_{i,k} = e$, but if $E_{j,l}$ also happens, then we must have $U_{i,k} = *$, which is a contradiction. Thus, 
    \[
    \Cov[\Big]{\frac{\indicator{E_{i,k}}}{\Pr{E_{i,k}}}, \frac{\indicator{E_{j,l}}}{\Pr{E_{j,l}}}} = \E[\Bigg]{\frac{\indicator{E_{i,k}}}{\Pr{E_{i,k}}} \frac{\indicator{E_{j,l}}}{\Pr{E_{j,l}}}} -1 = -1 \enspace.
    \]
    Finally, let us examine the case where $d(e_{i,k},e_{j,l}) = 2$. In this case, we can write
\begin{align*}
& \Cov[\Big]{\frac{\indicator{E_{i,k}}}{\Pr{E_{i,k}}}, \frac{\indicator{E_{j,l}}}{\Pr{E_{j,l}}}} \\
& = \E[\Big]{\frac{\indicator{E_{i,k}}}{\Pr{E_{i,k}}} \frac{\indicator{E_{j,l}}}{\Pr{E_{j,l}}}} -1 \\
& = \frac{\Pr{E_{i,k} \cap E_{j,l}}}{\Pr{E_{i,k}} \Pr{E_{j,l}}} -1 \\
& = \frac{\Pr[\big]{U_{i,k} = U_{j,l} = e_1 , U_{r,s} = * ,\forall e_{r,s} \in \mathcal{N}_b^\pi(e_{i,k}) \cup \mathcal{N}_b^\pi(e_{j,l})}}{\Pr[\big]{U_{i,k} = e_1, U_{r,s} = * ,\forall e_{r,s} \in \mathcal{N}_b^\pi(e_{i,k})}\Pr[\big]{U_{j,l} = e_1, U_{r,s} = * ,\forall e_{r,s} \in \mathcal{N}_b^\pi(e_{j,l})}} - 1\\
&= \frac{1}{\prod_{e_{r,s} \in \mathcal{N}_b^\pi(e_{i,k}) \cap \mathcal{N}_b^\pi(e_{j,l})} \paren[\Big]{1 - \frac{p_{r,s}}{C}}} - 1 
\end{align*}
By the inclusion-exclusion principle, we have that 
\[
\prod_{e_{r,s} \in \mathcal{N}_b^\pi(e_{i,k}) \cap \mathcal{N}_b^\pi(e_{j,l})} \paren[\Big]{1 - \frac{p_{r,s}}{C}} \geq 1 - \sum_{e_{r,s} \in \mathcal{N}_b^\pi(e_{i,k}) \cap \mathcal{N}_b^\pi(e_{j,l})} \frac{p_{r,s}}{C} \enspace.
\]
Hence, we have that 
\begin{align*}
    \frac{1}{\prod_{e_{r,s} \in \mathcal{N}_b^\pi(e_{i,k}) \cap \mathcal{N}_b^\pi(e_{j,l})} \paren[\Big]{1 - \frac{p_{r,s}}{C}}} - 1 &= \frac{1 - \prod_{e_{r,s} \in \mathcal{N}_b^\pi(e_{i,k}) \cap \mathcal{N}_b^\pi(e_{j,l})} \paren[\Big]{1 - \frac{p_{r,s}}{C}}}{\prod_{e_{r,s} \in \mathcal{N}_b^\pi(e_{i,k}) \cap \mathcal{N}_b^\pi(e_{j,l})} \paren[\Big]{1 - \frac{p_{r,s}}{C}}} \\
& \leq \frac{\sum_{e_{r,s} \in \mathcal{N}_b^\pi(e_{i,k}) \cap \mathcal{N}_b^\pi(e_{j,l})} \frac{p_{r,s}}{C}}{1 - \sum_{e_{r,s} \in \mathcal{N}_b^\pi(e_{i,k}) \cap \mathcal{N}_b^\pi(e_{j,l})} \frac{p_{r,s}}{C}} \\
&\leq \frac{2C}{2C - 1} \sum_{e_{r,s} \in \mathcal{N}_b^\pi(e_{i,k}) \cap \mathcal{N}_b^\pi(e_{j,l})} \frac{p_{r,s}}{C} \enspace,
\end{align*}
where in the last step we used the fact that 
\[
\sum_{e_{r,s} \in \mathcal{N}_b^\pi(e_{i,k}) \cap \mathcal{N}_b^\pi(e_{j,l})} \frac{p_{r,s}}{C} \leq \sum_{e_{r,s} \in \mathcal{N}_b^\pi(e_{i,k})} \frac{p_{r,s}}{C} \leq \frac{1}{2C} \enspace,
\]
which follows from Lemma~\ref{lem:ordering}. Finally, we trivially have
\[
\sum_{e_{r,s} \in \mathcal{N}_b^\pi(e_{i,k}) \cap \mathcal{N}_b^\pi(e_{j,l})} p_{r,s} \leq \sum_{e_{r,s} \in \mathcal{N}(e_{i,k}) \cap \mathcal{N}(e_{j,l})} p_{r,s} \enspace,
\]
which concludes the proof.
\end{proof}

\subsubsection{Matrix Norm Bounds}\label{sec:matrix_bound}

Lemmas~\ref{lem:marg_prob} and~\ref{lem:joint_prob} suggest that the covariance matrix associated with the estimator $\eate$ can be split into three matrices, one diagonal matrix corresponding to $e_{i,k} = e_{j,l}$, one corresponding to pairs $e_{i,k}, e_{j,l}$ with $d(e_{i,k},e_{j,l}) = 1$ and one for $d(e_{i,k},e_{j,l}) = 2$. A similar strategy was employed in \citet{kandiros2024conflict}. In particular, we define for all pairs $e_{i,k}, e_{j,l} \in V(\cH)$ 
\[
C_{(i,k),(j,l)} = \Cov[\Big]{\frac{\indicator{E_{i,k}}}{\Pr{E_{i,k}}}, \frac{\indicator{E_{j,l}}}{\Pr{E_{j,l}}}}\enspace.
\]
Then, let us define the matrices $\mat{T}^{(t)} \in \R^{2n \times 2n}$ for $t \in \setb{0,1,2}$ as follows:
\[
\mat{T}^{(t)}_{(i,k),(j,l)} = \begin{cases} |C_{(i,k),(j,l)}| & \text{if } d(e_{i,k},e_{j,l}) = t\\
0 & \text{otherwise}
\end{cases} \enspace.
\]

The following Lemmas upper bound the quadratic forms of the matrices $\mat{T}^{(0)}, \mat{T}^{(1)}, \mat{T}^{(2)}$, evaluated at vectors whose $q$-th norm is at most $1$. These bounds will be crucial for bounding the variance of $\eate$.
The proofs are based on careful applications of H\"older's inequality.
For a vector $x \in \R^{2n}$, we will sometimes use the double indexing $x_{i,k}$ to refer to the coordinate of $x$ corresponding to node $e_{i,k} \in V(\cH)$.

\begin{lemma}\label{lem:C0}
    For any vector $x \in \R^{2n}$ with $\norm{x}_q \leq 1$, we have
    \[
    x^\top \mat{T}^{(0)} x \leq 2C e^{\frac{1}{C}} \cdot n^{1 - \frac{2}{q}} \lamH^{\frac{2}{q}} d_{\mathrm{avg}}(\cH)^{1-\frac{2}{q}}\enspace.
    \]
\end{lemma}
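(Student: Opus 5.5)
The plan is to write the quadratic form as a weighted sum of squares and then apply H\"older's inequality. Since $\mat{T}^{(0)}$ is diagonal with entries $\Var{\indicator{E_{i,k}}/\Pr{E_{i,k}}}$, Lemma~\ref{lem:marg_prob} gives
\[
x^\top \mat{T}^{(0)} x \leq C e^{\frac{1}{C}} \sum_{e_{i,k} \in V(\cH)} \frac{x_{i,k}^2}{p_{i,k}} = 2 C e^{\frac{1}{C}} \lamH^{\frac{2}{q}} \sum_{e_{i,k}} d_{i,k}^{1-\frac{2}{q}} x_{i,k}^2 ,
\]
after substituting $p_{i,k} = 1/(2\lamH^{2/q} d_{i,k}^{1-2/q})$. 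It then suffices to show that $\sum_{e_{i,k}} d_{i,k}^{1-2/q} x_{i,k}^2 \leq n^{1-2/q} d_{\mathrm{avg}}(\cH)^{1-2/q}$ whenever $\norm{x}_q \leq 1$.

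For this I apply H\"older's inequality with conjugate exponents $q/2$ and $q/(q-2)$ to the product $x_{i,k}^2 \cdot d_{i,k}^{1-2/q}$. This gives
\[
\sum d_{i,k}^{1-\frac{2}{q}} x_{i,k}^2 \leq \Big(\sum |x_{i,k}|^q\Big)^{\frac{2}{q}} \Big(\sum d_{i,k}\Big)^{1-\frac{2}{q}} \leq \big(2n\, d_{\mathrm{avg}}(\cH)\big)^{1-\frac{2}{q}} ,
\]
because $(1-2/q)\cdot q/(q-2) = 1$ and $\sum d_{i,k} = 2n\, d_{\mathrm{avg}}(\cH)$. The edge cases need a brief remark: $q=2$ is the trivial bound, and $q=\infty$ is read as the limit, where $\sum d_{i,k}$ times $\max x^2$ gives the same form. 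Exposures of degree zero should have $p_{i,k}$ interpreted as $1/2$ or handled separately, since their contribution vanishes in the $d^{1-2/q}$ form.

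The main obstacle is normalization: the H\"older step yields $(2n)^{1-2/q}$ rather than $n^{1-2/q}$. This leaves a stray factor of $2^{1-2/q} \leq 2$, which I would absorb by checking how $\norm{x}_q$ is normalized relative to the moment constraint. In the later application, $x$ is $y$ rescaled by $n^{-1/q}$ per half, so that $\sum|x|^q \leq 2$. Alternatively, I would apply H\"older separately to each half $k \in \{0,1\}$ and use concavity of $t \mapsto t^{1-2/q}$ to combine the two halves. If neither route removes the factor, the constant in the statement would be adjusted to $2^{2-2/q} C e^{1/C}$, which is harmless for the final constant $\constant$.
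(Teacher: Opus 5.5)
Your computation is the paper's proof. You bound the diagonal by Lemma~\ref{lem:marg_prob}, then apply H\"older with exponents $q/2$ and $q/(q-2)$, which gives $2Ce^{1/C}\lamH^{2/q}(2m)^{1-2/q}$ with $m=|E_\cH|$. The factor you flag is real, and the paper does not remove it by any further argument. It simply writes $(2m)^{1-2/q}=n^{1-2/q}d_{\mathrm{avg}}(\cH)^{1-2/q}$, which amounts to the convention $d_{\mathrm{avg}}(\cH)=2m/n$: the degree sum divided by the number of units, not by $|V_\cH|=2n$. The paper states this convention explicitly at the end of the proof of Theorem~\ref{thm:conflict_graph_design}. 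Under that convention your H\"older step already gives the lemma exactly as stated.

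Under the literal definition $d_{\mathrm{avg}}(\cH)=m/n$, no rearrangement can absorb the factor, so your first two remedies cannot work.
\begin{itemize}
\item The H\"older bound is attained at $|x_{i,k}|^q\propto d_{i,k}$. Splitting into the two halves and recombining therefore just returns $(2m)^{1-2/q}$.
\item You are right that $\sum|\alpha_{i,k}|^q\le 2$ in the downstream application; the paper's ``$\le 1$'' there is itself off by a factor of $2$. But this makes $\norm{x}_q$ larger, which can only enlarge the bound. It is also irrelevant to a lemma whose hypothesis is $\norm{x}_q\le 1$.
\end{itemize}

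In fact the literal statement is false. Take the no-interference conflict graph, a perfect matching with $\lamH=1$ and $p_{i,k}=1/2$, and set $C=2$, $q=\infty$, $x=\mathbf{1}$. The diagonal entries on each matched pair are $3$ and $13/3$, so $x^\top\mat{T}^{(0)}x=22n/3>4e^{1/2}n$. A constant $x$ gives a violation for every finite $q\ge 14$ as well.

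Your third option is therefore the right resolution: replace $2Ce^{1/C}$ by $2^{2-2/q}Ce^{1/C}$, or equivalently adopt $d_{\mathrm{avg}}=2m/n$. This changes the numerical constant $\constant$ but not the rate; the same convention is used in Lemmas~\ref{lem:C1} and~\ref{lem:C2}. Your degree-zero caveat is moot here, since $e_{i,0}$ and $e_{i,1}$ always conflict and so every $d_{i,k}\ge 1$.
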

\begin{proof}
    Let $m$ denote the number of edges in the conflict graph $\cH$.
    Using H\"older's inequality, we have that
\begin{align*}
x^\top \mat{T}^{(0)} x &\leq C e^{\frac{1}{C}} \cdot \sum_{e_{i,k} \in V(\cH)} x_{i,k}^2 \frac{1}{p_{i,k}}\\
\intertext{using Lemma~\ref{lem:marg_prob}}\\
& \leq C e^{\frac{1}{C}} \cdot \paren[\Bigg]{\sum_{e_{i,k} \in V(\cH)} x_{i,k}^q}^{2/q} \paren[\Bigg]{\sum_{e_{i,k} \in V(\cH)} \paren[\Big]{\frac{1}{p_{i,k}}}^{\frac{q}{q-2}}}^{(q-2)/q} \\
& \leq C e^{\frac{1}{C}} \cdot \paren[\Bigg]{\sum_{e_{i,k} \in V(\cH)} \paren[\Big]{\frac{1}{p_{i,k}}}^{\frac{q}{q-2}}}^{(q-2)/q} \\
\intertext{by the assumption on $x$}\\
& = C e^{\frac{1}{C}} \cdot \paren[\Bigg]{\sum_{e_{i,k} \in V(\cH)} (2 \lamH^{2/q} d_{i,k}^{1-2/q})^{\frac{q}{q-2}}}^{(q-2)/q} \\
& = 2C e^{\frac{1}{C}} \cdot \lamH^{2/q} \paren[\Big]{\sum_{e_{i,k} \in V(\cH)} d_{i,k}}^{(q-2)/q} \\
& = 2 C e^{\frac{1}{C}} \cdot \lamH^{2/q} (2m)^{1-2/q}\\
&= 2 C e^{\frac{1}{C}} \cdot n^{1 - \frac{2}{q}} \lamH^{\frac{2}{q}} d_{\mathrm{avg}}(\cH)^{1-\frac{2}{q}} \enspace.
\end{align*}
\end{proof}

\begin{lemma}\label{lem:C1}
    For any vector $x \in \R^{2n}$ with $\norm{x}_q \leq 1$, we have
    \[
    x^\top \mat{T}^{(1)} x \leq n^{1 - \frac{2}{q}}\lamH^{\frac{2}{q}} d_{\mathrm{avg}}(\cH)^{1-\frac{2}{q}}\enspace.
    \]
\end{lemma}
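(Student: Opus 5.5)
By Lemma~\ref{lem:joint_prob}, every nonzero entry of $\mat{T}^{(1)}$ equals $1$ and sits exactly on an edge of $\cH$. Hence $\mat{T}^{(1)}$ is the adjacency matrix $\mat{A}$ of the conflict graph, and the task reduces to bounding $x^\top \mat{A} x$ by $n^{1-2/q}\lamH^{2/q}\davgH^{1-2/q}$ for $\norm{x}_q \le 1$. Since all entries are nonnegative, we may replace $x$ by $|x|$ and assume $x \ge 0$.

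The plan is to interpolate between two elementary bounds.

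\textbf{Spectral bound.} $x^\top \mat{A} x \le \lamH \norm{x}_2^2$.

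\textbf{Edge-count bound.} Write $x^\top \mat{A} x = \sum_{(u,v)} x_u x_v$, where the sum runs over ordered adjacent pairs. Applying H\"older's inequality over these $2m$ pairs, with exponents $q/2$ on $x_u x_v$ and $q/(q-2)$ on the constant $1$, gives
\[
x^\top \mat{A} x \le \paren[\Big]{\sum_{(u,v)} (x_u x_v)^{q/2}}^{2/q} (2m)^{1-2/q}.
\]

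Neither bound alone suffices, so I would write each summand as a product of two pieces. Split
\[
x_u x_v = (x_u x_v)^{2/q}\cdot (x_u x_v)^{1-2/q},
\]
and apply H\"older across the pairs with exponents $q/2$ and $q/(q-2)$. This yields
\[
x^\top \mat{A} x \le \paren[\Big]{\sum_{(u,v)} x_u x_v}^{2/q}\paren[\Big]{\sum_{(u,v)} (x_u x_v)^{\frac{q-2}{q}\cdot\frac{q}{q-2}}}^{1-2/q}.
\]
That is circular, so a better decomposition is needed. Instead, write $x_ux_v = (x_u^{q/2}x_v^{q/2})^{2/q}\cdot 1$, which gives
\[
x^\top \mat{A} x \le \paren[\big]{y^\top \mat{A} y}^{2/q}(2m)^{1-2/q}, \qquad y_u = x_u^{q/2}.
\]
Then $\norm{y}_2^2 = \norm{x}_q^q \le 1$, so the spectral bound gives $y^\top \mat{A} y \le \lamH$. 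Therefore
\[
x^\top \mat{A} x \le \lamH^{2/q}(2m)^{1-2/q}.
\]

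Finally, substitute $2m = |V_\cH|\,\davgH = 2n\,\davgH$. This gives $(2m)^{1-2/q} = (2n)^{1-2/q}\davgH^{1-2/q}$, which matches the target up to the factor $2^{1-2/q}$.

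The main obstacle is this stray constant $2^{1-2/q}$. Lemma~\ref{lem:C0} likewise writes $(2m)^{1-2/q} = n^{1-2/q}\davgH^{1-2/q}$, i.e.\ it takes $\davg$ to mean $2m/n$; if the present lemma follows the same convention, the stated bound holds exactly. Otherwise, I would check whether the moment normalization (the $1/n$ averaging split between the two halves of $x$) absorbs the constant. If it does not, I would accept a factor of at most $2$, which is harmless for Theorem~\ref{thm:conflict_graph_design} since that result only requires a constant.
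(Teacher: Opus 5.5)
Your proof is correct. It reaches exactly the quantity the paper reaches, $\lamH^{2/q}(2m)^{1-2/q}$, by a shorter route.

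The paper's proof is a chain of four steps:
\begin{enumerate}
\item It applies H\"older with exponents $(q, q/(q-1))$ to $\sum_u |x_u|\,(\mat{T}^{(1)}|x|)_u$, using $\norm{x}_q\le 1$ to discard the first factor.
\item It applies H\"older inside each row to get $(\mat{A}y)_u^{2/q}\, d_u^{1-2/q}$, with $y=|x|^{q/2}$.
\item It applies a third H\"older across rows to separate $\sum_u (\mat{A}y)_u^2$ from $\sum_u d_u = 2m$.
\item It uses the operator-norm bound $\norm{\mat{A}y}_2^2 \le \lamH^2 \norm{y}_2^2$.
\end{enumerate}

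You instead apply a single H\"older over the $2m$ ordered adjacent pairs, followed by the Rayleigh-quotient bound $y^\top \mat{A} y \le \lamH \norm{y}_2^2$. This buys several things:
\begin{itemize}
\item It uses the norm constraint only once.
\item It needs only the variational characterization of the top eigenvalue, so there is no need to identify $\lamH$ with the spectral norm.
\item It makes the interpolation between the $q=2$ spectral bound and the $q=\infty$ edge-count bound transparent.
\end{itemize}
The paper's row-by-row chain is longer, but it is the same template the paper reuses for the weighted matrix $\mat{T}^{(2)}$ in Lemma~\ref{lem:C2}.

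The ``circular'' detour in your write-up can be deleted. Your first edge-count display is already the whole argument once you observe that $\sum_{(u,v)}(x_ux_v)^{q/2} = y^\top\mat{A}y$.

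On the stray constant, the paper makes the same final conversion $(2m)^{1-2/q} = n^{1-2/q}\davgH^{1-2/q}$. That is, it takes $\davgH = 2m/n$, as the proof of Theorem~\ref{thm:conflict_graph_design} states explicitly.

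Your fallback, in which the moment normalization absorbs the factor, is not available inside this lemma. With the literal average degree $2m/(2n)$, the statement fails by exactly $2^{1-2/q}$. To see this, take $\cH$ a perfect matching (the no-interference conflict graph) and $x=(2n)^{-1/q}\mathbf{1}$: then $x^\top\mat{A}x=(2n)^{1-2/q}$, while the literal bound gives $n^{1-2/q}$. So the lemma must be read with the $2m/n$ convention, and under that convention your argument proves it exactly.
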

\begin{proof}
    We have that 
\begin{align*}
x^\top \mat{T}^{(1)} x & = \sum_{e_{i,k} \in V_\cH} x_{i,k} \sum_{e_{j,l} \in V_\cH} \mat{T}_{(i,k),(j,l)}^{(1)} x_{j,l} \\
&\leq \sum_{e_{i,k} \in V_\cH} |x_{i,k}| \sum_{e_{j,l} \in V_\cH} |\mat{T}_{(i,k),(j,l)}^{(1)}| |x_{j,l}| \\
&\leq \paren[\Big]{\sum_{e_{i,k} \in V_\cH} |x_{i,k}|^q}^{1/q} \paren[\Big]{\sum_{e_{i,k} \in V_\cH} \paren[\Big]{\sum_{e_{j,l} \in V_\cH} |\mat{T}_{(i,k),(j,l)}^{(1)}| |x_{j,l}|}^{\frac{q}{q-1}}}^{(q-1)/q} \\
\intertext{by H\"older's inequality}\\
& \leq \paren[\Big]{\sum_{e_{i,k} \in V_\cH} \paren[\Big]{\sum_{e_{j,l} \in V_\cH} |\mat{T}_{(i,k),(j,l)}^{(1)}| |x_{j,l}|}^{\frac{q}{q-1}}}^{(q-1)/q} \\
\intertext{by the assumption on $x$}\\
& \leq \paren[\Bigg]{\sum_{e_{i,k} \in V_\cH} \paren[\Bigg]{\paren[\Big]{\sum_{e_{j,l} \in V_\cH} |\mat{T}_{(i,k),(j,l)}^{(1)}| |x_{j,l}|^{q/2}}^{2/q} \cdot d_{i,k}^{1 - 2/q}}^{\frac{q}{q-1}}}^{\frac{q-1}{q}}  \\
\intertext{by H\"older's inequality}\\
& = \paren[\Bigg]{\sum_{e_{i,k} \in V_\cH} \paren[\Bigg]{\paren[\Big]{\sum_{e_{j,l} \in V_\cH} |\mat{T}_{(i,k),(j,l)}^{(1)}| |x_{j,l}|^{q/2}}^{\frac{2}{q-1}} \cdot d_{i,k}^{\frac{q-2}{q-1}}}}^{\frac{q-1}{q}}  \\
& \leq \paren[\Bigg]{\paren[\Big]{\sum_{e_{i,k} \in V_\cH} \paren[\Big]{\sum_{e_{j,l} \in V_\cH} |\mat{T}_{(i,k),(j,l)}^{(1)}| |x_{j,l}|^{q/2}}^{2} }^{\frac{1}{q-1}}\cdot \paren[\Big]{\sum_{e_{i,k} \in V_\cH} \paren[\big]{d_{i,k}^{\frac{q-2}{q-1}}}^{\frac{q-1}{q-2}}}^{\frac{q-2}{q-1}}}^{\frac{q-1}{q}} \\
\intertext{by H\"older's inequality}\\
&= \paren[\Bigg]{\paren[\Big]{\sum_{e_{i,k} \in V_\cH} \paren[\Big]{\sum_{e_{j,l} \in V_\cH} |\mat{T}_{(i,k),(j,l)}^{(1)}| |x_{j,l}|^{q/2}}^{2} }^{\frac{1}{q-1}} \cdot \paren[\Big]{\sum_{e_{i,k} \in V_\cH} d_{i,k}}^{\frac{q-2}{q-1}}}^{\frac{q-1}{q}} \\
&= \paren[\Bigg]{\sum_{e_{i,k} \in V_\cH} \paren[\Big]{\sum_{e_{j,l} \in V_\cH} |\mat{T}_{(i,k),(j,l)}^{(1)}| |x_{j,l}|^{q/2}}^{2} }^{\frac{1}{q}}  \cdot (2m)^{\frac{q-2}{q}}\\
& \leq \paren[\Bigg]{\lamH^2 \sum_{e_{i,k} \in V_\cH} \paren[\big]{|x_{j,l}|^{q/2}}^2 }^{\frac{1}{q}}  \cdot (2m)^{\frac{q-2}{q}}\\
\intertext{since $|\mat{T}_{(i,k),(j,l)}^{(1)}|$ is the adjacency entry of graph $\cH$  by Lemma~\ref{lem:joint_prob} and using the definition of the maximum eigenvalue property}\\
& \leq \lambda^{\frac{2}{q}} (2m)^{1-\frac{2}{q}}\\
&= n^{1 - \frac{2}{q}} \lamH^{\frac{2}{q}} d_{\mathrm{avg}}(\cH)^{1-\frac{2}{q}} \enspace.
\end{align*}
\end{proof}

Now, let us handle the third matrix.

\begin{lemma}\label{lem:C2}
    For any vector $x \in \R^{2n}$ with $\norm{x}_q \leq 1$, we have
    \[
    x^\top \mat{T}^{(2)} x \leq \frac{4}{2C-1} \cdot n^{1 - \frac{2}{q}} \lamH^{\frac{2}{q}} d_{\mathrm{avg}}(\cH)^{1-\frac{2}{q}}\enspace.
    \]
\end{lemma}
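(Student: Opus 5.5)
We bound $x^\top \mat{T}^{(2)} x$ by first using Lemma~\ref{lem:joint_prob} to replace each entry of $\mat{T}^{(2)}$ by a sum over common neighbors. For $d(e_{i,k},e_{j,l})=2$ we have $\mat{T}^{(2)}_{(i,k),(j,l)} \le \frac{2}{2C-1}\sum_{e_{r,s}\in \mathcal{N}(e_{i,k})\cap\mathcal{N}(e_{j,l})} p_{r,s}$. Summing over pairs and exchanging the order of summation, so that the sum runs over the middle vertex $e_{r,s}$, gives
\[
x^\top \mat{T}^{(2)} x \le \frac{2}{2C-1}\sum_{e_{r,s}\in V_\cH} p_{r,s}\Big(\sum_{e_{i,k}\in\mathcal{N}(e_{r,s})}|x_{i,k}|\Big)^2 .
\]
Writing $A$ for the adjacency matrix of $\cH$, this is $\frac{2}{2C-1}\sum_{r,s} p_{r,s}\,((A|x|)_{r,s})^2$. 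Nonnegativity lets us drop the distance-exactly-$2$ restriction and overcount harmlessly.

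Next we control each term $p_{r,s}((A|x|)_{r,s})^2$. By H\"older with exponents $q/2$ and $q/(q-2)$ on the inner sum, $(A|x|)_{r,s}^2 \le \big(\sum_{e_{i,k}\in\mathcal{N}(e_{r,s})}|x_{i,k}|^{q/2}\big)^{4/q} d_{r,s}^{2(1-2/q)}$. Plugging in $p_{r,s}= 1/(2\lamH^{2/q}d_{r,s}^{1-2/q})$ leaves
\[
\frac{1}{2\lamH^{2/q}}\sum_{r,s} w_{r,s}^{2/q}\, d_{r,s}^{1-2/q},\qquad w_{r,s}:=\Big(\sum_{e_{i,k}\in\mathcal{N}(e_{r,s})}|x_{i,k}|^{q/2}\Big)^2 .
\]
Applying H\"older again with exponents $q/2$ and $q/(q-2)$ over $(r,s)$ bounds this by $\frac{1}{2\lamH^{2/q}}\big(\sum w_{r,s}\big)^{2/q}\big(\sum d_{r,s}\big)^{1-2/q}$. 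This is the same pattern used in Lemma~\ref{lem:C1}.

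Finally, $\sum_{r,s} w_{r,s} = \|A v\|_2^2 \le \lamH^2\|v\|_2^2$ with $v=|x|^{q/2}$ entrywise, and $\|v\|_2^2=\|x\|_q^q\le1$. Also $\sum d_{r,s} = 2m = 2n\,d_{\mathrm{avg}}(\cH)$. This yields the bound
\[
\frac{2}{2C-1}\cdot\frac{1}{2\lamH^{2/q}}\,\lamH^{4/q}\,(2n\,d_{\mathrm{avg}})^{1-2/q}\;\le\;\frac{4}{2C-1}\,n^{1-2/q}\lamH^{2/q}d_{\mathrm{avg}}(\cH)^{1-2/q},
\]
since $2^{1-2/q}\le 2$.

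The main obstacle is the exponent bookkeeping in the two H\"older steps. The factor $d_{r,s}^{1-2/q}$ coming from $p_{r,s}$ must cancel exactly against the degree factor produced by the inner H\"older, so that only $\lamH^{2/q}$ survives rather than $\lamH^{4/q}/\lamH^{2/q}$ paired with a worse degree power. If that cancellation fails, I would instead split $p_{r,s}$ asymmetrically between the two neighbor sums. The boundary case $q=2$ also needs a separate check; there H\"older degenerates to the trivial bound and the argument reduces directly to the eigenvalue bound.
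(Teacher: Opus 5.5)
Your proof is correct, but it organizes the quadratic form differently from the paper.

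\textbf{Your route.} You exchange summation first and write the bound as $\frac{2}{2C-1}\sum_{e_{r,s}} p_{r,s}\big((A|x|)_{r,s}\big)^2$, i.e.\ as $|x|^\top A P A\,|x|$ with $P=\mathrm{diag}(p)$. After that you need only two H\"older steps and the estimate $\|Av\|_2\le\lamH\|v\|_2$ for $v=|x|^{q/2}$. This is exactly the template of Lemma~\ref{lem:C1}.

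\textbf{The paper's route.} The paper keeps the row structure instead:
\begin{itemize}
\item It first applies $\ell_q$--$\ell_{q/(q-1)}$ duality to $\sum_{i,k}|x_{i,k}|\,(\mat{T}^{(2)}|x|)_{i,k}$.
\item It splits each $\sum_{e_{r,s}\in\mathcal{N}(e_{i,k})\cap\mathcal{N}(e_{j,l})} d_{r,s}^{-(1-2/q)}$ by H\"older into $|\mathcal{N}(e_{i,k})\cap\mathcal{N}(e_{j,l})|^{2/q}$ times a power of $\sum 1/d_{r,s}$.
\item It applies two further H\"older steps, over $e_{j,l}$ and then over $e_{i,k}$.
\item It closes with the spectral bound for $A^2$, whose entries are common-neighbor counts so that $\|A^2\|_{\mathrm{op}}\le\lamH^2$, together with the identity $\sum_{e_{r,s}} d_{r,s}^2/d_{r,s}=2m$.
\end{itemize}

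\textbf{Comparison.} With constants tracked exactly, both routes land on $\frac{1}{2C-1}\lamH^{2/q}(2m)^{1-2/q}$. Yours gets there with fewer inequalities. It also makes the constant transparent: your factor $2^{1-2/q}/(2C-1)$ sits comfortably below the stated $4/(2C-1)$. The paper's argument is a more generic row-sum/duality computation that never exploits the $APA$ factorization, at the cost of four H\"older applications and the detour through $A^2$.

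\textbf{Your closing caveat.} The contingency in your last paragraph is unnecessary. The degree powers combine exactly as you computed, so no asymmetric split of $p_{r,s}$ is needed. At $q=2$ both H\"older steps are trivial, and the bound reads $\frac{1}{(2C-1)\lamH}\|A|x|\|_2^2\le\frac{\lamH}{2C-1}$ directly.
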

\begin{proof}
    Let $A_\cH$ be the adjacency matrix of the conflict graph $\cH$.
    We have that
\begin{align*}
x^\top \mat{T}^{(2)} x & = \sum_{e_{i,k} \in V_\cH} x_{i,k} \sum_{e_{j,l} \in V_\cH} \mat{T}_{(i,k),(j,l)}^{(2)} x_{j,l} \\
&\leq \sum_{e_{i,k} \in V_\cH} |x_{i,k}| \sum_{e_{j,l} \in V_\cH} |\mat{T}_{(i,k),(j,l)}^{(2)}| |x_{j,l}| \\
&\leq \paren[\Big]{\sum_{e_{i,k} \in V_\cH} |x_{i,k}|^q}^{1/q} \paren[\Big]{\sum_{e_{i,k} \in V_\cH} \paren[\Big]{\sum_{e_{j,l} \in V_\cH} |\mat{T}_{(i,k),(j,l)}^{(2)}| |x_{j,l}|}^{\frac{q}{q-1}}}^{\frac{q-1}{q}} \\
\intertext{by H\"older's inequality}\\
& \leq \frac{2}{2C-1}\frac{2}{\lamH^{2/q}}  \paren[\Bigg]{\sum_{e_{i,k} \in V_\cH} \paren[\Big]{\sum_{e_{j,l} \in V_\cH} |x_{j,l}| \sum_{e_{r,s} \in \mathcal{N}(e_{i,k}) \cap \mathcal{N}(e_{j,l})} \frac{1}{d_{r,s}^{\frac{q-2}{q}}} }^{\frac{q}{q-1}}}^{\frac{q-1}{q}} \\
\intertext{using the bound on $C_{(i,k),(j,l)}$ of Lemma~\ref{lem:joint_prob}}\\
& \leq \frac{2}{2C-1}\frac{2}{\lamH^{2/q}}  \paren[\Bigg]{\sum_{e_{i,k} \in V_\cH} \paren[\Big]{\sum_{e_{j,l} \in V_\cH} |x_{j,l}| |\mathcal{N}(e_{i,k})\cap \mathcal{N}(e_{j,l})|^{\frac{2}{q}} \paren[\Big]{\sum_{e_{r,s} \in \mathcal{N}(e_{i,k}) \cap \mathcal{N}(e_{j,l})} \frac{1}{d_{r,s}}}^{\frac{q-2}{q}}  }^{\frac{q}{q-1}}}^{\frac{q-1}{q}} \\
\intertext{again using H\"older's inequality}\\
& \leq \frac{2}{2C-1}\frac{2}{\lamH^{2/q}}  \paren[\Bigg]{\sum_{e_{i,k} \in V_\cH} \paren[\Bigg]{\sum_{e_{j,l} \in V_\cH} |x_{j,l}|^{\frac{q}{2}} |\mathcal{N}(e_{i,k})\cap \mathcal{N}(e_{j,l})|}^{\frac{2}{q-1}}\\
&\quad \cdot \paren[\Bigg]{\sum_{e_{j,l} \in V_\cH} \sum_{e_{r,s} \in \mathcal{N}(e_{i,k}) \cap \mathcal{N}(e_{j,l})} \frac{1}{d_{r,s}} }^{\frac{q-2}{q-1}}}^{\frac{q-1}{q}}\\
\intertext{by another application of H\"older's inequality}\\
& \leq \frac{2}{2C-1}\frac{2}{\lamH^{2/q}} \paren[\Bigg]{ \paren[\Bigg]{\sum_{e_{i,k} \in V_\cH} \paren[\Bigg]{\sum_{e_{j,l} \in V_\cH} |x_{j,l}|^{\frac{q}{2}} |\mathcal{N}(e_{i,k})\cap \mathcal{N}(e_{j,l})|}^{2} }^{\frac{1}{q-1}} \\
&\quad\cdot \paren[\Bigg]{\sum_{e_{i,k} \in V_\cH} \sum_{e_{j,l} \in V_\cH} \sum_{e_{r,s} \in \mathcal{N}(e_{i,k}) \cap \mathcal{N}(e_{j,l})} \frac{1}{d_{r,s}} }^{\frac{q-2}{q-1}}}^{\frac{q-1}{q}}\\
\intertext{by yet another application of H\"older's inequality}\\
&= \frac{4}{(2C-1)\lamH^{2/q}}  \paren[\Bigg]{\sum_{e_{i,k} \in V_\cH} \paren[\Bigg]{\sum_{e_{j,l} \in V_\cH} |x_{j,l}|^{\frac{q}{2}} |\mathcal{N}(e_{i,k})\cap \mathcal{N}(e_{j,l})|}^{2} }^{\frac{1}{q}}\\
&\quad \cdot \paren[\Bigg]{\sum_{e_{i,k} \in V_\cH} \sum_{e_{j,l} \in V_\cH} \sum_{e_{r,s} \in \mathcal{N}(e_{i,k}) \cap \mathcal{N}(e_{j,l})} \frac{1}{d_{r,s}} }^{\frac{q-2}{q}}\\
&\leq \frac{4}{(2C-1)\lamH^{2/q}}  \paren[\Bigg]{\lamH^2 \sum_{e_{i,k} \in V_\cH}  |x_{i,k}|^{q}  }^{\frac{1}{q}} \cdot \paren[\Bigg]{\sum_{e_{i,k} \in V_\cH} \sum_{e_{j,l} \in V_\cH} \sum_{e_{r,s} \in \mathcal{N}(e_{i,k}) \cap \mathcal{N}(e_{j,l})} \frac{1}{d_{r,s}} }^{\frac{q-2}{q}}\\
\intertext{using the definition of the maximum eigenvalue of $A_\cH$}\\
& \leq \frac{4}{(2C-1)\lamH^{2/q}} \lamH^{\frac{4}{q}}  \paren[\Bigg]{\sum_{e_{i,k} \in V_\cH} \sum_{e_{j,l} \in V_\cH} \sum_{e_{r,s} \in \mathcal{N}(e_{i,k}) \cap \mathcal{N}(e_{j,l})} \frac{1}{d_{r,s}} }^{\frac{q-2}{q}}\\
\intertext{using the constraint for the vector $x$}\\
& = \frac{4}{2C-1} \lamH^{2/q} \paren[\Bigg]{\sum_{e_{r,s} \in V_\cH} \frac{1}{d_{r,s}} \sum_{e_{i,k} \in N(e_{r,s})} \sum_{e_{j,l} \in N(e_{r,s})} 1 }^{\frac{q-2}{q}}\\
& = \frac{4}{2C-1} \lamH^{2/q} \paren[\Bigg]{\sum_{e_{r,s} \in V_\cH} \frac{d_{r,s}^2}{d_{r,s}} }^{\frac{q-2}{q}}\\
& = \frac{4}{2C-1} \lamH^{2/q} (2m)^{1-2/q} \\
&= \frac{4}{2C-1} n^{1 - \frac{2}{q}} \lamH^{\frac{2}{q}} d_{\mathrm{avg}}(\cH)^{1-\frac{2}{q}} \enspace.
\end{align*}
\end{proof}

\subsubsection{Proof of Theorem~\ref{thm:conflict_graph_design}}\label{sec:proof_cgd}

We are now ready to prove Theorem~\ref{thm:conflict_graph_design}, which is a bound on the variance of $\eate$.

\begin{proof}[Proof of Theorem~\ref{thm:conflict_graph_design}]
To prove the upper bound, it suffices to produce a statistical procedure, whose risk is bounded by the appropriate quantity.
We will use as our design $\design$ the Conflict Graph Design of Algorithm~\ref{alg:CGD} with an appropriate value of $C> 0$ that we will specify later and as our estimator $\eate$, the modified Horvitz--Thompson estimator \eqref{eq:modified_ht}.
Although the Conflict Graph Design and modified Horvitz--Thompson estimator form a randomized statistical procedure (as defined in Section~\ref{sec:randomness_appendix}), its maximal risk provides an upper bound for the minimax risk (Proposition~\ref{prop:randomness_model}, Section~\ref{sec:randomness_appendix}).

Our goal is to bound
\[
\sup_{y \in \cM(G,q)} \Esub{(Z,U) \sim \design}{\paren[\big]{\eate(Z,U,y(Z)) - \ate(y)}^2}  = \sup_{y \in \cM(G,q)} \Var[\Big]{\eate(Z,U,y(Z))} \enspace.
\]
Let $y \in \cM(G,q)$.
Let us define the vector $\vec{\alpha} \in \R^{2n}$ as
\[
\vec{\alpha}_{i,k} = \frac{1}{n^{1/q}}|y_i(e_{i,k})|  \enspace.
\]
We begin by writing the variance of $\eate$ as a quadratic form of the covariance matrix with vector $\vec{\alpha}$, as in \cite{kandiros2024conflict}. In particular, we have
\begin{align*}
\Var{\eate} & = \Var[\Bigg]{\frac{1}{n} \paren[\Bigg]{\sum_{i=1}^{n} \paren[\Big]{\frac{\indicator{E_{i,1}}}{\Pr{E_{i,1}}} y_i(e_{i,1}) - \frac{\indicator{E_{i,0}}}{\Pr{E_{i,0}}} y_i(e_{i,0})}}} \\
& = \frac{1}{n^2} \sum_{e_{i,k} \in V_\cH} \sum_{e_{j,l} \in V_\cH} y_{i}(e_{i,k}) y_{j}(e_{j,l}) (-1)^{i+j}\Cov[\Big]{\frac{\indicator{E_{i,k}}}{\Pr{E_{i,k}}}, \frac{\indicator{E_{j,l}}}{\Pr{E_{j,l}}}} \\
&\leq \frac{1}{n^2} \sum_{e_{i,k} \in V_\cH} \sum_{e_{j,l} \in V_\cH} |y_{i}(e_{i,k})| |y_{j}(e_{j,l})| |C_{(i,k),(j,l)}|\\
& = \frac{n^{\frac{2}{q}}}{n^2} \vec{\alpha}^\top \paren[\Big]{\mat{T}^{(0)} + \mat{T}^{(1)} + \mat{T}^{(2)}} \vec{\alpha}
\end{align*}
Since $y \in \cM(G,q)$, the $q$-th norm of the vector $\vec{\alpha}$ is bounded as
\begin{align*}
    \paren[\Big]{\sum_{i=1}^{n} \sum_{k=0}^{1} |\vec{\alpha}_{i,k}|^q}^{1/q} & = \paren[\Bigg]{\frac{1}{n} \sum_{i=1}^{n} |y_i(e_{i,0})|^q + \frac{1}{n} \sum_{i=1}^{n} |y_i(e_{i,1})|^q}^{1/q} \\
& \leq 1\enspace,
\end{align*}

using the definition of $\cM(G,q)$. Now, applying Lemmas~\ref{lem:C0}, \ref{lem:C1} and \ref{lem:C2} to bound the three terms in the sum, we have
\begin{align*}
\Var{\eate} & \leq \frac{n^{\frac{2}{q}}}{n^2} \paren[\Big]{2C e^{\frac{1}{C}}+ 1+ \frac{4}{2C-1}}\lamH^{\frac{2}{q}} (2m)^{1-\frac{2}{q}}\\
&= \paren[\Big]{2C e^{\frac{1}{C}}+ 1+ \frac{4}{2C-1}} \cdot \frac{\lamH^{\frac{2}{q}} (2m/n)^{1-\frac{2}{q}}}{n} \\
&= \paren[\Big]{2C e^{\frac{1}{C}}+ 1+ \frac{4}{2C-1}} \cdot \frac{\lamH^{\frac{2}{q}} d_{\mathrm{avg}}^{1-\frac{2}{q}}}{n}\enspace,
\end{align*}
where in the last step we used the fact that $d_{\mathrm{avg}} = 2m/n$.
Choosing $C = 2$ yields the desired bound.
\end{proof}

\subsection{Bias-Variance Tradeoff }\label{sec:bias_variance_appendix}

In this section, we first establish in Section~\ref{sec:exponential_bv_appendix} the improved upper bound of Theorem~\ref{thm:bias_variance_exponential}. We then show in Section~\ref{sec:approximation_algo_appendix} that we can find a design-estimator pair that can be implemented in polynomial time, whose risk is within multiplicative logarithmic factors from this improved bound. 

\subsubsection{Optimal Bias-Variance Tradeoff (Theorem~\ref{thm:bias_variance_exponential})}\label{sec:exponential_bv_appendix}

We first introduce some notation that will be particularly useful in this section. 

For any subset of nodes $S \subseteq V_\cH$, we denote by $\cH[V_\cH \setminus S]= (V_\cH \setminus S, E(V_\cH \setminus S))$ the subgraph of $\cH$ that is \emph{induced} by the nodes in $V_\cH \setminus S$, i.e. for $i,j \in V_\cH \setminus S$, $(i,j) \in E(V_\cH \setminus S)$ if and only if $(i,j) \in E_\cH$. We denote by $\lambda(\cH[V_\cH \setminus S])$ the largest eigenvalue of the adjacency matrix of $\cH[V_\cH \setminus S]$.

As usual, we will use the notation $\widetilde{N}(i)$ for the extended neighborhood of a node $i$ in the original graph $G$.
Also, for convenience, we introduce the following function of a subset $S \subseteq V(\cH)$ and a moment parameter $q$.
\[
Q_{q}(S, \cH) = \lambda(\cH[V_\cH \setminus S])^{\frac{2}{q}}\cdot d_{\mathrm{avg}}(\cH[V_\cH \setminus S])^{1 - \frac{2}{q}}+ \frac{|S|^{2 - \frac{2}{q}}}{n^{1 - \frac{2}{q}}}\enspace.
\]
In this section, we will prove the following theorem.

\biasvarianceexponential*

We start by introducing the design that will be used in the proof of Theorem~\ref{thm:bias_variance_exponential}. We give the pseudocode in Algorithm~\ref{alg:CGD_exponential}. 

\begin{algorithm}[ht]
\KwIn{maximum eigenvalue $\lamH$, graph $G$, conflict graph $\cH$ and moment $q$.}
	\KwOut{Random intervention $Z \in \mathcal{Z} = \setb{0,1}^n$, variables $U_i$ for all $i \in V(\cH)$.}
    $S^* \gets  \argmin_{S \subseteq V(\cH)} Q_{q}(S,\cH)$ \label{algline:subset_selection}\\
	$\cH^* \gets \cH[V_\cH \setminus S^*]$ \\
    $\pi^* \gets$ Ordering on $V_\cH \setminus S^*$ produced by Algorithm~\ref{alg:ordering} with input $\cH^*$ and $q$ \\
    \For{$e_{i,k} \in V(\cH^*)$}{
    $
p_{i,k} \gets \frac{1}{2 \lambda(\cH^*)^{\frac{2}{q}} d_{i,k}(\cH^*)^{1-\frac{2}{q}}} 
$\;
    }
	Sample desired exposure variables $\setb{U_{i,k} : e_{i,k} \in V(\cH) \setminus S^*}$ independently and identically as
	$$
	U_{i,k} \gets
	\begin{cases*}
		e  \quad\text{with probability } \frac{p_{i,k}}{2 }\\
		*  \quad\text{with probability } 1 - \frac{p_{i,k}}{2 }
	\end{cases*}
	$$\\
	Initialize the intervention vector $Z \gets \vec{0}$ \\
	\For{$e_{i,k} \in V(\cH) \setminus S^*$}{
		\If{$U_{i,k} = e $ and $U_{j,l} = *$ for all $e_{j,l} \in \mathcal{N}^{\pi^*}_b(e_{i,k})$\label{algline:exposure-condition-bias-exponential}}{
			Update intervention vector: set 
            $
            Z_r = \indicator[\big]{ r \in h_i(e_{i,k})} \quad \text{for all } r \in \widetilde{N}(i)
			$\label{algline:set-intervention-bias-exponential} \;
		}
	}
\caption{Biased Conflict Graph Design}
\label{alg:CGD_exponential}
\end{algorithm}

Algorithm~\ref{alg:CGD_exponential} operates similarly to the CGD of Algorithm~\ref{alg:CGD}, but with one important difference.
In particular, in line~\ref{algline:subset_selection} we select a particular subset of nodes $S^*$ in the conflict graph $\cH$ whose outcomes we will not try to estimate.
Then, an ordering $\pi^*$ of the nodes in $V_\cH \setminus S^*$ is produced by calling the importance ordering procedure from Algorithm~\ref{alg:ordering} on the subgraph $\cH[V_\cH \setminus S^*]$ of the original conflict graph. 
The remaining steps mirror the CGD of Algorithm~\ref{alg:CGD}, where we iterate only over the exposures in $V_\cH \setminus S^*$ and assign treatment.

In order to describe our proposed estimator, we first introduce some definitions for arbitrary subsets of nodes in the conflict graph. 
In particular, for any subset $S \subseteq V_\cH$, let $\pi_S$ be the ordering of the nodes in $V_\cH \setminus S$ produced by the importance ordering procedure from Algorithm~\ref{alg:ordering} with input $\cH[V_\cH \setminus S]$ and $q$ (we omit the dependence on $q$ since it is clear from the context).
For all $e_{i,k} \in V_\cH \setminus S$, define the events
\[
E_{i,k}^{(S)} = \setb{U_{i,k} = e, U_{j,l} = * \text{ for all } e_{j,l} \in \mathcal{N}^{\pi_S}_b(e_{i,k})}\enspace.
\]
Note that we used a similar notation for an analogous event in the original CGD in Section~\ref{sec:cgd_general_appendix}, where the event depends on all neighbors of $e_{i,k}$ in the conflict graph. 
This could be thought of as a special case of the above definition, where $S = \emptyset$ and $\pi_\emptyset$ is the ordering produced by Algorithm~\ref{alg:ordering} on the entire conflict graph $\cH$.

For any subset $S \subseteq V_\cH$, we define the following estimator.
\begin{equation}
	\label{eq:biased_estimator}
	\eate_{S} = \frac{1}{n} \sum_{i=1}^n \paren[\Bigg]{\frac{\indicator{E_{i,1}^{(S)} }}{\Pr{E_{i,1}^{(S)}}} \indicator{e_{i,1} \notin S} - \frac{\indicator{E_{i,0}^{(S)} }}{\Pr{E_{i,0}^{(S)}}} \indicator{e_{i,0} \notin S} }y_i(Z)\enspace.
\end{equation}
All probabilities are calculated with respect to the randomness in the design of Algorithm~\ref{alg:CGD_exponential}.
Note that $\eate_{S}$ is a randomized estimator, in the sense defined in Section~\ref{sec:minimax_theory}, since it uses the values of the random variables $U_{i,k}$ in its definition.

For an arbitrary subset of exposures $S \subseteq V_\cH$, denote by $\ate_S$ the average treatment effect defined only on exposures in $V_\cH \setminus S$, i.e.
\[
\ate_S = \frac{1}{n} \sum_{i=1}^n \paren{ \indicator{e_{i,1} \notin S} y_i(e_{i,1}) - \indicator{e_{i,0} \notin S} y_i(e_{i,0}) }\enspace.
\]
We will use $\eate_{S^*}$ as our estimator for $\ate_{S^*}$, where $S^*$ is the subset of nodes in the conflict graph that minimizes $Q(S,q)$, as defined in line~\ref{algline:subset_selection} of Algorithm~\ref{alg:CGD_exponential}.
Our first proposition establishes that $\eate_{S^*}$ is an unbiased estimator for $\ate_{S^*}$.

\begin{proposition}\label{prop:unbiased_S}
	For any outcome function $y \in \cM(G)$, we have $\E{\eate_{S^*}} = \ate_{S^*}$.
\end{proposition}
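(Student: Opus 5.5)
The plan is to reproduce the unbiasedness argument given for the unmodified Conflict Graph Design, now applied to the induced subgraph $\cH^* = \cH[V_\cH \setminus S^*]$ and its ordering $\pi^* = \pi_{S^*}$.

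The key step is an analogue of Lemma~\ref{lem:desired_exposure}: for every $e_{i,k} \in V_\cH \setminus S^*$, if $E^{(S^*)}_{i,k}$ occurs, then the intervention $Z$ returned by Algorithm~\ref{alg:CGD_exponential} satisfies $h_i(Z) = e_{i,k}$.

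\begin{itemize}
\item When $E^{(S^*)}_{i,k}$ holds, the condition in line~\ref{algline:exposure-condition-bias-exponential} is met for $e_{i,k}$. Line~\ref{algline:set-intervention-bias-exponential} therefore sets $Z_r = \indicator{r \in h_i(e_{i,k})}$ for all $r \in \widetilde{N}(i)$.
\item Suppose a later node $e_{r,l} \in V_\cH \setminus S^*$ overwrote some coordinate in $\widetilde{N}(i)$ with a different value. Then $e_{i,k}$ and $e_{r,l}$ would prescribe different treatments on a common unit of $\widetilde{N}(i)\cap\widetilde{N}(r)$, so they are in conflict and adjacent in $\cH$.
\item Both endpoints lie outside $S^*$, so this edge is also present in the induced subgraph $\cH^*$. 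Hence $e_{i,k} \in \mathcal{N}^{\pi^*}_b(e_{r,l})$.
\item For $e_{r,l}$ to trigger line~\ref{algline:set-intervention-bias-exponential}, we would need $U_{i,k} = *$, which contradicts $E^{(S^*)}_{i,k}$.
\end{itemize}

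One detail needs checking: ``later'' must mean later in the loop order, and the loop must respect $\pi^*$. I would stipulate that the loop in Algorithm~\ref{alg:CGD_exponential} iterates over $V_\cH\setminus S^*$ in the order $\pi^*$, exactly as implicitly assumed for Algorithm~\ref{alg:CGD}. Even without that stipulation the argument goes through, because both nodes cannot pass their check when they are adjacent in $\cH^*$: whichever of the two comes later in $\pi^*$ requires the other's $U$ to equal $*$.

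With the lemma in hand, $y \in \cM(G)$ gives $y_i(Z) = y_i(e_{i,k})$ on $E^{(S^*)}_{i,k}$. Each summand of \eqref{eq:biased_estimator} then equals $\frac{\indicator{E^{(S^*)}_{i,k}}}{\Pr{E^{(S^*)}_{i,k}}}\indicator{e_{i,k}\notin S^*}\, y_i(e_{i,k})$, where the factor multiplying $y_i(e_{i,k})$ is deterministic.

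Taking expectations requires $\Pr{E^{(S^*)}_{i,k}}>0$. This holds because
\[
\Pr{E^{(S^*)}_{i,k}} = \frac{p_{i,k}}{2}\prod_{e_{j,l}\in\mathcal{N}^{\pi^*}_b(e_{i,k})}\Bigl(1-\frac{p_{j,l}}{2}\Bigr),
\]
and every $p_{j,l}\le 1/2$ by Lemma~\ref{lem:ordering} applied to $\cH^*$. One degenerate case needs a convention: if $d_{i,k}(\cH^*)=0$, we define $p_{i,k}$ by continuity, for example set $p_{i,k}=1/2$, or treat isolated nodes separately; either choice keeps the probability positive.

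Linearity of expectation then gives $\E{\eate_{S^*}} = \frac1n\sum_i\bigl(\indicator{e_{i,1}\notin S^*}y_i(e_{i,1}) - \indicator{e_{i,0}\notin S^*}y_i(e_{i,0})\bigr) = \ate_{S^*}$.

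The only genuine obstacle is the overwrite argument. Its crux is that conflicts between nodes of $V_\cH\setminus S^*$ are preserved in the induced subgraph, so the ordering $\pi^*$ built on $\cH^*$ still resolves all of them.
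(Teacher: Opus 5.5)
Your proposal is correct and takes the same route as the paper: the paper also transfers the overwrite argument of Lemma~\ref{lem:desired_exposure} to the design run on $V_\cH \setminus S^*$ and then concludes by linearity of expectation. You make explicit what the paper leaves implicit, namely that conflicts among exposures outside $S^*$ remain edges of the induced subgraph $\cH^*$ (and your version of the argument does not depend on loop order), and that every event probability in the estimator is positive; one small correction is that setting $p_{i,k}=1/2$ for an isolated vertex of $\cH^*$ is a convention, not a continuity extension, since for $q>2$ the formula for $p_{i,k}$ blows up as the degree goes to zero.
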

\begin{proof}
	First, observe that if event $E_{i,k}^{(S^*)}$ holds, then $y_i(Z) = y_i(e_{i,k})$. The argument is essentially identical to that of Lemma~\ref{lem:desired_exposure} for the original CGD, since Algorithm~\ref{alg:CGD_exponential} essentially runs the conflict graph design on the set of nodes $V_\cH \setminus S^*$. 
Therefore, we have
\begin{align*}
	\E{\eate_{S^*}} & = \frac{1}{n} \sum_{i=1}^n \paren[\Bigg]{\frac{\Pr[\Big]{E_{i,1}^{(S^*)} }}{\Pr[\Big]{E_{i,1}^{(S^*)}}} \indicator{e_{i,1} \notin S^*} y_i(e_{i,1}) - \frac{\Pr[\Big]{E_{i,0}^{(S^*)} }}{\Pr[\Big]{E_{i,0}^{(S^*)}}} \indicator{e_{i,0} \notin S^*} y_i(e_{i,0}) }\\
	&= \frac{1}{n} \sum_{i=1}^n \paren[\Bigg]{\indicator{e_{i,1} \notin S^*} y_i(e_{i,1}) - \indicator{e_{i,0} \notin S^*} y_i(e_{i,0}) }\\
	&= \ate_{S^*} \enspace.
	\qedhere
\end{align*}
\end{proof}

We are now ready to prove Theorem~\ref{thm:bias_variance_exponential}. 

\begin{proof}[Proof of Theorem~\ref{thm:bias_variance_exponential}]
We will prove the upper bound on the minimax risk by bounding the risk of a particular design-estimator pair. We will use the Biased Conflict Graph Design of Algorithm~\ref{alg:CGD_exponential} as the design $\design$ and $\eate_{S^*}$ as the estimator, as defined in equation~\eqref{eq:biased_estimator}.
As observed above, $\design$ is a randomized experimental design and $\eate_{S^*}$ is a randomized estimator, since they both use the extra randomness of the variables $U_{i,k}$.
By Proposition~\ref{prop:randomness_model}, it suffices to bound the risk of this design-estimator pair to obtain an upper bound on the minimax risk.

By Proposition~\ref{prop:unbiased_S}, we have that $\eate_{S^*}$ is an unbiased estimator for $\ate_{S^*}$.
Since $\ate_{S^*}$ is not necessarily equal to $\ate$, the estimator $\eate_{S^*}$ may be biased for $\ate$.
Thus, we can decompose the MSE of $\eate_{S^*}$ as a sum of bias and variance terms, as follows.
\begin{equation}
	\label{eq:bias_var}
	\E[\big]{(\eate_{S^*} - \ate)^2} = \Var[\big]{\eate_{S^*}} + (\ate_{S^*} - \ate)^2\enspace.
\end{equation}
We will now bound each term separately, starting with the bias term.
For any $y \in \cM(G,q)$, we have
\begin{align}
	(\ate_{S^*} - \ate)^2 &= \paren[\Bigg]{\frac{1}{n} \sum_{i=1}^n \paren{ \indicator{e_{i,1} \in S^*} y_i(e_{i,1}) - \indicator{e_{i,0} \in S^*} y_i(e_{i,0}) }}^2\nonumber \\
	&\leq \frac{1}{n^2} \paren[\Bigg]{\sum_{e_{i,k} \in S^*} |y_i(e_{i,k})|}^2\nonumber \\
	\intertext{using triangle inequality} \nonumber \\
	&\leq \frac{1}{n^2} \paren[\Bigg]{\sum_{e_{i,k} \in S^*} |y_i(e_{i,k})|^q}^{\frac{2}{q}} \cdot \paren[\Bigg]{\sum_{e_{i,k} \in S^*} 1}^{\frac{2(q-1)}{q}}\nonumber \\
	\intertext{by H\"older's inequality}\nonumber \\
	&\leq \frac{|S^*|^{2 - \frac{2}{q}}}{n^{2- \frac{2}{q}}} \label{eq:bias_bound} \enspace,
\end{align}
using the fact that $y \in \cM(G,q)$ in the last step.

We now focus on the variance term. As we've already discussed, after selecting the subset $S^*$, Algorithm~\ref{alg:CGD_exponential} proceeds identically to the CGD of Algorithm~\ref{alg:CGD} but only on the subgraph $\cH[V_\cH \setminus S^*]$. Moreover, the estimator $\eate_{S^*}$ is essentially the exact analogue of the estimator $\eate$ defined in equation~\eqref{eq:modified_ht} for the original CGD, but only on the exposures in $V_\cH \setminus S^*$.
Therefore, we can use the analysis of the variance of the original CGD, as given in the proof of Theorem~\ref{thm:conflict_graph_design}, to conclude that for any $y \in \cM(G,q)$, we have

\begin{equation}
	\label{eq:var_bound}
	\Var[\big]{\eate_{S^*}} \leq \constant \cdot\frac{\lambda(\cH(V_\cH \setminus S^*))^{\frac{2}{q}}\cdot d_{\mathrm{avg}}(\cH[V_\cH \setminus S^*])^{1 - \frac{2}{q}}}{n}\enspace.
\end{equation}

Combining the bounds in equations~\eqref{eq:bias_bound} and~\eqref{eq:var_bound} with the bias-variance decomposition in equation~\eqref{eq:bias_var}, we have that 
\begin{align*}
\cR(G,\tau,q) &\leq \constant \cdot\frac{\lambda(\cH(V_\cH \setminus S^*))^{\frac{2}{q}}\cdot d_{\mathrm{avg}}(\cH[V_\cH \setminus S^*])^{1 - \frac{2}{q}}}{n} +  \frac{|S^*|^{\frac{2(q-1)}{q}}}{n^{2-\frac{2}{q}}}\\
&= \constant \cdot \frac{ Q_{q}(S^*,\cH)}{n} \enspace.
\end{align*}
Since $S^*$ was selected in line~\ref{algline:subset_selection} to optimize the right hand side of the above inequality, the result follows.
\end{proof}

\subsection{Polynomial Time Approximation Algorithm (Proposition~\ref{prop:approximation_algorithm})}\label{sec:approximation_algo_appendix}

In this section, we prove that there exists a design-estimator pair that run in polynomial time and achieve estimation rate within logarithmic factors from that of Theorem~\ref{thm:bias_variance_exponential}.
The precise statement follows.

\approximationalgorithm*

In order to establish Theorem~\ref{prop:approximation_algorithm}, it will be useful to introduce a few relevant computational problems.
The first one is known as the \emph{Partial Vertex Cover} problem.

\begin{definition}[Partial Vertex Cover]
The \emph{Partial Vertex Cover} problem is defined as follows.

\textbf{Input:} A graph $G=(V,E)$ and an integer $k$.

\textbf{Output:} A minimum-size subset $S \subseteq V$ such that
$|E(G[V \setminus S])| \le k$.
\end{definition}

For a graph $G$ and integer $k$, we denote by $\mathrm{OPT}_{pvc}(G,k)$ the size of the optimal solution to thePartial Vertex Cover problem on input $(G,k)$.

The second problem is a variant of the first one. We will refer to it as the \emph{Partial Eigenvalue Cover} problem, although this is not a standard terminology in the literature.

\begin{definition}[Partial Eigenvalue Cover]
The \emph{Partial Eigenvalue Cover} problem is defined as follows.

\textbf{Input:} A graph $G=(V,E)$ and an integer $k$.

\textbf{Output:} A minimum-size subset $S \subseteq V$ such that
$\lambda(G[V \setminus S]) \le k$.
\end{definition}

For a graph $G$ and integer $k$, we denote by $\mathrm{OPT}_{pec}(G,k)$ the size of the optimal solution to the Partial Eigenvalue Cover problem on input $(G,k)$.

We now present two results from the literature, which provide polynomial time approximation algorithms for the above problems. 

\begin{theorem}[Theorem 2.1 in \citet{gandhi2004approximation}]
\label{thm:partial_vertex_cover}
There exists a polynomial time algorithm that takes as input a graph $G$ and integer $k$ and outputs a subset $\texttt{PVC-Approx}(G,k) \subseteq V$ such that $|\texttt{PVC-Approx}(G,k)| \le 2 \cdot \mathrm{OPT}_{pvc}(G,k)$.
\end{theorem}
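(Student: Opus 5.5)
The approach is the primal--dual method with a guessing step, in the style of Bar-Yehuda and of Gandhi, Khuller and Srinivasan. We restate the problem as covering: with $m = |E|$, we must choose $S$ so that at least $m-k$ edges have an endpoint in $S$. The LP relaxation is
\[
\min \sum_{v} x_v \quad\text{s.t.}\quad y_e \le x_u + x_v \ \ (e=\{u,v\}),\qquad \sum_e y_e \ge m-k,\qquad 0\le x_v,\ y_e \le 1 .
\]
Its integrality gap is unbounded. The standard fix is to first guess one vertex $h$ of the optimal solution, namely one of maximum degree among the vertices of $\mathrm{OPT}$. For each guess we put $h$ into $S$, delete the vertices whose degree exceeds $\deg(h)$ (they are not in $\mathrm{OPT}$), and run the algorithm on the residual instance. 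We keep the best of the $|V|$ resulting outputs, which costs only polynomially many runs.

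For a fixed guess we run a primal--dual procedure. The dual has variables $\alpha_e$ for edges and a single variable $\beta$ for the coverage constraint. We raise $\beta$, and with it the $\alpha_e$ of all currently uncovered edges, uniformly. A vertex $v$ becomes tight when $\sum_{e\ni v}\alpha_e = 1$, and at that moment we add it to $S$ and freeze its edges. We stop as soon as the covered edges number at least $m-k$.

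The analysis has three steps. First, every vertex added before the final step is tight. Charging its unit cost to the $\alpha_e$ of its incident edges, each edge is charged at most twice, once per endpoint. Second, weak duality bounds the dual objective by $\mathrm{OPT}$ for this guess. Together these give cost at most $2\,\mathrm{OPT}$ plus the cost of the last batch of simultaneously tight vertices. Third, we handle the last batch. Many vertices may become tight at once, and we add them only until the threshold $m-k$ is reached. The excess is the final overshoot vertex, which covers too many edges to be paid for by the dual. We bound it by the guess: every remaining vertex has degree at most $\deg(h)$, so the overshoot is absorbed by the term for $h$, which is already counted in $\mathrm{OPT}$.

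The main obstacle is this last-batch step. The dual objective is $\sum_e \alpha_e - (m-k)\beta$, roughly, and the overshoot edges can make that quantity smaller than the primal cost by more than a factor $2$. I would first try the unit-weight structure, under which a uniform raise makes tight vertices exactly the vertices of maximum residual degree. This should let the overshoot be charged to a single vertex whose cost is matched by the guessed $h$. If that fails, I would fall back on the Lagrangian approach: binary search on $\beta$ for two Lagrangian vertex-cover solutions that bracket $m-k$, each a $2$-approximation by the half-integral vertex cover LP. Interpolating between them, with the guess again absorbing the single fractional vertex, gives the same factor $2$.
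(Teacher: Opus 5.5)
Your plan is the right one: guess a vertex $h$ of an optimal solution, put it in $S$, and run the primal--dual procedure on the residual instance. The paper does not reprove this statement; it imports it from \citet{gandhi2004approximation}, whose argument has this guess-then-primal--dual form. As written, however, your argument is not closed. The ``main obstacle'' you describe comes from an incorrect dual.

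Ignore the redundant bounds $x_v\le 1$, and take multipliers $\alpha_e,\beta,\gamma_e\ge 0$ for $y_e\le x_u+x_v$, $\sum_e y_e\ge m-k$ and $y_e\le 1$. The dual is then
\[
\max\ (m-k)\beta-\sum_e\gamma_e\quad\text{s.t.}\quad \sum_{e\ni v}\alpha_e\le 1\ \ \forall v,\qquad \beta\le\alpha_e+\gamma_e\ \ \forall e .
\]
Your procedure keeps $\alpha_e\le\beta$, so you may take $\gamma_e=\beta-\alpha_e$, which gives objective $D=\sum_e\alpha_e-k\beta$. The coefficient of $\beta$ is the number $k$ of edges allowed to stay uncovered, not $m-k$. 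It is still $k$ in the residual instance, since the edges at $h$ are already covered.

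You assert without proof that the vertices added before the last one cost at most $2\,\mathrm{OPT}$. Let $C$ be the set of edges covered by $v_1,\dots,v_{r-1}$, the vertices added before the final vertex $v_r$. The assertion needs $\sum_{e\in C}\alpha_e\le D$, which is immediate once the dual is right:
\begin{itemize}
\item By the stopping rule, $C$ falls short of the coverage requirement, so more than $k$ edges lie outside $C$.
\item Each such edge was still unfrozen when $v_r$ was added, so it ends with $\alpha_e=\beta$.
\item Together these edges outweigh the term $-k\beta$.
\end{itemize}
Tightness then gives $r-1\le 2\sum_{e\in C}\alpha_e\le 2D\le 2\,\mathrm{OPT}'\le 2(\mathrm{OPT}-1)$. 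Here $\mathrm{OPT}'$ is the residual optimum, and the last step holds because $\mathrm{OPT}\setminus\{h\}$ is feasible for the residual instance. Vertices of the last tight batch other than $v_r$ are tight and are paid for like all the others, so only $v_r$ is unpaid. Hence $|S|=1+(r-1)+1\le 2\,\mathrm{OPT}$.

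That last inequality is the entire role of the guess. $h$ and the overshoot vertex $v_r$ each cost $1$, and the factor-$2$ slack lets both be charged to the single unit that $h$ contributes to $\mathrm{OPT}$. This is the unit-cost case of guessing the most expensive vertex of $\mathrm{OPT}$, which prunes nothing. Degrees play no role. Your degree-based pruning is at best harmless, provided ``delete'' means forbidding those vertices rather than removing their edges, and guesses whose residual instance becomes infeasible are discarded.

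Your two fallbacks should be dropped.
\begin{itemize}
\item Under a uniform raise, the load of $v$ is $\sum_{e\ni v}\min(\beta,\beta_e)$, where $\beta_e$ is the value of $\beta$ when $e$ was frozen. A vertex that has built up a large load on already-frozen edges can therefore go tight before a vertex of larger residual degree. The claim that tight vertices are exactly those of maximum residual degree is false.
\item Two Lagrangian solutions bracketing $m-k$ are integral covers that may differ in many vertices, not in one ``fractional vertex''. Turning them into an integral factor-$2$ solution is the hard part of that route, and you do not supply it.
\end{itemize}
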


\begin{theorem}[Lemma 5.1 in \citet{saha2015approximation}]
\label{thm:partial_eigenvalue_cover}
There exists a polynomial time algorithm that takes as input a graph $G$ and integer $k$ and outputs a subset $\texttt{PEC-Approx}(G,k) \subseteq V(G)$ such that 
$$
|\texttt{PEC-Approx}(G,k)| = \bigO{\mathrm{OPT}_{pec}(G,k) \cdot \log n}
$$
and 
$$
\lambda\paren[\big]{G[V \setminus \texttt{PEC-Approx}(G,k)]} \le 2 k \enspace.
$$
\end{theorem}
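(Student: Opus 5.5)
Since this theorem is imported from \citet{saha2015approximation}, the paper itself simply cites it. If I had to reconstruct it, the plan would be a covering-type relaxation plus rounding. The relaxation would rest on two certificates for the spectral radius of a nonnegative symmetric matrix.

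\begin{itemize}
\item \emph{Lower bound.} For every induced subgraph $H$ of $G[V\setminus S]$, the average degree of $H$ and $\sqrt{\dmax(H)}$ are both at most $\lambda(G[V\setminus S])$. This monotonicity already appears in the proof of Lemma~\ref{lem:ordering}.
\item \emph{Upper bound (Collatz--Wielandt).} If there is a positive vector $w$ with $A_{V\setminus S}\, w \le 2k\, w$ coordinatewise, then $\lambda(G[V\setminus S]) \le 2k$.
\end{itemize}

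The optimal solution $S^*$ comes with a Perron vector $x$ of $G[V\setminus S^*]$ satisfying $A_{V\setminus S^*} x \le k x$. So the feasible region "there is a removal set $S$ and a weighting $w$ with $\sum_{u\in N(v)\setminus S} w_u \le k w_v$ for all $v \notin S$" contains $S^*$.

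First, I would guess the scale of $w$. Normalize $w$ and bucket the vertices of $V\setminus S^*$ into $O(\log n)$ dyadic classes according to $x_v$. Within the pair of classes $(a,b)$, the Collatz--Wielandt inequality says roughly that every $v$ in class $a$ has at most $k\,2^{a-b}$ neighbours in class $b$.

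Second, with the classes fixed, the condition becomes a family of \emph{degree-bounded subgraph} constraints: remove vertices so that, for each pair of classes, the bipartite degree bound holds. This is a capacitated partial-cover problem. I would write it as an LP with variables $s_v\in[0,1]$ and constraints $\sum_{u\in N(v)\cap B}(1-s_u)\le k\,2^{a-b}+ \mathrm{deg}_B(v)\, s_v$. I would then solve it and round by thresholding at $1/2$ (costing a factor $2$), in the style of Theorem~\ref{thm:partial_vertex_cover}. Summing over the $O(\log n)$ classes in which $S^*$ must be charged gives $|S| = \bigO{\mathrm{OPT}_{pec}(G,k)\log n}$.

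Third, I would rebuild a certificate vector on the rounded solution. Take $w_v = 2^{a(v)}$ for $v$ in class $a$. The per-class degree bounds hold up to a factor $2$ from thresholding. A geometric sum over classes then gives $A w \le c\,k\, w$. Adjusting constants through the choice of bucket base is meant to reach exactly $2k$.

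The main obstacle is the first step. The classes are defined by the unknown optimal eigenvector $x$, so the algorithm cannot see them. I would first try to guess a class assignment implicitly inside the LP, via indicator variables $t_{v,a}$ relaxed to a fractional assignment. The hard part is showing that this relaxation still rounds with only a logarithmic loss, and that the geometric summation in the third step does not lose more than a constant in the eigenvalue. If this fails, the fallback is a multiplicative-weights or greedy peeling argument in the spirit of Algorithm~\ref{alg:ordering}, combined with the lower bounds from the first bullet.
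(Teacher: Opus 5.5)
The paper gives no proof of this statement. It is imported from \citet{saha2015approximation}, so your sketch has to stand on its own, and it does not.

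\textbf{The unresolved step.} The step you call the main obstacle is the whole difficulty, and it is left open. The dyadic classes are read off the Perron vector of the unknown graph $G[V\setminus S^*]$. An algorithm would have to guess one of roughly $L^n$ class assignments, and nothing shows that a fractional assignment $t_{v,a}$ can be rounded. Even $L=\bigO{\log n}$ is unjustified. Perron entries can span a range exponential in $n$; for a clique with a long pendant path they decay geometrically along the path. You give no truncation argument to handle this.

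\textbf{The steps you do specify also fail, even with the classes given.}
\begin{itemize}
\item \emph{Per-class caps lose additivity.} You replace the single Collatz--Wielandt inequality at $v$ (in class $a$) by separate caps $n_b(v)\le k\,2^{a-b}$ on its number of neighbours in each class $b$. With $w_u=2^{b(u)}$ this gives $(Aw)_v=\sum_b n_b(v)\,2^b\le\sum_b k\,2^{a}=L\,k\,w_v$. Every class contributes equally, so there is no geometric series, and you only certify $\lambda=\bigO{k\log n}$.
\item \emph{Thresholding at $1/2$ does not round your big-$M$ LP.} Take a kept vertex with $s_v$ just below $1/2$. The LP then only guarantees fewer than $2k\,2^{a-b}+\deg_B(v)$ kept neighbours in $B$, which is vacuous. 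Degree caps are multicover constraints. The factor $2$ in Theorem~\ref{thm:partial_vertex_cover} comes from each edge having only two endpoints, not from threshold rounding.
\end{itemize}

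\textbf{The missing idea: eliminate the eigenvector entirely.} As far as I recall, this is how the cited result is obtained.
\begin{itemize}
\item Fix $r$ with $2r\ge\log_2 n$. Every graph $H$ on at most $n$ vertices satisfies $\lambda(H)^{2r}\le\mathrm{tr}(A_H^{2r})\le n\,\lambda(H)^{2r}$.
\item Hence $S^*$ leaves at most $n k^{2r}$ closed walks of length $2r$. Conversely, any $S$ leaving at most $n k^{2r}$ of them satisfies $\lambda(G[V\setminus S])\le n^{1/(2r)}k\le 2k$.
\item Destroying all but $n k^{2r}$ closed walks is a partial set cover problem. Each element (a walk) lies in at most $f=2r$ sets (the vertices it visits).
\item The primal--dual $f$-approximation for partial set cover of \citet{gandhi2004approximation} therefore gives $|S|\le 2r\cdot\mathrm{OPT}_{pec}(G,k)=\bigO{\mathrm{OPT}_{pec}(G,k)\log n}$.
\item It can be run implicitly in polynomial time. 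The number of surviving closed walks through a vertex is a difference of traces of $2r$-th powers of induced adjacency matrices.
\end{itemize}
This accounts for both the $\log n$ factor and the $2k$ threshold.
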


We will use the above two algorithms as subroutines in order to find a subset $S$ of vertices in the conflict graph $\cH$ that approximately minimizes the expression in Theorem~\ref{thm:bias_variance_exponential}.

In particular, we describe the design in Algorithm~\ref{alg:approximation_algorithm}. 

\begin{algorithm}[ht]
\KwIn{Interference network $G$, conflict graph $\cH$ and moment $q$.}
	\KwOut{Random intervention $Z \in \mathcal{Z} = \setb{0,1}^n$, variables $U_{i,k}$ for all $e_{i,k} \in V(\cH)$.}
    \For{$k_1 = 1$ to $|E(\cH)|$}{
    	\For{$k_2 = 1$ to $\lceil \lamH \rceil$}{
            $S_1 \gets \texttt{PVC-Approx}(\cH, k_1)$\;
    		$S_2 \gets \texttt{PEC-Approx}(\cH, k_2)$\;
    		$S_{k_1,k_2} \gets S_1 \cup S_2$\;
    	}
    }
    $\hat{S} \gets \argmin_{k_1, k_2} Q_{q}(S_{k_1,k_2}, \cH)$ \label{algline:subset_selection_efficient}\\
	$\hat{\cH} \gets \cH[V_\cH \setminus \hat{S}]$ \\
    $\pi_{\hat{S}} \gets$ Ordering produced by Algorithm~\ref{alg:ordering} with input $\hat{\cH}$ and $q$ \label{algline:ordering}\\
    \For{$e_{i,k} \in V_\cH \setminus \hat{S}$}{
    $
p_{i,k} \gets \frac{1}{2 \lamH^{\frac{2}{q}} d_{i,k}(\hat{\cH})^{1-\frac{2}{q}}} 
$\;
    }
	Sample desired exposure variables $\setb{U_{i,k} : e_{i,k} \in V_\cH \setminus \hat{S}}$ independently and identically as
	$$
	U_{i,k} \gets
	\begin{cases*}
		e  \quad\text{with probability } \frac{p_{i,k}}{2 }\\
		*  \quad\text{with probability } 1 - \frac{p_{i,k}}{2}
	\end{cases*}
	$$\\
	Initialize the intervention vector $Z \gets \vec{0}$ \\
	\For{$e_{i,k} \in V_\cH \setminus \hat{S}$}{
		\If{$U_{i,k} = e $ and $U_{j,l} = *$ for all $e_{j,l} \in \mathcal{N}^{\pi_{\hat{S}}}_b(e_{i,k})$\label{algline:exposure-condition-bias}}{
			Update intervention vector: set 
            $
            Z_r = \indicator[\big]{ r \in h_i(e_{i,k})} \quad \text{for all } r \in \widetilde{N}(i)
			$
			\label{algline:set-intervention-bias} \;
		}
	}
\caption{Approximation Design}
\label{alg:approximation_algorithm}
\end{algorithm}

The only difference between Algorithm~\ref{alg:approximation_algorithm} and the previously described Algorithm~\ref{alg:CGD_exponential} is in the selection of the optimal subset of exposures. While Algorithm~\ref{alg:CGD_exponential} searches over all subsets of exposures to minimize $Q_q(S,\cH)$, which takes time exponential in $n$, Algorithm~\ref{alg:approximation_algorithm} uses a different procedure that runs in polynomial time.

We next establish that the value $Q_{q}(\hat{S},\cH)$ is only logarithmically larger than the optimal value of $Q_{q}(S^*,\cH)$. 

\begin{lemma}
\label{lem:approximation_quality}
Let $S^* = \argmin_{S \subseteq V(\cH)} Q_{q}(S,\cH)$ as defined in Algorithm~\ref{alg:CGD_exponential} and $\hat{S}$ be the subset selected in line~\ref{algline:subset_selection_efficient} of Algorithm~\ref{alg:approximation_algorithm}. Then, \[
Q_{q}(\hat{S},\cH) = \bigO[\Big]{(\log n)^{2 - \frac{2}{q}}} \cdot Q_{q}(S^*,\cH) \enspace.
\]
\end{lemma}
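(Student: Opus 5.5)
Write $S^*$ for the optimal set and set $\lambda^* = \lambda(\cH[V_\cH\setminus S^*])$, $m^* = |E(\cH[V_\cH\setminus S^*])|$. The plan is to find one pair $(k_1,k_2)$ in the search grid of Algorithm~\ref{alg:approximation_algorithm} for which $Q_q(S_{k_1,k_2},\cH)$ is already within the claimed factor of $Q_q(S^*,\cH)$. Since $\hat S$ minimizes $Q_q$ over the grid, that single pair is enough. The choice is $k_1 = m^*$ and $k_2 = \lceil \lambda^*\rceil$. If $m^*=0$, the set $S^*$ already covers every edge; in that case we use $k_1=0$, or we treat it directly, because a vertex cover of size $2|S^*|$ can be found in polynomial time. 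Both choices lie in the range of the loops.

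First I bound the size of $S=S_1\cup S_2$. The set $S^*$ is feasible for Partial Vertex Cover with parameter $k_1=m^*$, and also for Partial Eigenvalue Cover with parameter $k_2\ge\lambda^*$. Theorems~\ref{thm:partial_vertex_cover} and \ref{thm:partial_eigenvalue_cover} then give $|S_1|\le 2|S^*|$ and $|S_2| = \bigO{|S^*|\log n}$. Hence $|S| = \bigO{|S^*|\log n}$, and the bias term satisfies $|S|^{2-2/q}/n^{1-2/q} \le \bigO{(\log n)^{2-2/q}}\cdot |S^*|^{2-2/q}/n^{1-2/q}$.

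Next I bound the variance term. Removing vertices can only lower the spectral radius, since it is monotone under taking induced subgraphs, and can only lower the edge count. Because $S\supseteq S_2$, this gives $\lambda(\cH[V_\cH\setminus S]) \le 2\lceil\lambda^*\rceil$. Because $S\supseteq S_1$, it gives $|E(\cH[V_\cH\setminus S])|\le m^*$. In $Q_q$ the term written $d_{\mathrm{avg}}$ must be read as $2|E|/n$; this is the normalization used in the proof of Theorem~\ref{thm:conflict_graph_design}, where $(2m/n)$ appears. Under that reading, removing vertices does not raise the average degree. So $\lambda(\cdot)^{2/q}d_{\mathrm{avg}}(\cdot)^{1-2/q}$ grows by at most a constant factor $4^{2/q}$, up to rounding of $\lceil\lambda^*\rceil$ when $\lambda^*<1$. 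That rounding is absorbed because $\lambda^*\ge1$ whenever the graph has an edge, and when it has no edge the variance term is zero on both sides. Adding the two bounds gives the lemma with constant times $(\log n)^{2-2/q}$.

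The main obstacle is controlling the average degree. The eigenvalue cover alone does not control the edge count, and the edge cover alone does not control the eigenvalue. This is why the union $S_1\cup S_2$ is needed, together with the monotonicity of both quantities under vertex deletion. I also need to check that $d_{\mathrm{avg}}$ in $Q_q$ is normalized by $n$, or by $|V_\cH|$, and not by the number of surviving vertices. If it were normalized by the surviving vertices, deleting vertices could raise the average degree and I would have to argue separately. The fallback in that case would be the bound $d_{\mathrm{avg}}\le\lambda$ on the remaining graph, which gives a weaker comparison.
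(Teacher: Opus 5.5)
Your argument is correct and is essentially the paper's proof: the same grid point $(k_1,k_2)=(m^*,\lceil\lambda^*\rceil)$, the same two approximation guarantees combined through $S_1\cup S_2$ and monotonicity under vertex deletion, and the same normalization $d_{\mathrm{avg}}=2|E|/n$ (the paper expands $n^{1-2/q}Q_q(S,\cH)$ as $\lambda^{2/q}(2|E|)^{1-2/q}+|S|^{2-2/q}$, which settles your worry in your favor). One small correction to your boundary case: the loops start at $k_1=k_2=1$, so $k_1=0$ is not on the grid when $m^*=0$, and you should use $(1,1)$ instead; this leaves at most one edge, so the variance term is at most $(2/n)^{1-2/q}$, while $|S^*|\ge 1$ gives $Q_q(S^*,\cH)\ge n^{-(1-2/q)}$, and the bound still holds (a case the paper's own proof passes over without comment).
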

\begin{proof}
Let $k_1^* = |E(V_\cH \setminus S^*)|$ and $k_2^* = \lceil \lambda(\cH(V_\cH \setminus S^*)) \rceil$. We first note that $\mathrm{OPT}_{pvc}(\cH, k_1^*) \le |S^*|$ and $\mathrm{OPT}_{pec}(\cH, k_2^*) \le |S^*|$. This is because $S^*$ is a feasible solution to both the \texttt{Partial Vertex Cover} problem with input $(\cH, k_1^*)$ and the \texttt{Partial Eigenvalue Cover} problem with input $(\cH, k_2^*)$.

For convenience, let us denote
\[
S_1 = \texttt{PVC-Approx}(\cH, k_1^*)\quad\text{and}\quad S_2 = \texttt{PEC-Approx}(\cH, k_2^*)\enspace.
\]

Then, by Theorem~\ref{thm:partial_vertex_cover}, we have that $|S_1| \le 2 \cdot \mathrm{OPT}_{pvc}(\cH, k_1^*) \le 2 |S^*|$. Also, by Theorem~\ref{thm:partial_eigenvalue_cover}, we have that 

$$|S_2| = \bigO{\mathrm{OPT}_{pec}(\cH, k_2^*) \cdot \log n} = \bigO{|S^*| \cdot \log n} \quad \text{and} \quad \lceil \lambda(\cH[V_\cH \setminus S_2]) \rceil \le 2 k_2^*\enspace.$$
Finally, $S_{k_1^*, k_2^*} = S_1 \cup S_2$ has size $|S_{k_1^*, k_2^*}| \le |S_1| + |S_2| = \bigO{|S^*| \cdot \log n}$.

Therefore, at the iteration where $k_1 = k_1^*$ and $k_2 = k_2^*$, we have that
\begin{align*}
n^{1 - \frac{2}{q}} \cdot Q_{q}(S_{k_1^*, k_2^*},\cH) & =  \lambda(\cH(V_\cH \setminus S_{k_1^*, k_2^*}))^{\frac{2}{q}}\cdot \paren{2|E(V_\cH \setminus S_{k_1^*, k_2^*})|}^{1 - \frac{2}{q}}+  |S_{k_1^*, k_2^*}|^{\frac{2(q-1)}{q}}\\
& \le  (2 k_2^*)^{\frac{2}{q}}\cdot \paren{2k_1^*}^{1 - \frac{2}{q}}+  \bigO{|S^*| \cdot \log n}^{\frac{2(q-1)}{q}}\\
& = \bigO[\Big]{(\log n)^{2 - \frac{2}{q}}}  \cdot \paren[\Bigg]{\lambda(\cH(V_\cH \setminus S^*))^{\frac{2}{q}}\cdot |E(V_\cH \setminus S^*)|^{1 - \frac{2}{q}}+  |S^*|^{\frac{2(q-1)}{q}}}\\
& = \bigO[\Big]{(\log n)^{2 - \frac{2}{q}}} \cdot n^{1 - \frac{2}{q}}  Q_{q}(S^*,\cH)\enspace.
\end{align*}

By definition, we have $Q_{q}(\hat{S},\cH) \le Q_{q}(S_{k_1^*, k_2^*},\cH)$, which implies the result.
\end{proof}

We are now ready to establish Proposition~\ref{prop:approximation_algorithm}. 

\begin{proof}[Proof of Proposition~\ref{prop:approximation_algorithm}]
As our design $\design$, we will use the design of Algorithm~\ref{alg:approximation_algorithm} and the estimator $\eate_{\hat{S}}$, where $\hat{S}$ is defined in line~\ref{algline:subset_selection_efficient} of Algorithm~\ref{alg:approximation_algorithm}.

We first note that Algorithm~\ref{alg:approximation_algorithm} runs in polynomial time. This is because the total number of iterations in the for loops in the beginning is $\bigO{|E(\cH)| \cdot \lamH} = \bigO{n^3}$, and each iteration involves running the algorithms of Theorem~\ref{thm:partial_vertex_cover} and Theorem~\ref{thm:partial_eigenvalue_cover}, which run in polynomial time. The remaining steps of the algorithm also clearly run in polynomial time. 
After having obtained $\hat{S}$, one can also compute the estimator $\eate_{\hat{S}}$ in polynomial time.

Next, we can proceed exactly as in the proof of Theorem~\ref{thm:bias_variance_exponential} to show that

\[
 \mathrm{MMSE}\paren[\big]{\design,\eate_{\hat{S}},\cM(G,q)} \leq \constant \cdot \frac{Q_{q}(\hat{S},\cH)}{n^{2 - \frac{2}{q}}}\enspace.
\]
Finally, by Lemma~\ref{lem:approximation_quality}, we have that 
$$
Q_{q}(\hat{S},\cH) = \bigO[\Big]{(\log n)^{2 - \frac{2}{q}}} \cdot Q_{q}(S^*,\cH)\enspace,
$$ 
where $S^* = \argmin_{S \subseteq V(\cH)} Q_{q}(S,\cH)$ as defined in Algorithm~\ref{alg:CGD_exponential}. The result follows.
    
\end{proof}

	\section{Illustrations of Main Results}\label{sec:applications_appendix}

In this section, we present the proofs of two illustrations of the main result for estimating two well-studied causal effects, as presented in Section~\ref{sec:applications}.
Section~\ref{sec:dte_appendix} contains the proof for the Direct Treatment Effect, while Section~\ref{sec:gate_appendix} contains the proof for the Global Average Treatment Effect.

\subsection{Direct Treatment Effect (Corollary~\ref{corollary:direct-rate})}\label{sec:dte_appendix}

The Direct Treatment Effect (DTE) was formally defined in Section~\ref{sec:preliminaries}.
We state and prove a more formal version of Corollary~\ref{corollary:direct-rate} below.

\begin{corollary}\label{cor:dte_formal}
    The minimax rate of estimating the Direct Treatment Effect $\ate_{\mathrm{DTE}}$ on a network $G$ is bounded as
	\[
	\max\paren[\Bigg]{ \frac{1}{|\indset(G)|} , \frac{\sqrt{d_{\mathrm{avg}}(G)}}{n} } 
	\lesssim 
	\risk(G,\ate_{\mathrm{DTE}},2) 
	\lesssim
	\frac{\lambda(G)+1}{n}
	\enspace,
	\]
	where $\indset(G)$ is the maximum independent set in $G$, $\davg(G)$ is its average degree, and $\lambda(G)$ is the largest eigenvalue of its adjacency matrix.
\end{corollary}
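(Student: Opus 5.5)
The plan is to derive each of the three bounds from the general results by relating the DTE conflict graph $\cH$ to $G$. First I describe $\cH$ precisely. Its vertices are $e_{i,1}=\{i\}$ and $e_{i,0}=\emptyset$.
\begin{itemize}
\item Two control exposures $e_{i,0},e_{j,0}$ never conflict.
\item $e_{i,1}$ and $e_{i,0}$ always conflict.
\item For $i\neq j$ and at least one index equal to $1$, say $e_{i,1}$, a conflict requires a unit in $\extneighi\cap\extneigh{j}$ on which the two exposures disagree.
\end{itemize}
The units on which $e_{i,1}$ and $e_{j,\ell}$ disagree are only $i$ (and $j$ if $\ell=1$). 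So a conflict occurs iff $i\in\extneigh{j}$ or, when $\ell=1$, $j\in\extneighi$; that is, iff $(i,j)\in E$.

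Hence $\cH$ is the union of three pieces: a copy of $G$ on $\{e_{i,1}\}$, edges $e_{i,1}e_{j,0}$ for $(i,j)\in E$, and the matching $e_{i,1}e_{i,0}$. Its adjacency matrix is therefore
\[
\begin{pmatrix} A & A+I\\ A+I & 0\end{pmatrix},
\]
with $A$ the adjacency matrix of $G$.

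For the upper bound, I apply Theorem~\ref{thm:conflict_graph_design} with $q=2$, which gives $\risk\lesssim\lamH/n$. The matrix above is entrywise dominated by $\begin{pmatrix}1&1\\1&0\end{pmatrix}\otimes(A+I)$. Its largest eigenvalue is therefore at most $\phi\,(\lambda(G)+1)$ with $\phi=(1+\sqrt5)/2$, using Perron–Frobenius monotonicity for nonnegative matrices. This yields $(\lambda(G)+1)/n$.

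For the first lower bound, I take $T=\{e_{i,1}:i\in[n]\}$ in Theorem~\ref{thm:global_lower_bound} with $q=2$. The induced subgraph on $T$ is exactly $G$, so $\indset(\cH,T)=|\indset(G)|$ and $(|T|/n)^{1}=1$, giving $1/|\indset(G)|$. The condition $|T|\ge5$ needs $n\ge5$; for smaller $n$ both sides are constants and a trivial bound such as the $1/n$ bound covers it.

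For the second lower bound, I use Corollary~\ref{cor:local_lower_bound_general} with $q=2$, which gives $\risk\gtrsim\sqrt{d_2^*(\cH)}/n$ (the exponent $4(q-1)^2/(q(3q-2))$ equals $1/2$ at $q=2$). Proposition~\ref{prop:critical_to_avg} gives $d_2^*(\cH)\ge\davg(\cH)/4$. The degree sum of $\cH$ counts $|E|$ edges in the $G$-copy, $2|E|$ cross edges, and $n$ matching edges, so it equals $2(3|E|+n)$ over $2n$ vertices. Thus $\davg(\cH)=3|E|/n+1\ge \davg(G)$.

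I expect the main obstacle to be verifying the conflict characterization carefully, including the asymmetric $e_{i,1}$ versus $e_{j,0}$ case. The rest is bookkeeping: constants, and the small-$n$ conditions ($n\ge3$ or $n\ge5$), which are absorbed into $\lesssim$.
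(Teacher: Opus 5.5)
Your proof is correct and follows the paper's route: Theorem~\ref{thm:conflict_graph_design} at $q=2$ for the upper bound, and Theorem~\ref{thm:global_lower_bound} with $T=\{e_{i,1}:i\in[n]\}$ (whose induced subgraph is a copy of $G$) for the first lower bound. The second lower bound likewise comes from Corollary~\ref{cor:local_lower_bound_general} with Proposition~\ref{prop:critical_to_avg}, and you then translate the conflict-graph quantities into quantities of $G$, as the paper does. Your adjacency matrix, with off-diagonal blocks $A+I$, is in fact the correct one; the paper's proof writes off-diagonal blocks $I$, dropping the $e_{i,1}e_{j,0}$ edges it lists itself, so its intermediate claim $\lamH\le\lambda(G)+2$ fails for $d$-regular $G$ with $d\ge 2$ (there $\lamH\approx\frac{1+\sqrt{5}}{2}\,d$), and your Perron--Frobenius/Kronecker bound $\lamH\le\frac{1+\sqrt{5}}{2}(\lambda(G)+1)$ is the right way to reach the stated $(\lambda(G)+1)/n$ rate.
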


\begin{proof}
    Let $\cH$ be the conflict graph corresponding to $G$ and $\ate_{\mathrm{DTE}}$.
    Let $S = \setb{e_{i,1}: i \in [n]}$ be the subset of size $n$ of the $e_{i,1}$ exposures.
    Applying the upper bound of Theorem~\ref{thm:conflict_graph_design} for $q=2$, as well as the global lower bound of Theorem~\ref{thm:global_lower_bound} with $T = S$ and the local lower bound of Corollary~\ref{cor:local_lower_bound_general} together with Proposition~\ref{prop:critical_to_avg}, we get
    \begin{equation}\label{eq:first_bound_dte}
    \max\paren[\bigg]{\frac{1}{|\cI(\cH,S)|},\frac{\sqrt{d_{\mathrm{avg}}(\cH)}}{n}}\lesssim \cR(G,\ate_{\mathrm{DTE}}) \lesssim \frac{\lamH}{n}
    \end{equation}
    We now connect the quantites on the upper and lower bounds with the desired quantities of the interference network $G = (V,E)$. 

    As we have discussed in Section~\ref{sec:conflict_graph}, the conflict graph $\cH$ for $\ate_{\mathrm{DTE}}$ has the following structure: $(e_{i,k},e_{j,l}) \in E_\cH$ if and only if $(i,j) \in E$ and $(k,l) \neq (0,0)$ or $i =j$ and $k \neq l$. 
    Therefore, the induced subgraph $\cH(S)$ is isomorphic to $G$, which implies that 
    \begin{equation}\label{eq:independence_equation_dte}
    |\cI(\cH,S)| = |\cI(G)| \enspace.
    \end{equation}
    Also, this means that $\cH$ has at least as many edges as $G$, which means that
    \begin{equation}\label{eq:avg_equation_dte}
    d_{\mathrm{avg}}(\cH) = \frac{2|E_\cH|}{2n} \geq \frac{2 |E|}{2n} = \frac{d_{\mathrm{avg}}(G)}{2}\enspace.
    \end{equation}
    Thus, combining~\eqref{eq:first_bound_dte} together with \eqref{eq:independence_equation_dte} and \eqref{eq:avg_equation_dte} yields the lower bound of the statement.

    For the upper bound, let $\mat{A}_\cH,\mat{A}_G$ denote the adjacency matrices of $\cH$ and $G$, respectively. 
    If we order the nodes in $\cH$ starting from the $n$ exposures $e_{i,0}$ and then moving to the $n$ exposures $e_{i,1}$, then by the preceding discussion we can write $\mat{A}_\cH$ in block form as follows.
    $$
    \mat{A}_\cH = 
    \begin{pmatrix}
        0 &I\\
        I &\mat{A}_G
    \end{pmatrix}\enspace.
    $$
    Now, standard matrix bounds imply that
    $$
    \lamH = \|\mat{A}_\cH\|_{\mathrm{op}} \leq 2 \|I\|_{\mathrm{op}} + \|\mat{A}_G\|_{\mathrm{op}} = 2 + \lambda(G)\enspace.
    $$
    This bound combined with \eqref{eq:first_bound_dte} yields the upper bound of the statement.
\end{proof}

\subsection{Global Average Treatment Effect (Corollary~\ref{corollary:gate-rate})}\label{sec:gate_appendix}

The Global Average Treatment Effect (GATE) was formally defined in Section~\ref{sec:preliminaries}.
We state and prove a more formal version of Corollary~\ref{corollary:gate-rate} below.

\begin{corollary}\label{cor:gate_formal}
    The minimax rate of estimating the Global Average Treatment Effect $\ate_{\mathrm{GATE}}$ on a network $G$ is bounded as
	\[
	\frac{\sqrt{d_{\mathrm{avg}}(G^2)}}{n} 
	\lesssim 
	\risk(G,\ate_{\mathrm{GATE}},2) 
	\lesssim
	\frac{\lambda(G^2)+1}{n}
	\enspace,
	\]
	where $G^2$ is the two-hop interference graph, $\davg(G^2)$ is the average degree of the two-hop graph, and $\lambda(G^2)$ is the largest eigenvalue of its adjacency matrix.\end{corollary}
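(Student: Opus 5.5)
We follow the template of the proof of Corollary~\ref{cor:dte_formal}. First we combine the general results at $q=2$. The upper bound of Theorem~\ref{thm:conflict_graph_design} gives $\risk(G,\ate_{\mathrm{GATE}},2) \lesssim \lamH/n$. For the lower bound we use Corollary~\ref{cor:local_lower_bound_general}: at $q=2$ the exponent $\frac{4(q-1)^2}{q(3q-2)}$ equals $1/2$, so the corollary yields $\sqrt{d^*_2(\cH)}/n$. Proposition~\ref{prop:critical_to_avg} then gives $d^*_2(\cH) \geq \davg(\cH)/4$, and hence $\risk(G,\ate_{\mathrm{GATE}},2) \gtrsim \sqrt{\davg(\cH)}/n$. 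What remains is to relate $\lamH$ and $\davg(\cH)$ to $\lambda(G^2)$ and $\davg(G^2)$.

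The structural step is to describe the edges of the GATE conflict graph exactly. From the discussion in Section~\ref{sec:conflict_graph}:
\begin{itemize}
\item there are no edges among the all-treated exposures $\{e_{i,1}\}$;
\item there are no edges among the all-control exposures $\{e_{i,0}\}$;
\item $e_{i,1}$ and $e_{j,0}$ are adjacent if and only if $\extneighi \cap \extneigh{j} \neq \emptyset$, which holds if and only if $i=j$ or $i,j$ are within distance $2$ in $G$. Equivalently, $i=j$ or $(i,j)$ is an edge of $G^2$.
\end{itemize}
Ordering the vertices as $(e_{\cdot,0}, e_{\cdot,1})$, the adjacency matrix is therefore
\[
\mat{A}_\cH = \begin{pmatrix} 0 & I + \mat{A}_{G^2} \\ I + \mat{A}_{G^2} & 0 \end{pmatrix}.
\]

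The upper bound follows from this block form. The spectrum of a bipartite block matrix of this type is $\pm$ the singular values of the off-diagonal block. Since $I + \mat{A}_{G^2}$ is symmetric, this gives $\lamH = \|I + \mat{A}_{G^2}\|_{\mathrm{op}}$. Because $\mat{A}_{G^2}$ is a nonnegative matrix, its operator norm equals its Perron eigenvalue $\lambda(G^2)$, and the triangle inequality gives $\lamH \le 1 + \lambda(G^2)$. This yields the stated upper bound.

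The lower bound follows by counting edges. The degree of $e_{i,k}$ is $1 + \deg_{G^2}(i)$, so
\[
\davg(\cH) = 1 + \davg(G^2) \geq \davg(G^2),
\]
and therefore $\sqrt{\davg(\cH)}/n \geq \sqrt{\davg(G^2)}/n$. No step is a real obstacle. The only point needing care is checking the conflict characterization exactly, in particular that $i=j$ always gives a conflict, which holds because $i \in \extneighi$. This includes confirming that $i \neq j$ at distance exactly $2$ produces a conflict through their common neighbor.
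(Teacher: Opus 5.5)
Your proposal is correct and follows the paper's proof essentially step for step. You combine Theorem~\ref{thm:conflict_graph_design}, Corollary~\ref{cor:local_lower_bound_general} and Proposition~\ref{prop:critical_to_avg} at $q=2$, then use the bipartite block form of $\mat{A}_\cH$, with off-diagonal block $I + \mat{A}_{G^2}$, to compare $\lambda(\cH)$ and $\davg(\cH)$ with the corresponding quantities for $G^2$. Your comparisons are in fact slightly sharper than the paper's: you get $\lambda(\cH) = \|I + \mat{A}_{G^2}\|_{\mathrm{op}} \le 1 + \lambda(G^2)$ and $\davg(\cH) = 1 + \davg(G^2)$, where the paper uses $\lambda(\cH) \le 2 + 2\lambda(G^2)$ and $\davg(\cH) \ge \davg(G^2)$, but this only affects constants.
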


\begin{proof}
    Let $\cH$ be the conflict graph corresponding to $G$ and $\ate_{\mathrm{GATE}}$.
    Applying the upper bound of Theorem~\ref{thm:conflict_graph_design} for $q=2$, as well as the local lower bound of Corollary~\ref{cor:local_lower_bound_general} together with Proposition~\ref{prop:critical_to_avg}, we get
    \begin{equation}\label{eq:first_bound_gate}
    \frac{\sqrt{d_{\mathrm{avg}}(\cH)}}{n}\lesssim \cR(G,\ate_{\mathrm{GATE}},2) \lesssim \frac{\lamH}{n}
    \end{equation}
    We now connect the quantites on the upper and lower bounds with the desired quantities of the 2-hop graph $G^2 = (V,E(G^2))$. 

    As we have discussed in Section~\ref{sec:conflict_graph}, the conflict graph $\cH$ for $\ate_{\mathrm{GATE}}$ has the following structure: $(e_{i,k},e_{j,l}) \in E_\cH$ if and only if $(i,j) \in E(G^2)$ and $k \neq l$ or $i =j$ and $k \neq l$. 
    Therefore, for every edge $(i,j) \in E(G^2)$ in the 2-hop graph $G^2$, there are two edges $(e_{i,1},e_{j,0}), (e_{i,0},e_{j,1}) \in E_\cH$.
    It follows that $|E_\cH| \geq 2 |E(G^2)|$, which implies that
    \[
    d_{\mathrm{avg}}(\cH) = \frac{2|E_\cH|}{2n} \geq \frac{4|E(G^2)|}{2n} = d_{\mathrm{avg}}(G^2)\enspace.
    \]
    This inequality combined with \eqref{eq:first_bound_gate} establishes the lower bound of the statement. 

    For the upper bound, let $\mat{A}_\cH, \mat{A}_{G^2}$ denote the adjacency matrices of $\cH$ and $G^2$ respectively. 
    Then, if we order the $2n$ nodes of $\cH$ starting from the $e_{i,0}$ exposures and then continuing with the $e_{i,1}$ exposures, we can write $\mat{A}_\cH$ in block form as
    \[
    \begin{pmatrix}
        0 &I + \mat{A}_{G^2}\\
        I + \mat{A}_{G^2} &0
    \end{pmatrix}
    \enspace.
    \]
    The identity part comes from the edges $(e_{i,0},e_{i,1})$. 
    Therefore, we have
    \[
    \lamH = \|\mat{A}_\cH\|_{\mathrm{op}} \leq 2 \|I + \mat{A}_{G^2}\|_{\mathrm{op}} \leq 2 + 2 \lambda(G^2)
    \]
    The above inequality combined with \eqref{eq:first_bound_gate} yields the lower bound in the statement.
\end{proof}

	\section{Tightness of Bounds}\label{sec:tightness_appendix}

In this section, we present the proofs of the results in Section~\ref{sec:comparisons}. We start by providing a convenient description of the conflict graph for direct effect when $G$ is an arbitrary $d$-regular graph. We then continue by establishing that both the upper and lower bounds for estimating the direct effect in regular graphs are tight. Finally, we show that for a random regular graph, the upper bound is tight up to a $\log d$ factor with high probability.

\subsection{The Structure of the Conflict Graph}\label{sec:supp_general_d_regular}
We first provide a Lemma that describes the structure of the conflict graph $\cH$ when the estimand of interest is the Direct Effect.
We denote by $\vec{e}_i \in \R^n$  the vector with $1$ in the $i$-th entry and $0$ everywhere else and $\vec{0} \in \R^n$ be the all $0$ vector. 
Also, for a symmetric matrix $A \in \R^{n \times n}$, we denote by $\|A\|_2$ its spectral norm. 
Furthermore, for any graph $G$, we denote by $\mat{A}_G$ its adjacency matrix.

\begin{lemma}
\label{lem:conflict_graph_direct_effect}
Let $G = (V,E)$ be a graph and $\cH = (V_\cH, E_\cH)$ be the conflict graph associated with $G$ and the direct effect $\tau$. Then, we have that
\begin{enumerate}[label=(\roman*)]
    \item \label{prop:degree_bound} If a node $i \in G$ has degree $d_i$, then $e_{i,1} \in \cH$ has degree $2d_i + 1$ and $e_{i,0} \in \cH$ has degree at most $d_i + 1$.
    \item \label{prop:isometry} Let $T = \{e_{i,1} : i \in [n]\}$ be the subset of vertices of $\cH$. Then, $\cH[T]$ is a graph isometric to $G$, where the isometry maps $e_{i,1}$ to $i$ for each $i \in [n]$.
\end{enumerate}
\end{lemma}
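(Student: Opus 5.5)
The proof is a direct case analysis of when two exposures are in conflict, using the characterization from Section~\ref{sec:conflict_graph}: $e_{i,k}$ and $e_{j,\ell}$ conflict if and only if they prescribe different treatments to some unit in $\extneigh{i} \cap \extneigh{j}$. For the direct effect, $e_{i,1}=\{i\}$ treats $i$ and leaves $\neighi$ untreated, and $e_{i,0}=\emptyset$ leaves all of $\extneighi$ untreated. So the only unit that any relevant exposure treats is its own unit. Hence a conflict between $e_{i,k}$ and $e_{j,\ell}$ can only arise at unit $i$ (when $\ell=1$, $i$ is treated by $e_{j,\ell}$ only if $i=j$) or at unit $j$. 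Concretely, it requires either $j \in \extneighi$ with $e_{j,\ell}=e_{j,1}$, or $i \in \extneigh{j}$ with $e_{i,k}=e_{i,1}$.

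I will fix $i$ and classify all $e_{j,\ell}$ by the position of $j$ relative to $i$ in $G$.
\begin{enumerate}
\item $j=i$: $e_{i,1}$ and $e_{i,0}$ disagree on $i$, so they conflict.
\item $j \in \neighi$: $e_{j,1}$ treats $j \in \extneighi$, whereas both $e_{i,1}$ and $e_{i,0}$ leave $j$ untreated, so $e_{j,1}$ conflicts with both. Also $e_{i,1}$ treats $i \in \extneigh{j}$, whereas $e_{j,0}$ leaves $i$ untreated, so $e_{j,0}$ conflicts with $e_{i,1}$. Finally $e_{j,0}$ and $e_{i,0}$ treat no one, so they do not conflict.
\item $j \notin \extneighi$: then $i \notin \extneigh{j}$ and $j \notin \extneighi$. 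Every unit $w$ in the intersection $\extneighi \cap \extneigh{j}$ (possibly empty; common neighbors at distance two) is therefore distinct from $i$ and $j$, and is left untreated by all four exposures. So there is no conflict.
\end{enumerate}

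Counting gives part~\ref{prop:degree_bound}. The neighbors of $e_{i,1}$ are $e_{i,0}$ together with $e_{j,1}$ and $e_{j,0}$ for each $j\in\neighi$, for a degree of $2d_i+1$. The neighbors of $e_{i,0}$ are $e_{i,1}$ together with $e_{j,1}$ for each $j\in\neighi$, for a degree of $d_i+1$, which in particular is at most $d_i+1$.

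For part~\ref{prop:isometry}, restricting the same classification to $T=\{e_{i,1}\}$ shows that $e_{i,1}$ and $e_{j,1}$ with $i\neq j$ are adjacent in $\cH$ exactly when $(i,j)\in E$. Hence $e_{i,1}\mapsto i$ is a graph isomorphism from $\cH[T]$ onto $G$.

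The only delicate step is item 3 for units at distance two, where one must check that common neighbors never receive conflicting assignments. This holds because under the DTE a unit is treated only in its own $e_{\cdot,1}$ exposure.
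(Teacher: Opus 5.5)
Your proof is correct and follows the same case analysis of conflicts as the paper's proof. It is in fact more complete: the paper only exhibits the edges $(e_{i,1},e_{i,0})$, $(e_{i,1},e_{j,0})$, $(e_{i,0},e_{j,1})$, $(e_{i,1},e_{j,1})$ for $(i,j)\in E$, and leaves implicit that $e_{i,0},e_{j,0}$ pairs and pairs at distance two are non-adjacent. You verify that non-adjacency explicitly, and it is what the exact degree $2d_i+1$ and the ``only if'' direction of part (ii) require.
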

\begin{proof}
    Let us start with the first part. The vertices of $\cH$ are the $2n$ exposures $e_{i,1} = \vec{e}_i$ and $e_{i,0} = \vec{0}$ for $i \in [n]$. For every $i \in [n]$, there is an edge $(e_{i,1},e_{i,0}) \in E_\cH$, since we can not observe both exposures for the same unit. Also, for every $i,j \in [n]$ with $(i,j) \in E$, we have that $(e_{i,1},e_{j,0}), (e_{i,0},e_{j,1}), (e_{i,1},e_{j,1}) \in E_\cH$. This is because if $i$ receives the $e_{i,1}$ exposure, then $i$ is treated, meaning that $j$ has a neighbor that is treated, so $j$ can not receive the $e_{j,0}$ or the $e_{j,1}$ exposure. Thus, the first property follows.

    For the second part, it follows from the previous discussion that for any $i,j \in [n]$, we have that $(e_{i,1}, e_{j,1}) \in E_\cH$ if and only if $(i,j) \in E$. Therefore, the subgraph of $\cH$ induced by $T$ is isometric to $G$.
\end{proof}

\subsection{Tightness of Upper Bound for Direct Effect (Proposition~\ref{prop:d_regular_upper_bound_tightness})}\label{sec:supp_tightness_upper}

\upperboundtightness*

\begin{proof}
    Let $d_n$ be a sequence of degrees such that $d_n = \littleO{n}$ and $d_n n$ is even for all $n$. For a given $n$, we will construct a $d_n$-regular graph $G_n$ on $n$ nodes, which we call the \emph{$d$-regular chorded cycle}. 
    We will assume that $n$ is large enough so that $d_n < n/2$, which is sufficient since our statement is asymptotic.
    The $k$-regular chorded cycle on $n$ nodes $H_{n,k}$ is defined as follows. We arrange the $n$ nodes in a cycle, labeled $1,2,\dots,n$. Two nodes $i,j \in [n]$ are connected if and only if $|i- j| \leq k$, where the difference is taken modulo $n$. In other words, each node is connected to the $k$ nodes on each side of it in the cycle.
    From the definition it follows immediately that $H_{n,k}$ is $2k$-regular.

    Let us now define $G_n$. Since $n d_n$ is even, it must be that either $n$ or $d_n$ is even.
    We distinghish between two cases.
    \begin{itemize}
    \item If $d_n$ is even, we define $G_n = H_{n,d_n /2}$.
    \item If $d_n$ is odd, then $n$ has to be even. We define $G_n$ to be the graph obtained by taking the union of $H_{n,(d_n-1)/2}$ and a perfect matching on the $n$ nodes.
    \end{itemize}
    In either case, $G_n$ is $d_n$-regular.
    We now show that $\cR(G_n,\tau,2) = \bigTheta[\Big]{d_n/n}$.

    For the upper bound, we apply Theorem~\ref{thm:conflict_graph_design}.
    Since $G_n$ is $d_n$-regular, by~\ref{prop:degree_bound} of Lemma~\ref{lem:conflict_graph_direct_effect} it follows that every node in $V_\cH$ has degree at most $2d_n + 1$.
    Thus, $\lamH \leq 2d_n + 1$. Thus, applying Theorem~\ref{thm:conflict_graph_design} with $q=2$, we get that
    \[
    \cR(G_n,\tau,2) \lesssim \frac{2d_n + 1}{n} = \bigO[\Big]{\frac{d_n}{n}}\enspace.
    \]
    
    For the lower bound, we apply Theorem~\ref{thm:global_lower_bound} on the subset $T = \{e_{i,1} : i \in [n]\}$ of the vertices of $\cH$. We will upper bound the size of the largest independent set of $\cH[T]$. By the isometry property~\ref{prop:isometry} of Lemma~\ref{lem:conflict_graph_direct_effect}, it suffices to bound the largest independent set of $G$. We will assume for simplicity that $n$ is divided by $\lfloor d_n /2 \rfloor$, since the general case can be handled by a standard modification of this argument. We partition $[n]$ into the subsets $\{T_i\}_{i=1}^{n / \lfloor d_n /2 \rfloor}$, where
\[
T_i = [\lfloor d_n /2 \rfloor (i-1),\lfloor d_n / 2 \rfloor i - 1] \cap [n] \enspace.
\]
Clearly $T_i$ are disjoint and $[n] = \cup_{i=1}^{n /\lfloor d_n / 2 \rfloor} T_i$. Also, within each $T_i$ all nodes are connected to each other in $G$, since for any $i,j \in T_i$ we have that $|i-j| \leq \lfloor d_n / 2 \rfloor \leq d_n$. Therefore, any independent set in $\cH[T]$ can contain at most one node from each $T_i$. Since there are at most 
\[
\frac{n}{\lfloor \frac{d_n}{2} \rfloor} \leq \frac{n}{d_n / 2 - 1} = \bigO[\Big]{\frac{n}{d_n}}
\]
such subsets, 
we have that $|\cI(\cH[T])| =|\cI| = \bigO{\frac{n}{d_n}}$. Thus, applying Theorem~\ref{thm:global_lower_bound} for the subset $T$, we get that
\[
\cR(G_n,\tau,2) \gtrsim \frac{1}{|\cI(\cH[T])|} \gtrsim \frac{d_n}{n} \enspace.
\]
\end{proof}

\subsection{Tightness of Lower Bound for Direct Effect (Proposition~\ref{prop:d_regular_lower_bound_tightness})}\label{sec:supp_tightness_lower}

For Proposition~\ref{prop:d_regular_lower_bound_tightness}, we will use a family of graphs called the \emph{Generalized Quadrangle}. Thus, we first review some of the properties of these graphs that will be relevant for our purposes. 
To do that, it will be helpful to introduce some basic concepts from the algebraic theory of quadratic forms. For a detailed introduction to this topic, see \citet{lam2005introduction}.
Let $s \in \mathbb{N}$ be a prime number and $\F_s$ be the finite field of residues modulo $s$. 
Let $r \in \mathbb{N}$ be a positive integer.
For every $x \in \F_s^r$, the corresponding \emph{projective point} is the set of all scalar multiples of $x$, i.e.
\[
[x] = \{y \in \F_s^r : y = \lambda x \text{ for some } \lambda \in \F_s\}\enspace.
\]
The set of all projective points is called the \emph{projective space} $PG(r-1,s)$.
Let $B : \F_s^r \times \F_s^r \to \F_s$ be a bilinear form.
Let $Q(r-1,s)$ be the set of all projective points $[x] \in PG(r-1,s)$ such that $B(x,x) = 0$. We call $Q(r-1,s)$ the \emph{quadric} associated with $B$ on the vector space $\F_s^r$. 
There are three types of quadrics, depending on the dimension $r$ and the Witt index\footnote{For the definition of Witt index, see Chapter 1 in \citet{lam2005introduction}.} $w$ of the bilinear form $B$. 
\begin{enumerate}
    \item If $r$ is even and $w = r/2$, then $Q(r-1,s)$ is called a \emph{hyperbolic quadric} and denoted $Q^+(r-1,s)$.
    \item If $r$ is even and $w = r/2 - 1$, then $Q(r-1,s)$ is called an \emph{elliptic quadric} and denoted $Q^-(r-1,s)$.
    \item If $r$ is odd and $w = (r-1)/2$, then $Q(r-1,s)$ is called a \emph{parabolic quadric} and denoted $Q(r-1,s)$.
\end{enumerate}

Most of our discussion will focus on the case $r=5$.
Consider the matrix $M \in \F_s^{5 \times 5}$ defined as
\[
M = 
\left(
\begin{array}{cc|cc|c}
0 & 1 & 0 & 0 & 0 \\
1 & 0 & 0 & 0 & 0 \\ \hline
0 & 0 & 0 & 1 & 0 \\
0 & 0 & 1 & 0 & 0 \\ \hline
0 & 0 & 0 & 0 & 1
\end{array}
\right)\enspace.
\]
We associate to $M$ the bilinear form $B : V \times V \to \F_s$ defined as $B(x,y) = x^T M y$ for all $x,y \in V$. 
We will denote by $Q(4,s)$ the quadric associated with $B$ on the vector space $\F_s^5$.
The Witt index of $B$ is $2$, thus by the above discussion $Q(4,s)$ is a parabolic quadric.
The points in $Q(4,s)$ will be the vertices of our graph.

A \emph{line} $L$ in $Q(4,s)$ is a $2$-dimensional subspace of $V$ such that $B(x,x) = 0$ for all $x \in L$. 
We say a point $[x]$ is \emph{incident} to a line $L$ if $x \in L$. 
The \emph{point graph} $\Gamma(Q(4,s))$ associated with $Q(4,s)$ is the graph with vertices corresponding to the points in $Q(4,s)$, where two points are connected by an edge if and only if they are incident to the same line.

We now summarize some basic properties of the graph $\Gamma(Q(4,s))$ related to the number of vertices, edges and spectrum of the adjacency matrix.

\begin{lemma}
\label{lem:gq_properties}
Let $s \geq 2$ be a prime number. The following properties hold for the graph $G = \Gamma(Q(4,s))$.
\begin{enumerate}[label=(\roman*)]
    \item \label{prop:num_nodes} The number of nodes is $n = (s+1)(s^2 + 1)$.
    \item \label{prop:degree} The graph is $d$-regular with $d = s(s+1)$.
    \item \label{prop:adjacency_matrix} The eigenvalues of the adjacency matrix $\mat{A}_G$ are $s(s+1), s-1, -(s+1)$, with multiplicities $1, s(s+1), s^2$ respectively.
\end{enumerate}
\end{lemma}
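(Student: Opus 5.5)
The plan is to identify $\Gamma(Q(4,s))$ as the collinearity graph of a generalized quadrangle of order $(s,s)$, then read off (i)–(iii) from strongly regular graph theory. Before that, I would fix a characteristic issue. For $s=2$ the diagonal form $B(x,x)=2x_1x_2+2x_3x_4+x_5^2$ degenerates. So I would work with the quadratic form $\mathcal{Q}(x)=x_1x_2+x_3x_4+x_5^2$, which defines the parabolic quadric in every characteristic and agrees with $B(x,x)/2$ when $s$ is odd. This is the standard model in \citet{payne2009finite}.

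\textbf{Step (i): counting points.} I would count nonzero zeros of $\mathcal{Q}$ in $\F_s^5$.
\begin{itemize}
\item Fix $x_1$. If $x_1\neq 0$, then $x_2$ is determined by $(x_3,x_4,x_5)$, which gives $(s-1)s^3$ solutions.
\item If $x_1=0$, then $x_2$ is free, and $x_3x_4+x_5^2=0$ has exactly $s^2$ solutions (a nondegenerate conic in 3 variables has $s^2$ affine zeros). This gives $s\cdot s^2$ solutions.
\end{itemize}
The total is $s^4$ zeros including $0$. Removing $0$ and dividing by $s-1$ gives $n=(s^4-1)/(s-1)=(s+1)(s^2+1)$ projective points.

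\textbf{Step (ii): regularity.} I would invoke (or verify) the generalized quadrangle axioms for $Q(4,s)$:
\begin{itemize}
\item each line has $s+1$ points;
\item each point lies on $t+1=s+1$ lines;
\item for a point $P$ not on a line $L$, there is a unique point of $L$ collinear with $P$.
\end{itemize}
For lines through $P$: they correspond to isotropic points of the nondegenerate conic $P^\perp/P \cong Q(2,s)$, which has $s+1$ points. Two distinct lines through $P$ meet only in $P$. Hence $P$ has $(s+1)\cdot s$ neighbours, giving $d=s(s+1)$.

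\textbf{Step (iii): strong regularity and spectrum.} I would show the graph is strongly regular with parameters $(n,k,\lambda,\mu)=(n,s(s+1),s-1,s+1)$.
\begin{itemize}
\item For $\lambda$: two collinear points share exactly the other $s-1$ points of their common line. Any other common neighbour would create a triangle, which the generalized quadrangle axiom forbids.
\item For $\mu$: for noncollinear $x,y$, each of the $s+1$ lines through $x$ contains exactly one point collinear with $y$.
\end{itemize}
The nontrivial eigenvalues are then the roots of $\theta^2-(\lambda-\mu)\theta-(k-\mu)=\theta^2+2\theta-(s^2-1)$, namely $s-1$ and $-(s+1)$. The eigenvalue $k=s(s+1)$ is simple because the graph is connected. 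The multiplicities $m_1,m_2$ follow from two equations:
\[
m_1+m_2=n-1, \qquad k+m_1(s-1)-m_2(s+1)=\mathrm{tr}(\mat{A}_G)=0.
\]

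\textbf{Main obstacle: the multiplicities in (iii) appear to be misstated.} Carrying out the last computation gives
\[
m_2=\frac{s(s^2+1)}{2}, \qquad m_1=\frac{s(s+1)^2}{2},
\]
which is the standard generalized quadrangle formula with $t=s$. This does not match the stated values $s(s+1)$ and $s^2$. Indeed, those stated values do not even sum to $n-1=s^3+s^2+s$.

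So at this step I would correct the multiplicities to $s(s+1)^2/2$ and $s(s^2+1)/2$. Only the eigenvalues themselves, not their multiplicities, should matter for later use (for example, a spectral bound on $\lambda$ or on independent sets).
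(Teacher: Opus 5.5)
Your argument is correct, and it takes a genuinely different route from the paper. The paper's proof is purely by citation: it takes the generalized-quadrangle structure of $Q(4,s)$ from Section~3.1 of \citet{payne2009finite} and reads (i)--(iii) off \citet{godsil2013algebraic}. You instead derive everything directly:
\begin{itemize}
\item counting the $s^4$ zeros of the quadratic form;
\item getting $s+1$ lines per point from the conic in $P^\perp/P$;
\item checking $\lambda=s-1$ and $\mu=s+1$ from the quadrangle axiom;
\item solving for the spectrum.
\end{itemize}
The citation is shorter. Your derivation is self-contained, and it verifies the numbers the citation takes on trust. That verification exposes two real errors in the statement.

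First, your diagnosis of the multiplicities is right. The stated values give $1+s(s+1)+s^2=2s^2+s+1$, which falls short of $n=(s+1)(s^2+1)$ by $s^2(s-1)>0$. Your values $s(s+1)^2/2$ and $s(s^2+1)/2$ satisfy both the dimension count and the trace condition. They also agree with the standard multiplicities $st(s+1)(t+1)/(s+t)$ and $s^2(st+1)/(s+t)$ for the point graph of a quadrangle of order $(s,t)$, specialized to $t=s$. As you note, the proof of Proposition~\ref{prop:d_regular_lower_bound_tightness} only uses that the nontrivial eigenvalues are $s-1$ and $-(s+1)$, so the correction does not propagate.

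Second, your switch to the quadratic form is necessary, not cosmetic. With the paper's definition via $B(x,x)=x^\top M x$, at $s=2$ one gets $B(x,x)=x_5^2$. The ``quadric'' is then the entire hyperplane $x_5=0$, and every line in it qualifies. The resulting point graph is $K_{15}$, which violates (ii).

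One small imprecision: for odd $s$, $B(x,x)/2=x_1x_2+x_3x_4+\tfrac12 x_5^2$ is not literally your $\mathcal{Q}$. However, $B(x,x)=\mathcal{Q}(x_1,2x_2,x_3,2x_4,x_5)$, so the two quadrics are projectively equivalent and nothing in your count changes.
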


\begin{proof}
    As established in Section 3.1 of \citet{payne2009finite}, $G$ satisfies the general properties of a point-line geometry.
    From these properties, it follows that $G$ is a $d$-regular graph with $d = s(s+1)$ and $n = (s+1)(s^2 + 1)$ nodes \citep[see e.g.][Lemma 10.8.1]{godsil2013algebraic}. This establishes properties~\ref{prop:num_nodes} and~\ref{prop:degree}. Property~\ref{prop:adjacency_matrix} is exactly Lemma 10.8.2 of \citet{godsil2013algebraic}. 
\end{proof}

In order for us to construct an improved design that achieves the $\sqrt{d}/n$ rate for this family of graphs, it is important to understand the structure of independent sets in $G$. 
We start by defining a \emph{hyperplane} of $\mathrm{PG}(r,q)$, which is a set of the form
\[
H = \{ [x] \in \mathrm{PG}(r,q) : \alpha^\top x = 0 \},
\]
for some $\alpha \in \F_q^{r} \setminus \{0\}$.
Let $H$ be a hyperplane of $\mathrm{PG}(4,s)$ and $I = H \cap Q(4,s)$. If $I$ is isomorphic to the elliptic quadric $Q^-(3,s)$, then we call $I$ an \emph{ovoid} of $Q(4,s)$.
By definition, $I$ will have Witt index $1$, thus it will contain no lines, meaning that $I$ is an independent set in $G$. By Lemma 3.4.1 in \citet{payne2009finite}, ovoids always exist in $Q(4,s)$. 
The following lemma establishes some crucial properties of ovoids that will be used in the construction of our design.

\begin{lemma}
\label{lem:ovoid_properties}
Let $G = \Gamma(Q(4,s))$ and $\cF$ be the family of ovoids in $Q(4,s)$. Then, the following properties hold.
\begin{enumerate}[label=(\roman*)]
     \item \label{prop:independent_family} Every ovoid has size $s^2 + 1$.
    \item \label{prop:independent_node} Every node $i$ belongs in the same number of independent sets in the family $\{I_f\}_{f \in \cF}$.
    \item \label{prop:independent_pair} Every pair of nodes $i,j$ for which $(i,j) \notin E$ belongs in the same number of independent sets in the family $\{I_f\}_{f \in \cF}$.
\end{enumerate}
\end{lemma}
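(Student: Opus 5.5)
Ovoids are hyperplane sections isomorphic to $Q^-(3,s)$, so I would argue entirely through the action of the orthogonal group preserving $B$. Write $\mathrm{PGO}(5,s)$ for the group of projective transformations preserving the quadric $Q(4,s)$. It acts on the points of $Q(4,s)$ and on the lines, and it preserves incidence. Hence it acts by automorphisms on $G=\Gamma(Q(4,s))$. It also permutes the hyperplanes of $\mathrm{PG}(4,s)$, and it preserves the isometry type of each hyperplane section. So it maps ovoids to ovoids and acts on $\cF$.

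For \ref{prop:independent_family}, an elliptic quadric $Q^-(3,s)$ has exactly $s^2+1$ projective points; this is the standard count of isotropic points of a nondegenerate quadratic form of Witt index $1$ in dimension $4$. I would cite \citet{payne2009finite} or \citet{lam2005introduction}. Alternatively, the count follows from the general formula $|Q^-(2m+1,s)| = (s^{m+1}+1)(s^m-1)/(s-1)$ with $m=1$.

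For \ref{prop:independent_node} and \ref{prop:independent_pair} I would use a double-counting argument based on transitivity. The group $\mathrm{PGO}(5,s)$ acts transitively on the points of $Q(4,s)$; this is Witt's extension theorem applied to isotropic vectors. It also acts transitively on ordered pairs of distinct non-collinear points. Two such points $[x],[y]$ satisfy $B(x,y)\neq 0$, so after rescaling $x,y$ span a hyperbolic plane, and Witt's theorem extends any isometry between two hyperbolic planes to the whole space. Now fix a node $i$ and any $g$ in the group. The map $f\mapsto gf$ is a bijection from the set of ovoids containing $i$ to the set of ovoids containing $gi$. Transitivity then shows that the number of ovoids through a node is the same for every node. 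The same argument, applied to pairs, shows that every non-adjacent pair lies in the same number of ovoids. Non-adjacent means non-collinear here: two distinct points are adjacent in $G$ exactly when the line through them is totally singular, which happens exactly when $B(x,y)=0$.

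The main obstacle is the group-theoretic input. First, ovoids must be defined as those hyperplane sections that are elliptic quadrics, so that membership in $\cF$ is invariant under the group. This holds because the quadric type is determined by the Witt index of the restricted form, and isometries preserve it. Second, Witt's theorem must apply in odd characteristic; for $s=2$ the quadratic form and bilinear form need separate care. I would first restrict to odd primes $s$, where $B$ determines the quadratic form. For $s=2$ I would either handle the case separately or simply note that the asymptotic statement only needs infinitely many $s$. I would also double-check that $\cF$ is nonempty, which is guaranteed by Lemma 3.4.1 in \citet{payne2009finite}, so that the common counts are positive.
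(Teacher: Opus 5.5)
Your proposal is correct and follows essentially the same route as the paper: Witt's extension theorem makes the isometry group transitive on points of $Q(4,s)$ and on non-collinear pairs (which span hyperbolic planes), and isometries carry the ovoids through one point or pair bijectively onto those through another, while (i) is a standard citation (you count the points of $Q^-(3,s)$, the paper cites the ovoid size $st+1$ from Payne--Thas). You add two details the paper glosses over: an explicit check that isometries send elliptic hyperplane sections to elliptic hyperplane sections, and a restriction to odd $s$, which is needed because for $s=2$ the paper's definition via $B(x,x)=0$ gives a degenerate ``quadric.''
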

\begin{proof}
    Property~\ref{prop:independent_family} is exactly Lemma 1.8.1 of \citet{payne2009finite}. For the second and third properties, we will use Witt's Extension Theorem, as stated in Theorem 22 of \citet{clark2013quadratic}. 
    This Theorem states that any isometry between two subspaces of a quadratic space can be extended to an isometry of the whole space.
    In particular, let $[x],[y] \in Q(4,s)$ be two different vertices of the graph. Consider the one dimensional subspaces $V_1 = [x]$ and $V_2 = [y]$, which are clearly isomorphic. By Witt's Extension Theorem, there exists an isometry of the whole space that maps $V_1$ to $V_2$, thus maps $[x]$ to $[y]$. Since it is an isometry, it preserves the quadratic form, thus all edges between vertices are preserved. This means that ovoids containing $[x]$ are in 1-1 mapping with ovoids containing $[y]$, establishing property~\ref{prop:independent_node}. 
    
    Also, let $([x],[y])$ and $([z],[u])$ be two pairs of non-adjacent nodes in $G$. 
    Consider the two-dimensional subspaces $V_1 = \mathrm{span}([x],[y])$ and $V_2 = \mathrm{span}([z],[u])$. Since $[x],[y]$ are non-adjacent, we have that $B(x,y) \neq 0$ and $B(x,x)=B(x,y)=0$, thus the restriction of the quadratic form to $V_1$ leads to a hyperbolic quadric. The same is true for $V_2$, thus $V_1$ and $V_2$ are isometric. By Witt's Extension Theorem, there exists an isometry of the whole space that maps $V_1$ to $V_2$. Since edges are preserved by the isometry, ovoids containing $[x],[y]$ are in 1-1 mapping with ovoids containing $[z],[u]$, establishing property~\ref{prop:independent_pair}.
\end{proof}

We will use the family $\cF$ of independent sets in the graph $G$ of Lemma~\ref{lem:ovoid_properties} to construct a design that achieves the $\sqrt{d}/n$ rate for this family of graphs. The pseudocode for the design is given in Algorithm~\ref{alg:isd}.

\begin{algorithm}[ht]
\KwIn{Graph $G$ on $n$ nodes as in Lemma~\ref{lem:gq_properties}.}
	\KwOut{Random intervention $Z \in \setb{0,1}^n$.}
    Initialize the intervention vector $Z \gets \vec{0}$. \\
    Select $f \in \cF$ uniformly at random. \\
    Sample variables $\setb{U_i : i \in [n]}$ independently and identically as
	$$
	U_i \gets
	\begin{cases*}
		0  \quad\text{with probability } 1/2\\
		1  \quad\text{with probability } 1/2
	\end{cases*}
	$$
    \For{$i \in [n]$}{
        \If{$i \in I_f$}{
            $Z_i \gets U_i$
        }
    \Else{
        $Z_i \gets 0$
    }
	}
\caption{Independent Set Design}
\label{alg:isd}
\end{algorithm}

The design works by selecting a uniformly random independent set from the family $\{I_f\}_{f \in \cF}$ and then treating the nodes in that independent set with probability $1/2$ each, while leaving the rest of the nodes untreated. Therefore, we call this the \emph{Independent Set Design}. The properties of the family $\{I_f\}_{f \in \cF}$ will enable a precise characterization of the spectrum of the covariance matrix that arises from this design.

Our estimator $\eate$ will be a Horvitz-Thompson type of estimator.
For $i \in [n], k \in \setb{0,1}$, we define the event $A_{i,k} = \{i \in I_f, Z_i = k\}$, which corresponds to the event that node $i$ is in the independent set that was selected and receives treatment $k$. We will use the following estimator for $\ate$

\begin{equation}\label{eq:ht_variant}
\eate = \frac{1}{n} \sum_{i=1}^n \paren[\Bigg]{\frac{\indicator{A_{i,1}}}{\Pr{A_{i,1}}} - \frac{\indicator{A_{i,0}}}{\Pr{A_{i,0}}}} Y_i \enspace.
\end{equation}

Our next lemma shows that this estimator is unbiased for $\ate$. 

\begin{lemma}
\label{lem:ht_unbiased_gq}
The estimator $\eate$ defined in~\eqref{eq:ht_variant} is an unbiased estimator for $\ate$, i.e. $\E{\eate} = \ate$.
\end{lemma}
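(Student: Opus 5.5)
The plan is to compute $\E{\eate}$ term by term, using linearity of expectation together with one key fact: on the event $A_{i,k}$, the observed outcome $Y_i$ equals the estimand-relevant potential outcome $y_i(e_{i,k})$. Throughout, we work with the direct effect, so $e_{i,1} = \setb{i}$ and $e_{i,0} = \emptyset$.

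First, we verify that on $A_{i,k}$ unit $i$ actually receives exposure $e_{i,k}$. Suppose $i \in I_f$. Because $I_f$ is an independent set in $G$, no neighbor $j \in \neighi$ lies in $I_f$, and Algorithm~\ref{alg:isd} therefore sets $Z_j = 0$ for every such $j$. Unit $i$ itself receives $Z_i = U_i$. Consequently, on $A_{i,1}$ we have $h_i(Z) = \setb{i} = e_{i,1}$, and on $A_{i,0}$ we have $h_i(Z) = \emptyset = e_{i,0}$. By the ANI model this gives $Y_i = y_i(e_{i,k})$ on $A_{i,k}$, so
\[
\indicator{A_{i,k}} Y_i = \indicator{A_{i,k}}\, y_i(e_{i,k}) \enspace.
\]

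Second, we check that the propensities are positive, so that $\eate$ is well defined. By independence of $f$ and $U_i$,
\[
\Pr{A_{i,k}} = \Pr{i \in I_f}\cdot \tfrac{1}{2} \enspace.
\]
Lemma~\ref{lem:ovoid_properties}\ref{prop:independent_node} says every node lies in the same number of ovoids. Double counting with Lemma~\ref{lem:ovoid_properties}\ref{prop:independent_family} then gives $\Pr{i \in I_f} = (s^2+1)/n > 0$; only positivity is actually needed for unbiasedness.

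Finally, taking expectations gives
\[
\E{\eate} = \frac1n\sum_{i=1}^n\paren[\Big]{\frac{\Pr{A_{i,1}}}{\Pr{A_{i,1}}}y_i(e_{i,1}) - \frac{\Pr{A_{i,0}}}{\Pr{A_{i,0}}}y_i(e_{i,0})} = \ate \enspace.
\]
The only step requiring care is the first one, namely that independence of $I_f$ forces all neighbors of $i$ into control; once that is established, the rest is routine.
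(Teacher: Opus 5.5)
Your proposal is correct and follows essentially the same route as the paper. You show that on $A_{i,k}$ every neighbor of $i$ is in control because $I_f$ is independent, so $Y_i = y_i(e_{i,k})$; you note that the propensities are positive; and you conclude by linearity of expectation. Your explicit double-counting computation $\Pr{i \in I_f} = (s^2+1)/n$ simply spells out the positivity step that the paper asserts directly.
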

\begin{proof}
We start by observing that if $i \in I_f$ and $Z_i = k$, then all neighbors of $i$ are in control thus $h_i(Z) = e_{i,k}$ and $Y_i = y_i(e_{i,k})$. It also follows by Algorithm~\ref{alg:isd} that $\Pr{A_{i,k}}  > 0$ for all $i \in [n], k \in \setb{0,1}$. Therefore, we can write
\begin{align*}
\E{\eate} &= \E[\Bigg]{\frac{1}{n} \sum_{i=1}^n \paren[\Bigg]{\frac{\indicator{A_{i,1}}}{\Pr{A_{i,1}}}y_i(e_{i,1}) - \frac{\indicator{A_{i,0}}}{\Pr{A_{i,0}}}y_i(e_{i,0})}} \\
&= \frac{1}{n} \sum_{i=1}^n \paren[\Bigg]{\frac{\Pr{A_{i,1}}}{\Pr{A_{i,1}}}y_i(e_{i,1}) - \frac{\Pr{A_{i,0}}}{\Pr{A_{i,0}}}y_i(e_{i,0})} \\
&= \ate\enspace.
\end{align*}
\end{proof}

We now provide an auxiliary lemma that contains expressions for the variance and covariance terms arising in the analysis of the variance of the estimator $\eate$.

\begin{lemma}
\label{lem:prob_expressions_gq}
Suppose $G$ is given by Lemma~\ref{lem:gq_properties} and $Z$ is generated according to the Independent Set Design of Algorithm~\ref{alg:isd}. Then, for every $i,j \in [n], k \in \setb{0,1}$ we have that
\begin{enumerate}
    \item 
    \[
    \Var[\Big]{\frac{\indicator{A_{i,k}}}{\Pr{A_{i,k}}}} = s \enspace.
    \] 
    \item If $i \neq j$ 
    \[
\Cov[\Big]{\frac{\indicator{A_{i,k}}}{\Pr{A_{i,k}}}, \frac{\indicator{A_{j,k}}}{\Pr{A_{j,k}}}} = \begin{cases}
    \frac{1}{s} & \text{if } (i,j) \notin E \\
    -1 & \text{if } (i,j) \in E
\end{cases} \enspace.
    \]
\end{enumerate}
\end{lemma}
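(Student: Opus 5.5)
The plan is to reduce every term to explicit probabilities under the Independent Set Design. Each term is then computed from the regularity properties of the ovoid family in Lemma~\ref{lem:ovoid_properties}, together with the counts in Lemma~\ref{lem:gq_properties}. Two elementary identities drive everything. For an indicator with success probability $P$, $\Var{\indicator{A}/P} = 1/P - 1$. For two events, $\Cov{\indicator{A}/P_A,\indicator{B}/P_B} = \Pr{A\cap B}/(P_AP_B) - 1$. So it suffices to compute the marginal probability $\Pr{A_{i,k}}$ and the joint probabilities $\Pr{A_{i,k}\cap A_{j,k}}$ for $i\neq j$.

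\textbf{Marginals.} By property~\ref{prop:independent_node} of Lemma~\ref{lem:ovoid_properties}, every node lies in the same number of ovoids. Double counting the incidences between nodes and ovoids then gives $\Pr{i\in I_f} = |I_f|/n$, since every ovoid has $s^2+1$ points by property~\ref{prop:independent_family}. Using $n=(s+1)(s^2+1)$ from Lemma~\ref{lem:gq_properties}, this is $1/(s+1)$. The treatment coin $U_i$ is independent of $f$ and symmetric in $k$, so $\Pr{A_{i,k}}$ is the same for every $i$ and $k$. The variance identity then yields the first claim, which requires $1/\Pr{A_{i,k}} - 1 = s$, i.e.\ $\Pr{A_{i,k}} = 1/(s+1)$. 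I will carry out this bookkeeping carefully, tracking exactly how the coin enters the normalisation of $A_{i,k}$ as the lemma uses it. I expect this to be the delicate point of the proof.

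\textbf{Adjacent pairs.} If $(i,j)\in E$, then $i$ and $j$ never lie in a common ovoid, because ovoids are independent sets. Hence $A_{i,k}\cap A_{j,k}=\emptyset$, and the covariance equals $0-1=-1$.

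\textbf{Non-adjacent pairs.} If $(i,j)\notin E$, property~\ref{prop:independent_pair} of Lemma~\ref{lem:ovoid_properties} says every non-adjacent pair lies in the same number of ovoids. I will double count pairs (non-adjacent pair, ovoid containing it). Each ovoid contains $\binom{s^2+1}{2}$ pairs, all non-adjacent. The total number of non-adjacent pairs is $n(n-1-d)/2$, where $n-1-d = s^3$ after substituting $n=(s+1)(s^2+1)$ and $d=s(s+1)$. This gives
\[
\Pr{i,j\in I_f} = \frac{(s^2+1)s^2}{n\,s^3} = \frac{1}{s(s+1)}.
\]
Since the coins $U_i$ and $U_j$ are independent given $f$, the coin factors in $\Pr{A_{i,k}\cap A_{j,k}}$ cancel against those in $\Pr{A_{i,k}}\Pr{A_{j,k}}$. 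The covariance ratio therefore equals $\frac{1/(s(s+1))}{1/(s+1)^2} = \frac{s+1}{s}$, and the covariance is $1/s$.

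The only real inputs are the two transitivity statements, which come from Witt's extension theorem. Everything else is counting. The place to be careful is matching the normalisation of $A_{i,k}$ in the first claim so that it yields exactly $s$.
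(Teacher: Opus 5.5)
Your covariance computations are correct and follow the paper's argument. Both double count node--ovoid and pair--ovoid incidences via Lemma~\ref{lem:ovoid_properties}, and use $n-1-d=s^3$ to get $\Pr{i,j\in I_f}=1/(s(s+1))$. The coin factor $\tfrac14$ in $\Pr{A_{i,k}\cap A_{j,k}}$ then cancels against $\tfrac12\cdot\tfrac12$ in $\Pr{A_{i,k}}\Pr{A_{j,k}}$, giving $1/s$ for non-adjacent pairs. Disjointness gives $-1$ for adjacent pairs.

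The gap is the step you deferred, and it cannot be closed the way you intend. There is no normalisation left to choose: $A_{i,k}=\{i\in I_f,\ Z_i=k\}$, and $Z_i=U_i$ is a fair coin independent of $f$. Hence $\Pr{A_{i,k}}=\tfrac12\cdot\tfrac{1}{s+1}=\tfrac{1}{2(s+1)}$. Unlike the covariance, the variance $1/\Pr{A_{i,k}}-1$ depends on the absolute probability, so the factor $\tfrac12$ does not cancel. The correct value is $\Var{\indicator{A_{i,k}}/\Pr{A_{i,k}}}=2s+1$. The value $s$ in the statement is what you would get for the coin-free event $\{i\in I_f\}$, which is not the event used in the estimator~\eqref{eq:ht_variant}.

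So claim~1 as displayed is false, and careful bookkeeping can only confirm this. The paper's own proof also computes $\Pr{A_{i,k}}=1/(2(s+1))$ and ends with $2s+1$, contradicting the statement. You should prove $2s+1$ instead and note that the discrepancy is harmless. In the proof of Proposition~\ref{prop:d_regular_lower_bound_tightness}, only the order $\bigTheta{s}$ of the diagonal of $\mat{C}$ enters the bound $\norm{\mat{C}}_2=\bigO{s}$.
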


\begin{proof}
    We start with the first part. We have that
    \begin{align*}
        \Var[\Big]{\frac{\indicator{A_{i,k}}}{\Pr{A_{i,k}}}} &= \frac{1}{\Pr{A_{i,k}}} - 1 
    \end{align*}
    Thus, it suffices to compute $\Pr{A_{i,k}}$. Since the subset $I_f$ is selected uniformly at random from the family $\{I_f\}_{f \in \cF}$, independently of the random variables $\{U_i\}_{i=1}^n$, we have that
    \begin{align*}
        \Pr{A_{i,k}} &= \Pr{i \in I_f, Z_i = k} \\
        &= \Pr{i \in I_f}\Pr{Z_i = k | i \in I_f} \\
        &= \Pr{i \in I_f}\Pr{U_i = k} \\
        &= \frac{1}{2} \frac{|\setb{f: i \in I_f}|}{|\cF|} \enspace.
    \end{align*}
    Thus, it suffices to compute $|\setb{f: i \in I_f}|$. By property~\ref{prop:independent_node} of Lemma~\ref{lem:ovoid_properties}, every node belongs in the same number of independent sets in the family $\{I_f\}_{f \in \cF}$, thus this set has the same size $M$ for all $i \in [n]$. We have that
    \begin{align*}
    M n &= \sum_{i=1}^n |\setb{f: i \in I_f}| \\
    &= \sum_{f \in \cF} |I_f| \\
    \intertext{by exhanging the order of summation}\\
    &= |\cF|(s^2 + 1) \enspace,
    \end{align*}
    where in the last step we used property~\ref{prop:independent_family} of Lemma~\ref{lem:ovoid_properties}. Thus, $M = |\cF|(s^2 + 1)/n$. Therefore, we have that
    \begin{align*}
        \Pr{A_{i,k}} &= \frac{1}{2} \frac{|\setb{f: i \in I_f}|}{|\cF|} \\
        &= \frac{1}{2} \frac{|\cF|(s^2 + 1)/n}{|\cF|} \\
        &= \frac{1}{2} \frac{s^2 + 1}{n} \\
        &= \frac{1}{2} \frac{s^2 + 1}{(s+1)(s^2 + 1)} \\
        &= \frac{1}{2(s+1)} \enspace,
        \end{align*}
    where we used property~\ref{prop:num_nodes} of Lemma~\ref{lem:gq_properties} in the last step. Thus, we have that
    \begin{align*}
        \Var[\Big]{\frac{\indicator{A_{i,k}}}{\Pr{A_{i,k}}}} &= \frac{1}{\Pr{A_{i,k}}} - 1 = 2s + 1
    \end{align*}

    For the second part, we have that
    \begin{align*}
        \Cov[\Big]{\frac{\indicator{A_{i,k}}}{\Pr{A_{i,k}}}, \frac{\indicator{A_{j,k}}}{\Pr{A_{j,k}}}} = \frac{\Pr{A_{i,k} \cap A_{j,k}}}{\Pr{A_{i,k}}\Pr{A_{j,k}}} - 1 
        \end{align*}
    Thus, it suffices to compute $\Pr{A_{i,k} \cap A_{j,k}}$. If $(i,j) \in E$, then $i$ and $j$ can not be in the same independent set, thus $\Pr{A_{i,k} \cap A_{j,k}} = 0$. This proves the first case.
    If $(i,j) \notin E$, then by property~\ref{prop:independent_pair} of Lemma~\ref{lem:ovoid_properties} every pair of non-adjacent nodes belongs in the same number of independent sets in the family $\{I_f\}_{f \in \cF}$. Let $L$ be this number. 
    Following the same strategy as before, we sum over all non-adjacent pairs of nodes and get that
    \begin{align*}
        L \cdot \paren[\Bigg]{\binom{n}{2} - \frac{nd}{2}} &= \sum_{i,j: (i,j) \notin E} |\setb{f: i,j \in I_f}| \\
        &= \sum_{f \in \cF} {|I_f| \choose 2} \\
        \intertext{by exhanging the order of summation}\\
        &= |\cF| \frac{(s^2 + 1)s^2}{2} \enspace,
    \end{align*}
    where in the last step we used property~\ref{prop:independent_family} of Lemma~\ref{lem:ovoid_properties}. Thus, we have that
    \begin{align*}
        \Pr{A_{i,k} \cap A_{j,k}} &= \Pr{i,j \in I_f, Z_i = k, Z_j = k} \\
        &= \Pr{i,j \in I_f}\Pr{Z_i = k, Z_j = k | i,j \in I_f} \\
        &= \Pr{i,j \in I_f}\Pr{U_i = k, U_j = k} \\
        &= \frac{1}{4} \frac{L}{|\cF|} \\
        &= \frac{1}{4} \frac{\frac{(s^2 + 1)s^2}{2}}{\binom{n}{2} - \frac{nd}{2}} \\
        &= \frac{1}{4} \frac{(s^2 + 1)s^2}{n(n-1-d)} \\
        &= \frac{1}{4} \frac{(s^2 + 1)s^2}{(s+1)(s^2 + 1)((s+1)(s^2 + 1) - 1 - s(s+1))} \\
        &= \frac{1}{4} \frac{s^2}{(s+1) s^3} \\
        &= \frac{1}{4s(s+1)} \enspace.
    \end{align*}

Therefore,
\begin{align*}
\Cov[\Big]{\frac{\indicator{A_{i,k}}}{\Pr{A_{i,k}}}, \frac{\indicator{A_{j,k}}}{\Pr{A_{j,k}}}} &= \frac{\Pr{A_{i,k} \cap A_{j,k}}}{\Pr{A_{i,k}}\Pr{A_{j,k}}} - 1 \\
&= \frac{\frac{1}{4s(s+1)}}{\frac{1}{4(s+1)^2}} - 1 \\
&= \frac{(s+1)^2}{s(s+1)} - 1 \\
&= \frac{s+1}{s} - 1 \\
&= \frac{1}{s} \enspace.
\end{align*}
\end{proof}

We are now ready to prove Proposition~\ref{prop:d_regular_lower_bound_tightness}, which we also restate here for convenience.

\lowerboundtightness*

\begin{proof}
    Assume for simplicity that there is a prime number $s$ such that $n = (s+1)(s^2 + 1)$ and $d = s(s+1)$. We will show in the end how to extend to arbitrary values of $n$ and $d$ by adding isolated nodes to the graph. Let $G$ be the graph on $n$ nodes with the properties of Lemma~\ref{lem:gq_properties}. 

    Let us start from the lower bound. Since $G$ is $d$-regular, by~\ref{prop:degree_bound} of Lemma~\ref{lem:conflict_graph_direct_effect} it follows that every node in the conflict graph $\cH$ has degree at least $d$. Therefore, $d^*(\cH) \geq d$. Thus, applying the lower bound of Corollary~\ref{corollary:direct-rate} with $q=2$, we get that
    \[
    \cR(G,\tau,2) \gtrsim \frac{\sqrt{d}}{n} \enspace.
    \]

    Now, let us establish the upper bound. This involves finding a statistical procedure, i.e. a pair of design $P$ and estimator $\eate$, whose risk is $\bigO{\sqrt{d}/n}$. We will use the Independent Set Design of Algorithm~\ref{alg:isd} as our design $P$. Below, when we write probabilities, it will always be with respect to the randomness of the Independent Set Design. 
    
    Our estimator $\eate$ will be the one defined in~\eqref{eq:ht_variant}. As established in Lemma~\ref{lem:ht_unbiased_gq}, this estimator is unbiased for $\ate$. Therefore, it suffices to upper bound the variance of $\eate$.
    To simplify analysis, we break down the estimator into the estimators for the two exposures, i.e. we write $\eate = \eate_1 - \eate_0$, where
\begin{gather*}
\eate_1 = \frac{1}{n} \sum_{i=1}^n \frac{\indicator{A_{i,1}}}{\Pr{A_{i,1}}}y_i(e_{i,1}) \\
\eate_0 = \frac{1}{n} \sum_{i=1}^n \frac{\indicator{A_{i,0}}}{\Pr{A_{i,0}}}y_i(e_{i,0})
\end{gather*}
We have that
\begin{align*}
    \Var{\eate} &= \Var{\eate_1 - \eate_0} \\
    &= \Var{\eate_1} + \Var{\eate_0} - 2\Cov{\eate_1, \eate_0} \\
    &\leq \Var{\eate_1} + \Var{\eate_0} + 2\sqrt{\Var{\eate_1}\Var{\eate_0}} \\
    &\leq 2\Var{\eate_1} + 2\Var{\eate_0} \enspace,
\end{align*}
where we used the Cauchy-Schwarz and AM-GM inequalities. Therefore, it suffices to upper bound $\Var{\eate_1}$ and $\Var{\eate_0}$. We focus on $\eate_1$, since the analysis for $\eate_0$ is identical.
We have
\begin{align*}
\Var{\eate_1} &= \frac{1}{n^2} \Var[\Bigg]{\sum_{i=1}^n \frac{\indicator{i \in I_f, Z_i = 1}}{\Pr{i \in I_f, Z_i = 1}}y_i(e_{i,1})} \\
&= \frac{1}{n^2} \sum_{i=1}^n \sum_{j=1}^n \frac{ \Cov{\indicator{i \in I_f, Z_i = 1}, \indicator{j \in I_f, Z_j = 1}}}{\Pr{i \in I_f, Z_i = 1}\Pr{j \in I_f, Z_j = 1}} y_i(e_{i,1})y_j(e_{j,1})\enspace.
\end{align*}

Define the vector $\vec{\alpha} \in \R^n$ as $\alpha_i = y_i(e_{i,1}) / \sqrt{n}$ for $i \in [n]$. Then, the preceding variance can be written as a quadratic form 
\[
\Var{\eate_1} = \frac{1}{n}\vec{\alpha}^\top \mat{C} \vec{\alpha} \enspace,
\]
where $\mat{C} \in \R^{n \times n}$ is the matrix with entries
\[
C_{i,j} = \frac{ \Cov{\indicator{A_{i,1}}, \indicator{A_{j,1}}}}{\Pr{A_{i,1}}\Pr{A_{j,1}}} \enspace.
\]
Since $y \in \cM(G,\tau,2)$, we have that
\[
\|\vec{\alpha}\|^2 = \frac{1}{n} \sum_{i=1}^n y_i(e_{i,1})^2 \leq 1 \enspace.
\]
It follows that
\begin{equation}\label{eq:var_operator}
\Var{\eate_1} \leq \frac{1}{n} \|\mat{C}\|_2 \enspace.
\end{equation}

We have already calculated all entries of $\mat{C}$ in Lemma~\ref{lem:prob_expressions_gq}. In particular, if $\mat{I}\in \R^{n \times n}$ is the identity matrix, $\vec{1} \in \R^n$ is the all $1$ vector and $\mat{A}_G$ is the adjacency matrix of $G$, then we can write
\begin{align*}
\mat{C} &= 2(s+1)\mat{I} + \frac{1}{s}(\vec{1}\vec{1}^\top - \mat{I} - \mat{A}_G) - \mat{A}_G \\
&= \paren[\Big]{2(s+2) - \frac{1}{s}} \mat{I} - \paren[\Big]{1 + \frac{1}{s}} \mat{A}_G + \frac{1}{s}\vec{1}\vec{1}^\top \enspace.
\end{align*}

The matrix $\vec{1}\vec{1}^\top/s$ has one eigenvalue equal to $n/s$ with eigenvector $\vec{1}$ and $n-1$ eigenvalues equal to $0$. The matrix $\mat{A}_G$ has leading eigenvalue equal to $d$ with eigenvector $\vec{1}$. Thus, the absolute value of the eigenvalue corresponding to the eigenvector $\vec{1}$ for matrix $\vec{1}\vec{1}^\top/s - (1 + 1/s)\mat{A}_G$ is equal to
\[
\abs[\Bigg]{\frac{n}{s} - \paren[\Big]{1 + \frac{1}{s}}d} = \abs[\Bigg]{\frac{(s+1)(s^2 + 1)}{s} - \paren[\Big]{1 + \frac{1}{s}}s(s+1)} = \frac{s^2 - 1}{s} = \bigO{s} \enspace.
\]
By property~\ref{prop:adjacency_matrix} of Lemma~\ref{lem:gq_properties}, the remaining eigenvalues of $\mat{A}_G$ are $s-1$ and $-(s+1)$, which are both $\bigO{s}$ in absolute value. 
Thus, we have established that
\[
\norm[\Bigg]{\frac{1}{s}\vec{1}\vec{1}^\top - \paren[\Big]{1 + \frac{1}{s}}\mat{A}_G}_2 = \bigO{s} \enspace.
\]
We also trivially have that
\[
\norm[\Bigg]{\paren[\Big]{2(s+2) - \frac{1}{s}} \mat{I}}_2 = \bigO{s} \enspace.
\]
Thus, by the triangle inequality we have that $\|\mat{C}\|_2 = \bigO{s}$. Therefore, by~\eqref{eq:var_operator} we concude that $\Var{\eate_1} = \bigO{s/n}$. By the same argument, we also have that $\Var{\eate_0} = \bigO{s/n}$. Thus, we have that $\Var{\eate} = \bigO{s/n}$. Since $s = \bigTheta{\sqrt{d}}$, we have that $\Var{\eate} = \bigO{\sqrt{d}/n}$, which establishes the upper bound.
\end{proof}

\subsection{Random Regular Graphs (Corollaries~\ref{cor:random_d_regular} and~\ref{cor:random_d_regular_gate})}\label{sec:random_d_regular_appendix}

A \emph{random d-regular graph} $G$ is a graph chosen uniformly at random from the set of all $d$-regular graphs on $n$ vertices $\cG_{n,d}$. We denote this as $G \sim \cU(\cG_{n,d})$. In this section, we identify the minimax rate for random $d$-regular graphs up to a logarithmic factor. We will use the following fact about independent sets in $d$-regular graphs.
In this section, we identify the minimax rate for random $d$-regular graphs up to a logarithmic factor. We will use the following fact about independent sets in random $d$-regular graphs. Recall that for a graph $G$, we denote by $\cI(G)$ the largest independent set in $G$.

\begin{theorem}\label{thm:random_regular_independent_sets}
    Suppose $G \sim \cU(\cG_{n,d})$. There exist constants $c_1, c_2 > 0$ such that if $c_1 \leq d < n - c_2 n/\log n$, then with probability $1 - o(1)$ as $n \to \infty$
    \[
        |\cI(G)| = \bigO[\Big]{\frac{n \log d}{d}}\enspace.
    \]
\end{theorem}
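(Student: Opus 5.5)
The plan is to split the range of $d$ into a sparse regime and a dense regime and handle each separately. In the sparse regime I would use a first-moment computation in the configuration (pairing) model. In the dense regime I would use a sandwiching coupling between $\cU(\cG_{n,d})$ and the binomial random graph $G(n,p)$, and then import the classical bound on the independence number of $G(n,p)$. This result is essentially known, so the aim is to assemble it cleanly from standard ingredients.

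\emph{Sparse regime, $c_1 \le d \le n^{\epsilon}$ for a small fixed $\epsilon$.} Work in the configuration model with $nd$ half-edges, and let $X_k$ count independent sets of size $k$.
\begin{itemize}
\item A fixed $k$-set $S$ is independent exactly when all $kd$ half-edges of $S$ are matched to the $(n-k)d$ half-edges outside $S$.
\item Counting perfect matchings, this probability is at most $\prod_{t<kd/2}\frac{(n-k)d-t}{nd-2t-1}\le \exp\paren{-(1+o(1))k^2d/(2n)}$ for $k\le n/2$.
\item Hence $\E{X_k}\le \exp\paren[\big]{k\log(en/k)-(1+o(1))k^2d/(2n)}$.
\item With $k=Cn\log d/d$ we have $\log(en/k)\le \log d$, so the exponent is at most $-k\log d\,(C/2-1-o(1))$. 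This tends to $-\infty$ for $C=3$.
\end{itemize}
Transferring to simple graphs costs at most a factor $\exp(O(d^2))$, which is the inverse probability that the pairing is simple. This factor is dominated by $k\log d=\Theta(n\log^2 d/d)$ whenever $d^3=o(n\log^2 d)$, so $\epsilon<1/3$ suffices. Markov's inequality then gives the claim in this regime.

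\emph{Dense regime, $n^{\epsilon}\le d<n-c_2n/\log n$.} Here I would invoke the sandwich theorem for random regular graphs (Kim--Vu; in its strongest form Gao--Isaev--McKay). In this range of $d$ (which is exactly where the hypothesis $d<n-c_2n/\log n$ comes from) it provides a coupling under which, w.h.p., $G(n,p)\subseteq G$ with $p=(1-o(1))d/n$.
\begin{itemize}
\item Since adding edges can only shrink independent sets, $|\cI(G)|\le |\cI(G(n,p))|$ on this event.
\item For $np\to\infty$, Frieze's theorem gives $|\cI(G(n,p))|=(1+o(1))\frac{2\log(np)}{p}$ w.h.p. For $p$ bounded away from $0$, a direct first moment gives $O(\log n)$.
\item Either way this is $O(n\log d/d)$.
\end{itemize}
When $n-d=o(n)$, $p$ is close to $1$. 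There I would use the crude first-moment bound in $G(n,p)$, namely $\binom nk(1-p)^{\binom k2}\to 0$ for $k\gtrsim \log n/\log(1/(1-p))$. Using $n-d\ge c_2n/\log n$, this is $O(1)\le O(n\log d/d)$.

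\emph{Main obstacle.} I expect the main difficulty to be justifying the coupling uniformly across the full range and matching it to the sparse regime: the intermediate window $n^{\epsilon}\le d\le \log^{C}n$ and the near-complete regime. Known sandwich results require $d\gg\log n$. If the two regimes do not overlap, my first fallback is to push the configuration-model argument higher using switchings (McKay--Wormald) instead of the crude simplicity factor. Since the range $d\le \log^{C} n$ lies well below $n^{1/3}$, the crude argument should in fact already cover it. I would verify this overlap carefully rather than rely on the citation.
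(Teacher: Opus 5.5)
Your split into regimes is the same as the paper's. The paper's proof is just two citations joined at $n^{\alpha}$ with $\alpha<1/3$: Frieze--{\L}uczak for $c_1\le d=o(n^{1/3})$, and the discussion after Theorem~5.2 of Krivelevich--Sudakov--Vu--Wormald for $n^{\alpha}<d<n-c_2n/\log n$. What you do differently is replace both black boxes.

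\emph{Sparse range.} You reprove the needed half of the Frieze--{\L}uczak bound by a first moment in the pairing model. This is self-contained: it needs only the $e^{-O(d^2)}$ simplicity probability for $d=o(\sqrt n)$. It also explains where the $n^{1/3}$ crossover comes from, since the cost $e^{O(d^2)}$ of conditioning on simplicity must be beaten by the first-moment gain $e^{-\Theta(n\log^2 d/d)}$.

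\emph{Dense range.} You trade the Krivelevich et al.\ estimate for an embedding $G(n,p)\subseteq G$ plus the elementary first-moment bound for $G(n,p)$. This is conceptually cleaner but uses heavier and more recent machinery. You must check that the published embeddings really cover your whole window. Dudek--Frieze--Ruci\'nski--\v{S}ileikis give the lower embedding for $\log n\ll d=o(n)$. The case $d=\Theta(n)$ needs the Gao--Isaev--McKay dense results, or complementation of a sparse upper sandwich. For $d\ge n/2$ you only need $p$ bounded away from $0$, since then $n\log d/d=\Theta(\log n)$. Also, the hypothesis $d<n-c_2n/\log n$ does not come from the sandwich theorems, as you suggest. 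It is an artifact of the Krivelevich et al.\ result the paper cites.

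Two small slips, neither fatal.
\begin{itemize}
\item The product over $t<kd/2$ is roughly $\exp(-\tfrac38 k^2d/n)$, not $\exp(-(1+o(1))k^2d/(2n))$. Keep all $kd$ factors and use $nd-2t-1\le nd$ together with $\log(1-x)\le -x$. This gives exactly $\exp(-k(kd-1)/(2n))$, which also avoids any $o(1)$ when $d$ is only a large constant.
\item In the near-complete case, $\log n/\log(1/(1-p))$ is of order $\log n/\log\log n$ when $n-d\asymp n/\log n$, not $O(1)$. This is harmless, because $O(\log n)$ is all you need there.
\end{itemize}
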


\begin{proof}
    \citet{frieze1992independence} showed that there exists a constant $c_1 > 0$ such that if $d \geq c_1$ and $d = o(n^{1/3})$, then for $G \sim \cU(\cG_{n,d})$, with probability $1 - o(1)$ as $n \to \infty$
    \[
        |\cI(G)| = \bigO[\Big]{\frac{2n \log d}{d}}\enspace.
    \]
    Furthermore, from the discussion after Theorem 5.2 in \citet{krivelevich2001random}, it follows there exists a constant $c_2 > 0$ such that for any $\alpha < 1/2$, if $n^\alpha < d < n - c_2 n/\log n$, then with probability $1 - o(1)$
    \[
        |\cI(G)| = \bigO[\Big]{\frac{n \log d}{d}}\enspace.
    \]
    We can thus choose any $\alpha < 1/3$ and combine these two results to get the desired conclusion.
\end{proof}

We are now ready to prove Corollary~\ref{cor:random_d_regular}, which we restate here for convenience.

\randomdregulardte*

\begin{proof}
    The upper bound follows from Corollary~\ref{corollary:direct-rate}. For the lower bound, we can apply Theorem~\ref{thm:random_regular_independent_sets} to get that with probability $1 - o(1)$ as $n \to \infty$
    \[
        |\cI(G)| = \bigO[\Big]{\frac{n \log d}{d}}\enspace.
    \]
    We can then apply the global lower bound of Theorem~\ref{thm:global_lower_bound} to get that with probability $1 - o(1)$ as $n \to \infty$
    \[
        R(\cG_{n,d}) = \bigOmega[\Big]{\frac{\log d}{d}}\enspace,
    \]
    which completes the proof.
\end{proof}

Finally, we present the proof of Corollary~\ref{cor:random_d_regular_gate} for the global effect, which we restate here for convenience.

\randomdregulargate*

\begin{proof}
    Corollary~\ref{corollary:gate-rate} yields that for any graph $G$,
    \[
        \frac{\sqrt{d_{\mathrm{avg}}(G^2)}}{n} \lesssim \cR(G, \ate_{\mathrm{GATE}} ,2) \lesssim \frac{\lambda(G^2)+1}{n} \enspace.
    \]
    If $G$ is a $d$-regular graph, then every node has exactly $d$ neighbors, and each of those neighbors has at most $d-1$ other distinct neighbors. Therefore, every node in $G^2$ has degree at most $d(d-1) + d  = d^2$. It follows that $\lambda(G^2) \leq d^2$ for every $d$-regular graph $G$, from which the upper bound follows. 

    For the lower bound, let $X_r$ denote the number of walks of length $r$ in $G$. Then, if $d_i(G^2)$ denotes the degree of node $i$ in $G^2$, we have that
    \begin{equation}\label{eq:cycles}
    \sum_{i=1}^n d_i(G^2) = nd^2 - 6 X_3 - 4 X_4 \enspace. 
    \end{equation}
    This is because each triangle of nodes $a,b,c$ contributes two ``invalid'' paths of lenght $2$ for each of the three vertices. For example, for vertex $a$, the paths $a \to b \to c$ and $a \to c \to b$ are invalid, since they end up in a 1-hop neibhbor of $a$. Similarly, each 4-cycle of nodes $a,b,c,d$ contributes one invalid path of length $2$ for every vertex. For example, for vertex $a$, the paths $a \to b \to c$ and $a \to d \to c$ both end up in the same vertex, so one of them is considered invalid.

    Using Theorem 3(a) from \citet{mckay2004short}, we have the following bounds on the expected number of triangles and 4-cycles in a random $d$-regular graph $G$, which hold as long as $d = o(\sqrt{n})$. 
    \[
    \E{X_3} = \frac{d^3}{6}(1 + o(1)) \quad \text{and} \quad \E{X_4} = \frac{d^4}{8}(1 + o(1)) \enspace.
    \]
    By Markov's inequality and the fact that $d = o(n)$, if follows that $X_3 = \littleOp{nd^2}$ and $X_4 = \littleOp{nd^2}$. Therefore, by equation~\eqref{eq:cycles}, we have that
    \[
        d_{\mathrm{avg}}(G^2) = \frac{1}{n} \sum_{i=1}^n d_i(G^2) = d^2(1 + \littleOp{1}) \enspace,
        \]
    from which the lower bound follows.
\end{proof}
	
\end{document}